\numberwithin{equation}{section}\usepackage{graphicx}
\numberwithin{equation}{section}
\theoremstyle{plain}
\newtheorem{theorem}[equation]{Theorem}
\newtheorem{lemma}[equation]{Lemma}
\newtheorem{corollary}[equation]{Corollary}
\newtheorem{proposition}[equation]{Proposition}
\theoremstyle{definition}
\newtheorem{definition}[equation]{Definition}
\theoremstyle{remark}
\newtheorem{remark}[equation]{Remark}
\newcommand{\fiint}{\operatornamewithlimits{\fint\!\!\!\!\fint}}
\newcommand{\dv}{\operatorname{div}}
\newcommand{\supp}{\operatorname{supp}}
\newcommand{\dist}{\operatorname{dist}}
\newcommand{\loc}{\operatorname{loc}}
\newcommand{\tr}{\operatorname{tr}}
\newcommand{\V}{\operatorname{V}}
\newcommand{\la}{\Lambda_{\alpha}}
\newcommand{\CC}{\mathbb{C}}
\newcommand{\RR}{\mathbb{R}}
\newcommand{\NN}{\mathbb{N}}
\newcommand{\reu}{\mathbb{R}^{n+1}_+}
\newcommand{\rel}{\mathbb{R}^{n+1}_-}
\newcommand{\ree}{\mathbb{R}^{n+1}}
\newcommand{\rn}{\mathbb{R}^n}
\newcommand{\N}{{\widetilde{\mathcal{N}}}}
\newcommand{\C}{\mathcal{C}}
\newcommand{\A}{\mathcal{A}}
\newcommand{\SL}{\mathcal{S}}
\newcommand{\Le}{L_\epsilon}
\newcommand{\K}{\mathcal{K}}
\newcommand{\D}{\mathcal{D}}
\newcommand{\T}{\mathcal{T}}
\newcommand{\rr}{\mathcal{R}}
\newcommand{\tT}{\widetilde{T}}
\newcommand{\tN}{\widetilde{N}}
\newcommand{\tB}{\widetilde{B}}
\newcommand{\tR}{\widetilde{R}}
\newcommand{\w}{\widetilde{w}}
\newcommand{\eps}{\varepsilon}
\newcommand{\rec}{R^c_{1/\epsilon}}
\newcommand{\recs}{R^{c,i}_{1/\epsilon}}
\newcommand{\recb}{R^{c,e}_{1/\epsilon}}
\newcommand{\lv}{\lvert}
\newcommand{\rv}{\rvert}
\newcommand{\lp}{\left(}
\newcommand{\rp}{\right)}
\newcommand{\bp}{\noindent {\it Proof}.\,\,}
\newcommand{\ep}{\hfill$\Box$ \vskip 0.08in}
\begin{document}	

\title[Layer potentials and boundary problems]{Layer potentials and boundary value problems for elliptic equations with complex $L^{\infty}$ coefficients satisfying the small Carleson measure norm condition}

\author{Steve Hofmann}	
\address{ Steve Hofmann\\
Department of Mathematics\\ University of Missouri, Columbia, MO 65211  USA}
\email{hofmanns@missouri.edu }
\author{Svitlana Mayboroda}

\address{Svitlana Mayboroda
\\
School of Mathematics
\\
University of Minnesota
\\
Minneapolis, MN, 55455  USA} \email{svitlana@math.umn.edu}
\author{Mihalis Mourgoglou}
\address{Mihalis Mourgoglou\\
Department of Mathematics\\ University of Missouri, Columbia, MO 65211  USA}
\curraddr{Univ. Paris-Sud 11, Laboratoire de Math\'ematiques, 
UMR 8628, F-91405 
{\sc ORSAY}; FMJH, F-91405 {\sc ORSAY}}
\email{mihalis.mourgoglou@math.u-psud.fr}

\begin{abstract} We consider divergence form elliptic equations $Lu:=\nabla\cdot(A\nabla u)=0$
in the half space
$\mathbb{R}^{n+1}_+ :=\{(x,t)\in \mathbb{R}^n\times(0,\infty)\}$, whose coefficient matrix $A$
is complex elliptic, bounded and measurable.   In addition, we suppose that
$A$ satisfies some additional regularity
in the direction transverse to the boundary, namely that the discrepancy 
$A(x,t) -A(x,0)$ satisfies a Carleson measure
condition of Fefferman-Kenig-Pipher type, with small Carleson norm.
Under these conditions, we establish a full range of boundedness results for double and single layer potentials in $L^p$, Hardy, Sobolev, BMO and H\"older spaces. Furthermore, we prove 
solvability of the Dirichlet problem for $L$,
with data in $L^p(\rn)$, $BMO(\rn)$, and $C^\alpha(\rn)$, 
 and solvability of the Neumann and Regularity problems, with data in the spaces $L^p(\rn)/H^p(\rn)$ and $L^p_1(\rn)/H^{1,p}(\rn)$ respectively, with the appropriate restrictions on indices,
assuming invertibility of layer potentials in for the $t$-independent operator $L_0:= -\nabla\cdot(A(\cdot,0)\nabla)$.
\end{abstract}


\maketitle

\tableofcontents

\allowdisplaybreaks[1]

\section{Introduction and Statements of Results}\label{s1}
We  consider boundary value problems for divergence form elliptic equations $Lu=0$, where
\begin{equation} L=-\nabla\cdot(A\nabla u)=-\sum_{i,j=1}^{n+1}\frac{\partial}{\partial
x_{i}}\left(A_{i,j} \,\frac{\partial}{\partial x_{j}}\right),
\label{eq1.1}\end{equation}
 and $A=A(x,t)$ is an $(n+1)\times(n+1)$ matrix of complex-valued $L^{\infty}$ coefficients, defined on $\mathbb{R}^{n+1}_+ :=\{(x,t)\in \mathbb{R}^n\times(0,\infty)\}$ and satisfying the uniform ellipticity conditions
\begin{equation}
\lambda|\xi|^{2}\leq\Re e\,\langle A(x,t)\xi,\xi\rangle
\equiv \Re e\sum_{i,j=1}^{n+1}A_{ij}(x,t)\xi_{j}\bar{\xi_{i}}\,\,\,\text{and}\quad
 \Vert A\Vert_{L^{\infty}(\mathbb{R}^{n+1})}\leq\Lambda,
\label{eq1.2}\end{equation}
for some $\lambda>0$, $\Lambda<\infty$ and for all $\xi\in\mathbb{C}^{n+1}$, $x\in\mathbb{R}^{n}$ and $t>0$. The divergence form equation is interpreted in the usual weak sense, i.e., we say that $Lu=0$ in a domain $\Omega$ if $u\in W^{1,2}_{loc}(\Omega)$ and 
$$\int A \nabla u \cdot \overline{\nabla \Psi} = 0$$ 
for all complex valued $\Psi \in C_0^\infty(\Omega)$.  

The goal of this paper is two-fold. First, we develop a comprehensive theory of layer potentials for elliptic operator with bounded measurable complex coefficients satisfying the Small Carleson Measure Condition. Secondly, we establish the well-posedness for the corresponding boundary value problems  assuming the well-posedness of the associated boundary value problems for operators with $t$-independent coefficients, $A=A(x,0)$, or, to be more precise, the invertibility of the boundary layer potentials. 

The aforementioned Small Carleson Measure Condition can be defined as follows. Let $A^1(x,t)=A(x,t)$ be a complex $(n+1) \times (n+1)$ coefficient matrix, satisfying the ellipticity condition \eqref{eq1.2}, and let $A^0(x,t)=A^0(x):=A(x,0)$.   Set
\begin{equation}\label{eq1.3}
\epsilon(x,t):=\sup\left\{\left|A^1(y,s)-A^0(y,s)\right|: \, (y,s)\in W(x,t)\right\} 
\end{equation}
where $W(x,t) \equiv \Delta(x,t) \times (t/2, 3t/2)$ and $\Delta(x,t) \equiv \left\{y \in \rn: \left| x-y \right| < t\right\}$, and assume that $\epsilon=\epsilon(x,t)$ satisfies
a  Carleson measure condition
\begin{equation}\sup\limits_{\text{cube}\;Q\subset\rn} \displaystyle{\left( \frac{1}{|Q|} \iint\limits_{R_Q} 
\epsilon^2(x,t)\frac{dxdt}{t}\right)^\frac{1}{2}} < {\varepsilon}_0,
\label{eq1.4} \end{equation} 
where $R_Q:=Q\times(0, \ell(Q))$ is a Carleson box and $\ell(Q)$ is the side-length of the surface cube $Q$.
We say that $L$ satisfies the Small Carleson Measure Condition if $\eps_0$ above is sufficiently small (depending on parameters which will be specified with the corresponding Theorems).
At this point we just mention that the condition \eqref{eq1.4} arises very naturally in this context, as the forthcoming discussion of history of the problem and known counterexamples will reveal.

Let us start by introducing some notation. Recall that the scale of $L^p$ spaces is naturally extended to $p\leq 1$ by Hardy classes $H^p(\rn)$. Similarly, the scale of homogeneous Sobolev spaces $\dot L^p_1(\rn)$, $1<p<\infty$, extends to $H^{1,p}(\rn)$, the Sobolev-Hardy spaces of tempered distributions whose first order distributional derivatives are in $H^p$. As customary, we shall write $H^p(\rn)$ (resp., $H^{1,p}(\rn)$) for all $0<p<\infty$, with the understanding that $H^p=L^p$ (resp., $H^{1,p}=\dot L_1^p$) when $1<p<\infty$. Furthermore, let us denote 
\begin{equation}
\la(\rn)=\begin{cases} 
BMO(\rn),& \text{if}\,\,\, \alpha=0,\\
{C^\alpha}(\rn),& \text{if}\,\,\, 0<\alpha<1,\end{cases}
\label{eq1.5}\end{equation}
where ${C^\alpha}(\rn)$ are the homogeneous H\"older spaces and $BMO$ is the space of functions of bounded mean oscillation. Rigorous definitions and main properties of all these function classes can be found in Section~\ref{s2} below. 

Going further, given $x_0\in\mathbb{R}^{n}$, let $\Gamma^{\beta}(x_0)=\{(x,t)\in\mathbb{R}_{+}^{n+1}:|x_0-x|<\beta t\}$ be the cone with vertex $x_0$ and aperture $\beta>0$. Then the non-tangential maximal function for a function $u:\mathbb{R}_{+}^{n+1}\!\to\mathbb{C},$ is defined by $$N_*^{\beta} u(x_0)  \equiv\sup_{(x,t)\in\Gamma^{\beta}(x_0)}|u(x,t)|.$$
We shall also use its modified version based on the $L^2$-averages, 
$$\N^{\beta} u(x_0)  \equiv\sup_{(x,t)\in \Gamma^{\beta}(x_0)}\left(\fiint_{W(x,t)} |u(y,s)|^{2}dyds\right)^{\frac{1}{2}},$$ where, as before,  $W(x,t) \equiv \Delta(x,t) \times (t/2, 3t/2)$ and $\Delta(x,t) \equiv \left\{y \in \rn: \left| x-y \right| < t\right\}$. The superscript $\beta$ will be omitted when equal to 1 or not important in the context.

Let us return now to elliptic operators defined in \eqref{eq1.1}--\eqref{eq1.2}. 
Throughout the paper $L$ will be an elliptic  divergence form elliptic operator
with  bounded, measurable, complex-valued  coefficients. We shall assume, in addition, that the solutions to $Lu=0$ in $\reu$ are locally H\"older continuous in the following sense.  
For any  $u\in W^{1,2}_{\loc}(\reu)$ such that  $Lu=0$ in $\reu$ and any $n+1$-dimensional ball $B_{2R}(X)\subset \reu$, $X\in \reu$, $R>0$ we have 
\begin{equation}
|u(Y)-u(Z)|\leq
C \left(\frac{|Y-Z|}{R}\right)^{{\alpha_0}}\left(\,\fiint\limits_{B_{2R}(X)}|u|^{2}\right)^{\frac{1}{2}},\quad\mbox{for all}\quad Y,Z \in B_R(X),
\label{eq1.6}\end{equation}
\noindent for some constants $\alpha_0>0$ and $C>0$.
 In particular, one can show that Moser's local boundedness estimate holds (\cite{Mo}):
\begin{equation}
|u(Y)|\leq
C \left(\,\fiint\limits_{B_{2R}(X)}|u|^{p}\right)^{\frac{1}{p}},\quad\mbox{for all}\quad Y \in B_R(X), \quad 0<p<\infty.
\label{eq1.7}\end{equation}

We shall refer to property \eqref{eq1.6} by saying that the solutions (or, slightly abusing the terminology, the operator) satisfy the De Giorgi-Nash-Moser (or DG/N/M)
bounds.  Respectively, the constants $C$ and $\alpha_0$ in \eqref{eq1.6}, \eqref{eq1.7} will be called the De Giorgi-Nash-Moser constants of $L$. Finally, following \cite{AAAHK}, \cite{HMiMo}, we shall normally refer to the following collection of quantities: the dimension, the ellipticity, and the De Giorgi-Nash-Moser constants of $L$, $L^*$  as the {\it ``standard constants"}.

We note that the De Giorgi-Nash-Moser estimates are not necessarily satisfied for all elliptic PDEs with complex coefficients \cite{Fr, MNP, HMMc}. However, \eqref{eq1.6}, \eqref{eq1.7} always hold when the coefficients of the underlying equation are real \cite{DeG,N,Mo}, 
 and the constants depend quantitatively only upon ellipticity and dimension.
 We also note that \eqref{eq1.6}, \eqref{eq1.7}  hold always if $n+1=2$ and, if the coefficients of the operator are $t$-independent, 
when the ambient dimension $n+1=3$
(see \cite[Section 11]{AAAHK}). Moreover, \eqref{eq1.6} (which implies \eqref{eq1.7}) is stable under small complex perturbations
of the coefficients in the $L^\infty$ norm (see, e.g., \cite{Gi}, Chapter VI, or  \cite{A}).  Resting on this result, we demonstrate in Section~\ref{s2} that, in addition, whenever operator $L$ which satisfies \eqref{eq1.4} is such that $L_0$ with coefficients $A^0=A(x,0)$ verifies DeG/N/M and $\eps_0$ in \eqref{eq1.4} is small enough depending on the dimension, the ellipticity, and the De Giorgi-Nash-Moser constants of $L_0$, one can conclude that the De Giorgi-Nash-Moser bounds hold for $L$. Thus, in particular, \eqref{eq1.6}, \eqref{eq1.7} hold automatically for operators satisfying the Small Carleson Measure condition on coefficients in dimension $n+1=3$, and for Small Carleson Measure perturbations of real elliptic coefficients in all dimensions.

Throughout the paper we assume that the operators treated satisfy the De Giorgi-Nash estimates \eqref{eq1.6}. 

Recall now the definitions of classical layer potentials. For $L$ satisfying \eqref{eq1.1} and \eqref{eq1.2}, there exist the fundamental solutions  $\Gamma,\,\Gamma^*$  in $\mathbb{R}^{n+1}$ associated with $L$ and $L^*$ respectively, so that 
$$L_{x,t} \,\Gamma (x,t,y,s) = \delta_{(y,s)} \;\;\;\text{and}\;\;\; L^*_{y,s}\, \Gamma^*(y,s,x,t) \equiv L^*_{y,s} \,\overline{\Gamma (x,t,y,s)} = \delta_{(x,t)},$$
where $\delta_{(x,t)}$ denotes the Dirac mass at the point $(x,t)$, and $L^*$ is the hermitian adjoint of $L$. One can refer to \cite{HK} for their construction. We define the single layer potential and the double layer potential operators associated with $L$, respectively, by 
\begin{equation*}
\begin{split}{\SL}^L_t f(x) & \equiv\int\limits_{\mathbb{R}^{n}}\Gamma(x,t,y,0)\,f(y)\,dy, \,\,\, t\in \mathbb{R},\\
\mathcal{D}^L_{t}f(x) & 
\equiv\int\limits_{\mathbb{R}^{n}}\overline{\partial_{\nu^*} \Gamma^*
(y,0,x,t)}\,f(y)\,dy,\,\,\, t \neq 0,\end{split}
\end{equation*}
where $\partial_{\nu^*}$ is the adjoint exterior conormal derivative 
\begin{align*}
\partial_{\nu^*} \Gamma^* (y,0,x,t)
&=-\sum^{n+1}_{j=1}A_{n+1,j}^{\ast}(y,0)\frac{\partial \Gamma^*}{\partial
y_{j}}(y,0,x,t)\\
&=-e_{n+1}\cdot A^{\ast}(y,0)
\nabla_{y,s}\Gamma^*(y,s,x,t) \mid_{s=0},
\end{align*}
and $A^{\ast}$ is the hermitian adjoint of $A$. Furthermore, let
\begin{align}\label{eq1.10}
\left(\SL_t D_j\right)f(x)&:= \int_{\rn}\frac{\partial}{\partial y_j}\Gamma(x,t,y,0)\,f(y)\,dy\,,\quad 
1\leq j\leq n\\[4pt]\label{eq1.11}
\left(\SL_t D_{n+1}\right)f(x)&:= \int_{\rn}\frac{\partial}{\partial s}\Gamma(x,t,y,s)\big|_{s=0}\,f(y)\,dy\,,
\end{align}
and we set 
\begin{equation}\label{eq1.12}\left(\SL_t\nabla\right):=\Big(\left(\SL_t D_1Ä\right),\left(\SL_t D_2\right),...,
\left(\SL_t D_{n+1}\right)\Big)\,,\,\,\,{\rm and} \,\, \left(\SL_t\nabla\right)\cdot \vec{f}:=\sum_{j=1}^{n+1}
\left(\SL_t D_j\right)f_j\,,
\end{equation}
where $\vec{f}$ takes values in $\mathbb{C}^{n+1}$.
Note that for $t$-independent operators, we have by translation invariance in $t$ that
$\left(\SL_t D_{n+1}\right)=\,-\,\partial_t\SL_t$.

In the presence of non-tangential maximal function or Lipschitz space estimates, one can define the operators $\K^L$, $\widetilde{\mathcal{K}}^L$ and ${\SL}^L_t\!\mid_{t=0}$ on the boundary $\partial \RR^{n+1}_{\pm} =\rn$ such that 
\begin{equation}\label{eq1.12.1}\mathcal{D}^L_{t}f \xrightarrow{t \to 0} \left(\mp \frac{1}{2}I+\mathcal{K}^L\right)f, \quad \partial_\nu{\SL}^L g=\left(\pm\frac{1}{2}I+\widetilde{\mathcal{K}}^L\right)g, \quad \SL^L_t h \xrightarrow{t \to 0} \SL^L_t\!\mid_{t=0} h,\,
\end{equation}
in a suitable sense.  For operators with $t$-independent coefficients, the construction of 
$\K^L$, $\widetilde{\mathcal{K}}^L$ as above, and a description of the sense in which they exist,
may be found in \cite{HMiMo} (see also \cite[Lemma 4.28]{AAAHK} for the discussion in $L^2$). For elliptic operators treated in this paper, we shall present the pertinent results, as well as the proper definitions of convergence to the boundary and normal derivative, in Section~\ref{s7}.

The first main result of this work as follows.

\begin{theorem}\label{t1.13}
Let $A^1(x,t)=A(x,t)$ be a complex $(n+1) \times (n+1)$ coefficient matrix, satisfying the ellipticity condition \eqref{eq1.2}, and let $A^0(x,t)=A^0(x):=A(x,0)$.  Denote $L=L_1=-\nabla\cdot A(x,t) \nabla$ and $L_0=-\nabla\cdot(A(\cdot,0)\nabla)$.  Assume further that the operator $L_0$ and its adjoint $L_0^*$  
satisfy the De Giorgi-Nash property \eqref{eq1.6} for some $\alpha_0>0$.  
Then there exists $\varepsilon_0$ sufficiently small, depending on $n,\lambda,\Lambda$, and the De Giorgi/Nash bounds of $L_0$, $L_0^*$, such that whenever coefficients of $L$ satisfy \eqref{eq1.4} we have 
\begin{eqnarray}
\label{eq1.14} \|\N(\nabla \SL^L f)\|_{L^p(\rn)} &\lesssim  \|f\|_{H^p(\rn)}, \quad &p\in (p_0, 2+\eps),\\[4pt]
\label{eq1.15} \|N_*(\D^{L^*} f)\|_{L^{p'}(\rn)} &\lesssim  \|f\|_{L^{p'}(\rn)}, \quad &p\in (1, 2+\eps),
\\[4pt]
\label{eq1.15.1} \|\D^{L^*} f\|_{\Lambda^\beta(\overline{\reu})} &\lesssim  \|f\|_{\Lambda^\beta(\rn)}, \quad &\beta\in (0,\alpha_0),
\\[4pt]
\label{eq1.16} \sup\limits_{\text{cube}\;Q\subset\rn} \left(\displaystyle{\frac{1}{|Q|^{1 + 2\beta /n}} \iint_{R_Q} |\nabla \D^{L^*} f |^2 tdxdt}\right)^{1/2} &\lesssim  \|f\|_{\Lambda^\beta(\rn)}, \quad &\beta\in [0,\alpha_0),
\end{eqnarray}
\noindent the following square function bounds hold
\begin{eqnarray}
\int_{\rn}\left(\iint_{|x-y|<t} 
\left|t\,\nabla\left(\SL^{L^*}_{t}\nabla\right)f(y)\right|^{2}\,\frac{dydt}{t^{n+1}}\right)^{p'/2} dx
& \lesssim\,
 \|f\|^{p'}_{L^{p'}(\rn)}\,,\quad &1<p<2+\eps\,,\label{eq1.17}\\[4pt]
\sup\limits_{\text{cube}\;Q\subset\rn} \left(\displaystyle{\frac{1}{|Q|^{1 + 2\beta /n}}} \iint_{R_Q} \left|t\,\nabla\left(\SL^L_{t}\nabla_\|\right)f(x)\right|^{2}\,\frac{dxdt}{t}\right)^{1/2}
&\lesssim\,
 \|f\|_{\Lambda^\beta(\rn)}\,, \quad &\beta\in [0,\alpha_0),
\label{eq1.19}
\end{eqnarray}
\noindent and the following estimates on the boundary are valid
\begin{eqnarray}
\label{eq1.20} \|\nabla_\| \SL^L f\|_{H^p(\rn)} &\lesssim  \|f\|_{H^p(\rn)}, \quad &p\in (p_0, 2+\eps),\\[4pt]
\label{eq1.21} \|\widetilde \K^L f\|_{H^p(\rn)} &\lesssim  \|f\|_{H^p(\rn)}, \quad &p\in (p_0, 2+\eps),\\[4pt]
\label{eq1.22} \|\K^{L^*}f\|_{L^{p'}(\rn)} &\lesssim  \|f\|_{L^{p'}(\rn)}, \quad &p\in (1, 2+\eps),\\[4pt]
\label{eq1.22.1} \|\K^{L^*}f\|_{\la(\rn)} &\lesssim  \|f\|_{\la(\rn)}, \quad &\alpha\in [0,\alpha_0).\end{eqnarray}

\noindent The constants $p_0<1$ and $\eps>0$ above, and the implicit constants in \eqref{eq1.14}--\eqref{eq1.22} depend on $n,\lambda,\Lambda$, and the De Giorgi/Nash constants of $L_0$, $L_0^*$ only. Analogous statements hold for the adjoint operator $L^*$.
\end{theorem}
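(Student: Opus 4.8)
The plan is to bootstrap from the known $L^2$ theory for the $t$-independent operator $L_0$ (whose layer-potential bounds in $L^2$, BMO, and Hölder spaces are available from \cite{HMiMo}, \cite{AAAHK}) by a perturbation argument organized around the Carleson-measure smallness hypothesis \eqref{eq1.4}. First I would fix $L$, write $L = L_0 - \nabla\cdot(A^1 - A^0)\nabla$, and regard the discrepancy $\mathcal{E}(x,t) := A^1(x,t) - A^0(x)$ as a perturbation controlled in the Carleson sense by $\epsilon(x,t)$ of \eqref{eq1.3}; note that $\epsilon$ dominates $|\mathcal{E}|$ pointwise on $W(x,t)$. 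The DG/N/M bounds for $L$ itself are available from the discussion following \eqref{eq1.7} (they are inherited from $L_0$ once $\eps_0$ is small), so Moser's estimate \eqref{eq1.7} and the interior Hölder bound \eqref{eq1.6} may be used freely for solutions of $Lu=0$.

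The core of the argument is a square-function/Carleson estimate: one shows that for solutions $u$ of $Lu=0$ (in particular for $u = \SL^L_t f$, or $u = \D^{L^*}_t f$), the difference between $u$ and the corresponding solution of $L_0$ satisfies an estimate whose error term is $\iint_{R_Q} \epsilon^2(x,t)\,|\nabla u|^2 \, t\, dx\,dt$ or a similar Carleson-box quantity, which by \eqref{eq1.4} and the $\mathcal{T}^p$–$\mathcal{N}^p$ (tent-space versus nontangential maximal function) duality is bounded by $\eps_0$ times the relevant norm of $u$ plus, after a hiding argument, the norm of the data. The key technical device is the identity $L_0(u - v) = \nabla\cdot(\mathcal{E}\nabla u)$ combined with representation via the $L_0$-layer potentials and the $L_0$-resolvent/square-function machinery; this is where the $t$-independence of $L_0$ is essential, since it is precisely for $t$-independent operators that the full $L^2$ layer-potential and square-function theory is known unconditionally (under DG/N/M). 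One then runs this first for the $L^2$ estimates \eqref{eq1.14}–\eqref{eq1.22} at $p=2$, upgrades to $p$ near $2$ by the stability of the relevant estimates under small $L^p$ perturbation of the exponent (a Šneĭberg-type or good-$\lambda$ interpolation argument), extends down to $p_0 < 1$ and into the Hardy-space range via atomic decomposition together with the local $L^2$ nature of the $\mathcal{N}$ operator (using \eqref{eq1.6}–\eqref{eq1.7} to pass from pointwise to $L^2$-averaged control on Whitney cubes), and finally obtains the BMO and Hölder endpoint statements \eqref{eq1.15.1}, \eqref{eq1.16}, \eqref{eq1.19}, \eqref{eq1.22.1} from the Carleson-measure characterization of $\la$ together with the square-function bound \eqref{eq1.19}, again via a perturbation off the corresponding $L_0$-estimate of \cite{HMiMo}.

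The boundary estimates \eqref{eq1.20}–\eqref{eq1.22} and the trace/jump relations \eqref{eq1.12.1} are then read off from the interior nontangential-maximal-function bounds by a limiting argument: once $\|\N(\nabla\SL^L f)\|_{L^p} \lesssim \|f\|_{H^p}$ is known, the existence of nontangential boundary limits (hence of $\nabla_\| \SL^L f$, $\widetilde{\mathcal{K}}^L f$, $\mathcal{K}^{L^*}f$ on $\rn$) follows from a Fatou-type theorem for solutions satisfying $\N u \in L^p$, which in turn rests on \eqref{eq1.6} applied along the cone; the boundary norm bounds then follow from Fatou's lemma. The statement for $L^*$ is symmetric, since $L^*$ satisfies exactly the same hypotheses (the Carleson condition \eqref{eq1.4} is symmetric in $A$ and $A^*$, and DG/N/M for $L_0^*$ is assumed).

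I expect the main obstacle to be the square-function perturbation estimate with the $\epsilon^2\,dx\,dt/t$ error term: making rigorous the passage from $L_0(u-v) = \nabla\cdot(\mathcal{E}\nabla u)$ to an $L^p$ (and Hardy/BMO) bound on $u-v$ requires a delicate combination of (i) off-diagonal/Caccioppoli estimates for the $L_0$-semigroup or layer potentials to absorb the derivative falling on $\mathcal{E}$, (ii) the tent-space duality $(\mathcal{T}^p)^* = \mathcal{T}^{p'}$ together with the $L^p$-boundedness of the conical square function for $L_0$-solutions, and (iii) a careful \emph{hiding} (absorption) argument that moves the term $\eps_0 \,\|\N(\nabla u)\|_{L^p}$ from the right to the left side — which is only legitimate once one has a priori qualitative finiteness of that norm, necessitating a preliminary truncation/approximation scheme (e.g. working first with smooth, compactly supported $\mathcal{E}$, or with the truncated potentials $\SL_t$ for $\delta < t < 1/\delta$, and passing to the limit at the end). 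Getting all the function spaces — $H^p$ for $p_0<p\le 1$, $L^p$, $BMO$, $C^\alpha$ — to work uniformly through this scheme, with constants depending only on the standard constants of $L_0$, is the crux of the paper.
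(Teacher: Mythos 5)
Your high-level picture — write $L = L_0 - \nabla\cdot\mathcal{E}\nabla$, use $L_0(u-v)=\nabla\cdot(\mathcal{E}\nabla u)$ (i.e.\ the resolvent identity $L_1^{-1}-L_0^{-1}=-L_0^{-1}\dv\,\mathcal{E}\,\nabla L_1^{-1}$), pay for the perturbation with the Carleson norm, and hide the small term after establishing qualitative a priori finiteness — is aligned with the paper's perturbation scheme. You also correctly identify the a priori finiteness issue (Lemma~\ref{l6.0.1}) and that the tent/Carleson duality is what converts \eqref{eq1.4} into a usable bound. However, there is a substantive gap in your assumption that ``the full $L^2$ layer-potential and square-function theory is known unconditionally'' for the $t$-independent operator $L_0$. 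The known input is only the partial square function estimate \eqref{eq1.23} (for $\partial_t\SL_t$). The full square function bound $\iint|t\nabla(\SL_t\nabla)\cdot\vec f\,|^2\frac{dxdt}{t}\lesssim\|\vec f\|_2^2$ of Lemma~\ref{l3.1}, and — more importantly — the tent-space operator bounds $\nabla L_0^{-1}\dv: T^p_2\to\widetilde{T}^p_\infty$ (Lemma~\ref{l4.1}, Proposition~\ref{p4.6}) and $\nabla L_0^{-1}\frac1t:\widetilde T^p_1\to\widetilde T^p_\infty$ (Proposition~\ref{p5.1}), are \emph{new} results that the paper must first prove even in the $t$-independent case. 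These are exactly what your phrase ``$L_0$-resolvent/square-function machinery'' would have to mean for the argument to close, and without them the perturbation step has nothing to bite on: the factorization $\mathbf T^\infty_2\cdot\widetilde T^p_\infty\hookrightarrow T^p_2$ (Lemma~\ref{l2.20}) maps the Carleson quantity $\mathcal{E}\,\nabla L_1^{-1}\cdot$ into $T^p_2$, and one then needs precisely the operator bound on $\nabla L_0^{-1}\dv$ from $T^p_2$ to $\widetilde T^p_\infty$ to complete the circle and absorb.

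A secondary divergence: the paper does not get the range $p\in(2,2+\eps)$ by Šneĭberg- or good-$\lambda$-type extrapolation from $p=2$. Instead, in Proposition~\ref{p4.6} the sub-$L^2$ range $n/(n+\alpha_0)<p\le 1$ is handled by showing that the operator maps $T^p_2$-atoms to ``molecules'' for $\widetilde T^p_\infty$, the range $1<p\le 2$ by tent-space interpolation, and the range $2<p<2+\eps$ by a reverse-Hölder/Gehring argument (Iwaniec's Lemma~\ref{l4.28}) applied to $g=|\nabla w(\cdot,0)|$ against a control function built from $\mathfrak{C}(\Phi)$, $M(\A_2(\Phi)^q)^{1/q}$, and $\partial_t w(\cdot,0)$. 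Also, the BMO/Hölder statements \eqref{eq1.16}, \eqref{eq1.19}, \eqref{eq1.22.1} are not obtained by a separate Carleson-characterization argument layered on \eqref{eq1.19}, but directly by duality: \eqref{eq1.16}--\eqref{eq1.19} dualize against $\nabla L^{-1}\dv$ and $\nabla L^{-1}\frac1t$ on the boundary, and \eqref{eq1.15.1} then follows from \eqref{eq1.16} via the Meyers characterization of $\Lambda^\beta$ and the boundary Poincar\'e Lemma~\ref{l6.34}. Your reading of the boundary operators (Fatou-type theorem, jump relations) is essentially the content of Section~\ref{s7}, and is fine.
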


When $L$ is $t$-independent a special case of \eqref{eq1.17}, the square function estimate  
\begin{equation}\label{eq1.23}
\iint_{\ree}  
\left|t\,\nabla\partial_t\SL^L_{t}f(y)\right|^{2}\,\frac{dydt}{|t|}\lesssim
 \|f\|^2_{L^2(\rn)},
\end{equation}

\noindent was proved for complex perturbations of real, symmetric coefficient matrices in \cite{AAAHK}, and in general 
in \cite{R}  (for an alternative proof, 
see  \cite{GH}). It underpins much of the (generalized) Calder\'on-Zygmund theory leading to \eqref{eq1.14}--\eqref{eq1.16} in that context. With \eqref{eq1.23} given, and {\it still restricted to $t$-independent coefficients}, \eqref{eq1.14}--\eqref{eq1.16}, and \eqref{eq1.20}--\eqref{eq1.22} were proved in \cite{HMiMo}.  The full estimates on the square function, \eqref{eq1.17}--\eqref{eq1.19}, are unique to this paper\footnote{P. Auscher has pointed out to the
first named author that, with  \eqref{eq1.23} in place, the case $p=2$ of \eqref{eq1.17}, in the $t$-independent
setting, follows
readily from the functional calculus results of \cite{AAMc} (see also \cite{AAMc2}), which in turn are proved
via a refinement of the technology of the solution of the Kato problem;  in the present paper, we 
use the technology of the Kato problem directly, to give a self-contained proof.}, even in the $t$-independent case, and in fact, \eqref{eq1.17} was used in \cite{HMiMo} to establish, e.g., \eqref{eq1.15}. 

The major goal of this paper is to develop a perturbation approach allowing one to efficiently employ the condition \eqref{eq1.4} to establish the full range of layer potential estimates for operators whose coefficients {\it depend} on the transversal direction to the boundary $t$, as above.

Let us consider now the classical boundary value problems. We say that the Dirichlet problem for $L^*$ is solvable in $L^{p'}(\rn)$, $1<p<2+\eps$,  (and write $(D)_{p'}$) holds) if for any $f\in L^{p'}(\rn)$ there exists a solution $u$ of 
\begin{equation}
\begin{cases} 
L^*u=0,\;\text{ in }\mathbb{R}_{+}^{n+1}\\ 
\mbox{$u (\cdot,t)\to f$ as $t\to 0$},
\end{cases}
\label{eq1.24}\end{equation}
such that $$\|N_*(u)\|_{L^{p'}(\rn)}\leq C.$$ The statement $u (\cdot,t)\to f$ as $t\to 0$ is interpreted in the sense of convergence (strongly) in $L^{p'}$ on compacta of $\rn$.

Respectively, we say that the Dirichlet problem is solvable in
$\la(\rn)$, $0\leq \alpha<\alpha_0$, and we write that $(D)_{\la}$  holds if for any $f \in \la(\rn)$ there exists a solution $u$ of \eqref{eq1.24}
satisfying
\begin{equation}\sup\limits_{\text{cube}\;Q\subset\rn} \left(\displaystyle{\frac{1}{|Q|^{1 + 2\alpha /n}} \iint\limits_{R_Q} |\nabla u(x,t)|^2 tdxdt}\right)^{1/2} \leq C.\label{eq1.25} \end{equation}
The statement $u (\cdot,t)\to f$ as $t\to 0$ is interpreted in the sense of convergence in the weak* topology of $\la(\rn)$, $0\leq\alpha<\alpha_0$, and as a pointwise limit when $0<\alpha\leq \alpha_0$.

We say that the Neumann problem for $L$ is solvable in $H^p(\rn)$, $p_0<p<2+\eps$, 
 and we write that $(N)_{p}$ holds, if for any $g \in H^p(\rn)$  there exists a solution $u$ of 
 \begin{equation}
\begin{cases} 
Lu=0\;\;\text{ in }\mathbb{R}_{+}^{n+1},\\ 
\partial_{\nu_A} u = g\;\text{ on }\rn,
\end{cases}
\label{eq1.26}\end{equation}
satisfying
\begin{equation}\|\N(\nabla u)\|_{L^p(\rn)} \leq C.\label{eq1.27} \end{equation}
Here the conormal derivative on the boundary is understood in the weak sense and also is a limit of $\partial_{\nu_A} u(\cdot, t)$ as $t\to 0$ in the sense of distributions (see Section~\ref{s7} for details).

Finally, we say that the Regularity problem is solvable in 
$H^p(\rn)$ and we write that $(R)_{p}$ holds, 
if for any $f \in H^{1,p}(\rn)$
 there exists a solution $u$ of \eqref{eq1.24} satisfying
\begin{equation}\|\N(\nabla u)\|_{L^p(\rn)} \leq C.\label{eq1.28} \end{equation}
In this case, $u (\cdot,t)\to f$ as $t\to 0$ n.t. a.e., and $\fint_{t/2}^{2t}\nabla_{\|}u(\cdot,\tau) \,d\tau \to \nabla_{\|}f$ as $t \to 0$, weakly in $L^p$ when $p>1$.

One of the central methods of solution of the boundary value problems  is the method of layer potentials. To be specific, observe that the solutions to Dirichlet, Neumann, and Regularity problems in $\ree_{\pm}$ can be obtained via representations
\begin{eqnarray}
& u=\mathcal{D}^L_{t}\left(\mp\frac{1}{2}I+\mathcal{K}^L\right)^{-1}f, &\quad \mbox{for} \quad (D)_{p'}, (D)_{\Lambda_\alpha}, \label{eq1.28.1}\\[4pt]
& u=\SL^L_t\left(\pm\frac{1}{2}I+\widetilde{\mathcal{K}}^L\right)^{-1}g, &\quad \mbox{for} \quad  (N)_{p},\label{eq1.28.2} \\[4pt]
& u=\SL^L_t\left(\SL^L_t\!\mid_{t=0}\right)^{-1}f, &\quad \mbox{for} \quad  (R)_{p},\label{eq1.28.3}
\end{eqnarray}
respectively, provided that the corresponding layer potentials are bounded, that is, \eqref{eq1.14}--\eqref{eq1.16} and \eqref{eq1.20}--\eqref{eq1.22.1} are satisfied, and that the boundary layer potentials are invertible in the underlying function spaces. 

Given $1<p'<\infty$ (resp. $0<\alpha<1$), 
we say that the Dirichlet problem $(D)_{p'}$ 
(resp., $(D)_{\Lambda_\alpha}$) is solvable by the method of layer potentials if \eqref{eq1.15} (resp. \eqref{eq1.16}) is satisfied and the operator 
\begin{equation}\label{eq1.29} -\frac{1}{2}I+\mathcal{K}^L: L^{p'}\to L^{p'}, \quad \Big({\mbox{resp.,}} -\frac{1}{2}I+\mathcal{K}^L: \Lambda_\alpha\to\Lambda_\alpha\Big), \quad\mbox{ is bounded and invertible.}\end{equation}

Similarly, given $p_0<p<\infty$ 
we say that the Neumann problem $(N)_p$ 
  is solvable by the method of layer potentials if \eqref{eq1.14}  is satisfied and the operator 
\begin{equation}\label{eq1.30} \frac{1}{2}I+\widetilde{\mathcal{K}}^L: H^p\to H^p, \quad\mbox{ is bounded and invertible.}\end{equation}

Finally, given $p_0<p<\infty$  
we say that the Regularity problem $(R)_p$ 
 is solvable by the method of layer potentials if \eqref{eq1.14}  is satisfied and the operator 
\begin{equation}\label{eq1.31} \SL^L_t\!\mid_{t=0}: H^p\to H^p_1, \quad\mbox{ is bounded and invertible.}\end{equation} 

Analogous definitions apply to $\rel$.

We remark that, in principle, one could use different layer potential representations than those outlined above. For instance, the representation for the solution of the Regularity problem can serve the Dirichlet problem, and vice versa, provided that the underlying boundedness and invertibility results for layer potentials are established. This has been successfully used, e.g., in \cite{HKMP2}, and can be employed in the present setting too. To keep the statements in the introduction reasonably brief, for now we shall restrict ourselves to the representations above. 

The second main result of this work is the following.

\begin{theorem}\label{t1.32}
Let $A^1(x,t)=A(x,t)$ be a complex $(n+1) \times (n+1)$ coefficient matrix, satisfying the ellipticity condition \eqref{eq1.2}, with entries in $L^\infty(\mathbb{R}^{n+1})$, and let 
$A^0(x,t)=A^0(x):=A(x,0)$.  Set $L_0:=-\nabla\cdot(A(\cdot,0)\nabla)$,
and assume that $L_0$ and its adjoint $L_0^*$  
satisfy the De Giorgi-Nash property \eqref{eq1.16}, and that the coefficients of $L$ satisfy \eqref{eq1.4}. Let $0<\alpha<\alpha_0$ and $p_0<p<2+\eps$, $p_0=\frac{n}{n+\alpha_0}$, with $\eps>0$ and $0<\alpha_0<1$ depending on the standard constants of $L_0$ retain the same significance as in Theorem~\ref{t1.13}.  Then invertibility of boundary layer potentials in \eqref{eq1.29}--\eqref{eq1.31} for the operator $L_0$, for a given $p\in (p_0, 2+\eps)$ (or $0\leq \alpha<\alpha_0$), implies invertibility of the corresponding boundary layer potentials for the operator $L$ in the same function spaces, provided that the constant $\eps_0$ in \eqref{eq1.4} is sufficiently small depending on the standard constants of $L_0$ and implicit constants in \eqref{eq1.29}--\eqref{eq1.31} only. 

Respectively, with the same restrictions on $\alpha$ and $p$, if the boundary value problems  $(D)_{p'}$ 
(resp., $(D)_{\Lambda_\alpha}$), $(N)_p$, and $(R)_p$, are solvable for $L_0$ by the method of layer potentials, then so are the corresponding boundary problems for $L$.

Moreover the corresponding solutions are unique in the conditions of Proposition~\ref{p8.3.1}.
\end{theorem}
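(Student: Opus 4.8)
The plan is to run a perturbation argument treating $L$ as a Carleson-measure perturbation of the $t$-independent operator $L_0$, and to transfer invertibility of boundary layer potentials from $L_0$ to $L$ via a Neumann-series / method-of-continuity argument. First I would record that, by Theorem~\ref{t1.13} applied to both $L$ and $L_0$ (the latter being $t$-independent, so its layer potential theory is already available from \cite{HMiMo}, \cite{AAAHK}, \cite{R}), all of the layer potential operators appearing in \eqref{eq1.29}--\eqref{eq1.31} are bounded on the relevant spaces, with constants depending only on the standard constants of $L_0$. Next, the heart of the matter is a quantitative comparison estimate: I would show that the differences of boundary layer potentials,
\[
\big(-\tfrac12 I+\K^{L}\big)-\big(-\tfrac12 I+\K^{L_0}\big)=\K^{L}-\K^{L_0},\qquad \widetilde\K^{L}-\widetilde\K^{L_0},\qquad \SL^{L}_t\!\mid_{t=0}-\SL^{L_0}_t\!\mid_{t=0},
\]
have operator norm on the pertinent function space bounded by $C\eps_0$, where $C$ depends only on the standard constants of $L_0$ and on the implicit constants in \eqref{eq1.29}--\eqref{eq1.31}. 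This is where the Carleson smallness \eqref{eq1.4} is spent: the fundamental solution difference $\Gamma^L-\Gamma^{L_0}$ is controlled, via the resolvent/Green identity $\Gamma^L-\Gamma^{L_0}=-\iint \Gamma^L (A-A^0)\nabla\Gamma^{L_0}$, by the discrepancy $\eps(x,t)$ from \eqref{eq1.3}, and the resulting integral is estimated by a Carleson-measure bound (an $L^2$ square-function/tent-space pairing, or a Carleson embedding), against the square function and non-tangential bounds for $\nabla\SL$ and $\D$ already supplied by Theorem~\ref{t1.13}; the factor one gains is precisely the Carleson norm $\eps_0$.

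Granting the $O(\eps_0)$ comparison, the transfer of invertibility is soft: if $T_0$ (the boundary operator for $L_0$) is bounded and invertible on a Banach space $X$ with $\|T_0^{-1}\|\le M$, and $\|T-T_0\|_{X\to X}\le C\eps_0<1/M$, then $T=T_0(I+T_0^{-1}(T-T_0))$ is invertible by Neumann series, with $\|T^{-1}\|\le M/(1-MC\eps_0)$. Applying this to $-\tfrac12 I+\K^{L}$ on $L^{p'}$ and on $\Lambda_\alpha$, to $\tfrac12 I+\widetilde\K^{L}$ on $H^p$, and to $\SL^{L}_t\!\mid_{t=0}\colon H^p\to H^{1,p}$ (here using also that $\SL^{L_0}_t\!\mid_{t=0}$ is an isomorphism onto $H^{1,p}$, so one perturbs in the operator norm $H^p\to H^{1,p}$), gives the first assertion. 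The second assertion, solvability of $(D)_{p'}$, $(D)_{\Lambda_\alpha}$, $(N)_p$, $(R)_p$ for $L$, then follows by plugging the just-constructed inverses into the representation formulas \eqref{eq1.28.1}--\eqref{eq1.28.3} and invoking the boundedness estimates \eqref{eq1.14}--\eqref{eq1.16}, \eqref{eq1.20}--\eqref{eq1.22.1} to verify that the resulting $u$ satisfies the required non-tangential maximal function or Carleson-box bound, together with the convergence-to-the-data statements established in Section~\ref{s7}. Uniqueness is quoted directly from Proposition~\ref{p8.3.1}.

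The main obstacle I anticipate is the comparison estimate, and specifically making it $O(\eps_0)$ uniformly across the full range of spaces ($H^p$ with $p<1$, $L^{p'}$, and $\Lambda_\alpha=BMO/C^\alpha$) rather than just in $L^2$. For $p\le 1$ and for the Hölder/BMO endpoints one cannot argue purely by duality and real interpolation from an $L^2$ bound for the difference, because the perturbed operator need not be a nice Calderón--Zygmund operator on the nose; instead I expect one must redo the tent-space/Carleson pairing at each scale, exploiting the atomic structure of $H^p$ and the local-ness of the De Giorgi--Nash estimates \eqref{eq1.6}--\eqref{eq1.7} for $L$ (valid by the discussion after \eqref{eq1.7}, since $\eps_0$ is small relative to the DG/N/M constants of $L_0$), and carefully tracking that every constant that enters depends only on the standard constants of $L_0$. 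A secondary technical point is the $\SL^{L}_t\!\mid_{t=0}$ case for the Regularity problem: one needs the comparison in the norm $H^p\to H^{1,p}$, i.e.\ a bound on $\nabla_\|(\SL^{L}_t-\SL^{L_0}_t)\!\mid_{t=0}$, which requires differentiating the Green-identity representation of $\Gamma^L-\Gamma^{L_0}$ tangentially and re-running the Carleson estimate on the gradient — manageable given \eqref{eq1.19}, but it is the place where the argument is most delicate.
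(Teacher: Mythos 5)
Your outline coincides with the paper's proof of Theorem~\ref{t1.32}: the heart is the $O(\eps_0)$ operator-norm comparison of boundary layer potentials (carried out in \eqref{eq7.16}, \eqref{eq7.18} and Step~IV of Proposition~\ref{p7.15}), obtained from the resolvent identity $L_0^{-1}-L_1^{-1}=L_0^{-1}\dv(A^0-A^1)\nabla L_1^{-1}$ of \eqref{eq6.27.1} combined with the tent-space mapping bounds $\nabla L^{-1}\dv:T^p_2\to\widetilde{T}^p_\infty$ and $\nabla L^{-1}\textstyle{\frac 1t}:\widetilde{T}^p_1\to\widetilde{T}^p_\infty$ of Sections~\ref{s4}--\ref{s6}, after which the method of continuity and the representation formulas \eqref{eq1.28.1}--\eqref{eq1.28.3} give invertibility and solvability exactly as you describe. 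The only small slip is the displayed Green identity, which dropped one gradient (the correct integrand is $\nabla\Gamma^{L}\cdot(A-A^0)\nabla\Gamma^{L_0}$, as is implicit in \eqref{eq6.27.1}), and the $H^p\to H^{1,p}$ comparison for $\SL_t|_{t=0}$ is actually read off directly from the $\widetilde{T}^p_\infty$ comparison via \eqref{eq7.1.2} rather than by differentiating the Green identity tangentially; otherwise you have correctly identified where the argument is delicate.
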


We remark that, to prove solvability of a given boundary value problem for the operator $L_1$,
we shall use our hypotheses concerning
De Giorgi/Nash estimates for 
both $L_0$ and $L_0^*$.

Although the theorem is stated in the upper-half space, the same arguments work identically in the lower-half space. Moreover, by a standard pull-back transformation, Theorem~\ref{t1.32} can be proved in unbounded Lipschitz graph domains. Our results may be further extended to the setting of star-like Lipschitz domains by the use of a partition of unity argument, as in \cite{MMT}, to reduce to the case of a Lipschitz graph. We omit the details.

With a slight abuse of language, if $\epsilon(x,t)$ is as in \eqref{eq1.4} with appropriately small $\eps_0$ we shall say that $A^1-A^0$ satisfies a Carleson measure condition with sufficiently small constant or that $A^1$ is a small perturbation of $A^0$ and we write $A_0$, $A_1$ satisfy (SCMC). In the case that the mode of measuring the discrepancy $A^1-A^0$ is the $L^\infty$-norm, i.e. $\|A^1-A^0\|_{L^\infty} \leq \varepsilon_0$, we will say that $A^1$ is a small $L^\infty$-perturbation of $A^0$. 
\medskip

One should notice that \eqref{eq1.4} is the Carleson measure condition for the discrepancy of the coefficient matrices introduced  by Fefferman, Kenig and Pipher \cite{FKP}\footnote{We mention that in \cite{FKP}, it is not required that $A_0$ be $t$-independent.  In the setting of complex coefficients,
it is still an open problem to treat the case that $A_0$ is $t$-dependent.}, where it is treated only in the case of 
matrices with real $L^\infty$ coefficients (since their method relies on positivity and estimates of harmonic measure), with data in $L^p$ for some $p>1$. The goal is to extend the results in \cite{FKP} and \cite{KP} as far as possible in the complex setting. A purely $L^2$ version of the results in \cite{FKP} and in \cite{KP}
has recently been obtained for complex equations, 
and more generally, for elliptic systems,
by Auscher and Axelsson in \cite{AA}, assuming 
that $A^0 = A^0(x)$ is $t$-independent, 
and that one has sufficient smallness of the Carleson norm
controlling the discrepancy.  
Our results may therefore be viewed as $L^p$ and
``endpoint" analogues of
those of \cite{AA}, and \cite{AA2}, 
using the techniques developed in \cite{AAAHK}.    
We note that such endpoint bounds cannot be obtained in general,
without the De Giorgi/Nash hypothesis, by virtue of the examples of \cite{MNP} and \cite{Fr}. For more details of the history of work in this area we refer the reader to \cite{AAAHK}.

We conclude this introduction with two further comments.  First, it should be noted that the basic strategy of our perturbation argument owes much to that of \cite{KP2}.  Indeed, our representation for the discrepancy between null solutions $u_0$ and $u_1$, of $L_0$ and $L_1$ respectively, is 
analogous to that of \cite{KP2}, and
our tent space estimates 
in Section \ref{s5}, 
which are really the key to our work, 
are a variant of those proved in  \cite{KP2}.    There is one important difference however:  in \cite{KP2}, the authors considered only the case of real coefficients, and in that case it was natural to work with
$L_D^{-1}$, the inverse of $L$ with Dirichlet boundary condition, whose kernel is the Green function.
In that setting, positivity, and properties of the Green function and of harmonic measure could be exploited.  In our setting of complex coefficients, where these tools are not available, it seems natural to replace $L_D^{-1}$ with $L^{-1}$, the global inverse of $L$, whose kernel is the global fundamental solution,
and to work with layer potential estimates in place of harmonic measure.  In this setting, in order to prove our main tent space estimates, we are led to prove certain square function bounds in Section~\ref{s3}
which generalize the Kato problem \cite{AHLMcT}.

Our second comment is as follows.  Since our solvability results are of perturbation type, in which the ``unperturbed"
operator $L_0$ is $t$-independent, it is of course of interest to know that there is a rich class of operators $L_0$ to which our Theorem \ref{t1.32}
applies.    A substantial class of such operators is provided by the results of
\cite{AAAHK} and \cite{HMiMo}. We note, furthermore, that for any $t$-independent operator $L_0$ with real, possibly non-symmetric, coefficients, there exists $1<p<2+\eps$ such that the $(D)_{p'}$ and $(R)_p$ are solvable, accompanied by the invertibility results for  the single layer potential at the boundary (see \cite{HKMP}, \cite{HKMP2}).

\vskip 0.08 in
\noindent {\it{Acknowledgements.}} S. Hofmann was supported by NSF grant DMS-1101244.
Svitlana Mayboroda was partially supported by the Alfred P. Sloan Fellowship, the NSF CAREER Award DMS 1056004, and the NSF Materials Research Science and Engineering Center Seed Grant DMR 0819885. Mihalis Mourgoglou was supported by Fondation Math\'ematiques Jacques Hadamard.

\section{Preliminaries} \label{s2}

In this section we review some definitions and theorems concerning the function spaces we deal with along with the results on layer potentials we will use in the sequel. 

\subsection{Notation}\label{s2.1}
As is common practice we use the letter $C$ to denote a constant depending on the 
``standard constants", i.e., the
ellipticity parameters, dimension, and the De Giorgi/Nash constants,
that may vary from line to line, while whenever it is deemed necessary, we indicate the dependence by adding a subscript or a parenthesis with the parameters of dependence specified. 
For our convenience we often write $A \lesssim B$ or $A \gtrsim B$ instead of saying that there exists a positive constant $C$ such that $A \leq C B$ or $A \geq C B$. When $A \lesssim B$ and $A \gtrsim B$ hold simultaneously we simply write $A \approx B$ or $A\sim B$ and we say that $A$ is comparable to $B$ or $A$ and $B$ are comparable.  In these cases, the implicit constants are 
permitted to depend upon the ``standard constants" mentioned above. 

Throughout the paper we shall denote a point in the upper (lower) half-space by either $X\in \mathbb{R}^{n+1}_\pm$ or $(x, \pm t) \in \mathbb{R}^{n+1}_\pm$, where $x\in \rn$, $\pm t=x_{n+1}$ and $t \in (0,\infty)$. By $\frac{\partial}{x_j}=\partial_{x_j}=\partial_j$ we denote the partial derivative with respect to the $j$-th variable in $\mathbb{R}^d$ when $j \in \{1,2,\dots,d\}$, and by $\nabla= (\partial_1,\partial_2,\dots,\partial_d)$ the gradient operator. In the special case of $\reu$ for the $n$-dimensional gradient or divergence we have the notational convention $\nabla_x$ and $\dv_x$ or $\nabla_\|$ and $\dv_\|$, while $\partial_t$ will be the $(n+1)$-partial derivative. Similarly, if $A$ is an $(n+1)\times(n+1)$ matrix we write $A_\|$ for the $n\times n$ sub-matrix with entries $(A_\|)_{i,j}\equiv A_{i,j}$, when $1 \leq i,j \leq n$. Finally, following \cite{AA}, we shall denote $\nabla_A:=(\nabla_\|, \partial_{\nu_A})^\perp$, where $\partial_{\nu_A}$ is, as before, the conormal derivative to be interpreted in the appropriate weak sense below.

We normally denote by $\Delta$ and $Q$ balls and cubes in $\rn$, respectively, and by $B$ balls in $\ree$. Whitney cubes in $\ree$ or $\rel$ are denoted by $W$. 

If $f \in {\bf S}'$ and $\varphi \in {\bf S}$, we let $\langle f, \varphi \rangle$ denote the Hermitian duality pairing between
${\bf S}$ and ${\bf S}'$, i.e.,  $\langle f,\varphi\rangle:= (f, \overline{\varphi})$, where $(\cdot,\cdot)$ denotes the standard
${\bf S},{\bf S}'$ duality pairing.  We shall use $\langle \cdot,\cdot\rangle$ to denote this Hermitian 
duality pairing in both $\rn$ and in $\mathbb{R}^{n+1}$, but the usage should be clear in context. 
\subsection{Function spaces}\label{s2.2}
\subsubsection{Hardy and Hardy-Sobolev Spaces}\label{s2.2.1}Let us first give the definitions of $H^p$ atoms and molecules.

\begin{definition}\label{d2.1} For $\textstyle{\frac{n}{n+1}}<p\leq 1$  and $r$ such that $p<r$ and $1 \leq r< \infty$, a complex-valued function $a(x)$ is defined to be a $(p,r)$-atom in $\rn$ if it is supported in a cube $Q \subset \rn$ and satisfies:
\begin{equation}\label{eq2.2}
\int\limits_{\rn} a(x)\,dx=0, \;\;\;\text{and}\;\;\; \| a \|_{L^r(\rn)} \leq \left|Q\right|^{\frac{1}{r}-\frac{1}{p}}.
\end{equation}
\end{definition}

\begin{definition}\label{d2.3}
We say that $m \in L^1(\rn) \cap L^2(\rn)$ is an $H^p$-molecule adapted to a cube $Q \subset \rn,$ if
\begin{itemize}
\item[\textit{(i)}] We have the following cancellation and size conditions: $$\displaystyle \int\limits_{\rn} m(x)\,dx=0,\;\;\;\;\;\quad
\displaystyle \int\limits_{16Q} \left| m(x)\right|^2\,dx \leq \ell(Q)^{n\left(1-\frac{2}{p}\right)}.$$
\item[\textit{(ii)}] There exists a constant $\varepsilon>0$ such that  for every $k \geq 4$ 
$$\displaystyle \int\limits_{2^{k+1} Q\setminus 2^k Q} \left| m(x)\right|^2\,dx \leq 2^{-\varepsilon k} \left(2^k\ell(Q)\right)^{n\left(1-\frac{2}{p}\right)}.$$
\end{itemize}
\end{definition}

\begin{lemma}[Atomic decomposition]\label{l2.4}
Let $0<p\leq 1$  and $r$ such that $p<r$ and $1 \leq r < \infty$. Then $f \in H^p(\rn)$ if and only if $f \in \mathcal{S}'(\rn)$ and there exists a sequence of $(p,r)$-atoms $\{a_j\}_{j=1}^\infty$ and a sequence of complex numbers $\{\lambda_j\}_{j=1}^\infty$ with $$\sum_j |\lambda_j|^p \leq C \|f\|^p_{H^p(\rn)},$$ for a positive constant $C$, such that, in the sense of tempered distributions,
\begin{equation}f=\sum_j \lambda_j a_j.\label{eq2.5}\end{equation}
In particular,
$$\|f\|_{H^p(\rn)}  \approx \inf \left\{\left(\sum_j |\lambda_j |^p\right)^{1/p} : \text{all decompositions of $f$ as in \eqref{eq2.5} }\right\}.$$
\end{lemma}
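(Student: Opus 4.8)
The statement to prove is the atomic decomposition lemma for $H^p(\rn)$, Lemma~\ref{l2.4}. This is a classical result, so the "proof" is really a matter of assembling standard ingredients; the paper is presumably just recalling it for the reader's convenience. Let me sketch how I would prove it.

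\medskip

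The plan is to treat the two directions separately, and to reduce the general statement to a known base case. The harder and more substantive direction is that every $f\in H^p(\rn)$ admits a decomposition $f=\sum_j\lambda_j a_j$ into $(p,r)$-atoms with $\sum_j|\lambda_j|^p\lesssim\|f\|_{H^p}^p$; the converse direction, that any such sum converges in $\mathcal S'$ to an element of $H^p$ with $\|f\|_{H^p}^p\lesssim\sum_j|\lambda_j|^p$, follows from the uniform bound $\|a_j\|_{H^p}\lesssim 1$ for a single atom together with $p$-subadditivity of $\|\cdot\|_{H^p}^p$ (which in turn reduces, via a grand maximal function characterization, to a pointwise estimate on $M_\varphi a_j$ split into the part over a fixed dilate of the supporting cube $Q$ — handled by the $L^r\to L^r$, or $L^1\to L^{1,\infty}$, boundedness of the maximal operator and Hölder — and the far part, handled by the cancellation condition $\int a_j=0$ which produces a factor $\ell(Q)/|x-x_Q|$ giving an integrable tail since $p>n/(n+1)$). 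So for the converse I would simply cite or reproduce this two-piece estimate.

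For the forward direction, I would first establish the case $r=2$ (say), i.e.\ the existence of an $(p,2)$-atomic decomposition, which is the content of the classical Fefferman–Stein / Coifman / Latter atomic decomposition theorem; here I would either cite Stein's book or Coifman–Weiss, or sketch the Calderón–Zygmund-type stopping-time argument: take $f\in H^p$, form the grand maximal function $\mathcal M f$, decompose the open sets $\{\mathcal M f>2^k\}$ via a Whitney decomposition, build a partition of unity adapted to the Whitney cubes, and peel off atoms level by level, controlling the $L^\infty$ and $L^2$ sizes of the pieces by the size of the level set and controlling $\sum|\lambda_j|^p$ by $\sum_k 2^{kp}|\{\mathcal M f>2^k\}|\approx\|\mathcal M f\|_{L^p}^p\approx\|f\|_{H^p}^p$. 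Then, to pass from $(p,2)$-atoms to $(p,r)$-atoms for any admissible $r$, I would note that every $(p,r)$-atom is (a fixed multiple of) a finite/convergent sum of $(p,2)$-atoms when $r>2$ — giving one inclusion trivially since $(p,r)\subset(p,2)$ for $r\ge 2$ is false in the needed direction, so more carefully: for $1\le r\le 2$, a $(p,2)$-atom need not be a $(p,r)$-atom, so instead I would observe that the space defined by finite combinations of $(p,r)$-atoms, with the natural quasinorm, is independent of $r$ — this is the standard fact that all atomic Hardy spaces $H^{p,r}$ coincide with $H^p$ for $n/(n+1)<p\le 1\le r<\infty$, $r>p$, proved by showing each $(p,r)$-atom itself decomposes into $(p,\infty)$-atoms (via the same Calderón–Zygmund machinery applied to the atom, using its small $L^r$ norm to sum the $\ell^p$ of the coefficients) and conversely each $(p,\infty)$-atom is a $(p,r)$-atom up to normalization. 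I would present this as: "by \cite[Ch.~III]{Stein} (see also \cite{CW}), $H^p(\rn)$ coincides with the atomic space $H^{p,r}(\rn)$ with equivalent quasinorms for all admissible $r$," and give the brief reduction argument rather than reproving everything.

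\medskip

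The main obstacle — or rather the only point requiring genuine care — is the $r$-independence step: making precise that a $(p,r)$-atom, which has only an $L^r$ normalization rather than an $L^\infty$ bound, can be re-decomposed into $(p,\infty)$-atoms with $\ell^p$-summable coefficients. The key is that the atom is supported in a single cube $Q$ with $\|a\|_{L^r}\le|Q|^{1/r-1/p}$, so running the grand-maximal stopping-time argument on $a$ at heights that are dyadic multiples of $|Q|^{-1/p}$, the resulting coefficients satisfy $\sum|\lambda_j|^p\lesssim\|\mathcal M a\|_{L^p}^p\lesssim\|a\|_{L^r}^p|Q|^{1-p/r}\lesssim 1$ by the (weak-type if $r=1$) maximal inequality and Hölder, which is exactly what is needed; all the atoms produced can be taken supported in a fixed dilate of $Q$ and with vanishing mean, so after a harmless adjustment they are genuine $(p,\infty)$-atoms. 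Once this is in hand, combining with the $(p,2)$ decomposition of a general $f$ and reshuffling the double sum (legitimate by absolute convergence of the $\ell^p$ coefficients) yields the $(p,r)$ decomposition, and the asserted equivalence of quasinorms
$$\|f\|_{H^p(\rn)}\approx\inf\Big\{\big(\textstyle\sum_j|\lambda_j|^p\big)^{1/p}\Big\}$$
drops out by combining the two directions. I would keep the write-up short, emphasizing that the result is classical and pointing to \cite{Stein,CW} for the full details.
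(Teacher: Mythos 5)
Your sketch correctly reproduces the classical atomic decomposition argument --- the grand-maximal-function/Whitney stopping-time construction for the existence direction, the two-piece estimate on $M_\varphi a$ for the converse, and the re-decomposition of atoms to pass between different $r$ --- which is exactly what the paper invokes by citing Garc\'ia-Cuerva--Rubio de Francia and Stein without giving any argument. The only point worth flagging is that your tail estimate (and hence the whole statement with atoms carrying a single vanishing moment, as in Definition~\ref{d2.1}) requires $p>n/(n+1)$, which matches the range in the paper's own atom definition even though the lemma's hypothesis nominally allows $0<p\leq 1$; for smaller $p$ one would need atoms with higher-order vanishing moments.
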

\begin{proof}
For a proof see \cite[p.283]{GCRDF} or \cite[p.106]{St}.
\end{proof}
The following lemma allows us to obtain $H^p \to H^p$ boundedness of a sublinear operator, which turns out to be a very useful criterion in the case of an integral operator whose kernel has pointwise or $L^2$ bounds.

\begin{lemma}\label{l2.6}
\noindent (i) Let $T$ be an $L^2(\rn) \to L^2(\rn)$ bounded sublinear operator. If $T$ maps any $(p,2)$-atom to an $H^p$-molecule adapted to the cube $Q$ which contains the support of the atom, then $T$ is $H^p(\rn) \to H^p(\rn)$ bounded.

\noindent(ii) A linear operator $T$ extends to a bounded linear operator from Hardy spaces $H^p(\rn)$ with $p\in(0,1]$ to some quasi-Banach space $\mathcal{B}$ if and only if $T$ maps all $(p,2)$-atoms into uniformly bounded elements of $\mathcal{B}$.
\end{lemma}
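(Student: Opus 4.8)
The plan is to reduce both parts to one fact: every $H^p$-molecule belongs to $H^p(\rn)$ with quasi-norm bounded by a constant depending only on $n$, $p$ and the molecular parameter $\varepsilon$ of Definition~\ref{d2.3}. Granting this, part (i) will follow by decomposing $f\in H^p(\rn)$ into $(p,2)$-atoms via Lemma~\ref{l2.4}, applying $T$, and using that $\|\cdot\|_{H^p(\rn)}^p$ is subadditive (which holds since $p\le1$); part (ii) is the linear, quasi-Banach version of the same argument, with ``is a molecule'' replaced by ``is a uniformly bounded element of $\mathcal B$''. I would carry this out in three steps.

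\emph{Step 1: an $H^p$-molecule $m$ adapted to a cube $Q$ satisfies $\|m\|_{H^p(\rn)}\le C$.} I would write $m=\sum_{k\ge 3}m\,\chi_{A_k}$ over the dyadic coronas $A_3:=16Q$, $A_k:=2^{k+1}Q\setminus2^kQ$ for $k\ge4$. By condition (ii) of Definition~\ref{d2.3}, each piece $m\chi_{A_k}$ obeys the $L^2$ size bound of a $(p,2)$-atom on $2^{k+1}Q$ up to the favorable factor $2^{-\varepsilon k/2}$, but it lacks the cancellation of an atom. To restore cancellation one subtracts the coronal averages $c_k:=|2^{k+1}Q|^{-1}\int_{A_k}m$, and then removes the resulting multiples of $\chi_{2^{k+1}Q}$ by a summation by parts over the nested cubes $\{2^kQ\}$, using $\int_{\rn}m=0$: the partial tails $N_k:=\sum_{j\ge k}\int_{A_j}m$ satisfy $|N_k|\lesssim 2^{-\varepsilon k/2}(2^k\ell(Q))^{n/2}$ by Cauchy--Schwarz together with condition (ii), so that $|N_k|\,|2^{k+1}Q|^{-1}(\chi_{2^{k+1}Q}-\chi_{2^kQ})$ is, after normalization, $O(2^{-\varepsilon k/2})$ times a $(p,2)$-atom. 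Collecting the terms yields an atomic decomposition $m=\sum_k\lambda_k a_k$ with $|\lambda_k|\lesssim2^{-\varepsilon k/2}$, hence $\sum_k|\lambda_k|^p\lesssim\sum_k2^{-\varepsilon kp/2}\lesssim1$, and Lemma~\ref{l2.4} gives $\|m\|_{H^p(\rn)}\le C$.

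\emph{Steps 2--3: from Step~1 to the two conclusions.} For part (i), given $f\in H^p(\rn)$ take $f=\sum_j\lambda_j a_j$ with $\sum_j|\lambda_j|^p\lesssim\|f\|^p_{H^p(\rn)}$; since $T$ is bounded on $L^2(\rn)$ and atoms lie in $L^2(\rn)$, $Tf$ is unambiguously defined on the dense subspace $H^p\cap L^2$, where the decomposition may be taken to converge in $L^2$ as well. Sublinearity gives the pointwise domination $|Tf|\lesssim\sum_j|\lambda_j|\,|Ta_j|$ a.e., and by hypothesis each $Ta_j$ is an $H^p$-molecule adapted to the cube carrying $\supp a_j$, so by Step~1 the pieces $\lambda_j\,Ta_j$ reassemble --- via $p$-subadditivity of $\|\cdot\|_{H^p(\rn)}^p$ --- into a function of $H^p(\rn)$ quasi-norm at most $C(\sum_j|\lambda_j|^p)^{1/p}$, which controls $\|Tf\|_{H^p(\rn)}$; the bound then passes to all of $H^p(\rn)$ by density. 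For part (ii), necessity is immediate since every $(p,2)$-atom has $\|a\|_{H^p(\rn)}\le C_{n,p}$ (Step~1, viewing an atom as a molecule); for sufficiency, if $\sup_a\|Ta\|_{\mathcal B}=:C<\infty$, then for a finite combination $f=\sum_j\lambda_j a_j$ linearity gives $Tf=\sum_j\lambda_j\,Ta_j$, so $p$-subadditivity of the quasi-norm on $\mathcal B$ yields $\|Tf\|_{\mathcal B}^p\le C^p\sum_j|\lambda_j|^p$; taking the infimum over decompositions and invoking Lemma~\ref{l2.4}, then using density of finite atomic combinations in $H^p(\rn)$ and completeness of $\mathcal B$, $T$ extends uniquely to a bounded operator $H^p(\rn)\to\mathcal B$.

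The main obstacle is Step~1 --- specifically the summation by parts that redistributes the non-vanishing coronal averages of $m$ across the nested cubes, so that the corrected pieces are honest $(p,2)$-atoms whose coefficients still decay geometrically in $k$; everything else is bookkeeping with the $p\le1$ quasi-norm inequality. The one further point deserving care, in both parts, is the by-now classical subtlety that ``uniform boundedness on $(p,r)$-atoms'' yields $H^p$-boundedness only once a dense domain on which $T$ is already a well-defined (sub)linear map has been fixed --- the finite linear combinations of $(p,2)$-atoms for part (ii), and $H^p\cap L^2$ together with the $L^2$-continuity of $T$ for part (i) --- which is exactly the framework built into the hypotheses, and which for $(p,r)$-atoms with $r<\infty$ (here $r=2$) causes no difficulty.
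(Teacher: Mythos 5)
The paper itself gives no proof of this lemma: it simply cites \cite{TW}, \cite{CW}, \cite{HMMc} for part~(i) and \cite{YZ1}, \cite{YZ2} for part~(ii). Your outline reconstructs the classical argument from those references, and it is essentially correct in structure. The key reduction --- that every $H^p$-molecule lies in $H^p$ with uniformly bounded quasi-norm --- is proved by the standard device of decomposing over dyadic coronae, subtracting coronal averages to restore local cancellation, and absorbing the subtracted averages by a telescoping (Abel) sum over the nested cubes using $\int m=0$; parts~(i) and~(ii) then follow from $p$-subadditivity of $\|\cdot\|_{H^p}^p$ together with the correct dense-domain framework (and, for~(i), the $L^2$-boundedness of $T$ so that $Tf$ is unambiguous on $H^p\cap L^2$ and $|Tf|\le\sum_j|\lambda_j||Ta_j|$ a.e.). You also correctly flag the Bownik-type subtlety behind~(ii) and that it is harmless for $(p,2)$-atoms; that is precisely the content of the Yang--Zhou criterion.

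One arithmetic slip in Step~1 should be fixed. Cauchy--Schwarz and Definition~\ref{d2.3}(ii) give $\bigl|\int_{A_j}m\bigr|\le|A_j|^{1/2}\|m\|_{L^2(A_j)}\lesssim(2^j\ell(Q))^{n/2}\cdot2^{-\varepsilon j/2}(2^j\ell(Q))^{n(1/2-1/p)}=2^{-\varepsilon j/2}(2^j\ell(Q))^{n(1-1/p)}$, so the tail bound should read $|N_k|\lesssim 2^{-\varepsilon k/2}(2^k\ell(Q))^{n(1-1/p)}$, not $(2^k\ell(Q))^{n/2}$. With the corrected exponent, normalizing $N_k\,|2^{k+1}Q|^{-1}(\chi_{2^{k+1}Q}-\chi_{2^kQ})$ to the $L^2$ size of a $(p,2)$-atom costs a factor $(2^{k+1}\ell(Q))^{n(1/p-1)}$, and the resulting coefficient is $\lesssim 2^{-\varepsilon k/2}$, as you claim. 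With the exponent $n/2$ as written, the coefficient would instead grow like $(2^k\ell(Q))^{n(1/p-1/2)}$ and the $\ell^p$ sum would fail for $p<1$. So your conclusion is right, but only after this intermediate exponent is corrected.
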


\begin{proof}
For a proof of \textit{(i)} see for example \cite{TW}, \cite{CW} and \cite{HMMc}, while for \textit{(ii)} see \cite{YZ1} and \cite{YZ2}.
\end{proof}

\begin{definition}\label{l2.7}
A distribution $f$ is said to be in the Hardy-Sobolev space $H^{1,p}(\rn)$, for $p\in(\frac{n}{n+1},1]$, if $\nabla f \in H^p(\rn)$.
\end{definition}

Let us state without proof a lemma which provides us with an alternative characterization of $H^{1,p}(\rn)$ which is very useful in the proof of Theorem \ref{t1.32} (see \cite{KoS}).

\begin{lemma}[\cite{KoS}]\label{l2.8}
Let $n\geq 1$ and $p\in(\frac{n}{n+1},1]$. Then a distribution $f \in \SL'(\rn)$ belongs to $H^{1,p}(\rn)$ if and only if is locally integrable and there is a function $g \in L^p$ such that 
\begin{equation}
|f(x)-f(y)| \leq |x-y| (g(x)+g(y)), \,\,\,\,\, x,y \in \rn \setminus E,
\label{eq2.9}\end{equation}
where $E$ is a set of measure zero. Moreover, one has the equivalence of (quasi)norms
\begin{equation}
\|f\|_{H^{1,p}(\rn)} \approx inf\|g\|_{L^p(\rn)},
\label{eq2.10}\end{equation}
where the infimum is taken over all admissible functions $g$ in \eqref{eq2.9}, and one identifies functions differing only by a constant.
\end{lemma}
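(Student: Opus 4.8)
~\textbf{Proof plan for Lemma~\ref{l2.8}.}
The plan is to prove the two implications separately, following the now-standard Haj\l asz-type characterization of Sobolev-Hardy spaces. First I would establish the necessity: given $f\in H^{1,p}(\rn)$, so that $\nabla f\in H^p(\rn)$, I want to produce a control function $g\in L^p$ satisfying \eqref{eq2.9}. The natural candidate is $g:=c\,\mathcal{M}(|\nabla f|)$ (or, since $\nabla f\in H^p$ with $p\le 1$, a suitable grand maximal / fractional sharp maximal function adapted to $\nabla f$); the point is that for locally integrable $f$ one has the pointwise inequality $|f(x)-f(y)|\lesssim |x-y|\big(\mathcal{M}(|\nabla f|)(x)+\mathcal{M}(|\nabla f|)(y)\big)$ at Lebesgue points, which is a classical consequence of the sub-representation formula $|f(x)-\fint_{B}f|\lesssim \int_B \frac{|\nabla f(z)|}{|x-z|^{n-1}}\,dz$ together with a telescoping argument over dyadic balls joining $x$ and $y$. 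The subtlety is that $\mathcal{M}$ is not bounded on $L^p$ for $p\le 1$, so one cannot simply write $\|g\|_{L^p}\lesssim \|\nabla f\|_{L^p}$; instead one uses the atomic decomposition of $\nabla f\in H^p$ (Lemma~\ref{l2.4}), reduces to a single atom $a$ supported in a cube $Q$, and shows that $\mathcal{M}a\in L^p$ with $\|\mathcal{M}a\|_{L^p}\lesssim 1$ by exploiting the cancellation and size of $a$ (the standard argument: near $Q$ use the $L^2\to L^2$ boundedness of $\mathcal{M}$ and H\"older, away from $Q$ use the mean-zero property to gain decay $|x|^{-n-1}$). Summing in $\ell^p$ gives $\|g\|_{L^p}\lesssim \|\nabla f\|_{H^p}=\|f\|_{H^{1,p}}$.

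For the sufficiency I would assume $f$ is locally integrable and that \eqref{eq2.9} holds for some $g\in L^p$ off a null set $E$. The goal is to show $\nabla f$ (in the distributional sense) lies in $H^p$ with $\|\nabla f\|_{H^p}\lesssim \|g\|_{L^p}$. First \eqref{eq2.9} forces $f$ to be (a.e. equal to) a function whose distributional gradient is a measurable function with $|\nabla f|\lesssim g$ a.e.\ --- indeed dividing \eqref{eq2.9} by $|x-y|$ and letting $y\to x$ along Lebesgue points shows $f$ is differentiable a.e.\ with $|\nabla f(x)|\le 2g(x)$, and a Poincar\'e-type argument upgrades this to the statement that the distributional derivative coincides with the a.e.\ derivative. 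Thus $\nabla f\in L^p$; the remaining, real content is to produce the Hardy-space structure, i.e.\ an atomic decomposition of $\nabla f$. Here I would invoke the Calder\'on-Zygmund / good-$\lambda$ machinery on the superlevel sets $\{Mg>\lambda\}$: on such a set $f$ agrees (up to the null set) with a Lipschitz function with constant $\sim\lambda$ by \eqref{eq2.9} applied with $Mg$ in place of $g$; performing the Whitney decomposition of $\{Mg>\lambda\}$ and a partition of unity, one writes $\nabla f=\sum \lambda_j a_j$ with $a_j$ (essentially) $(p,2)$-atoms, and $\sum|\lambda_j|^p\lesssim \|Mg\|_{L^p}^p\sim\|g\|_{L^p}^p$, where the last comparison again uses that $g$ is supported suitably or, more precisely, that $M$ is weak-$(1,1)$ and the distribution function of $Mg$ is controlled by that of $g$ after the usual layer-cake summation. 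This is exactly the argument of \cite{KoS}, which I am permitted to quote; in a self-contained writeup the Whitney/partition-of-unity step is where the bookkeeping lives.

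The equivalence of (quasi)norms \eqref{eq2.10} then follows by combining the two inequalities just proved, noting that in the necessity direction the specific $g=c\,\mathcal{M}(|\nabla f|)$ is admissible, while in the sufficiency direction any admissible $g$ dominates $|\nabla f|$ a.e.\ and hence controls $\|\nabla f\|_{H^p}$ from above; the identification of functions differing by a constant is built in because \eqref{eq2.9} is insensitive to additive constants. I expect the main obstacle to be the sufficiency direction: one must carefully arrange the Whitney pieces so that the resulting functions genuinely satisfy the size, support, and cancellation conditions of $(p,2)$-atoms (in particular the mean-zero condition, which requires subtracting appropriate averages and absorbing the correction terms), and one must track the failure of $L^p$-boundedness of the maximal function throughout --- this is precisely why the atomic/molecular language of Lemma~\ref{l2.4} and the good-$\lambda$ distribution function estimates are indispensable rather than a direct norm estimate. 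Since this lemma is quoted verbatim from \cite{KoS}, in the paper itself I would simply cite it and omit the proof, as the authors do.
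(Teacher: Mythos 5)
The paper does not prove this lemma; it is quoted from \cite{KoS} and used as a black box, as you correctly note at the end of your sketch. So for the paper's purposes your conclusion — cite and omit the proof — is exactly right. However, since you did sketch a proof, it is worth flagging a genuine gap that your plan does not survive.

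The problem lies in the repeated use of the Hardy--Littlewood maximal operator $\mathcal{M}$ in a range where it is useless. In the necessity direction you write $g:=c\,\mathcal{M}(|\nabla f|)$ and claim that for a $(p,2)$-atom $a$ one has $\|\mathcal{M}a\|_{L^p}\lesssim 1$ because the mean-zero property of $a$ buys decay $|x|^{-n-1}$. This is false for $\mathcal{M}$: the Hardy--Littlewood maximal function averages $|a|$, which destroys cancellation, and a direct computation gives $\mathcal{M}a(x)\approx \|a\|_{L^1}\,|x-x_Q|^{-n}$ for $|x-x_Q|\gg\ell(Q)$, both as an upper and a lower bound. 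Consequently $\mathcal{M}a\in L^p$ \emph{iff} $p>1$; for $p\le 1$ the integral of $(\mathcal{M}a)^p$ diverges near infinity. The decay $|x|^{-n-1}$ that you want is a feature of the \emph{smooth} maximal function $\sup_t|a*\varphi_t|$, where the convolution genuinely sees the sign of $a$ — but that is not the object appearing in the Haj\l asz pointwise inequality $|f(x)-f(y)|\lesssim|x-y|\bigl(\mathcal{M}(|\nabla f|)(x)+\mathcal{M}(|\nabla f|)(y)\bigr)$, which is derived by averaging $|\nabla f|$ along dyadic balls and therefore requires $\mathcal{M}$ applied to an absolute value. You cannot swap one for the other. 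The same issue resurfaces in your sufficiency direction, where you invoke $\|Mg\|_{L^p}\sim\|g\|_{L^p}$ ``by weak-$(1,1)$ and layer cake'': the weak-$(1,1)$ bound $|\{Mg>\lambda\}|\lesssim\lambda^{-1}\|g\|_{L^1}$ feeds into $\sum_k 2^{kp}|\{Mg>2^k\}|\lesssim\|g\|_{L^1}\sum_k 2^{k(p-1)}$, which diverges for $p<1$. So the summation of Whitney coefficients cannot be closed this way.

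This is precisely why \cite{KoS} does not run the naive argument: they work with a Calder\'on-type reproducing formula / Littlewood--Paley decomposition of $f$, which produces a control function $g$ whose $L^p$ norm is compared to the $H^p$ norm of $\nabla f$ via square-function or grand-maximal-function characterizations that do respect cancellation, and in the converse direction they avoid passing through $\mathcal{M}g$ and instead construct the atoms more carefully. Your overall architecture (pointwise Lipschitz estimate on one side, Whitney/Calder\'on--Zygmund decomposition on the other) is the right skeleton, and your hedge ``or a suitable grand maximal / fractional sharp maximal function'' gestures toward the fix, but as written the plan would break at both of the steps above. Since the paper only cites \cite{KoS}, none of this affects the paper; but a self-contained writeup along your lines would need to replace $\mathcal{M}$ by cancellation-sensitive maximal functions throughout and rework the summation of coefficients accordingly.
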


The following lemma is the extension of the usual Hardy-Sobolev embedding theorem in the case of Hardy-Sobolev spaces (see \cite[p.136]{St}).
\begin{lemma}\label{l2.11}
Assume that $\frac{n}{n + 1} < p \leq 1$ and let $p^*:=\frac{pn}{n-p}$	be the Sobolev conjugate exponent, so that $p^* \geq 1$. Then $H^{1,p}(\rn) \hookrightarrow L^{p^*}(\rn)$.
\end{lemma}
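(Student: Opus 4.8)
The plan is to reduce the embedding, via the atomic decomposition of Lemma~\ref{l2.4}, to a uniform $L^{p^*}$-bound for the image of a single $(p,2)$-atom under the Riesz potential $I_1$ of order one. Since $f\in H^{1,p}(\rn)$ means, by Definition~\ref{l2.7}, that $\nabla f\in H^p(\rn)$, I would decompose each component $\partial_k f=\sum_j\lambda_{j,k}\,a_{j,k}$ into $(p,2)$-atoms $a_{j,k}$ (Lemma~\ref{l2.4}), with $\sum_{j,k}|\lambda_{j,k}|^p\lesssim\|\nabla f\|_{H^p}^p=\|f\|_{H^{1,p}}^p$. Since $\sum_{k=1}^n(\xi_k/|\xi|)^2\equiv1$, one has, modulo constants, the representation $f=\sum_{k=1}^n R_kI_1(\partial_k f)$, where $R_k$ is the $k$-th Riesz transform; hence $f=\sum_{j,k}\lambda_{j,k}\,R_kI_1 a_{j,k}$ modulo constants --- and the constant ambiguity here is exactly the one built into the space $H^{1,p}$ (cf.\ the last line of Lemma~\ref{l2.8}), resolved by taking this representative. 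Thus it suffices to prove the uniform bound $\|R_kI_1 a\|_{L^{p^*}(\rn)}\lesssim1$ for every $(p,2)$-atom $a$.

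Since $p>\tfrac{n}{n+1}$ forces $p^*>1$, the Riesz transforms are bounded on $L^{p^*}(\rn)$, so it is enough to show $\|I_1 a\|_{L^{p^*}(\rn)}\lesssim1$, where $a$ is a $(p,2)$-atom supported in a cube $Q$, with $\int a=0$ and $\|a\|_{L^2}\le|Q|^{1/2-1/p}$; I treat $n\ge2$, the case $n=1$ being classical (there the kernel of $I_1$ is logarithmic, and $p^*=\infty$ when $p=1$). I would split $\rn=2Q\cup(\rn\setminus2Q)$. On $2Q$: the kernel $c_n|x|^{1-n}$ of $I_1$ is locally integrable with $\int_{|z|<r}|z|^{1-n}\,dz\sim r$, so Young's inequality gives $\|I_1 a\|_{L^2(2Q)}\lesssim\ell(Q)\,\|a\|_{L^2}$, whence, by H\"older on $2Q$ (using $p^*\le2$, valid for $p\le1$ and $n\ge2$) and the identities $\tfrac1{p^*}=\tfrac1p-\tfrac1n$ and $\ell(Q)=|Q|^{1/n}$, $\|I_1 a\|_{L^{p^*}(2Q)}\lesssim|Q|^{\frac1{p^*}-\frac12}\,\ell(Q)\,\|a\|_{L^2}\le|Q|^{\frac1{p^*}-\frac12}\,\ell(Q)\,|Q|^{\frac12-\frac1p}=1$. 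On $\rn\setminus2Q$: the cancellation $\int a=0$ together with the smoothness $|\nabla(|x|^{1-n})|\sim|x|^{-n}$ of the kernel gives $|I_1 a(x)|\lesssim\ell(Q)\,|x-x_Q|^{-n}\,\|a\|_{L^1}\lesssim\ell(Q)\,|x-x_Q|^{-n}\,|Q|^{1-1/p}$, so that $\int_{\rn\setminus2Q}|I_1 a|^{p^*}\lesssim1$ --- the integral converging precisely because $np^*>n$, i.e.\ $p^*>1$, and the powers of $\ell(Q)$ cancelling since $p^*\big(1-\tfrac np\big)=-n$.

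Finally, since $p^*\ge1$, the triangle inequality in $L^{p^*}$ and the uniform atomic bound just proved yield $\|f\|_{L^{p^*}}\le\sum_{j,k}|\lambda_{j,k}|\,\|R_kI_1 a_{j,k}\|_{L^{p^*}}\lesssim\sum_{j,k}|\lambda_{j,k}|$, and by the elementary inclusion $\ell^p\hookrightarrow\ell^1$ (valid for $p\le1$) this is $\le\big(\sum_{j,k}|\lambda_{j,k}|^p\big)^{1/p}\lesssim\|f\|_{H^{1,p}}$; alternatively, one organizes $\vec h\mapsto\sum_k R_kI_1 h_k$ as a linear operator from $H^p(\rn;\mathbb{C}^n)$ into the Banach (hence quasi-Banach) space $L^{p^*}(\rn)$ and invokes Lemma~\ref{l2.6}(ii). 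The main obstacle is the uniform atomic estimate of the previous paragraph, and specifically its tail part over $\rn\setminus2Q$: this is the step in which the full range $\tfrac{n}{n+1}<p\le1$ is used in an essential way, and the endpoint $p=\tfrac{n}{n+1}$, for which $p^*=1$, is genuinely excluded (there the tail integral diverges and the Riesz transforms fail on $L^1$), whereas the local part is valid for all $p\le1$. A less self-contained alternative is to quote directly the mapping properties $R_k\colon H^p\to H^p$ and $I_1\colon H^p\to L^{p^*}$ for $\tfrac{n}{n+1}<p\le1$ (see \cite[p.136]{St}) and compose them; the argument above is essentially a self-contained proof of the latter.
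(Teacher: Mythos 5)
Your proof is correct. The paper itself gives no proof of this lemma --- it simply cites \cite[p.\,136]{St} for the Hardy--Sobolev embedding --- so there is nothing to diverge from; your argument is a self-contained derivation of exactly the fact being cited, namely that $I_1\colon H^p\to L^{p^*}$ and $R_k\colon L^{p^*}\to L^{p^*}$, composed through the Fourier identity $f=\sum_k R_k I_1\partial_k f$ (modulo constants, which is the right modding-out for $H^{1,p}$). The atomic estimate is carried out cleanly and all the exponent bookkeeping checks out: $p^*\le 2$ is needed and holds for $n\ge2$ in the local part, $p^*>1$ is needed and holds for $p>n/(n+1)$ in the tail, and the powers of $\ell(Q)$ cancel in both. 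The one place you are slightly informal is the passage from the termwise bound on atoms to the bound on $f$ (convergence in $L^{p^*}$ versus the a priori convergence of the atomic decomposition in ${\bf S}'$), but you yourself flag the clean fix, which is to appeal to Lemma~\ref{l2.6}(ii); that closes the gap.
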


\subsubsection{Lipschitz and $BMO$ Spaces}\label{s2.2.2}  
The quotient spaces $BMO(\rn)$ and $\dot C^\alpha(\rn)$ of functions modulo constants, equipped with the norms
$$\| f \|_{BMO(\rn)}= \sup_{\text{cube}\,Q\subseteq \rn} {\fint\limits_Q {\left| f(x)-f_Q \right|}}\,dx$$ and
$$\| f \|_{\dot C^\alpha(\rn)}:= \sup_{\substack{{(x,y) \in \rn\times \rn}\\{x \neq y}}}\frac{\lv f(x)-f(y) \rv }{{\lv x-y \rv}^\alpha},\quad 0<\alpha<1,$$
become Banach spaces. Here $f_E=\fint\limits_E f$ stands for the mean value $|E|^{-1} \int\limits_E f$, and $|E|$ here represents the Lebesgue measure of the set $E$.
By John-Nirenberg Theorem and  \cite{Me} the space $\la(\rn)$ defined by \eqref{eq1.5} is a Banach space with norm
\begin{equation}
\| f \|_{\la(\rn)}= \sup_{\text{cube}\,Q\subseteq \rn} \frac{1}{{\lv Q \rv}^{\alpha/n}} \lp \frac{1}{\lv Q \rv}\int\limits_Q {\left| f(x)-f_Q \right| ^2}\rp ^{1/2}.
\label{eq2.12}\end{equation}
Moreover, $\la(\rn)$ can be viewed as the dual space of $H^p(\rn)$ for $\alpha=n(1/p-1)$ which will be denoted by $(H^p(\rn))^*$. A $BMO(\rn)$ function $f$ is said to belong to $VMO(\rn)$ if 
$$ \fint\limits_Q {\left| f(x)-f_Q \right|} \to 0, \;\;\text{as}\;\; |Q| \to 0.$$
It can be proved that $VMO(\rn)$ is the closure in $BMO(\rn)$ of the continuous functions that vanish at infinity and $H^1(\rn)=(VMO(\rn))^*$ (for details see \cite{Sa}).

\subsubsection{Tent Spaces}\label{s2.2.3}
Given a measurable function $f:\reu\to\RR$, let us consider
\begin{eqnarray*} 
\mathcal{A}_q(f)(x) &=& \left(\iint\limits_{\Gamma(x)} \left|f(z,t)\right|^q \frac{dzdt}{t^{n+1}} \right)^{1/q}, \quad {\mathfrak C}_{q}(f)(x) = \sup_{B \ni x} \left(\frac{1}{|B|} \iint\limits_{\widehat{B}} |f(y,s)|^q \frac{dxdt}{t} \right)^{1/q}, 
\\
N_* f(x)  &=&\sup_{(z,t)\in \Gamma(x)}|f(z,t)|, \qquad \qquad
{\C}_{\alpha}(f)(x) = \sup_{B \ni x} \left(\frac{1}{|B|^{1 + 2\alpha /n}} \iint\limits_{\widehat{B}} |f(y,s)|^2 \frac{dxdt}{t} \right)^{1/2}, 
\end{eqnarray*} 
where $\widehat{B}:=\{(x,t): \text{dist}(x,B^c)\geq t\}.$  Furthermore,  denote
\begin{equation}\label{eq2.12.1} f_W(x,t):= \left(\fiint\limits_{W(x,t)}|f(y,s)|^{2}dyds\right)^{\frac{1}{2}}, \quad (x,t)\in\reu,\end{equation}
$W$ standing for Whitney cubes, and let the operators marked with a {\it tilde} stand for modifications of $\A_q$, $N_*$ and ${\mathfrak C}_{q}$ with $f_W$ in place of $f$, that is, 
\begin{eqnarray*} 
{\widetilde{\mathcal{A}}}_q(f)(x) &=& \left(\iint\limits_{\Gamma(x)} \left|f_W(z,t)\right|^q \frac{dzdt}{t^{n+1}} \right)^{1/q},\quad 
{\widetilde{\mathfrak C}}_{q}(f)(x) = \sup_{B \ni x} \left(\frac{1}{|B|} \iint\limits_{\widehat{B}} |f_W(y,s)|^q \frac{dxdt}{t} \right)^{1/q},
\\[4pt]
\N f(x)  &=&\sup_{(z,t)\in \Gamma(x)}|f_W(z,t)|.\end{eqnarray*}

For a Lebesgue measurable set $E$, we let ${\bf M}(E)$ denote the
collection of measurable functions on $E$.  For $0<p,q<\infty$ we define the following tent spaces:
\begin{eqnarray*} 
T^p_q &=& \left\{f\in {\bf M}(\mathbb{R}^{n+1}_+): {\A}_q(f) \in L^p(\rn)\right\},\quad \widetilde T^p_q = \left\{f\in {\bf M}(\mathbb{R}^{n+1}_+): {\widetilde \A}_q(f) \in L^p(\rn)\right\},\\ 
T_{\infty}^p &=& \left\{f\in {\bf M}(\mathbb{R}^{n+1}_+):N_*(f) \in  L^p(\rn)\right\},\quad 
\widetilde{T}_{\infty}^p = \left\{f\in {\bf M}(\mathbb{R}^{n+1}_+):\N(f) \in  L^p(\rn)\right\},\\ 
T_{2, {\alpha}}^{\infty} &=&\left \{f\in {\bf M}(\mathbb{R}^{n+1}_+): {\C}_{\alpha}(f) \in L^{\infty}(\rn)\right \},\quad 
{\bf T}_{2, {\alpha}}^{\infty} = \left\{f\in {\bf M}(\mathbb{R}^{n+1}_+): {\C}_{\alpha}({\bf s}(f)) 
\in L^{\infty}(\rn)\right \},
\end{eqnarray*}
where in the last definition, for $f\in{\bf M}(\mathbb{R}^{n+1}_+)$, we set
$${\bf s}(f)(x,t):= \sup_{(y,s)\in W(x,t)} \left|f(y,s)\right|$$
(the notation ``sup" is interpreted as the essential supremum). The spaces $T^p_q(\reu)$, $0<p,q\leq \infty$ were first introduced by Coifman, Meyer and Stein in \cite{CMS}. The spaces $\widetilde{T}_{q}^p(\reu)$, $\widetilde{T}_{\infty}^p(\reu)$ and ${\bf T}_{q}^{\infty}(\reu)$ started appearing in the literature more recently, naturally arising for elliptic PDEs with non-smooth coefficients.

We mention that  \cite{CMS}, Theorem~3, Section~6, {\it loc. cit.} implies that 
\begin{equation}\label{eq2.12.1}
\|\A_q(f)\|_{L^p}\approx \|{\mathfrak C}_q f\|_{L^p}, \quad q<p<\infty, 
\end{equation}
and hence, the corresponding norms in $T^p_q$ and $\widetilde T^p_q$ can be defined with ${\mathfrak C}_q$ in place of $\A_q$ and $\widetilde {\mathfrak C}_q$ in place of $\widetilde \A_q$, respectively.

\begin{remark}\label{r2.13}In the case $\alpha=0$ we will simply write $\C, T_2^{\infty}$, and ${\bf T}_2^{\infty}$ instead of ${\C}_0, T_{2, {0}}^{\infty}$ and ${\bf T}_{2, {0}}^{\infty}.$
\end{remark}

\begin{lemma}\label{l2.14}
If\,\, $0< p\leq 1$ and $\alpha=n(\frac{1}{p}-1),$ the pairing 
$$<f,g> \longrightarrow \!\iint\limits_{\reu} f(x,t)g(x,t) \frac{dxdt}{t}$$ 
realizes $T_{2, {\alpha}}^{\infty}$ as equivalent to the Banach space dual of $T^p_2$. Moreover, for $p \in (1,\infty)$ the same pairing realizes $T_2^{p'}$ as equivalent with the dual of $T_2^p$, where $1/p+1/p'=1$.
\end{lemma}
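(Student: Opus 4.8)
The final statement to prove is Lemma~\ref{l2.14}, the duality of tent spaces: the pairing $\langle f,g\rangle = \iint_{\reu} f(x,t)g(x,t)\,\frac{dxdt}{t}$ realizes $T_{2,\alpha}^\infty$ as the dual of $T^p_2$ when $0<p\le 1$ and $\alpha = n(\frac1p-1)$, and realizes $T_2^{p'}$ as the dual of $T_2^p$ for $1<p<\infty$.

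This is the classical Coifman–Meyer–Stein tent space duality, so my plan is essentially to recall their argument (Theorem 1 and its companion results in \cite{CMS}) and indicate how the endpoint case with $\alpha$-weights goes. Let me sketch the approach.

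**Proof approach.**

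First, for the reflexive range $1<p<\infty$: The plan is to show (a) every $g\in T_2^{p'}$ induces a bounded functional on $T_2^p$ via the pairing, and (b) conversely every bounded functional arises this way. For (a) one uses the pointwise Cauchy–Schwarz inequality in the cone followed by Fubini to write $\left|\iint fg\,\frac{dxdt}{t}\right| \lesssim \int_{\rn} \A_2(f)(x)\,\A_2(g)(x)\,dx$, after which Hölder's inequality on $\rn$ gives the bound by $\|f\|_{T_2^p}\|g\|_{T_2^{p'}}$. The key geometric identity here is $\iint_{\reu} F(y,s)\,\frac{dyds}{s} = c_n \int_{\rn}\left(\iint_{\Gamma(x)} F(y,s)\,\frac{dyds}{s^{n+1}}\right) dx$, valid by Fubini since $|\{x: (y,s)\in\Gamma(x)\}| = c_n s^n$. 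For (b), given $\ell\in (T_2^p)^*$, one restricts $\ell$ to functions supported in a fixed Whitney region (or a fixed truncated cone region) where $T_2^p$ coincides with a weighted $L^2$ space $L^2(\reu, t^{?}\,dxdt)$; Riesz representation on that space produces a locally defined $g$, these patch together consistently, and the global bound $\|\A_2(g)\|_{L^{p'}}\lesssim \|\ell\|$ follows by a duality/stopping-time argument testing $\ell$ against suitably truncated and normalized versions of $g$ itself.

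Second, for the endpoint $0<p\le 1$ with $\alpha = n(\frac1p-1)$: Here the target is the Carleson-type space $T_{2,\alpha}^\infty$, whose members $g$ satisfy the condition that $\iint_{\widehat B}|g|^2\,\frac{dxdt}{t}\lesssim |B|^{1+2\alpha/n}$ uniformly over balls $B$. For the easy direction — that such $g$ define bounded functionals on $T_2^p$ — the main tool is a good-$\lambda$ / stopping-time decomposition of the region $\reu$ relative to the level sets of $\A_2(f)$ together with the $T_2^p$ atomic decomposition (a $T_2^p$-atom being supported over a tent $\widehat B$ with $\iint_{\widehat B}|a|^2\,\frac{dxdt}{t}\le |B|^{1-2/p}$). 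On a single atom, Cauchy–Schwarz against the Carleson condition gives $\left|\iint a\,g\,\frac{dxdt}{t}\right| \le \|a\|_{L^2(\widehat B, dxdt/t)}\,\|g\|_{L^2(\widehat B, dxdt/t)} \lesssim |B|^{(1-2/p)/2} \cdot |B|^{(1+2\alpha/n)/2} = 1$ by the choice $\alpha=n(\frac1p-1)$; summing over the atomic decomposition with the $\ell^p$-summable coefficients and using that $p\le 1$ closes the estimate. For the converse — that every bounded functional on $T_2^p$ is represented by some $g\in T_{2,\alpha}^\infty$ — one again obtains $g$ locally via $L^2$ Riesz representation on tent regions, and then tests $\ell$ against a $T_2^p$-atom built from $g\cdot\mathbf{1}_{\widehat B}$ (normalized) to recover exactly the Carleson-norm bound $\iint_{\widehat B}|g|^2\,\frac{dxdt}{t}\lesssim |B|^{1+2\alpha/n}\|\ell\|^2$.

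**Main obstacle.** The routine part is the ``easy'' direction (boundedness of the pairing), which is just Cauchy–Schwarz, Fubini, and Hölder, plus in the endpoint case the atomic decomposition of $T_2^p$. The genuinely delicate part is the surjectivity of the pairing — i.e.\ that \emph{every} bounded linear functional is represented by an element of the claimed dual space. This requires: (i) the local $L^2$ representation and a careful patching argument to produce a single globally-defined $g$ on $\reu$; and (ii) a stopping-time argument to upgrade local integrability of $g$ to the global norm control ($\|\A_2(g)\|_{p'}$, respectively the Carleson bound). Since this is precisely \cite[Theorem 1]{CMS} in the first range, and a now-standard endpoint variant in the second, I would not reproduce the full argument but cite \cite{CMS} for the reflexive case and indicate the atomic-decomposition modification for the $p\le 1$ case, noting that the required $T_2^p$ atomic decomposition is itself in \cite{CMS}. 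The one point meriting a few lines of detail is the arithmetic $\alpha = n(\frac1p-1) \iff (1-\tfrac2p) + (1+\tfrac{2\alpha}{n}) = 0$, which is exactly what makes the pairing on a single atom bounded by an absolute constant.
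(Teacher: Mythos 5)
Your proposal is correct and follows essentially the same route as the paper, which simply cites \cite[p.~313, p.~316]{CMS} (and \cite{St}, \cite{HMMc}) for exactly the Coifman--Meyer--Stein duality argument you sketch; your reconstruction of the atom--Carleson pairing and the exponent arithmetic $\alpha=n(\tfrac1p-1)$ is the standard proof the reference contains.
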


\begin{proof} 
For $p \in (0,1]$, in one direction one can adjust the arguments in \cite[p.313]{CMS}, or \cite[p.162]{St}, while for the other one can follow {\it mutatis mutandi} the argument in \cite[p.32]{HMMc}. The proof for the case $p \in (1,\infty)$ can be found in \cite[p.316]{CMS}.
\end{proof}

\begin{definition}\label{d2.15} Let $0<p\leq 1$.
A $T^p_2$-atom is defined to be a function $a \in T^p_2$ supported in $\widehat{B},$ for some ball $B,$ for which 
\begin{equation}
\left(\iint\limits_{\widehat{B}} |a (x,t)|^2\frac{dxdt}{t}\right)^{1/2}\lesssim \left|B\right|^{\frac{1}{2} - \frac{1}{p}},
\label{eq2.16}\end{equation}
and
\begin{equation}
\left\| a \right\|_{T^q_2} \lesssim \left|B\right|^{\frac{1}{q}-\frac{1}{p}},\,\,\,\, \text{for every}\,\,\, 1<q<\infty.
\label{eq2.17}\end{equation}
\end{definition}

\begin{remark}\label{r2.18} In the definition above we can replace the ball $B$ with a cube $Q$ and the tent region $\widehat{B}$ with a Carleson box $R_Q.$\end{remark}
Every function in $T^p_2$ has an atomic decomposition (see \cite{CMS},  \cite{HMMc}):

\begin{lemma}\label{l2.19} 

If $0<p\leq 1,$ for every $f \in T_2^p$ there exists a sequence of atoms $\{a_j\}_{j\geq 1} \subset T^p_2$ and a sequence $\{\lambda\}_{j\geq 1} \subset \CC$ such that 
$f=\sum_j {\lambda}_j a_j$ in $T^p_2$, and 
\begin{equation*} 
\left(\sum_j|{\lambda}_j|^p \right)^{1/p} \lesssim \|f\|_{T^p_2}:= \|{\A}_2(f)\|_{L^p}.
\end{equation*}
 
\end{lemma}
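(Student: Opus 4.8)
The statement to prove is the atomic decomposition for the tent space $T_2^p$, $0<p\le 1$ (Lemma~\ref{l2.19}): every $f\in T_2^p$ decomposes as $f=\sum_j\lambda_j a_j$ into $T_2^p$-atoms with $\big(\sum_j|\lambda_j|^p\big)^{1/p}\lesssim\|f\|_{T_2^p}$.

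\medskip

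\textbf{Plan of proof.}  The plan is to follow the classical Coifman--Meyer--Stein stopping-time construction, adapted to the slightly stronger notion of $T_2^p$-atom used here (which requires not only the $L^2(dxdt/t)$ size bound \eqref{eq2.16} but also the $T_2^q$-bounds \eqref{eq2.17} for all $1<q<\infty$).  First I would set up the area-integral machinery: for $f\in T_2^p$ we have $\mathcal A_2(f)\in L^p(\rn)$, so defining for each $k\in\mathbb Z$ the open sets
\[
O_k:=\{x\in\rn:\mathcal A_2(f)(x)>2^k\},
\]
we have $|O_k|<\infty$ and $\sum_k 2^{kp}|O_k|\approx\|\mathcal A_2(f)\|_{L^p}^p$.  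Let $O_k^*$ be the ``enlarged'' set $\{x:M(\mathbf 1_{O_k})(x)>1/2\}$ (or a fixed large dilation of a Whitney decomposition of $O_k$), which still satisfies $|O_k^*|\lesssim|O_k|$.  The key geometric fact, essentially Lemma~2 of \cite{CMS}, is that the region
\[
\Delta_k:=\widehat{(O_k^*)^c}\setminus\widehat{(O_{k+1}^*)^c}
\]
supports $f$ ``where $\mathcal A_2(f)$ is of size $\sim 2^k$'': in particular $\iint_{\Delta_k}|f(x,t)|^2\,\tfrac{dxdt}{t}\lesssim 2^{2k}|O_k^*|\lesssim 2^{2k}|O_k|$, because $(O_{k+1}^*)^c$ is, up to constants, where the cone averages controlling $f$ are already $\le 2^{k+1}$.

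\medskip

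Next I would perform a Whitney decomposition of each $O_k^*$ into cubes $\{Q_{k,j}\}_j$ with $\mathrm{diam}(Q_{k,j})\approx\mathrm{dist}(Q_{k,j},(O_k^*)^c)$, and set $T_{k,j}:=\Delta_k\cap(Q_{k,j}^{**}\times(0,\infty))$ for a suitable fixed dilate $Q_{k,j}^{**}$ of $Q_{k,j}$, so that $\Delta_k=\bigsqcup_j T_{k,j}$ up to null sets and $T_{k,j}\subset R_{B_{k,j}}$ for a ball $B_{k,j}$ with $|B_{k,j}|\approx|Q_{k,j}|$.  Then define
\[
a_{k,j}:=\lambda_{k,j}^{-1}\,f\cdot\mathbf 1_{T_{k,j}},\qquad \lambda_{k,j}:=2^k|B_{k,j}|^{1/p-1/2},
\]
so that $f=\sum_{k,j}\lambda_{k,j}a_{k,j}$ in $T_2^p$ (convergence following from the disjointness of the $\Delta_k$ together with the $L^p$ bound, exactly as in \cite{CMS,HMMc}).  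I then must verify that each $a_{k,j}$ is a $T_2^p$-atom.  The size bound \eqref{eq2.16} is immediate from $\iint_{T_{k,j}}|f|^2\tfrac{dxdt}{t}\lesssim 2^{2k}|B_{k,j}|$ (a consequence of the $\Delta_k$-estimate localized to the Whitney piece, using that the $Q_{k,j}^{**}$-dilates have bounded overlap).  For \eqref{eq2.17}, i.e. $\|a_{k,j}\|_{T_2^q}\lesssim|B_{k,j}|^{1/q-1/p}$ for $1<q<\infty$: since $a_{k,j}$ is supported in $R_{B_{k,j}}$, by \eqref{eq2.12.1} we may estimate $\|\mathcal A_2(a_{k,j})\|_{L^q}$ via the Carleson functional, or more directly observe that $\mathcal A_2(a_{k,j})$ is supported in a fixed dilate $c B_{k,j}$ of $B_{k,j}$ (by finite propagation speed of the cone), so by Hölder
\[
\|\mathcal A_2(a_{k,j})\|_{L^q}\le |cB_{k,j}|^{1/q-1/2}\,\|\mathcal A_2(a_{k,j})\|_{L^2}
= |cB_{k,j}|^{1/q-1/2}\,\Big(\iint_{T_{k,j}}|a_{k,j}|^2\tfrac{dxdt}{t}\Big)^{1/2}\lesssim |B_{k,j}|^{1/q-1/p},
\]
where the last step uses \eqref{eq2.16} for $a_{k,j}$ (and the $L^2(dxdt/t)$-norm equals $\|\mathcal A_2\|_{L^2}$ by Fubini).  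Finally the coefficient bound is
\[
\sum_{k,j}|\lambda_{k,j}|^p=\sum_k 2^{kp}\sum_j|B_{k,j}|\lesssim\sum_k 2^{kp}|O_k^*|\lesssim\sum_k 2^{kp}|O_k|\approx\|\mathcal A_2(f)\|_{L^p}^p=\|f\|_{T_2^p}^p.
\]

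\medskip

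\textbf{Main obstacle.}  The heart of the argument, and the step I expect to require the most care, is the geometric/measure-theoretic lemma asserting $\iint_{\Delta_k}|f|^2\tfrac{dxdt}{t}\lesssim 2^{2k}|O_k|$, and more precisely its localized Whitney version $\iint_{T_{k,j}}|f|^2\tfrac{dxdt}{t}\lesssim 2^{2k}|B_{k,j}|$.  This is exactly where the definitions of $\widehat B$ and of $O_k^*$ as a dilation of $O_k$ interact: one needs that every point $(x,t)\in\Delta_k$ lies in the cone $\Gamma(y)$ of a substantial set of $y\notin O_{k+1}$ (a positive proportion of $\Delta(x,t)$), so that integrating $\mathcal A_2(f)(y)^2\le 2^{2(k+1)}$ over those $y$ and applying Fubini recovers the tent-space mass.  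The bookkeeping of the various dilation constants (the enlargement $O_k\leadsto O_k^*$, the Whitney dilate $Q_{k,j}\leadsto Q_{k,j}^{**}$, and the cone-propagation dilate $B_{k,j}\leadsto cB_{k,j}$) so that all overlaps stay bounded and all inclusions hold is routine but delicate; everything else reduces to Hölder's inequality and the equivalence \eqref{eq2.12.1}.  Since this is the standard Coifman--Meyer--Stein decomposition with only the minor addition of verifying \eqref{eq2.17} (which, as shown above, follows from \eqref{eq2.16} plus the compact support of $\mathcal A_2(a_{k,j})$ and Hölder), I would simply cite \cite{CMS} and \cite{HMMc} for the construction and indicate the one extra line needed for the $T_2^q$-bounds, as the authors evidently intend.
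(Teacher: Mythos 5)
Your overall plan is exactly the right one and matches the references the paper cites in lieu of a proof: the Coifman--Meyer--Stein stopping-time decomposition by level sets $O_k=\{\mathcal{A}_2(f)>2^k\}$, Whitney pieces, and normalization $\lambda_{k,j}=2^k|B_{k,j}|^{1/p-1/2}$; the coefficient bound, the $L^2(dxdt/t)$ size bound \eqref{eq2.16}, and the convergence of the sum are all handled correctly.  (One notational slip: you presumably mean $\widehat{O_k^*}$, the tent \emph{over} $O_k^*$, rather than $\widehat{(O_k^*)^c}$.)

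However, there is a genuine gap in your verification of \eqref{eq2.17} for $2<q<\infty$.  The step
$\|\mathcal{A}_2(a_{k,j})\|_{L^q}\le|cB_{k,j}|^{1/q-1/2}\|\mathcal{A}_2(a_{k,j})\|_{L^2}$
is H\"older on a finite-measure set, and it is only valid for $1<q\le 2$; for $q>2$ it fails.  And for $q>2$ (with $p\le 1$, so $1/q-1/p<0$), the size bound \eqref{eq2.16} together with compact support of $\mathcal{A}_2(a)$ does \emph{not} imply \eqref{eq2.17}: there is no upper bound on $\|\mathcal{A}_2(a)\|_{L^q}$ from those two pieces of information alone.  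This is precisely why \eqref{eq2.17} is stated as an extra requirement in Definition~\ref{d2.15}; the constructed atoms satisfy it because of the stopping-time geometry, not because of \eqref{eq2.16} plus support.

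The fix runs through the Carleson functional, the route you mention but do not carry out.  From the same Fubini argument that gives the $L^2$ bound one should extract the uniform Carleson estimate
\begin{equation*}
\|\mathfrak{C}_2(a_{k,j})\|_{L^\infty}\lesssim |B_{k,j}|^{-1/p},
\qquad\text{equivalently}\qquad
\iint_{\widehat{B}\cap T_{k,j}}|f|^2\,\frac{dzdt}{t}\lesssim 2^{2k}|B|\quad\text{for every ball }B.
\end{equation*}
Indeed, for $(z,t)\in\widehat{B}\cap T_{k,j}$ one has $(z,t)\notin\widehat{O_{k+1}^*}$, hence $B(z,t)\not\subset O_{k+1}^*$; by the maximal-function definition of $O_{k+1}^*$ (with the threshold chosen small enough depending on $n$), this forces $|B(z,2t)\setminus O_{k+1}|\gtrsim t^n$, with $B(z,2t)\subset 2B$.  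Fubini in $w\in 2B\setminus O_{k+1}$, where $\mathcal{A}_2(f)(w)\le 2^{k+1}$, then yields the claimed Carleson bound.  Combining this $L^\infty$ bound with the elementary decay $\mathfrak{C}_2(a_{k,j})(y)\lesssim|B_{k,j}|^{1/2-1/p}|y-y_{B_{k,j}}|^{-n/2}$ away from $B_{k,j}$ (which \emph{does} follow from \eqref{eq2.16} and the support condition), a direct integration gives $\|\mathfrak{C}_2(a_{k,j})\|_{L^q}\lesssim|B_{k,j}|^{1/q-1/p}$ for $q>2$, and then \eqref{eq2.12.1} converts this into $\|\mathcal{A}_2(a_{k,j})\|_{L^q}\lesssim|B_{k,j}|^{1/q-1/p}$.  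Together with your H\"older argument for $1<q\le 2$, this closes the gap.
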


\begin{lemma}\label{l2.20}
If\,\, $0< p<\infty$ then ${\bf T}^\infty_2 \cdot \widetilde{T}^p_\infty \hookrightarrow T^p_2$ and ${\bf T}^\infty_2 \cdot {T}^p_2 \hookrightarrow \widetilde T^p_1.$
\end{lemma}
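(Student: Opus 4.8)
\textbf{Proof strategy for Lemma \ref{l2.20}.}

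The plan is to prove the two embeddings $\mathbf{T}_2^\infty \cdot \widetilde T_\infty^p \hookrightarrow T_2^p$ and $\mathbf{T}_2^\infty \cdot T_2^p \hookrightarrow \widetilde T_1^p$ by verifying the pointwise inequality between the relevant square/nontangential functionals at every $x \in \rn$, which then integrates to the asserted norm bound. For the first embedding, let $F \in \mathbf{T}_2^\infty$ and $G \in \widetilde T_\infty^p$, and fix $x \in \rn$. I would estimate $\A_2(FG)(x)$ by writing, for $(z,t) \in \Gamma(x)$, the bound $|F(z,t)G(z,t)| \le \mathbf{s}(F)(z,t)\, \|G\|_{L^\infty(W(z,t))} \le \mathbf{s}(F)(z,t)\, \N^{c} G(x')$ for appropriate nearby $x'$, so that after a Fubini/averaging step the cone integral $\iint_{\Gamma(x)} |\mathbf{s}(F)(z,t)|^2 \frac{dz\,dt}{t^{n+1}}$ controls things pointwise by $\C(\mathbf{s}(F))(x)^2$ localized to the cone, and the $L^\infty$ norm of $G$ is replaced by $\N G$ evaluated at points in a fixed dilate of the cone's shadow. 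The cleanest route is: split $\reu$ into Whitney regions, on each the size of $G$ is comparable (up to the $L^2$-average) to $\N G$ at any base point in the shadow, pull the $\mathbf{T}_2^\infty$ norm of $F$ out as an $L^\infty$ bound on the Carleson functional, and integrate the remaining $\N G$ piece in $x$. This is essentially the argument that a Carleson measure times a nontangential bound gives a tent-space function; the only subtlety is matching the $\mathbf{s}(\cdot)$ and $f_W$ (the $L^2$-average) modifications appearing in the definitions of $\mathbf{T}_2^\infty$ and $\widetilde T_\infty^p$ respectively.

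For the second embedding, $\mathbf{T}_2^\infty \cdot T_2^p \hookrightarrow \widetilde T_1^p$, I would again argue pointwise: fix $x$ and estimate $\widetilde\A_1(FG)(x) = \iint_{\Gamma(x)} |(FG)_W(z,t)| \frac{dz\,dt}{t^{n+1}}$. Inside a Whitney box $W(z,t)$ one has $|(FG)_W(z,t)| \le \|\mathbf{s}(F)\|_{L^\infty(W(z,t))} \cdot |G_W(z,t)|$ roughly — more precisely, using that $\mathbf{s}(F)$ is already an $L^\infty$-type quantity on a slightly enlarged Whitney region, one factors out $\mathbf{s}(F)(z,t)$ (or a fixed dilate thereof) and is left with $\iint_{\Gamma(x)} \mathbf{s}(F)(z,t)\, |G_W(z,t)| \frac{dz\,dt}{t^{n+1}}$. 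Now Cauchy--Schwarz in the measure $\frac{dz\,dt}{t^{n+1}}$ over the cone is not directly available to close (it would produce $\A_2 \cdot \A_2$), so instead I would use the duality between $T^1_2$-type and $\C$-type functionals, or more elementarily the stopping-time/atomic decomposition of $T_2^p$ from Lemma \ref{l2.19}: write $G = \sum_j \lambda_j a_j$ with $a_j$ a $T_2^p$-atom supported in $\widehat{B_j}$, reduce to estimating $\|\mathbf{s}(F) \cdot a_j\|_{\widetilde T^1_1}$ for a single atom, and use that on $\widehat{B_j}$ the Carleson control of $F$ combined with the $L^2(\widehat{B_j}, \frac{dx\,dt}{t})$ normalization \eqref{eq2.16} of $a_j$ gives, by Cauchy--Schwarz over the bounded region $\widehat{B_j}$, the bound $\|\mathbf{s}(F) \cdot a_j\|_{\widetilde T^1_1} \lesssim \|F\|_{\mathbf{T}_2^\infty} |B_j|^{1-1/p}$; summing $\sum_j |\lambda_j|^p \lesssim \|G\|_{T_2^p}^p$ against a single power (since $p \le 1$) then yields the claim. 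One should double-check the passage from $(FG)_W$ to $\mathbf{s}(F) \cdot G_W$ pointwise, which requires that the Whitney region defining $f_W$ be contained in the slightly larger region defining $\mathbf{s}(F)$; this is true after harmlessly enlarging the Whitney parameters, a standard maneuver.

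The main obstacle I anticipate is bookkeeping the interplay of the three different "localization" devices in play — the $L^2$-Whitney average $f_W$ (defining the tilded spaces), the essential supremum $\mathbf{s}(f)$ over Whitney boxes (defining $\mathbf{T}_2^\infty$), and the raw pointwise values (defining $T_2^p$ and $T_2^\infty$) — and ensuring that when one multiplies $F$ and $G$ the Whitney box over which $(FG)_W$ averages sits inside comparable dilates of the boxes over which $\mathbf{s}(F)$ and $G_W$ are measured, so that the pointwise factorization $(FG)_W(z,t) \lesssim \mathbf{s}(F)(z',t')\, G_W(z,t)$ is legitimate for $(z',t')$ in a fixed dilate of $(z,t)$. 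Once that pointwise control is set up cleanly, both embeddings follow from: (a) for the first, a Fubini argument reducing a cone integral of a Carleson function times a bounded function to the Carleson norm times the nontangential maximal function, and (b) for the second, the $T_2^p$ atomic decomposition plus Cauchy--Schwarz on each atom's (bounded) tent. No deep new input is needed beyond Lemmas \ref{l2.14} and \ref{l2.19} and the elementary geometry of Whitney regions and cones.
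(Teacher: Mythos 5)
Your overall strategy---factor $(FG)_W \le (\sup_W|F|)\,G_W = \mathbf{s}(F)\,G_W$ on Whitney averages and then invoke Carleson-embedding/tent-space factorization---is in the same spirit as the paper's, but the paper does it far more economically: it simply cites Cohn--Verbitsky \cite{CV} for the factorizations $T_2^\infty\cdot T_\infty^p = T_2^p$ and $T_2^\infty\cdot T_2^p = T_1^p$ (valid for \emph{all} $0<p<\infty$) on the unmodified spaces, and passes to the modified spaces via the single pointwise inequality $(fg)_W\le g_W\,\sup_W f$ together with the $L^p$-equivalence of conical functionals under aperture change. You try to re-derive the factorization from scratch, and one step in your first embedding is wrong as written: you bound $|F(z,t)G(z,t)|\le\mathbf{s}(F)(z,t)\,\|G\|_{L^\infty(W(z,t))}$, but $\widetilde T_\infty^p$ controls only the $L^2$ Whitney averages $G_W$, not $\|G\|_{L^\infty(W)}$. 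You flag this subtlety at the end but do not resolve it; the fix is to first pass from $\A_2(FG)(x)^2$ to the averaged cone quantity $\iint_{\Gamma^{c\beta}(x)}(FG)_W(z,t)^2\,dz\,dt/t^{n+1}$, comparable in $L^p$ by the aperture-change/Fubini argument, and only then factor. Relatedly, your opening claim to establish a ``pointwise inequality between the relevant functionals at every $x$'' is not achievable: there is no pointwise bound $\A_2(FG)(x)\lesssim\C(\mathbf{s}(F))(x)\,\N(G)(x)$, since the cone integral of $\mathbf{s}(F)^2$ is generically divergent even when $\C(\mathbf{s}(F))\in L^\infty$; the correct statement is an $L^p$-norm inequality, obtained by a global (good-$\lambda$/stopping-time) argument, not a pointwise one.

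The more substantial gap is in the second embedding $\mathbf{T}_2^\infty\cdot T_2^p\hookrightarrow\widetilde T_1^p$. Your atomic-decomposition route via Lemma~\ref{l2.19} covers only $0<p\le 1$---as you yourself note---while the lemma claims $0<p<\infty$, and the paper actually applies this embedding with exponent $p'>1$ (indeed $p'$ near or above $2$) in the proof of Lemma~\ref{l8.09}. Your proposed alternative, a ``duality between $T_2^1$-type and $\C$-type functionals,'' is not developed, and cone-wise Cauchy--Schwarz over-shoots, as you observe. To cover $p>1$ one needs the stopping-time argument underlying the Cohn--Verbitsky factorization, or an equivalent good-$\lambda$ inequality; without that input, your proposal does not reach the range of $p$ in which the lemma is actually used downstream.
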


This factorization-type lemma is valid in a much bigger generality, but we are restricting here to the particular cases of use in the present manuscript. 

\begin{proof} The result is a direct consequence of the fact that  ${T}^\infty_2 \cdot {T}^p_\infty = T^p_2$ and ${ T}^\infty_2 \cdot {T}^p_2 =  T^p_1$ due to \cite{CV} and 
the fact that for every $f,g:\reu\to \rn$, $(x,t)\in\reu$, we have $(fg)_W(x,t)\leq g_W(x,t) \,\sup_{W(x,t)} f$.
\end{proof}

\subsection{Estimates for solutions and layer potentials}\label{s2.3}
Recall that we say that $L=-\nabla \cdot(A \nabla\cdot)$ satisfies the standard assumptions when $A$ is an $(n+1)\times(n+1)$ matrix of complex-valued $L^{\infty}$ coefficients, defined on $\mathbb{R}^{n+1}$ that satisfies \eqref{eq1.2} and for every solution the De Giorgi-Nash estimate \eqref{eq1.6} holds. 
Let us start with

\begin{proposition}[Caccioppoli inequality]\label{p2.23}
Let $\Omega$ be a bounded open set in $\mathbb{R}^{n+1}$, $L$ satisfy the standard assumptions, u be a solution for $L$ in $\Omega$. Let $B \subset \Omega$ a fixed ball of radius $R$ such that $2B\subset \Omega$. Then
\begin{equation}
\fiint\limits_B \left| \nabla u(Y) \right|^2 \,dY \leq \frac{C}{R^2} \fiint\limits_{2B} \left| u(Y) \right|^2 \,dY.
\label{eq2.24}\end{equation}
\end{proposition}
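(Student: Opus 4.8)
The statement to prove is the Caccioppoli inequality, Proposition~\ref{p2.23}. The plan is to use the standard energy method: test the weak formulation of $Lu=0$ against $\eta^2 \bar u$, where $\eta$ is a suitable cutoff function.

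\textbf{Setup and choice of test function.} First I would fix a smooth cutoff $\eta \in C_0^\infty(\mathbb{R}^{n+1})$ with $\eta \equiv 1$ on $B$, $\supp \eta \subset 2B$ (one can work with a slightly larger ball still contained in $\Omega$, or rescale so that $\frac32 B \subset \Omega$; the stated hypothesis $2B\subset\Omega$ is enough after a trivial adjustment of constants), $0\le \eta \le 1$, and $|\nabla \eta| \lesssim 1/R$. Since $u \in W^{1,2}_{\loc}(\Omega)$, the function $\Psi := \eta^2 u$ lies in $W^{1,2}$ with compact support in $\Omega$, and by a density argument it is an admissible test function in the weak formulation $\int A\nabla u \cdot \overline{\nabla \Psi} = 0$. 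Computing $\nabla \Psi = \eta^2 \nabla u + 2\eta u \nabla\eta$ and taking real parts gives
\[
\Re \int \eta^2 \, \langle A\nabla u, \nabla u\rangle \,=\, -\,2\,\Re \int \eta\, \bar u\, \langle A\nabla u, \nabla \eta\rangle.
\]

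\textbf{Applying ellipticity and absorbing.} By the ellipticity lower bound in \eqref{eq1.2}, the left side dominates $\lambda \int \eta^2 |\nabla u|^2$. For the right side, the boundedness $\|A\|_\infty \le \Lambda$ together with Cauchy--Schwarz and Young's inequality ($ab \le \delta a^2 + \frac{1}{4\delta} b^2$) gives, for any $\delta>0$,
\[
2\Lambda \int \eta |\bar u|\, |\nabla u|\, |\nabla\eta| \,\le\, \delta \int \eta^2 |\nabla u|^2 \,+\, \frac{\Lambda^2}{\delta} \int |u|^2 |\nabla \eta|^2.
\]
Choosing $\delta = \lambda/2$ lets us absorb the first term on the right into the left-hand side, leaving
\[
\frac{\lambda}{2}\int \eta^2 |\nabla u|^2 \,\le\, \frac{2\Lambda^2}{\lambda} \int |u|^2 |\nabla\eta|^2 \,\lesssim\, \frac{1}{R^2}\int_{2B} |u|^2.
\]
Since $\eta \equiv 1$ on $B$, the left side bounds $\int_B |\nabla u|^2$. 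Dividing both sides by $|B| \approx |2B|$ converts the plain integrals into the Whitney-type averages $\fiint$ appearing in \eqref{eq2.24}, with a constant $C = C(\lambda,\Lambda,n)$.

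\textbf{Main obstacle.} There is no serious analytic obstacle here; this is a textbook argument. The only points requiring a modicum of care are: (i) justifying that $\eta^2 u$ is a legitimate test function, which follows by approximating $u$ by smooth functions in $W^{1,2}_{\loc}$ and using that $\eta$ has compact support in $\Omega$; (ii) the fact that the ellipticity condition \eqref{eq1.2} controls only $\Re\langle A\xi,\xi\rangle$ from below, which is precisely why one takes real parts throughout — the imaginary parts simply get discarded after bounding the right-hand side by its modulus; and (iii) keeping track of the passage from integrals over $B$ and $2B$ to normalized averages, which is just dividing by comparable volumes. Thus the "hard part" is really only bookkeeping, and the estimate \eqref{eq2.24} follows directly.
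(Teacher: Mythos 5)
Your proof is correct and is the standard energy argument; the paper simply states Proposition~\ref{p2.23} without proof, treating the Caccioppoli inequality as a well-known consequence of the ellipticity structure \eqref{eq1.2} alone (indeed it does not use the De Giorgi/Nash hypothesis at all). Your handling of the complex-coefficient subtlety --- taking real parts so that the lower ellipticity bound on $\Re\langle A\xi,\xi\rangle$ can be applied, while bounding the right-hand side by its modulus --- is exactly the right point to be careful about, and the absorption via Young's inequality with $\delta=\lambda/2$ is the standard way to close the estimate, yielding a constant depending only on $\lambda$, $\Lambda$, $n$.
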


We recall a real variable result proved in \cite[Lemma 2.2]{HMiMo}:
\begin{lemma}\label{l2.31} If $w\in L^2_{loc}(\reu)$ is such that  $\N(w)\in L^p(\rn)$ for some $0<p\leq 2n/(n+1)$ then
$w \in L^{p(n+1)/n}(\mathbb{R}^{n+1}_+)$
and \begin{equation}\label{eq2.32}
\|w\|_{L^{p(n+1)/n}(\mathbb{R}^{n+1}_+)} \leq C(p,n)\, \|\N(w)\|_{L^p(\rn)}\,.
\end{equation}
If, in addition, $w$ is a solution of $Lw=0$ in $\reu$ for an elliptic operator $L$ whose solution satisfies the De Giorgi-Nash-Moser bounds \eqref{eq1.6}--\eqref{eq1.7}, then \eqref{eq2.32} is valid for any $0<p<\infty$. If $w=\nabla u$, where $u$ is a null-solution of $L$, then \eqref{eq2.32} holds for $p< \frac{n(2+\eps)}{n+1}$ for some $\eps>0.$
\end{lemma}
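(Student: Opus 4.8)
The plan is to decouple the estimate into two pieces: (i) a passage from the pointwise size of $w$ to its $L^2$ Whitney averages $w_W(y,s):=\big(\fiint_{W(y,s)}|w|^2\big)^{1/2}$, and (ii) a distribution-function estimate for $w_W$ in terms of $\N(w)$ that holds with \emph{no} restriction on $p$. The three ranges of $p$ in the statement will then correspond exactly to how lossless step (i) is under the three hypotheses on $w$. Throughout write $q:=p(n+1)/n$, so that $q\le 2 \Leftrightarrow p\le 2n/(n+1)$ and $q<2+\eps\Leftrightarrow p<n(2+\eps)/(n+1)$.

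For step (i), I would argue as follows. If only $w\in L^2_{\loc}(\reu)$ and $q\le 2$, then Jensen's inequality applied to the concave function $t\mapsto t^{q/2}$ gives $\fiint_{W(y,s)}|w|^q\le w_W(y,s)^q$; integrating in $(y,s)$ and using Tonelli together with the elementary fact that, for fixed $(u,\tau)$, the set of $(y,s)$ with $(u,\tau)\in W(y,s)$ has measure comparable to $|W(y,s)|\sim\tau^{n+1}$, one obtains $\|w\|_{L^q(\reu)}\lesssim\|w_W\|_{L^q(\reu)}$. If instead $w$ is a null solution of $L$, Moser's local boundedness \eqref{eq1.7} upgrades this to the pointwise bound $|w(y,s)|\lesssim w_W(y,s)$ on all of $\reu$, so step (i) holds for every $q>0$. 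Finally, if $w=\nabla u$ with $u$ a null solution, the interior Caccioppoli inequality (Proposition \ref{p2.23}) self-improves via Gehring's lemma to a reverse Hölder inequality $\big(\fiint_{W(y,s)}|\nabla u|^{2+\eps}\big)^{1/(2+\eps)}\lesssim(\nabla u)_{W^*}(y,s)$ on a mildly dilated Whitney box $W^*$, after which Jensen again yields $\|w\|_{L^q(\reu)}\lesssim\|w_W\|_{L^q(\reu)}$ for every $q<2+\eps$.

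The heart of the matter is step (ii): the claim $\|w_W\|_{L^q(\reu)}\lesssim\|\N(w)\|_{L^p(\rn)}$ with $q=p(n+1)/n$, valid for all $p>0$. By definition of $\N$ one has $w_W(y,s)\le\N(w)(x)$ whenever $x\in\Delta(y,s)$, i.e. $(y,s)\in\Gamma(x)$; hence for each $\lambda>0$,
$$\{(y,s)\in\reu:\,w_W(y,s)>\lambda\}\ \subseteq\ \Omega_\lambda:=\{(y,s)\in\reu:\,\Delta(y,s)\subseteq E_\lambda\},\qquad E_\lambda:=\{\N(w)>\lambda\}.$$
Since $\N(w)\in L^p$, $E_\lambda$ is open with $|E_\lambda|<\infty$, and $|\Omega_\lambda|=\int_{E_\lambda}\dist(y,E_\lambda^c)\,dy$; because $\Delta\big(y,\dist(y,E_\lambda^c)\big)\subseteq E_\lambda$ we get $\dist(y,E_\lambda^c)\lesssim|E_\lambda|^{1/n}$, whence the key geometric bound $|\Omega_\lambda|\lesssim|E_\lambda|^{1+1/n}=|E_\lambda|^{(n+1)/n}$ — a tent over a set of finite measure has volume controlled by that measure raised to the power $(n+1)/n$. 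Writing $a(\lambda):=|E_\lambda|$, which is nonincreasing, this gives $\|w_W\|_{L^q(\reu)}^q=q\int_0^\infty\lambda^{q-1}|\{w_W>\lambda\}|\,d\lambda\lesssim\int_0^\infty\lambda^{q-1}a(\lambda)^{(n+1)/n}\,d\lambda$. Monotonicity of $a$ yields $a(\lambda)\,\lambda^p/p\le\int_0^\lambda\mu^{p-1}a(\mu)\,d\mu\le I$ with $I:=\int_0^\infty\mu^{p-1}a(\mu)\,d\mu=p^{-1}\|\N(w)\|_{L^p}^p$; substituting $a(\lambda)^{1/n}\le(pI\lambda^{-p})^{1/n}$ and checking that the exponent $q-1-p/n$ equals $p-1$, the integral collapses to $\lesssim I^{(n+1)/n}\sim\|\N(w)\|_{L^p}^q$. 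Combining steps (i) and (ii) completes the proof, with constant depending only on $n$, $p$ (and, in the second and third cases, on the De Giorgi--Nash--Moser/Gehring constants).

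I expect the main obstacle to be recognizing that the naive route — take a Whitney decomposition $\{W_j\}$, bound $\iint_{W_j}|w|^q$ by $|W_j|\,(\fiint_{W_j}|w|^2)^{q/2}$ and then by $\big(\int_{\Delta_j}\N(w)^p\big)^{(n+1)/n}$, and sum — does \emph{not} work, because the boundary shadows $\Delta_j=\Delta(y_j,s_j)$ have unbounded overlap across scales, so the resulting sum cannot be bounded by $\|\N(w)\|_{L^p}^q$. The distribution-function argument of step (ii), with the tent-volume bound $|\Omega_\lambda|\lesssim|E_\lambda|^{(n+1)/n}$ and the one-line monotonicity estimate, is precisely what bypasses this; the remaining ingredients are routine.
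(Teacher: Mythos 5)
Your argument is correct. The paper itself offers no proof of this lemma — it refers the reader to \cite{HMiMo} — so there is no in-paper argument to compare against. Your two-step structure is the natural one: first pass from $|w|$ to the Whitney average $w_W$ (by Jensen when $q\le 2$, by Moser's bound \eqref{eq1.7} for null solutions, by the Meyers reverse H\"older estimate for gradients of null solutions), then bound $\|w_W\|_{L^q(\reu)}$ by $\|\N(w)\|_{L^p(\rn)}$ via the tent-volume inclusion $\{w_W>\lambda\}\subset\{(y,s):\Delta(y,s)\subset E_\lambda\}$, the elementary estimate $|\{(y,s):\Delta(y,s)\subset E_\lambda\}|\lesssim|E_\lambda|^{(n+1)/n}$, and the Chebyshev bound $|E_\lambda|\lesssim\lambda^{-p}\|\N(w)\|_{L^p}^p$; the exponent bookkeeping $q-1-p/n=p-1$ is right and the computation is valid for every $p>0$, so the three ranges of $p$ in the statement do indeed come only from the first step. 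Your diagnosis of why a naive Whitney-decomposition sum fails (unbounded overlap of boundary shadows across scales) is also correct, and is exactly what the distribution-function route avoids. The one place a full write-up needs an extra sentence is the gradient case: Meyers' inequality produces the $2$-average over a \emph{dilated} box $W^*(y,s)$, so step (i) actually yields $\|w\|_{L^q(\reu)}\lesssim\|(\nabla u)_{W^*}\|_{L^q(\reu)}$, not $\|w_W\|_{L^q}$ as written; one must then run step (ii) with a modified nontangential maximal function $\N^\beta$ built from $W^*$ and a slightly larger aperture, and invoke the standard $L^p$-equivalence of nontangential maximal functions across apertures and Whitney regions (the authors appeal to this repeatedly, citing \cite{FS}). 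This is a routine adjustment, not a gap, but it should be said.
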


Let us demonstrate at this point that, given our hypothesis that $L_0$ and $L_0^*$ enjoy the De Giorgi/Nash
property,   so do $L_1$ and $L_1^*$, by virtue of the ``SCMC" entailed by \eqref{eq1.4}
with $\eps_0$ small.
Indeed, this observation follows immediately from the following pair of results.
The first is due to Auscher  \cite{A} and says that the De Giorgi-Nash bounds are stable under $L^\infty$-perturbations of $t$-independent elliptic operators.   The second says that SCMC implies 
small discrepancy of the coefficients also in the $L^\infty$ sense and thus, stability of
De Giorgi/Nash bounds.

\begin{lemma}[\cite{A}]\label{l2.25}
If $L^0$ is an elliptic operator with  $t$-independent complex-valued $L^{\infty}$ coefficients satisfying the De Giorgi-Nash bounds, then any elliptic operator $L^1$ with coefficients given by a sufficiently small $L^\infty$-perturbation of the coefficients of $L^0$ satisfies the De Giorgi-Nash bounds.
\end{lemma}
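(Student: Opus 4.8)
The plan is to run the classical Campanato-type comparison (freezing) argument, which is in essence the content of \cite{A} (see also \cite{Gi}, Chapter~VI). First observe that if $\eps_0$ is small enough, depending only on $\lambda$ and $\Lambda$, then $L^1$ is itself uniformly elliptic with constants comparable to those of $L^0$, so its $W^{1,2}_{\loc}$ null solutions obey the Caccioppoli inequality of Proposition~\ref{p2.23}; it therefore suffices to upgrade such a solution $u$ of $L^1u=0$, on any ball $B_{2R}(X)\subset\reu$, to the H\"older bound \eqref{eq1.6}, with a possibly smaller exponent $\alpha_1\in(0,\alpha_0)$ and with constants depending only on $n,\lambda,\Lambda$ and the De Giorgi/Nash constants of $L^0$.

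Fix a ball $B_\rho(Y)$ with $\overline{B_\rho(Y)}\subset B_{2R}(X)$ and let $v\in u+W^{1,2}_0(B_\rho(Y))$ be the Lax--Milgram solution of $L^0 v=0$ in $B_\rho(Y)$ --- here only the ellipticity of $A^0$ enters. With $w:=u-v\in W^{1,2}_0(B_\rho(Y))$ we have $L^0 w=\nabla\cdot\!\big((A^0-A^1)\nabla u\big)$, and testing against $w$ yields the energy estimate
\begin{equation*}
\int_{B_\rho(Y)}|\nabla w|^2\;\le\;\lambda^{-2}\,\|A^1-A^0\|_{L^\infty}^2\int_{B_\rho(Y)}|\nabla u|^2\;\le\;\lambda^{-2}\eps_0^2\int_{B_\rho(Y)}|\nabla u|^2.
\end{equation*}
On the other hand, since $L^0$ satisfies \eqref{eq1.6}, its solution $v$ obeys the standard Morrey decay
\begin{equation*}
\int_{B_\sigma(Y)}|\nabla v|^2\;\le\;C\,(\sigma/\rho)^{\,n-1+2\alpha_0}\int_{B_\rho(Y)}|\nabla v|^2,\qquad 0<\sigma\le\rho,
\end{equation*}
obtained by subtracting the mean of $v$ over $B_{\rho/2}(Y)$ and combining \eqref{eq1.6}, Poincar\'e's inequality and Proposition~\ref{p2.23}.

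Set $\phi(\sigma):=\int_{B_\sigma(Y)}|\nabla u|^2$. Using $\int_{B_\rho}|\nabla v|^2\lesssim\phi(\rho)+\int_{B_\rho}|\nabla w|^2$ together with the two displays above gives
\begin{equation*}
\phi(\sigma)\;\le\;C\Big[(\sigma/\rho)^{\,n-1+2\alpha_0}+\eps_0^2\Big]\phi(\rho),\qquad 0<\sigma\le\rho,
\end{equation*}
uniformly over balls $B_\rho(Y)$ as above, with $C$ depending only on $n,\lambda,\Lambda$ and the De Giorgi/Nash constants of $L^0$. The standard iteration lemma (see, e.g., \cite{Gi}) then gives, for any prescribed $\beta$ with $n-1<\beta<n-1+2\alpha_0$, a threshold $\tau_0>0$ depending only on $C$, $\alpha_0$ and $\beta$, such that $\eps_0^2\le\tau_0$ forces $\phi(\sigma)\le C'(\sigma/\rho)^{\beta}\phi(\rho)$ for all $0<\sigma\le\rho$. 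Choosing $\beta=n-1+2\alpha_1$ with, say, $\alpha_1=\alpha_0/2$, this says that $|\nabla u|^2$ satisfies a Morrey growth condition of order $n-1+2\alpha_1$ on $B_R(X)$, whose Morrey constant is bounded by $R^{-(n-1+2\alpha_1)-2}\int_{B_{2R}(X)}|u|^2$ after one more application of Proposition~\ref{p2.23}; Morrey's Dirichlet growth theorem then yields $u\in C^{\alpha_1}$ on $B_R(X)$ together with exactly the quantitative bound \eqref{eq1.6}, with $\alpha_0$ replaced by $\alpha_1$. This is the asserted De Giorgi/Nash property for $L^1$, and \eqref{eq1.7} follows from it as indicated in the text (the identical argument applied to $(L^0)^*$ and $(L^1)^*$ handles the adjoint operators).

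I do not expect a genuine obstacle here, since Caccioppoli, Lax--Milgram, the iteration lemma and Morrey's theorem are all classical; the one point requiring care is the bookkeeping of constants. One must check that the smallness threshold $\eps_0$ and every implicit constant depend \emph{only} on $n,\lambda,\Lambda$ and the De Giorgi/Nash constants of $L^0$, not on $L^1$ --- which is automatic, as only $\|A^1-A^0\|_{L^\infty}$ and the ellipticity of the two operators enter the estimates. Finally, the whole argument is purely local and never uses $t$-independence of $A^0$; that hypothesis matters only for the way the lemma is applied elsewhere in the paper.
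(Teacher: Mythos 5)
Your proof is correct and reconstructs precisely the classical Campanato (freezing-coefficient) argument; since the paper does not give its own proof of Lemma~\ref{l2.25} but simply cites Auscher \cite{A} and Giaquinta \cite{Gi}, Chapter~VI, your argument is in substance the one the paper is implicitly invoking. The Lax--Milgram comparison on $B_\rho(Y)$, the energy estimate $\int_{B_\rho}|\nabla w|^2\le\lambda^{-2}\eps_0^2\int_{B_\rho}|\nabla u|^2$, the Morrey decay of order $n-1+2\alpha_0$ for $|\nabla v|^2$ coming from \eqref{eq1.6} together with Caccioppoli and Poincar\'e (note the ambient dimension is $n+1$, so the exponent you quote is indeed the right one), and the standard iteration lemma feeding into Morrey's Dirichlet growth theorem, are all in order, with constants depending only on $n,\lambda,\Lambda$ and the De Giorgi/Nash constants of $L^0$. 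Two very minor remarks: there is a harmless sign slip in the formula $L^0w=\nabla\cdot((A^0-A^1)\nabla u)$ --- it should read $\nabla\cdot((A^1-A^0)\nabla u)$, which does not affect the energy estimate; and your observation that $t$-independence of $A^0$ plays no role in the perturbation step, and enters only as a reason the unperturbed operator has DG/N bounds in the applications (cf.\ Proposition~\ref{p2.26}), is accurate and worth keeping.
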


\begin{proposition}\label{p2.26}
Let $A^1\equiv A(x,t)$ be an $(n+1)\times (n+1)$ complex elliptic matrix with bounded measurable coefficients such that $A^0\equiv A(x,0)$ that satisfies the De Giorgi-Nash bounds. If $A^1-A^0$ satisfies the Carleson measure condition \eqref{eq1.4} with sufficiently small constant, then $A^1$ satisfies the De Giorgi-Nash bounds as well.
\end{proposition}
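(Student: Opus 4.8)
The plan is to pass from the De Giorgi/Nash bounds for the $t$-independent operator $L_0$ (with coefficients $A^0(x)=A(x,0)$) to those for $L_1=L$ (with coefficients $A^1(x,t)=A(x,t)$) by interpolating between the $L^\infty$-perturbation stability of Lemma~\ref{l2.25} and a localization argument that exploits the Carleson smallness of the discrepancy $\epsilon(x,t)$ from \eqref{eq1.3}--\eqref{eq1.4}. The key point is that the De Giorgi/Nash property \eqref{eq1.6} is a \emph{local} statement at all scales, so it suffices to verify it on an arbitrary ball $B_{2R}(X_0)\subset\reu$ with $X_0=(x_0,t_0)$, and one may exploit the structure of $\epsilon$ on the Whitney region $W(x_0,t_0)$.

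First I would record the straightforward but useful observation that the Carleson condition \eqref{eq1.4} forces $\epsilon(x,t)$ to be \emph{small pointwise on average at every scale}, and in particular that for any fixed ball $B_R(X_0)$ with $X_0=(x_0,t_0)$ and $R\le t_0/4$ the quantity $\sup_{B_R(X_0)}|A^1-A^0|$ is controlled: applying \eqref{eq1.4} on the surface cube $Q=\Delta(x_0, c t_0)$ of side $\sim t_0$, one gets
\[
\frac{1}{|Q|}\iint_{R_Q}\epsilon^2(x,t)\,\frac{dxdt}{t}<\eps_0^2,
\]
and since the region $W(x_0,t_0)$ sits at height $\sim t_0$ inside $R_Q$, integrating over a sub-box of $R_Q$ around $(x_0,t_0)$ of fixed relative size shows $\epsilon$ has $L^2(\tfrac{dxdt}{t})$-average over that sub-box bounded by $C\eps_0$. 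This is not yet a pointwise bound, so the real mechanism must be a \emph{freezing / frozen-coefficient} comparison rather than a literal $L^\infty$ smallness claim on each ball.

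The core of the argument is therefore the following comparison scheme. Fix $B=B_{2R}(X_0)\subset\reu$ and let $u$ solve $L_1 u=0$ in $B$. Introduce the ``frozen'' operator $\widetilde L=-\dv(\widetilde A\nabla\cdot)$ whose coefficients $\widetilde A(x,t):=A^1(x,\min(t,t_0/2))$ (or, even more simply, the $t$-independent operator with coefficients $A^1(x,0)=A^0(x)$) agree with those of $L_1$ near the boundary-adjacent part of the Whitney cube. Away from the boundary, at scales $R\ll t_0$, the operators $L_1$ and the $t$-independent operator $L_0$ have coefficients whose $L^\infty$ distance on $B_{2R}(X_0)$ is bounded by the \emph{oscillation} of $A^1$ in the $t$-variable over an interval of length $\sim R$ around $t_0$, which by the $W$-region definition of $\epsilon$ is dominated by $\epsilon(x_0,t_0)$ plus a term measuring the genuine $t$-oscillation of $A^1$ on scale $R$. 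The Carleson condition guarantees this is small \emph{on average in $X_0$}, and since De Giorgi/Nash need only be established with constants that are uniform (not improving), one can run the Auscher perturbation argument (Lemma~\ref{l2.25}, which gives stability with the perturbed constants depending only on the unperturbed ones and on how small the $L^\infty$ perturbation is) on the good balls, and then use a standard real-variable/good-$\lambda$ or a Caccioppoli-plus-telescoping argument (Proposition~\ref{p2.23}) to absorb the exceptional set where $\epsilon$ is large. Concretely: split $B_{2R}(X_0)$ according to whether $\sup_{B_{2R}(X_0)}|A^1-A^0|<\eta$ (apply Lemma~\ref{l2.25} directly, with $L_0$ as the stable base operator) or $\ge\eta$; the measure of the latter set of centers is controlled by Chebyshev against the Carleson measure, and Moser's estimate \eqref{eq1.7} together with Caccioppoli \eqref{eq2.24} lets one still bound the H\"older modulus there at the cost of a harmless geometric constant, provided $\eps_0/\eta$ is small. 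Iterating over dyadic subscales and choosing $\eta$ and then $\eps_0$ appropriately yields \eqref{eq1.6} for $u$ with constants depending only on $n,\lambda,\Lambda$ and the De Giorgi/Nash constants of $L_0$.

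The main obstacle, and the step I expect to require the most care, is exactly this passage from the \emph{integral} (Carleson) smallness of $\epsilon$ to the \emph{pointwise} $L^\infty$-smallness that Lemma~\ref{l2.25} literally demands: the Carleson condition does not preclude $\epsilon$ from being occasionally of size $O(1)$, so one genuinely needs the stability of De Giorgi/Nash to be robust under a perturbation that is small in $L^\infty$ off a set of small Carleson measure, not everywhere. I would handle this either (a) by a stopping-time decomposition of $\reu$ into Whitney-type Carleson regions on which $\epsilon<\eta$, proving \eqref{eq1.6} first within each such region via Lemma~\ref{l2.25} and then patching across the (sparse, by \eqref{eq1.4}) boundaries using interior estimates, or (b) by invoking the stability of the De Giorgi/Nash \emph{exponent} under $L^1$-type (rather than $L^\infty$) perturbations of the coefficients — a result in the spirit of \cite{Gi}, Chapter~VI — combined with the observation that $\int \epsilon^2\,\tfrac{dxdt}{t}$ small on Carleson boxes controls precisely such an $L^1$-average. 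Either route reduces the proposition to Lemma~\ref{l2.25} plus the elementary measure-theoretic content of \eqref{eq1.4}; the bookkeeping of the dyadic iteration and the uniformity of constants is the only real work, and I would present it as a short lemma on the ``good-$\lambda$'' redistribution of the Carleson measure followed by the perturbation step.
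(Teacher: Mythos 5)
There is a genuine gap, and it is exactly at the spot you flagged as ``the main obstacle.'' You observe that the Carleson condition \eqref{eq1.4} gives an $L^2(dx\,dt/t)$ smallness for $\epsilon$ on Whitney sub-boxes and then conclude ``This is not yet a pointwise bound, so the real mechanism must be a frozen-coefficient comparison rather than a literal $L^\infty$ smallness claim on each ball.'' That conclusion is wrong, and it sends you down an unnecessarily elaborate (and not fully justified) stopping-time/good-$\lambda$ road. The point you are missing is that $\epsilon(x,t)$, as defined in \eqref{eq1.3}, is not the pointwise discrepancy $|A^1(x,t)-A^0(x,t)|$ but the \emph{supremum} of that discrepancy over the Whitney region $W(x,t)$. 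Consequently, for a fixed $(x,t)$ one has $|A^1(x,t)-A^0(x,t)|\le \epsilon(y,s)$ for \emph{every} $(y,s)$ in a nearby Whitney box $W_1(x,t)$ of the same scale (any $(y,s)$ with $(x,t)\in W(y,s)$ will do). Thus $|A^1(x,t)-A^0(x,t)|$ is a \emph{lower bound} for $\epsilon$ on a full Whitney region, and averaging gives
\begin{equation*}
|A^1(x,t)-A^0(x,t)|^2 \;\le\; \fiint_{W_1(x,t)}\epsilon^2(y,s)\,dy\,ds \;\lesssim\; t^{-n}\iint_{R(x,t)}\epsilon^2(y,s)\,\frac{dy\,ds}{s}\;\lesssim\;\varepsilon_0^2,
\end{equation*}
where $R(x,t)$ is a Carleson box of side $\sim t$ containing $W_1(x,t)$. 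So the Carleson condition with small constant \emph{does} force $\|A^1-A^0\|_{L^\infty}\lesssim \varepsilon_0$, and the proposition then follows in one line from Lemma~\ref{l2.25}. Your worry that ``the Carleson condition does not preclude $\epsilon$ from being occasionally of size $O(1)$'' would be justified if $\epsilon$ were a raw pointwise quantity, but it is false for a Whitney supremum: a spike of height $\delta$ at one point forces $\epsilon\ge\delta$ on an entire $(n+1)$-dimensional Whitney box, which contributes $\gtrsim\delta^2|Q|$ to the Carleson integral over the corresponding surface cube $Q$. Once you see this, the frozen-coefficient comparison, the good-$\lambda$ redistribution, and the appeal to $L^1$-type perturbation stability (which is not established in the paper and would itself require proof) all become unnecessary.
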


\begin{proof}
Fix $(x,t) \in \reu,$ and set 
$$ R(x,t) = \Delta (x,t) \times (0,3t/2), \quad W(x,t) =\Delta(x,t/8) \times (7t/8, 9t/8),\quad  W_1(x,t) = \Delta(x,t/8) \times (7t/8, 9t/8).$$
It is easy to see from the definitions that if $(y,s) \in W_1(x,t),$ then $(x,t) \in W(y,s).$ Consequently,
$$ \left| A(x,t) - A(x,0) \right| \leq \epsilon(y,s), \quad \forall (y,s) \in W_1(x,t),$$
so that 
\begin{align*}
\left| A^1(x,t) - A^0(x,t) \right| &\leq \frac{1}{|W_1(x,t)|} \iint\limits_{W_1(x,t)} \left|\epsilon(y,s)\right|^2\,dyds\\
&\lesssim t^{-n} \iint\limits_{R(x,t)} \left|\epsilon(y,s) \right|^2\frac{dyds}{s}\lesssim \varepsilon_0,
\end{align*}
where the implicit constants are purely dimensional. Thus
\begin{equation}\label{eq2.27}
\|A^1-A^0\|_{\infty} \leq C_n \varepsilon_0\,,
\end{equation}
and by Lemma \ref{l2.25}, \eqref{eq1.6} holds for $L_1$ as well.
\end{proof}

We now record some estimates which hold in the special case when the coefficients are $t$-independent.

\begin{lemma}[\cite{AAAHK}, Proposition 2.1]\label{l2.33}
Let $L$ and $L^*$ satisfy the standard assumptions and be $t$-independent. Then there is a uniform constant $\epsilon>0$ depending only on $n$ and ellipticity, and for every $p \in [2,2+\epsilon)$, a uniform constant $C_p$ such that, for every cube $Q \subset \rn$, and $t \in \Bbb{R}$, if $Lu=0$ in the box $I_Q:=4Q \times (t-\ell(Q),t+\ell(Q))$, then we have the following estimates
\begin{align}\label{eq2.34}
\left(\frac{1}{|Q|}\int_Q |\nabla u(x,t)|^p dx\right)^{1/p} &\leq C_p \left(\frac{1}{|Q^\ast|}\iint_{Q^\ast} |\nabla u(x,\tau)|^p dxd\tau\right)^{1/p}, \\\label{eq2.35}
\left(\frac{1}{|Q|}\int_Q |\nabla u(x,t)|^p dx\right)^{1/p} &\leq C_p \left(\frac{1}{\ell(Q)^2}\frac{1}{|Q^{\ast\ast}|}\iint_{Q^{\ast\ast}} | u(x,\tau)|^p dxd\tau\right)^{1/p}, 
\end{align}
where $Q^\ast:=2Q \times (t-\ell(Q)/4,t+\ell(Q)/4)$ is an $n+1$ dimensional rectangle with diameter comparable to that of $Q$, and $Q^{\ast\ast}:=3Q \times (t-\ell(Q)/2,t+\ell(Q)/2)$ is a fattened version of $Q^\ast$.
\end{lemma}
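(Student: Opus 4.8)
The statement to prove is Lemma~\ref{l2.33}, the interior ``reverse H\"older'' estimates for gradients of solutions of $t$-independent elliptic operators. The plan is to combine three ingredients: Meyers' self-improvement of Caccioppoli, the $t$-translation invariance of the equation, and a Sobolev/Poincar\'e interpolation to pass from averages of $u$ to averages of $\nabla u$. I treat \eqref{eq2.34} first, then deduce \eqref{eq2.35} by composing with an $L^2$ Caccioppoli step, and the reverse-H\"older inequality on the $t$-slice is the main technical point.

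\textbf{Step 1: a slicewise reverse H\"older inequality in $x$.} Fix $t$ and suppose $Lu=0$ in $I_Q$. Because the coefficients are $t$-independent, for each fixed $\tau$ the slice $u(\cdot,\tau)$ need not solve an equation in $x$ alone, so I cannot work purely on a single slice; instead I work on the thin slab $\widetilde{I}:=3Q\times(t-\ell(Q)/2,\,t+\ell(Q)/2)$. The key observation is that the ``layers'' estimate in Proposition~\ref{p2.23} together with the $t$-independence gives, via a standard argument (cf. \cite[Chapter VI]{Gi}), a \emph{Meyers-type} estimate: there is $\eps=\eps(n,\lambda,\Lambda)>0$ such that $\nabla u\in L^{2+\eps}_{\loc}$ with
\begin{equation*}
\left(\fiint_{\widetilde{Q}}|\nabla u|^{2+\eps}\right)^{1/(2+\eps)}\lesssim \left(\fiint_{\widetilde{I}}|\nabla u|^{2}\right)^{1/2},
\end{equation*}
where $\widetilde Q$ is a slightly shrunken concentric copy of $\widetilde I$. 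Meyers' argument is purely local and self-improving from Caccioppoli \eqref{eq2.24} combined with the Sobolev--Poincar\'e inequality and Gehring's lemma, so the exponent $\eps$ and the constant depend only on $n$ and ellipticity, not on $t$ or on DG/N/M constants.

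\textbf{Step 2: from the slab to the slice, using $t$-independence.} Now I want \eqref{eq2.34}, i.e.\ to replace the left-hand slab average $\widetilde Q$ by the single-slice average $\frac1{|Q|}\int_Q|\nabla u(x,t)|^p\,dx$ over $\{t\}\times Q$. This is exactly where $t$-independence enters decisively: since $A=A(x)$, the function $v(x,\tau):=u(x,\tau+h)$ also solves $Lv=0$ for every real $h$, so $\partial_\tau u$ is a solution as well; combined with Moser's local boundedness \eqref{eq1.7} applied to $\partial_\tau u$ and to $\nabla_x u$ (here the DG/N/M hypothesis is available, but in fact for the $L^2\to L^{2+\eps}$ statement one uses Meyers, which does not need it) one controls $\sup$ over a small Whitney ball around $(x,t)$ by an $L^p$ average over a slightly larger slab. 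Averaging this pointwise bound over $x\in Q$ and applying Fubini and the engulfing of the Whitney balls by $Q^{\ast}=2Q\times(t-\ell(Q)/4,t+\ell(Q)/4)$ yields
\begin{equation*}
\left(\frac1{|Q|}\int_Q|\nabla u(x,t)|^p\,dx\right)^{1/p}\lesssim \left(\frac1{|Q^{\ast}|}\iint_{Q^{\ast}}|\nabla u(x,\tau)|^p\,dxd\tau\right)^{1/p}
\end{equation*}
for $p\in[2,2+\eps)$; one first proves it for $p=2$ via Moser and then interpolates against the $L^{2+\eps}$ endpoint of Step~1 (or simply reruns Moser with exponent $p$). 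This is \eqref{eq2.34}.

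\textbf{Step 3: deduce \eqref{eq2.35} by a Caccioppoli step.} For \eqref{eq2.35} I start from the right-hand side of \eqref{eq2.34} and apply the Caccioppoli inequality \eqref{eq2.24} on balls of radius $\sim\ell(Q)$ covering $Q^{\ast}$: this replaces $\iint_{Q^{\ast}}|\nabla u|^p$ by $\ell(Q)^{-p}\iint_{Q^{\ast\ast}}|u|^p$ up to dimensional constants, where $Q^{\ast\ast}=3Q\times(t-\ell(Q)/2,t+\ell(Q)/2)$ is the required fattening; one must be slightly careful that the $L^p$ (rather than $L^2$) Caccioppoli inequality holds, which again follows from the $L^2$ version plus Moser's $L^p$--$L^2$ self-improvement \eqref{eq1.7} applied on a chain of Whitney balls. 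Chaining Steps~2 and~3 gives \eqref{eq2.35}. Finally one checks that $4Q\times(t-\ell(Q),t+\ell(Q))=I_Q$ is large enough to accommodate all the enlarged boxes used ($\widetilde I$, $Q^{\ast}$, $Q^{\ast\ast}$ and the auxiliary Whitney balls) with room to spare, so the hypothesis $Lu=0$ in $I_Q$ suffices.

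\textbf{Main obstacle.} The genuinely nontrivial point is Step~1 together with the claim that $\eps$ depends only on $n$ and ellipticity (and not on DG/N/M): this is the Meyers higher-integrability estimate, whose proof rests on the interior Caccioppoli inequality, the Sobolev--Poincar\'e inequality, and Gehring's reverse-H\"older lemma, and is essentially self-improving from \eqref{eq2.24}. The translation trick in Step~2 is standard but must be invoked cleanly, since it is the only place where $t$-independence of $A$ is used, and the whole lemma is false without it. I would cite \cite{Gi} (Chapter VI) and \cite{AAAHK} (Proposition~2.1) rather than reproduce the Meyers/Gehring machinery in detail.
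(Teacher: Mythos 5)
Your Step 3 and the Meyers/Gehring input in Step 1 are fine (and for the record, the paper does not prove this lemma at all — it simply cites \cite{AAAHK}, Proposition 2.1). The genuine gap is in Step 2, which is precisely the heart of the lemma: passing from a solid average to the average over the single slice $\{ \tau=t\}$. You propose to bound $|\nabla u(x,t)|$ pointwise by a supremum over a Whitney ball, invoking Moser's local boundedness \eqref{eq1.7} ``applied to $\partial_\tau u$ and to $\nabla_x u$''. But $\nabla_x u$ is not a solution: the coefficients depend on $x$, so the tangential derivatives $\partial_{x_j}u$ do not satisfy the equation, and no local boundedness (nor any pointwise bound at all, for complex bounded measurable coefficients) is available for them. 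Only $\partial_t u$ is a solution, by $t$-independence. Moreover, even where Moser is legitimately applicable it rests on the DG/N/M hypothesis, so the resulting constants would depend on the De Giorgi/Nash constants, contradicting the assertion (which you repeat, correctly, from the statement) that $\epsilon$ and $C_p$ depend only on $n$ and ellipticity; your parenthetical remark acknowledges this tension but does not resolve it.

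The standard repair, which is how \cite{AAAHK} argues, avoids any pointwise bound on $\nabla u$: for a.e.\ $x$ and every $\tau$ in the $t$-interval of $Q^\ast$, write by the fundamental theorem of calculus
\begin{equation*}
|\nabla u(x,t)|^p \,\leq\, |\nabla u(x,\tau)|^p \,+\, p\int \big|\nabla u(x,s)\big|^{p-1}\,\big|\nabla \partial_s u(x,s)\big|\,ds,
\end{equation*}
then average in $\tau$ over $(t-\ell(Q)/4,\,t+\ell(Q)/4)$ and integrate in $x\in Q$. The first term gives the right-hand side of \eqref{eq2.34} directly; the second is handled by H\"older together with Caccioppoli (and, for $p>2$, Meyers) applied to $\partial_s u$, which is where $t$-independence is used — $\partial_s u$ is again a null solution, so $\iint|\nabla\partial_s u|^2\lesssim \ell(Q)^{-2}\iint|\partial_s u|^2\leq \ell(Q)^{-2}\iint|\nabla u|^2$ on suitably nested boxes. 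This yields \eqref{eq2.34} with constants depending only on $n$ and ellipticity, and then your Step 3 (Meyers plus Caccioppoli plus H\"older, with no need for Moser on chains) gives \eqref{eq2.35}. As written, your argument does not establish \eqref{eq2.34}.
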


In the introduction we defined the fundamental solutions associated with an elliptic operator. In the case when the coefficients of the underlying matrix are $t$-independent, 
\begin{equation}\label{eq2.28}
\Gamma(x,t,y,s)=\Gamma(x,t-s,y,0).
\end{equation}
In light of \eqref{eq1.6} and \eqref{eq1.7}, for every integer $m\geq 0$, we have the estimates
\begin{equation}
\left| (\partial_t)^{m} \Gamma(x,t,y,0) \right| \leq \frac{C}{(|t|+|x-y|)^{n+m-1}},
\label{eq2.29}\end{equation}
and if $\alpha_0$ is the De Giorgi-Nash exponent in \eqref{eq1.6}, 
\begin{multline}
\left| (\partial_t)^{m} \Gamma(x+h,t,y,0)-(\partial_t)^{m}\Gamma(x,t,y,0)\right|\\[4pt]
+\,\,\left| (\partial_t)^{m} \Gamma(x,t,y+h,0)-(\partial_t)^{m}\Gamma(x,t,y,0)\right|\, \leq\, 
C \frac{\left| h \right|^{\alpha_0}}{(|t|+|x-y|)^{n+m-1+\alpha_0}},\label{eq2.30}\\[4pt]
 \text{whenever} \,\,\, 2|h|\leq \max(|x-y|, t).
\end{multline}
See \cite{AAAHK} for a detailed discussion of these and related results.

\begin{lemma}\label{l2.36}
 Let $L$ and $L^*$ satisfy the standard assumptions and be $t$-independent. 
 Let $Q$ be a cube in $\rn$, and let $x$ be any fixed point in $Q$. Then for $k\geq 1$ and $m\geq -1$, 
\begin{equation}
\int\limits_{2^{k+1}Q \setminus 2^kQ} \left| \left(2^k \ell(Q) \right)^m\left(\partial_t\right)^{m+1} 
\nabla_{y,s} \Gamma(x,t,y,s)\big|_{s=0} \right|^2\,dy\leq C_m \left(2^k \ell(Q) \right)^{-n-2}\,,
\quad \forall t\in\mathbb{R}\,,
\label{eq2.37}\end{equation}
and 
\begin{equation}
\int\limits_{2Q} \left|t^m\left(\partial_t\right)^{m+1} \nabla_{y,s} 
\Gamma(x,t,y,s)\big|_{s=0} \right|^2\,dy\leq C_m \left( \ell(Q) \right)^{-n-2}\,,
\quad t\approx\ell(Q)\,.
\label{eq2.38}\end{equation}
If $\alpha_0>0$ is the H\"{o}lder exponent in \eqref{eq1.6}, $k\geq 4$ and $m\geq 0$, then for any $y, y'\in Q$ we have
\begin{equation}
\int\limits_{2^{k+1}Q \setminus 2^kQ} \left|\nabla_{x,t}\partial^{m}_t \left( \Gamma(x,t,y,0)-\Gamma(x,t,y',0)\right) \right|^2\,dx \leq C_m 2^{-2\alpha_0 k}\left(2^k \ell(Q) \right)^{-n-2m}.
\label{eq2.39}\end{equation}
Furthermore, fix $(x_0,t_0) \in \mathbb{R}^{n+1}$ and suppose that $|x_0 - x| < 2\rho, |t_0-t|<2\rho$
and that $k\geq 2$.  Then, 
\begin{equation}\label{eq2.40}
\int_{2^k\rho\leq |x_0-y|<2^{k+1}\rho}\left|\nabla_{y,s}\partial^m_s\Big(\Gamma(x,t,y,s) - 
\Gamma(x_0,t_0,y,s)\Big)\big|_{s=0}\right|^2dy\leq \,C_{m} \,2^{-2k\alpha_0}(2^k\rho)^{-n-2m}.
\end{equation}
\end{lemma}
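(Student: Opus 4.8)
The plan is to derive all four bounds \eqref{eq2.37}--\eqref{eq2.40} from a single template, exploiting $t$-independence and the slice estimates of Lemma~\ref{l2.33}. Two structural facts underlie everything. First, since the coefficients do not depend on the transversal variable, differentiating the defining identity for $\Gamma$ shows that $(\partial_t)^{j}\Gamma$---a $t$-derivative in \emph{either} slot---is, away from the pole, still a weak ($W^{1,2}_{\loc}$) null solution of an operator satisfying the standard assumptions: in the first pair of variables that operator is $L$, and in the second pair it is $-\nabla\cdot(A^{T}\nabla)$, whose solutions are complex conjugates of solutions of $L^{*}$ and which therefore inherits the De Giorgi--Nash--Moser bounds from $L^{*}$; the requisite regularity of $\Gamma$ I would take from \cite{HK}, \cite{AAAHK}. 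Second, by \eqref{eq2.28} one has $(\partial_s)^{j}\Gamma(x,t,y,s)\big|_{s=0}=(-1)^{j}(\partial_t)^{j}\Gamma(x,t,y,0)$, so every $s$-derivative occurring in $\nabla_{y,s}$ may be traded for a first-pair derivative, making the pointwise bounds \eqref{eq2.29} and the H\"older bounds \eqref{eq2.30} available. Finally, the basic geometric observation is that, with $x\in Q$ and $y$ ranging over $2^{k+1}Q\setminus 2^{k}Q$ (for \eqref{eq2.40}, with $y$ in the annulus of inner radius $2^{k}\rho$ about $x_{0}$ and the two pole-candidates within $2\rho$ of $x_{0}$), the separation in the tangential variables alone forces every pole of the integrand to lie at distance $\gtrsim 2^{k}\ell(Q)$ (resp. $\gtrsim 2^{k}\rho$) from the relevant slice, \emph{for all values of the remaining free parameters}---in particular for all $t\in\mathbb R$, including $t=0$, in \eqref{eq2.37}--\eqref{eq2.39} (I read \eqref{eq2.39} as asserted for each $t\in\mathbb R$) and for any $t_{0}$ in \eqref{eq2.40}; in \eqref{eq2.38} the hypothesis $t\approx\ell(Q)$ plays the same role.

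With this in hand, the template is as follows. Cover the annulus in question by $O_{n}(1)$ cubes $P$ of side $\sim 2^{k}\ell(Q)$ (resp. $\sim\ell(Q)$ for \eqref{eq2.38}, $\sim 2^{k}\rho$ for \eqref{eq2.40}), chosen small enough that the fattened box $P^{\ast\ast}$ of \eqref{eq2.35}, placed at the appropriate height (the slice $\{s=0\}$, or height $t$ in \eqref{eq2.39}), misses every pole. Let $w$ be the relevant null solution: $w(y,s)=(\partial_t)^{m+1}\Gamma(x,t,y,s)$ for \eqref{eq2.37}--\eqref{eq2.38}; $w(x,t)=(\partial_t)^{m}\big(\Gamma(x,t,y,0)-\Gamma(x,t,y',0)\big)$ for \eqref{eq2.39}; $w(y,s)=(\partial_s)^{m}\big(\Gamma(x,t,y,s)-\Gamma(x_{0},t_{0},y,s)\big)$ for \eqref{eq2.40}. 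Then \eqref{eq2.35} with $p=2$ gives, schematically,
$$\int_{P}\big|\nabla w(\cdot\,,0)\big|^{2}\ \lesssim\ \ell(P)^{\,n-2}\,\Big(\sup_{P^{\ast\ast}}|w|\Big)^{2},$$
into which one inserts a pointwise bound for $\sup_{P^{\ast\ast}}|w|$; summing over the $O_{n}(1)$ cubes $P$ and multiplying by the indicated power of the scale---$(2^{k}\ell(Q))^{2m}$, resp. $t^{2m}\approx\ell(Q)^{2m}$---then yields the claimed estimate.

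It remains to supply the $\sup$-bound in each case. For \eqref{eq2.37} and \eqref{eq2.38} it is immediate from \eqref{eq2.29} and \eqref{eq2.28}: on $P^{\ast\ast}$, $|w|=|(\partial_t)^{m+1}\Gamma(x,t-s,y,0)|\lesssim (2^{k}\ell(Q))^{-(n+m)}$ (resp. $\ell(Q)^{-(n+m)}$), and the exponents close. For \eqref{eq2.39} I would use the H\"older bound \eqref{eq2.30} with increment $h=y'-y$: since $|y-y'|\le\operatorname{diam}(Q)\lesssim\ell(Q)$ while $|x-y|\gtrsim 2^{k}\ell(Q)$, the admissibility hypothesis $2|h|\le\max(|x-y|,t)$ holds once $k$ exceeds a dimensional threshold $k_{0}(n)$, giving $|w|\lesssim\ell(Q)^{\alpha_{0}}(2^{k}\ell(Q))^{-(n+m-1+\alpha_{0})}$ on $P^{\ast\ast}$, and the exponents produce precisely the factor $2^{-2\alpha_{0}k}(2^{k}\ell(Q))^{-n-2m}$; the finitely many scales $4\le k\le k_{0}(n)$, on which $2^{-2\alpha_{0}k}\approx_{n}1$, are handled by the triangle inequality and \eqref{eq2.29} applied to each of $\Gamma(x,t,y,0)$, $\Gamma(x,t,y',0)$. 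For \eqref{eq2.40}, \eqref{eq2.28} rewrites $w(y,s)=\pm\big(g_{y}(x,t-s)-g_{y}(x_{0},t_{0}-s)\big)$ with $g_{y}(x',\tau'):=(\partial_{\tau'})^{m}\Gamma(x',\tau',y,0)$ a null solution of $L$ away from $(y,0)$; the two evaluation points differ by the fixed vector $(x-x_{0},t-t_{0})$ of length $\lesssim\rho$ while both lie at distance $\gtrsim 2^{k}\rho$ from $(y,0)$, so De Giorgi--Nash \eqref{eq1.6} on a ball of radius $\sim 2^{k}\rho$, with the $L^{2}$-average of $g_{y}$ controlled by \eqref{eq2.29}, yields $|w|\lesssim 2^{-k\alpha_{0}}(2^{k}\rho)^{-(n+m-1)}$ on $P^{\ast\ast}$; the template then gives \eqref{eq2.40}, with $2\le k\le k_{1}(n)$ treated directly via \eqref{eq2.29}.

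The main obstacle is essentially bookkeeping: one must check, in each of the four configurations and across the full admissible range of the free parameters---in particular $t\in\mathbb R$ (including $t=0$), $t_{0}$ unrestricted, and the smallest permitted values of $k$---that the boxes $P^{\ast\ast}$ genuinely avoid all poles of the integrand, so that \eqref{eq2.35} legitimately applies, and then carry the exponent arithmetic through carefully; the PDE inputs---the Caccioppoli inequality (Proposition~\ref{p2.23}), the interior estimates of Lemma~\ref{l2.33}, and the pointwise and H\"older bounds \eqref{eq2.29}--\eqref{eq2.30}---enter only as black boxes.
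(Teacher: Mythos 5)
The paper does not give a proof of Lemma~\ref{l2.36}; it simply cites \cite{AAAHK} (Lemmas~2.8, 2.13 and estimate~(4.15) there). Your reconstruction is correct and is in fact the same strategy used in that reference: exploit $t$-independence so that $t$- and $s$-derivatives of $\Gamma$ remain null solutions in each slot, cover the annulus by boundedly many cubes on which the slice Caccioppoli/Moser estimate \eqref{eq2.35} converts the slice gradient $L^2$-norm into a sup of the solution on a fattened (pole-free) box, and then feed in \eqref{eq2.29}, the H\"older increment bound \eqref{eq2.30}, or the interior De Giorgi--Nash estimate \eqref{eq1.6} depending on which difference quotient is being estimated; the exponent bookkeeping and the thresholds $k_0(n), k_1(n)$ you identify (with the finitely many small scales handled by \eqref{eq2.29} and the triangle inequality) are exactly the routine checks required.
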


\begin{proof} See \cite[Lemma 2.8]{AAAHK}, \cite[Lemma 2.13]{AAAHK}
and \cite[(4.15)]{AAAHK}.  
\end{proof}



\begin{lemma}[\cite{AAAHK}, Lemma 2.9 and Lemma 2.10]\label{l2.43}
Suppose that $L,L^*$ satisfy the standard assumptions and 
be $t$-independent. 
Let ${\bf f}: \mathbb{R}^n \to \mathbb{C}^{n+1}.$ Then for every cube $Q$,  for all integers $k\geq
1$ and $m \geq -1$, and for all $t\in\mathbb{R}$, we have 
\smallskip
\begin{equation}\label{eq2.44}
\|\partial^{m+1}_t (S_t\nabla )\cdot({\bf f}
1_{2^{k+1}Q\setminus 2 ^kQ})\|^2_{L^2(Q)} 
\leq C_m 2^{-nk} (2 ^k\ell (Q))^{-2m-2} \|{\bf f}\|^2_{L^2(2^{k+1}Q\backslash 2^k
Q)}.\,
\end{equation}
Moreover, for each $m\geq 0$,
\begin{equation}\label{eq2.45}
\| t^{m+1}\partial^{m+1}_t \left(S_t\nabla \right)
\cdot {\bf f}\|_{L^2(\mathbb{R}^n)}\leq 
C_m\,\|
{\bf f}\|_2
\end{equation}
\end{lemma}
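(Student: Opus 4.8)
The plan is to reduce both estimates to the kernel bounds already recorded in Lemma~\ref{l2.36}, combined with duality and a Schur-type summation over dyadic annuli. First I would observe that, by definition \eqref{eq1.10}--\eqref{eq1.12}, $(\SL_t\nabla)\cdot\mathbf{f}(x) = \sum_{j=1}^{n+1}\int_{\rn} \partial_{y_j}\Gamma(x,t,y,s)|_{s=0}\, f_j(y)\,dy$, so $\partial_t^{m+1}\bigl((\SL_t\nabla)\cdot\mathbf{f}\bigr)(x) = \int_{\rn} \bigl(\partial_t^{m+1}\nabla_{y,s}\Gamma(x,t,y,s)\big|_{s=0}\bigr)\cdot \mathbf{f}(y)\,dy$ (differentiation under the integral sign is justified by the pointwise kernel bounds \eqref{eq2.29}, which for $t$-independent coefficients also control $t$-derivatives via \eqref{eq2.28}, together with dominated convergence on the region $y\in 2^{k+1}Q\setminus 2^kQ$ where $|x-y|\approx 2^k\ell(Q)$ is bounded below). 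For \eqref{eq2.44}, I would dualize: write $\|\partial_t^{m+1}(\SL_t\nabla)\cdot(\mathbf{f}1_{2^{k+1}Q\setminus 2^kQ})\|_{L^2(Q)} = \sup\bigl|\iint \overline{g(x)}\,\bigl(\partial_t^{m+1}\nabla_{y,s}\Gamma(x,t,y,s)|_{s=0}\bigr)\cdot\mathbf{f}(y)\,dx\,dy\bigr|$ over $g\in L^2(Q)$ with $\|g\|_2\le 1$, move the $x$-integration inside, apply Cauchy--Schwarz in $y$ over the annulus, and bound $\int_{2^{k+1}Q\setminus 2^kQ}\bigl|\int_Q g(x)\,\partial_t^{m+1}\nabla_{y,s}\Gamma(x,t,y,s)|_{s=0}\,dx\bigr|^2 dy$. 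The key point is that the inner $x$-integral, for fixed $y$ in the annulus, is essentially $|Q|$ times an average against a kernel controlled by \eqref{eq2.37} (with $x$ and $y$ roles swapped, which is legitimate since the same estimates hold for $L^*$, whose fundamental solution is $\overline{\Gamma^*}=\Gamma$ read in the other pair of variables); Cauchy--Schwarz in $x$ and \eqref{eq2.37} then give the factor $(2^k\ell(Q))^{-n-2(m+1)}|Q|\cdot\|g\|_{L^2(Q)}^2$. Collecting the powers of $2^k\ell(Q)$ and $|Q|=\ell(Q)^n$ yields exactly the claimed $2^{-nk}(2^k\ell(Q))^{-2m-2}\|\mathbf{f}\|_{L^2(2^{k+1}Q\setminus 2^kQ)}^2$ after taking the square root.

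For \eqref{eq2.45}, I would decompose $\mathbb{R}^n$, relative to a cube $Q$ with $\ell(Q)\approx t$ centered appropriately (or, more invariantly, relative to the ball $\Delta(x_0,t)$ for the point $x_0$ where the $L^2(\rn)$ norm is being tested — in practice one works with the sharp maximal / Schur argument of \cite{AAAHK}), into the ``local'' part $y\in 2Q$ and the annular pieces $2^{k+1}Q\setminus 2^kQ$ for $k\ge 1$. On the local part one uses \eqref{eq2.38}, and on the annuli one uses the already-established \eqref{eq2.44}. The factor $t^{m+1}\approx \ell(Q)^{m+1}$ converts the negative power $(2^k\ell(Q))^{-m-1}$ coming from each annulus into $2^{-k(m+1)}$ times a purely $L^2$-bounded piece, so that summing in $k$ converges (here $m\ge 0$ is exactly what is needed for the geometric series; the endpoint $m=-1$ is excluded from \eqref{eq2.45} precisely because the sum would only be logarithmically divergent / borderline). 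More precisely, after the reduction one is left with $\|t^{m+1}\partial_t^{m+1}(\SL_t\nabla)\cdot\mathbf{f}\|_{L^2}\lesssim \sum_{k\ge 0} 2^{-k(m+1)}\,c_k$ where $\sum_k c_k^2\lesssim \|\mathbf{f}\|_2^2$ by orthogonality-type or Cotlar--Stein considerations (or simply by a vector-valued Schur test matching the off-diagonal decay against the input), giving the bound $C_m\|\mathbf{f}\|_2$.

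The main obstacle, and the step deserving the most care, is the duality/Schur argument for \eqref{eq2.44}: one must correctly track which variable the kernel estimate \eqref{eq2.37} applies to and invoke the $L^*$ version of Lemma~\ref{l2.36} to handle the $x$-integration, and one must be careful that the average $\fint_Q$ appearing after pulling $|Q|$ out is genuinely controlled by a single application of \eqref{eq2.37} rather than requiring a further subdivision of $Q$ itself — this is fine because $x$ ranges over $Q$ while $y$ is in a far annulus, so all the relevant distances are comparable to $2^k\ell(Q)$ and the kernel is essentially constant at that scale up to the stated decay. Everything else is bookkeeping of dyadic exponents. I would remark that both estimates, and this exact line of reasoning, are carried out in \cite[Lemma 2.9, Lemma 2.10]{AAAHK}, so in the write-up it suffices to indicate the reduction to Lemma~\ref{l2.36} and refer there for the details.
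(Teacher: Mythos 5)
The paper gives no proof of this lemma at all; it is cited directly from \cite{AAAHK} (Lemmas 2.9 and 2.10), so there is no argument in the present text to compare against. Your plan of reducing both estimates to the kernel bounds of Lemma~\ref{l2.36} and then running an annular Schur-type decomposition is indeed the route taken in \cite{AAAHK}, and your final remark that the write-up should indicate the reduction and cite the source is the right editorial choice.

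One substantive comment on your treatment of \eqref{eq2.44}: the duality detour is unnecessary, and as you describe it the control of the inner $x$-integral is muddled. The direct argument is cleaner. For each fixed $x\in Q$, Cauchy--Schwarz in $y$ over the annulus $A_k:=2^{k+1}Q\setminus 2^kQ$ followed by \eqref{eq2.37} (which holds uniformly for all $x\in Q$ and all $t$) gives the pointwise bound
\begin{equation*}
\bigl|\partial_t^{m+1}(\SL_t\nabla)\cdot(\mathbf{f}1_{A_k})(x)\bigr|^2
\;\le\; C_m\,(2^k\ell(Q))^{-n-2-2m}\,\|\mathbf{f}\|^2_{L^2(A_k)},
\end{equation*}
and integrating over $x\in Q$ supplies the factor $\ell(Q)^n(2^k\ell(Q))^{-n}=2^{-nk}$, which is exactly \eqref{eq2.44}. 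If instead you insist on dualizing, then after Cauchy--Schwarz in $x$ for fixed $y$ the correct move is Fubini: you land on $\int_Q\int_{A_k}|K(x,y)|^2\,dy\,dx$, controlled once more by \eqref{eq2.37} with no need to exchange the roles of $x$ and $y$. Your stated justification --- that one should invoke a ``role-swapped'' $L^*$ version of \eqref{eq2.37}, and that the kernel is ``essentially constant'' over the annulus --- does not quite hold up: the object is $\partial_t^{m+1}\nabla_{y,s}\Gamma$, whose spatial gradient falls on the integration variable and whose $t$-derivatives fall on the pole variable, so naive relabelling via $\Gamma(x,t,y,s)=\overline{\Gamma^*(y,s,x,t)}$ does not produce the right configuration of derivatives for an $L^*$-analogue of \eqref{eq2.37} to apply to the $x$-integral; moreover the gradient of $\Gamma$ is only controlled in an averaged $L^2$ sense (Caccioppoli plus De Giorgi--Nash), not pointwise, so ``essentially constant'' over-claims. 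None of this is fatal, since Fubini repairs the duality version, but the direct argument avoids the issue entirely. Your argument for \eqref{eq2.45} --- tile $\rn$ by cubes of sidelength $\approx t$, use \eqref{eq2.38} locally and \eqref{eq2.44} on the annuli, note that each point lies in $\lesssim 2^{nk}$ of the $k$-th annuli, and sum the geometric series in $k$, which converges precisely because $m\ge 0$ --- is correct as stated.
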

We note that by translation invariance in $t$, taking ${\bf f}=(0,...,0,f)$ 
to be a purely ``vertical" vector,
we obtain from \eqref{eq2.45} that
\begin{equation}\label{eq2.46}
\| t^{m+1}\partial^{m+2}_t S_t f\|_{L^2(\mathbb{R}^n)}\leq 
C_m\,\|f\|_2\,.
\end{equation}
For the sake of notational convenience, we observe that \eqref{eq2.44}
can be reformulated as
\begin{equation}\label{eq2.47}
\Vert\theta_{t}({\bf f}1_{2^{k+1}Q\backslash 2^{k}Q})\Vert_{L^{2}(Q)}^{2}\leq
C_{m}2^{-nk}\left(\frac{t}{2^{k}\ell(Q)}\right)^{2m+2}\Vert
{\bf f}\Vert^2_{L^{2}(2^{k+1}Q\backslash 2^{k}Q)}\end{equation}
 where $\theta_{t}=t^{m+1}\partial_{t}^{m+1}(S_{t}\nabla).$ We now consider generic 
 operators $\theta_{t}$ which
satisfy~\eqref{eq2.47} for some integer $m\geq 1$. 

\begin{lemma}[\cite{AAAHK}, Lemma 3.5]
\label{l2.48} (i) Suppose that $\{\theta_t\}_{t\in \mathbb{R}}$ is a 
family of operators satisfying  \eqref{eq2.47}, for some $m\geq 1$, and for 
all $t\approx\ell (Q)$.  Suppose also that $\sup_{t}
\|\theta_t\|_{2\to2}\leq C$, and that $\theta_t1=0$ for all $t\in \mathbb{R}$ (our hypotheses allow $\theta_t1$ to be defined as an
element of $L^2_{\loc }$). Then for $h\in \dot{L}^2_1 (\mathbb{R}^n)$,
\begin{equation}\label{eq2.49}\int_{\mathbb{R}^n}|\theta_th|^2\leq Ct^2\int_{\mathbb{R}^n}|\nabla_x h|^2.\end{equation}
(ii) If, in addition, $\|\theta_t \nabla_x\|_{2 \to 2} \leq C/t,$ then also
\begin{equation}\label{eq2.50} \iint_{\mathbb{R}^{n+1}_+}|\theta_t f(x)|^2 \frac{dxdt}{t} \leq C \|f\|^2_2.\end{equation}\end{lemma}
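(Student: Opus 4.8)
\textbf{Proof plan for Lemma \ref{l2.48}.}

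The plan is to follow the scheme of \cite[Lemma 3.5]{AAAHK}, which is a variant of the $T(1)$/local $Tb$-type square function argument of Christ--Journ\'e and Coifman--Meyer, combined with the quasi-orthogonality estimate supplied by \eqref{eq2.47}. For part (i), the goal is the Caccioppoli-type bound $\int_{\rn}|\theta_t h|^2 \lesssim t^2 \int_{\rn}|\nabla_x h|^2$ for $h\in\dot L^2_1(\rn)$. Fix $t>0$ and set $\ell=\ell(Q)$ for a generic dyadic cube $Q$ with $\ell(Q)\approx t$; decompose $\rn=\bigcup_{k\ge 0}\big(2^{k+1}Q\setminus 2^kQ\big)$ (with the convention $2^0Q=Q$). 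Using $\theta_t 1=0$ we may replace $h$ by $h-h_{2Q}$ on the support integral over $Q$, so that on each annulus we estimate $\theta_t\big((h-h_{2Q})1_{2^{k+1}Q\setminus 2^kQ}\big)$ on $Q$. For $k=0,1$ one uses $L^2$-boundedness of $\theta_t$ together with Poincar\'e's inequality $\|h-h_{2Q}\|_{L^2(2Q)}\lesssim \ell\,\|\nabla_x h\|_{L^2(2Q)}\lesssim t\,\|\nabla_x h\|_{L^2(CQ)}$. For $k\ge 2$ one uses the decay estimate \eqref{eq2.47}: since $m\ge 1$,
\begin{equation*}
\|\theta_t\big((h-h_{2Q})1_{2^{k+1}Q\setminus 2^kQ}\big)\|_{L^2(Q)}^2\lesssim 2^{-nk}\Big(\frac{t}{2^k\ell}\Big)^{2m+2}\|h-h_{2Q}\|_{L^2(2^{k+1}Q\setminus 2^kQ)}^2,
\end{equation*}
and one bounds $\|h-h_{2Q}\|_{L^2(2^{k+1}Q)}\lesssim 2^k\ell\,\|h-h_{2^{k+1}Q}\|_{L^2(2^{k+1}Q)}/(2^k\ell) + \sum_{j\le k}|h_{2^{j+1}Q}-h_{2^jQ}|\,|2^{k+1}Q|^{1/2}$, each term controlled by $2^k\ell\,\|\nabla_x h\|_{L^2(2^{k+1}Q)}$ via Poincar\'e (the telescoping of averages loses only a harmless factor of $k$). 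Plugging in, the $k$-sum converges because $m+1\ge 2>1/2$ beats the geometric growth $2^{k(2+\cdots)}$ from $\|h-h_{2Q}\|^2$ and the factor $k^2$, yielding $\|\theta_t h\|_{L^2(Q)}^2\lesssim t^2\|\nabla_x h\|_{L^2(CQ)}^2$; summing over a grid of such $Q$ with bounded overlap of the dilates $CQ$ gives \eqref{eq2.49}.

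For part (ii), with the extra hypothesis $\|\theta_t\nabla_x\|_{2\to2}\le C/t$, the plan is the standard Calder\'on--Zygmund/square-function reduction: introduce a nice approximate identity $P_t$ (say $P_t=e^{t^2\Delta}$ or a smooth compactly supported averaging), write $\theta_t f = \theta_t(f-P_tf) + \theta_t P_t f$, and estimate the two pieces separately. For the first piece, $f-P_tf$ behaves like $\nabla_x$ applied to something of size $t$ at frequency scales $\gtrsim 1/t$; more precisely one uses the Littlewood--Paley/Calder\'on reproducing formula $f-P_t f = \int_0^{c} \big(-s\partial_s Q_{st}\big)\,\tfrac{ds}{s}\,f$ type identity combined with $\|\theta_t\nabla_x\|_{2\to2}\le C/t$ to absorb a derivative and gain a factor $(s\wedge s^{-1})^{\sigma}$, so that by Schur's lemma and Plancherel $\iint|\theta_t(f-P_tf)|^2\tfrac{dxdt}{t}\lesssim\|f\|_2^2$. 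For the second piece $\theta_t P_t f$, one applies part (i) with $h=P_tf$: by \eqref{eq2.49}, $\int_{\rn}|\theta_t P_t f|^2 \lesssim t^2\int_{\rn}|\nabla_x P_t f|^2$, and then $\iint_{\reu} t^2|\nabla_x P_tf|^2\tfrac{dxdt}{t} = \iint |t\nabla_x P_t f|^2\tfrac{dxdt}{t}\lesssim\|f\|_2^2$ is the classical square-function estimate for the (smooth, cancellation-carrying) kernel $t\nabla_x P_t$, proved by Plancherel. Adding the two contributions gives \eqref{eq2.50}.

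The main obstacle I anticipate is the careful bookkeeping in part (i): one must handle the ``telescoping of averages'' $h_{2^{j+1}Q}-h_{2^jQ}$ across all scales without losing summability, and must verify that the off-diagonal decay exponent $m+1$ coming from \eqref{eq2.47} (which requires the hypothesis $m\ge1$, not merely $m\ge 0$) genuinely dominates the growth $2^{k(n/2+1)}$ hidden in $\|h-h_{2Q}\|_{L^2(2^{k+1}Q)}$ after Poincar\'e --- i.e. that $m+1 - (n/2+1) - (\text{slack})>0$ once one writes everything in terms of $\|\nabla_x h\|_{L^2(CQ)}$ and sums the overlapping grid; this is exactly why $m\ge 1$ is assumed. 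In part (ii), the only delicate point is choosing $P_t$ so that $f-P_tf$ carries a full gradient with the right $t$-scaling and so that $t\nabla_x P_t$ has the cancellation needed for the classical bound; both are routine once the right $P_t$ is fixed. Since all of this is carried out in \cite[Lemma 3.5]{AAAHK}, I would present the argument by recalling that reduction and emphasizing only the two places where the present hypotheses ($m\ge1$ in (i); $\|\theta_t\nabla_x\|_{2\to2}\le C/t$ in (ii)) enter.
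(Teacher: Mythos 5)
The paper offers no proof of Lemma \ref{l2.48} --- it is quoted directly from \cite[Lemma 3.5]{AAAHK} --- so what is being assessed is your reconstruction. The overall strategy you outline (Christ--Journ\'e $T(1)$-type argument: annular decomposition plus Poincar\'e plus the off-diagonal bound \eqref{eq2.47} for part (i); Littlewood--Paley quasi-orthogonality, with $\|\theta_t\nabla_x\|_{2\to2}\lesssim 1/t$ absorbing a derivative on the high-frequency side and part (i) on the low-frequency side, for part (ii)) is the right one, and part (ii) of your sketch is essentially fine.

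However, there is a genuine gap in your bookkeeping for part (i), and your own sanity check exposes it. If one bounds $\|h-h_{2Q}\|_{L^2(2^{k+1}Q)}$ by Poincar\'e plus telescoping and then feeds the whole thing into \eqref{eq2.47} annulus by annulus, the telescoping of averages contributes $\|h-h_{2Q}\|_{L^2(2^{k+1}Q)}\gtrsim (2^k)^{n/2}\ell\,\|\nabla h\|_{L^2(2^{k+1}Q)}$ (for $n\ge 3$), and after summing the grid overlaps (a factor $2^{nk}$) the requirement becomes $2m+2>n$, i.e.\ $m>n/2-1$. This is strictly stronger than the stated hypothesis $m\ge 1$ once $n\ge 4$, and indeed the condition you write, $m+1-(n/2+1)>0$, is exactly this obstruction: it fails for $m=1$ as soon as $n\ge 2$. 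Saying ``this is why $m\ge 1$ is assumed'' does not resolve it. The fix is to split the two Poincar\'e contributions and treat them differently. Write
\begin{equation*}
h-h_{2Q}=\Bigl[(h-h_{4Q})\mathbf 1_{4Q}+\sum_{k\ge 2}(h-h_{2^{k+1}Q})\mathbf 1_{A_k}\Bigr]+\sum_{k\ge 1}(h_{2^{k+1}Q}-h_{2Q})\mathbf 1_{A_k},
\end{equation*}
where $A_k=2^{k+1}Q\setminus 2^kQ$ ($A_1:=4Q$). For the first bracket, Poincar\'e on the \emph{correct} cube $2^{k+1}Q$ gives $\|h-h_{2^{k+1}Q}\|_{L^2(A_k)}\lesssim 2^k\ell\,\|\nabla h\|_{L^2(2^{k+1}Q)}$ (linear growth in $2^k$, not $(2^k)^{n/2}$), and \eqref{eq2.47} then gives a gain $2^{-k(n/2+m+1)}\cdot 2^k$; the $2^{-nk/2}$ cancels the grid-overlap factor $2^{nk/2}$ exactly, and the net geometric decay is $2^{-km}$, summable for $m\ge 1$. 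For the constant pieces, resum the telescoping: with $b_j:=h_{2^{j+1}Q}-h_{2^jQ}$ one has $\sum_k(h_{2^{k+1}Q}-h_{2Q})\mathbf 1_{A_k}=b_1\mathbf 1_{\rn}+\sum_{j\ge 2}b_j\mathbf 1_{(2^jQ)^c}$. Now exploit $\theta_t1=0$: $\theta_t\mathbf 1_{(2^jQ)^c}=-\theta_t\mathbf 1_{2^jQ}$, and the crucial point is to estimate the \emph{left} side, i.e.\ $\|\theta_t\mathbf 1_{(2^jQ)^c}\|_{L^2(Q)}\lesssim\sum_{i\ge j}2^{-i(m+1)}\ell^{n/2}\lesssim 2^{-j(m+1)}\ell^{n/2}$ by \eqref{eq2.47} (bounding $\theta_t\mathbf 1_{2^jQ}$ directly only gives the useless bound $\ell^{n/2}$). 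Since $|b_j|\lesssim (2^j\ell)^{1-n/2}\|\nabla h\|_{L^2(2^{j+1}Q)}$, this yields a net factor $2^{-j(m+n/2)}\ell\,\|\nabla h\|_{L^2(2^{j+1}Q)}$, and again the grid overlap $2^{nj}$ is beaten for any $m\ge 1$. Without this ``complement plus resummation'' trick, the argument as you sketched it does not close under the stated hypothesis.
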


As mentioned earlier, for the $t$-independent operators the majority of the results on the boundedness of the layer potentials as postulated in Theorem~\ref{t1.13} has been established in \cite{HMiMo}. (See the introduction for a more detailed discussion). Here, we recall the boundary convergence results, which will, in particular, clarify the definition of the operators $\nabla_\| \SL_t^L\Big|_{t=0}$, $\widetilde{\K}$, and $\K$ and specify the meaning of \eqref{eq1.12.1} in the $t$-independent case. 

\begin{lemma}[\cite{HMiMo} (see also \cite{AAAHK}, Lemma 4.18 for $p=2$)]\label{l2.59} Suppose that $L,L^*$ satisfy the standard assumptions and are $t$-independent and retain the significance of constants $p_0<1$ and $\eps>0$ from Theorem~\ref{t1.13}.  

There exist  operators $\K^L, \widetilde{\K}^{L^*}, \nabla_\| \SL_t^L\Big|_{t=0}$ satisfying \eqref{eq1.20}--\eqref{eq1.22.1} with the following properties
\begin{enumerate}
\item[(i)] $\partial_{\nu_A}^\pm \nabla \SL^Lf (\cdot, t)  \to \left(\pm \frac{1}{2}I +\widetilde{\K}\right)f,$  as $t \to 0,$ weakly in $ L^p$, for all $f\in L^p$, when $p>1$ and in the sence of distributions when $p\leq 1$, $f\in H^p$.  
\item[(ii)]$\mathcal{D}_{\pm t} f \to \left(\mp\frac{1}{2}I + \K \right)f,$  as $t \to 0,$ weakly in $L^{p'}$ when $p>1$, $f\in L^{p'}$, and in the weak* topology of $\Lambda^\beta$, $0\leq \beta<\alpha_0$, for $f\in \Lambda^\beta$.
\item[(iii)]  $\nabla_\| {\SL}_{\pm t} f 
\to \nabla_\| \SL_t^L\Big|_{t=0}f,$  as $t \to 0,$ weakly in $ L^p$, for all $f\in L^p$, when $p>1$ and in the sence of distributions when $p\leq 1$, $f\in H^p$. 
\end{enumerate} 

Moreover, there exists an operator $\T_L: L^p\to L^p$, $1<p<2+\eps$, such that 
$ \partial_t \SL_t^{\pm}f\to \left(\mp \frac{1}{2A_{n+1,n+1}}+\T_L\right) f$, as $t\to 0$, weakly in $L^p.$
\end{lemma}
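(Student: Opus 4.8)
\emph{Proof proposal.} The boundedness estimates \eqref{eq1.20}--\eqref{eq1.22.1} for $t$-independent $L$ are exactly the content of \cite{HMiMo} (and of \cite{AAAHK} for $p=2$), so the plan is to take them as given: they already define $\K^L$, $\widetilde{\K}^{L^*}$ and $\nabla_\|\SL^L_t|_{t=0}$ as bounded operators on a dense class and supply the extension to the full scale of spaces. What remains is to identify these operators as the boundary traces in (i)--(iii) and to derive the $\T_L$ relation. I would fix once and for all a dense class to argue on first --- compactly supported Schwartz functions for $L^p$, $1<p<\infty$; finite linear combinations of $(p,2)$-atoms for $H^p$, $p\le1$; smooth bumps for the $\Lambda^\beta$ data --- and only at the very end upgrade to the stated topologies, using the a priori bounds on $\N(\nabla\SL^L\cdot)$, $N_*(\D^{L^*}\cdot)$, the square functions, and on $\widetilde\K,\K,\nabla_\|\SL^L_t|_{t=0}$.

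First I would dispatch the single layer and its tangential gradient, which carry \emph{no} jump. For $t$-independent coefficients $\Gamma(x,t,y,0)$ is jointly continuous off the diagonal and locally integrable in $y$, with $|\nabla_x\Gamma(x,t,y,0)|\lesssim(|t|+|x-y|)^{-n}$ and the H\"older bounds \eqref{eq2.30}; hence for $f$ in the dense class, $\SL^L_{\pm t}f\to\SL^L_t|_{t=0}f$ and $\nabla_\|\SL^L_{\pm t}f\to\nabla_\|\SL^L_t|_{t=0}f$ pointwise and locally in $L^2$, with the same limit from both sides, where $\nabla_\|\SL^L_t|_{t=0}$ is the Calder\'on--Zygmund operator with kernel $\nabla_\|\Gamma(\cdot,0,\cdot,0)$ (made meaningful, after commuting with a truncating cutoff, via the cancellation of $f$). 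This is precisely the operator of \eqref{eq1.20}, which proves (iii) on the dense class.

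The heart of the matter is (i)--(ii). The \emph{magnitude} of the jump is a distributional fact: since $L_{x,t}\Gamma(x,t,y,0)=\delta_{(y,0)}$ one gets $L(\SL^L_t f)=f\otimes\delta_{\{t=0\}}$ on $\ree$, which forces $\partial_{\nu_A}^+\SL^L f-\partial_{\nu_A}^-\SL^L f=f$ (matching the conventions of \eqref{eq1.12.1}), and similarly --- via the Green's identity pairing $\D^{L^*}$ against $\partial_{\nu_A}\SL^L$, or a divergence-theorem computation in the half-space --- a jump of $\mp f$ for the double layer; the analytic content is that the one-sided limits exist \emph{individually} for nice $f$ and average to the principal-value operators. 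For this I would freeze the coefficients at a boundary point $x_0$, write $\Gamma_0$ for the fundamental solution of $-\dv(A(x_0,0)\nabla)$, and use \eqref{eq2.39}--\eqref{eq2.40} together with the local H\"older dependence of the fundamental solution on the coefficients to see that $\nabla_{x,t}\bigl(\Gamma(x,t,y,0)-\Gamma_0(x-y,t)\bigr)$ gains a power $|x-x_0|^{\alpha_0}$ near $x_0$ and so contributes no jump there. For the constant-coefficient operator $\Gamma_0$ is homogeneous of degree $1-n$, whence $\partial_t\Gamma_0(x-y,t)$ is an $L^1$-normalized approximate-identity kernel; a change of variables, together with the divergence theorem in $\reu$ after the usual truncation at infinity, gives $\int_{\rn}\partial_t\Gamma_0(x-y,t)\,dy\to\mp\tfrac12$ as $t\to0^\pm$, while the remaining (principal-value) part of the full operator converges, for $f$ in the dense class, to the bounded operator $\widetilde\K$ of \eqref{eq1.21} by the standard Calder\'on--Zygmund argument. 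This yields $\partial_{\nu_A}^\pm\nabla\SL^L f\to(\pm\tfrac12 I+\widetilde\K)f$, and the same freeze-and-compare scheme applied to the double-layer kernel $\overline{\partial_{\nu^*}\Gamma^*}$ gives $\D^{L^*}_{\pm t}f\to(\mp\tfrac12 I+\K^{L^*})f$, in each case in the topologies listed. Finally, writing $\partial_{\nu_A}\SL^L_t f=-A_{n+1,n+1}\,\partial_t\SL^L_t f-\sum_{j\le n}A_{n+1,j}\,\partial_j\SL^L_t f$ and invoking the no-jump of the $j\le n$ terms, I would solve for $\partial_t\SL^L_t f$ to obtain $\partial_t\SL^{L,\pm}_t f\to(\mp\tfrac{1}{2A_{n+1,n+1}}+\T_L)f$ with $\T_L:=-A_{n+1,n+1}^{-1}\bigl(\widetilde\K+\sum_{j\le n}A_{n+1,j}\,\partial_j\SL^L_t|_{t=0}\bigr)$, bounded on $L^p$, $1<p<2+\eps$, by \eqref{eq1.20}--\eqref{eq1.21}.

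To close, I would upgrade from the dense class. For $1<p<\infty$ the uniform bound $\sup_{|t|<1}\|\D^{L^*}_t f\|_{L^{p'}}\lesssim\|N_*(\D^{L^*}f)\|_{L^{p'}}$ (and its analogues for $\partial_{\nu_A}\SL^L$, $\nabla_\|\SL^L$, $\partial_t\SL^L$) let one extract a weak limit in $L^{p'}$, resp.\ $L^p$, by Banach--Alaoglu, identify it by testing against the dense class, and then conclude full convergence from uniqueness of the weak limit. For $p\le1$ one tests against $\la=\Lambda^{n(1/p-1)}$ and uses $H^p$--$\la$ duality with \eqref{eq1.21} to obtain convergence in $\mathcal{S}'$; for $\Lambda^\beta$ data the argument is symmetric, through weak-$*$ convergence against $H^{n/(n+\beta)}$ and \eqref{eq1.16}, \eqref{eq1.22.1}. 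I expect the main obstacle to be the jump-constant step: with only bounded measurable, $t$-independent coefficients there is no classical computation available, and the $\pm\tfrac12$ must be squeezed out of the freezing-of-coefficients reduction combined with the De Giorgi--Nash-type kernel bounds of Lemma~\ref{l2.36} and \eqref{eq2.30} alone.
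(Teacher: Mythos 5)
Your proposal takes the a priori estimates \eqref{eq1.20}--\eqref{eq1.22.1} as given, which is reasonable, but the step in which you extract the $\pm\frac12$ jump contains a genuine gap. You propose to freeze the coefficients at a boundary point $x_0$, compare $\Gamma(x,t,y,0)$ to the constant-coefficient fundamental solution $\Gamma_0(x-y,t)$ of $-\dv(A(x_0,0)\nabla)$, and claim that $\nabla_{x,t}\bigl(\Gamma-\Gamma_0\bigr)$ gains a power $|x-x_0|^{\alpha_0}$ near $x_0$. For merely bounded measurable $t$-independent coefficients this comparison is not available: \eqref{eq2.39}--\eqref{eq2.40} express H\"older continuity of $\Gamma$ in its own arguments --- a De Giorgi--Nash consequence --- not closeness of $\Gamma$ to a frozen-coefficient parametrix. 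The parametrix comparison requires Dini or VMO (or better) modulus of continuity for $A$; for $L^\infty$ coefficients the kernel need not be asymptotic to any constant-coefficient profile near the diagonal, so there is no ``principal-value part plus the constant-coefficient $\pm\frac12$'' decomposition available. Your closing caveat correctly flags this as the main obstacle, but the proposed mechanism does not supply the missing idea.

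The route followed in \cite{AAAHK} and \cite{HMiMo}, mirrored for $t$-dependent operators in Lemma~\ref{l7.1} and Proposition~\ref{p7.15} of the present paper, separates the two questions you are merging. Existence of the one-sided limits is a Fatou-type consequence of the a priori estimate $\N(\nabla\SL^L f)\in L^p$: one shows directly (Lemma~\ref{l7.1}(i)--(iv)) that the variational conormal derivative $\partial_{\nu_A}^\pm\SL^L f$ exists in $H^p$, that $u=\SL^L f$ has a non-tangential trace $g\in H^{1,p}$, and that $\partial_{\nu_A}\SL^L f(\cdot,t)$ and $\fint_{t/2}^{2t}\nabla_\|\SL^L_\tau f\,d\tau$ converge to the respective traces --- with no kernel asymptotics near the diagonal. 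The jump \emph{magnitude} $\partial_{\nu_A}^+\SL^Lf-\partial_{\nu_A}^-\SL^L f=f$ then follows from the distributional identity you correctly observe, $L(\SL^L_t f)=f\otimes\delta_{\{t=0\}}$, via the mollification argument of Proposition~\ref{p7.15}, Step~V: set $u_\eta:=L^{-1}(f\otimes\varphi_\eta)$, verify $\iint A\nabla u_\eta\cdot\nabla\Psi\to\int f\Psi$, and pass $u_\eta\to\SL^L f$ on both half-spaces using uniform $L^2(\ree)$ bounds on $\nabla\SL^{L,\tau}f$. The $\pm\frac12$ falls out by defining $\widetilde\K$ as the average of the two one-sided traces; nothing is computed pointwise at the boundary. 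The same issue touches your (iii): the assertion that $\nabla_\|\SL^L_{\pm t}f\to\nabla_\|\SL^L_t|_{t=0}f$ pointwise on the dense class already presumes the existence of the boundary operator, which again comes from Lemma~\ref{l7.1}(i)--(ii) (and, for the non-averaged convergence on slices, from the $t$-independence via \eqref{eq2.34}), not from direct kernel manipulation. Your algebraic derivation of $\T_L$ from (i) and (iii) is fine once those are in place, and your proposed upgrade from dense class to the stated topologies via the uniform-slice bounds and Banach--Alaoglu is sound.
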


\section{The first main estimate:  square function bounds \`a la the Kato problem}\label{s3}

In this section and the next, we prove a pair of ``main estimates" 
that are really the deep facts underlying all of the results in this paper.
The first is an $L^2$ square function bound, 
which may be viewed as an extension
of the solution of the Kato problem \cite{AHLMcT}.  We also deduce $L^p$ and endpoint versions,
as a corollary of this $L^2$ estimate.  We recall that 
the ``standard assumptions" are listed in subsection \ref{s2.3}.

\begin{lemma}
\label{l3.1} Suppose that $L$ and 
$L^*$ satisfy the standard assumptions and are $t$-independent. 
If $\vec{f} \in L^2(\rn, \mathbb{C}^{n+1})$, 
then
\begin{equation}\label{eq3.2}
\left(\int_{-\infty}^\infty\int_{\mathbb{R}^{n}}|t\nabla (\SL_t \nabla) 
\cdot \vec{f}(x,t)|^{2}\frac{dxdt}{|t|}\right)^{\frac{1}{2}} \leq C \| \vec{f}\|_{L^2(\rn)}.
\end{equation}
\end{lemma}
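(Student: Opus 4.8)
The plan is to reduce the square function bound \eqref{eq3.2} to a quadratic estimate for a family of operators $\theta_t$ built from $\SL_t\nabla$, and then to treat that estimate by the technology of the Kato problem in the spirit of \cite{AHLMcT}. First I would set $\Theta_t \vec f := t\,\nabla_{x,t}(\SL_t\nabla)\cdot \vec f$, so that the left side of \eqref{eq3.2} is $\iint_{\ree}|\Theta_t\vec f|^2\,\frac{dxdt}{|t|}$; by $t$-independence and translation invariance in $t$ these operators have the off-diagonal decay recorded in \eqref{eq2.44}--\eqref{eq2.47}, i.e.\ they satisfy \eqref{eq2.47} with (say) $m=1$ after absorbing one factor of $t$, and by the single layer square function bound \eqref{eq1.23} together with the Caccioppoli inequality (Proposition~\ref{p2.23}) applied on Whitney boxes, $\Theta_t$ maps $L^2(\rn,\CC^{n+1})$ boundedly to $L^2(\rn)$ uniformly in $t$. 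The target is then of the form $\iint|\Theta_t\vec f|^2\frac{dxdt}{|t|}\lesssim\|\vec f\|_2^2$, a ``vertical'' square function / $T(b)$-type estimate.

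The key reduction is the standard one: writing $P_t$ for a nice approximate identity (a $t$-mollifier), one splits $\Theta_t = (\Theta_t - R_t P_t) + R_t P_t$ where $R_t$ is a suitable ``correction'' or resolvent-type operator designed so that $\Theta_t - R_t P_t$ has extra decay in $t$ and is handled by a Schur-type/almost-orthogonality argument using \eqref{eq2.47} and the $L^2$ boundedness, giving a Littlewood--Paley-type bound with no hypothesis on $\Theta_t 1$; the genuinely nontrivial part is the piece involving $\Theta_t$ applied to constants, i.e.\ $\Theta_t \vec 1$, equivalently the Carleson measure estimate
\begin{equation}\label{eqproposal}
\sup_{Q}\frac{1}{|Q|}\iint_{R_Q} \bigl|\Theta_t \vec{\mathbf 1}\,(x)\bigr|^2\,\frac{dxdt}{t} \lesssim 1 .
\end{equation}
Here $\Theta_t\vec{\mathbf 1}$ means $t\,\nabla(\SL_t\nabla)\cdot\vec{\mathbf 1}$ with each component of $\vec{\mathbf 1}$ the constant function $1$; note $(\SL_t\nabla)\cdot \vec{\mathbf 1}$ is, up to the tangential pieces, closely related to $\partial_t\SL_t 1$ and to solutions of $L$, so this quantity is not zero and must be estimated directly. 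This is exactly the point at which the Kato-problem machinery enters: one builds, for each cube $Q$, an accretive-type system of test functions adapted to $Q$ (the analogue of the $b_Q$'s in \cite{AHLMcT}), shows a ``sawtooth'' John--Nirenberg / stopping-time argument reduces \eqref{eqproposal} to testing against these $b_Q$, and then uses ellipticity of $A$ together with the $t$-independence to run the same variational argument that solved the Kato square root problem.

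Concretely the ordered steps are: (1) define $\Theta_t$, verify uniform $L^2$ bounds via \eqref{eq1.23} + Caccioppoli + Lemma~\ref{l2.43}, and verify off-diagonal estimates \eqref{eq2.47}; (2) perform the $P_t$-splitting and dispatch the ``$\Theta_t - R_t P_t$'' term by almost-orthogonality (quasi-orthogonality à la Cotlar/Schur using \eqref{eq2.47}); (3) reduce the main term to the Carleson measure bound \eqref{eqproposal} for $\Theta_t$ against constants; (4) run the $T(b)$/Kato argument: construct the system $\{b_Q\}$ as (components of) solutions or resolvent images $(1+\eps^2 L)^{-1}$-type functions adapted to $Q$, establish the needed nondegeneracy ($\fint_Q b_Q \approx 1$) and the $L^2$ control $\iint_{R_Q}|\Theta_t b_Q|^2\frac{dxdt}{t}\lesssim|Q|$ using ellipticity, and then invoke the John--Nirenberg lemma for Carleson measures to pass from testing on $b_Q$ to the full sup in \eqref{eqproposal}; (5) combine to obtain \eqref{eq3.2}, and finally note that $\vec f\mapsto\Theta_t\vec f$ is linear so no extra care with sublinearity is needed. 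I expect step (4), the construction and control of the Kato test functions in the presence of \emph{complex} coefficients (where positivity is unavailable and one must lean on the De Giorgi/Nash hypothesis, the resolvent bounds, and the $t$-independence), to be the main obstacle; steps (1)--(3) are essentially bookkeeping built on the lemmas already quoted in Section~\ref{s2}.
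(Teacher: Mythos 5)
Your high-level strategy is in the right neighborhood — reduce to a Carleson measure estimate for the ``$\theta_t$ applied to a constant'' type object, then use Kato-problem machinery — but there are two genuine gaps that separate your outline from a proof, and they change the shape of the argument.

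\textbf{First: the missing reductions and the wrong Carleson function.}  You define $\Theta_t \vec f = t\nabla_{x,t}(\SL_t\nabla)\cdot\vec f$ and aim at a Carleson measure bound for $\Theta_t\vec{\mathbf 1}$.  But the paper's argument first makes several essential reductions before any Carleson bound appears: (a) integrate by parts in $t$ and use Caccioppoli on Whitney boxes to replace the \emph{outer} $\nabla$ by $\partial_t$; (b) use translation invariance in $t$ plus \eqref{eq1.23} to replace the \emph{inner} $\nabla$ by $\nabla_\|$ only; (c) — crucially — invoke the adapted Hodge decomposition for the $n$-dimensional operator $L_\| = -\nabla_\|\cdot A_\|\nabla_\|$ so that the input becomes $A_\|\nabla_\| F$; (d) integrate by parts once more to gain a second $t$-derivative.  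After these steps the Carleson measure one actually needs to control is
\begin{equation}\label{eq:CMcompare}
\sup_Q\frac1{|Q|}\int_0^{\ell(Q)}\!\!\!\int_Q\bigl|t^2(\partial_t)^2\bigl(\SL_t\nabla_\|\cdot A_\|\bigr)(x)\bigr|^2\frac{dxdt}{t}\leq C\,,
\end{equation}
i.e.\ the Carleson function is $T_tA_\|$ with $T_t := t^2(\partial_t)^2(\SL_t\nabla_\|)$, not $\Theta_t\vec{\mathbf 1}$ and not $t\,\partial_t\SL_t 1$.  The reason the Hodge decomposition is indispensable is precisely to produce this matrix-valued symbol $A_\|$ against which the Kato-style test functions will pair cleanly; testing $\Theta_t$ against $\vec{\mathbf 1}$ does not lead anywhere directly, because there is no $T(1)$ theorem here and you need a $b$ with the right structure.

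\textbf{Second: re-running Kato vs.\ quoting it.}  Your step (4) proposes to construct a full $T(b)$ system — accretive test functions $b_Q$, stopping time/John--Nirenberg argument, nondegeneracy $\fint_Q b_Q\approx 1$, and then ``run the same variational argument that solved the Kato square root problem.''  This amounts to re-proving \cite{AHLMcT} inside the lemma, which is a very heavy detour and, for complex non-self-adjoint $A$, not something one ``just runs.''  The paper does something much lighter: it directly \emph{cites} the output of \cite{AHLMcT}, namely the existence of the family of mappings $F_Q:\rn\to\CC^n$ with properties \eqref{eq3.13}(i)--(iii) (controlled gradient, controlled $L_\|F_Q$, and the square-function-domination property via the dyadic averaging operator $E_t$).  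With that in hand, \eqref{eq:CMcompare} follows from the standard \cite{CM}/\cite{AT} algebra: write
$(T_tA_\|)E_t\nabla_\|F_Q = R_t^{(1)}\nabla_\|F_Q + R_t^{(2)}\nabla_\|F_Q + T_tA_\|\nabla_\|F_Q$,
handle $R_t^{(1)}, R_t^{(2)}$ by orthogonality and Lemma~\ref{l2.48}, and observe that the main term equals $t^2(\partial_t)^2\SL_t L_\|F_Q$, which is controlled by \eqref{eq2.46} and \eqref{eq3.13}(ii).  No new accretive system and no new stopping time argument is required.

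One smaller point: your claim that the uniform-in-$t$ $L^2$ boundedness of $\Theta_t$ comes ``from \eqref{eq1.23} together with Caccioppoli'' is imprecise — \eqref{eq1.23} is a square function bound and does not by itself yield a fixed-$t$ operator bound; the correct source for uniform $L^2$ bounds on $t^{m+1}\partial_t^{m+1}(\SL_t\nabla)$ is Lemma~\ref{l2.43}, specifically \eqref{eq2.45}.  Fixing these items would bring your outline into line with the paper, but as written the Hodge-decomposition reduction and the use of \eqref{eq3.13} as a black box are the missing ideas.
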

As a corollary of the previous lemma, we have
\begin{corollary}\label{c3.3}  Suppose that
$L$ and its adjoint $L^*$ satisfy the standard assumptions and have $t$-independent coefficients.  Then
\begin{equation}
\sup_Q\frac1{|Q|}\int_0^{\ell(Q)}\!\!\!\int_Q \left|t\,\nabla\left(\SL_{t}\nabla\right)f(x)\right|^{2}\,\frac{dxdt}{t}
 \lesssim\,
 \|f\|^2_{L^\infty(\rn)}\,,\label{eq3.4}
\end{equation}
\begin{equation}
\int_{\rn}\left(\iint_{|x-y|<t} 
\left|t\,\nabla\left(\SL_{t}\nabla\right)f(y)\right|^{2}\,\frac{dydt}{t^{n+1}}\right)^{p/2} dx
 \lesssim\,
 \|f\|^p_{L^p(\rn)}\,,\qquad 2\leq p<\infty\,,\label{eq3.5}
\end{equation}
and
\begin{equation}
\int_{\rn}\left(\iint_{|x-y|<t} 
\left|t\,\nabla\partial_t\SL_{t}f(y)\right|^{2}\,\frac{dydt}{t^{n+1}}\right)^{p/2} dx
 \lesssim\,
 \|f\|^p_{L^p(\rn)}\,,\qquad 1<p<\infty\,,\label{eq3.6}
\end{equation}
Analogous estimates hold in the the lower half space $\ree_-$.
 \end{corollary}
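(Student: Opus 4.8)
The goal is to deduce Corollary \ref{c3.3} from the $L^2$ square function bound \eqref{eq3.2} of Lemma \ref{l3.1}, by a combination of Carleson-measure (Carleson-embedding / tent-space) arguments and a standard good-$\lambda$ or $T^p_2$ interpolation argument.

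\bp
The plan is to first reformulate \eqref{eq3.2} as a Carleson measure estimate, then pass to $L^\infty$ data via a localization argument, and finally obtain the full $L^p$ range for $2\le p<\infty$ by interpolation/extrapolation of Carleson measures, with \eqref{eq3.6} following as the special case of a ``vertical'' vector together with $t$-independence (so that $(\SL_t D_{n+1}) = -\partial_t\SL_t$), supplemented by a separate argument at the bottom of the range $1<p<2$.

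\textbf{Step 1: The Carleson measure bound \eqref{eq3.4}.} Fix a cube $Q$ and $f\in L^\infty(\rn)$. Split $f = f\mathbf 1_{4Q} + f\mathbf 1_{(4Q)^c} =: f_1 + f_2$. For the local piece, apply Lemma \ref{l3.1} to $\vec f := (0,\dots,0,f_1)$ — note $\SL_t\nabla\cdot\vec f = (\SL_t D_{n+1})f_1$ — to get
\[
\iint_{R_Q}\! \left|t\,\nabla(\SL_t D_{n+1})f_1\right|^2\frac{dxdt}{t} \le \int_{-\infty}^\infty\!\!\int_{\rn}\!\left|t\,\nabla(\SL_t\nabla)\cdot\vec f\,\right|^2\frac{dxdt}{|t|}\lesssim \|f_1\|_2^2 \lesssim |Q|\,\|f\|_\infty^2,
\]
which is the desired bound divided by $|Q|$. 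For the far piece $f_2$, one uses the off-diagonal kernel estimates \eqref{eq2.37}–\eqref{eq2.38} of Lemma \ref{l2.36} (applied with the appropriate $m$): for $(x,t)\in R_Q$, $\nabla(\SL_t D_{n+1})f_2(x) = \int_{(4Q)^c}\nabla_{x,t}\partial_s\Gamma(x,t,y,0)f_2(y)\,dy$, and summing the $L^2(Q)$ bounds over the annuli $2^{k+1}Q\setminus 2^kQ$ gives a geometrically convergent series controlled by $\|f\|_\infty$; integrating $dt/t$ over $0<t<\ell(Q)$ converges since the bound carries a factor $t^2/(2^k\ell(Q))^2$. This proves \eqref{eq3.4}. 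Actually one wants \eqref{eq3.4} with $\SL_t\nabla$ acting on a general vector-valued $L^\infty$ datum; the same splitting works verbatim using \eqref{eq2.37}–\eqref{eq2.38} for the full gradient.

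\textbf{Step 2: From Carleson to $L^p$, $2\le p<\infty$.} This is the standard passage recorded, e.g., in \cite{AAAHK}: one shows that the square-function operator $f\mapsto \big(\iint_{|x-y|<t}|t\nabla(\SL_t\nabla)\cdot f(y)|^2\,dydt/t^{n+1}\big)^{1/2}$ is bounded on $L^p$ for $2\le p<\infty$. At $p=2$ this is exactly \eqref{eq3.2} (after the elementary change of the truncated cone integral $\int_{\rn}\mathcal A_2(\cdot)^2 dx \approx \iint |\cdot|^2 dxdt/t$, Fubini). For $p=\infty$ one has \eqref{eq3.4}. Interpolating — via the $T^p_2$–$T^\infty_{2}$ duality and Lemma \ref{l2.14}, or by the classical good-$\lambda$ inequality between the square function and the Carleson functional (Fefferman–Stein / Coifman–Meyer–Stein \cite{CMS}) — yields \eqref{eq3.5} for all $2\le p<\infty$.

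\textbf{Step 3: The estimate \eqref{eq3.6} for $1<p<\infty$.} Since $L$ is $t$-independent, $(\SL_t D_{n+1})f = -\partial_t\SL_t f$, so $t\nabla\partial_t\SL_t f = -t\nabla(\SL_t D_{n+1})f$ and \eqref{eq3.5} already gives \eqref{eq3.6} for $2\le p<\infty$. For the range $1<p<2$ one argues separately: the operator $f\mapsto t\nabla\partial_t\SL_t f$ is a vector-valued singular integral whose kernel $\nabla_{x,t}\partial_t\Gamma(x,t,y,0)$ obeys Calder\'on–Zygmund-type size and H\"older bounds \eqref{eq2.29}–\eqref{eq2.30} (with $m=2$); combined with the $L^2$ square function bound from Step 1 (case $p=2$), a standard Calder\'on–Zygmund decomposition / good-$\lambda$ argument gives weak-$(1,1)$ for the square function and hence, by Marcinkiewicz interpolation with $p=2$, the strong $L^p$ bound for $1<p<2$. (Alternatively one invokes the $H^1$ bound via atoms and Lemma \ref{l2.6}, then interpolates.) The analogous statements in $\ree_-$ follow by the obvious reflection $t\mapsto -t$.

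\textbf{Main obstacle.} The genuinely substantive input is Lemma \ref{l3.1} itself (the Kato-type $L^2$ square function bound), which is assumed here; granted that, the only real care needed is in Step 1, handling the far-away contribution $f_2$ cleanly using the off-diagonal decay \eqref{eq2.37}–\eqref{eq2.38} and checking the $dt/t$ integrability near $t=0$, and in Step 3, producing the endpoint weak-type bound so that the sub-$L^2$ range is reached. Everything else is the by-now routine Coifman–Meyer–Stein tent-space machinery.
\ep
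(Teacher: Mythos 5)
Your overall plan (recast \eqref{eq3.2} as a Carleson measure estimate, localize, interpolate in tent spaces, and use $t$-translation invariance to reduce \eqref{eq3.6} to a special case) is exactly the paper's approach. However, Step~1 contains a genuine gap in the treatment of the far-away piece $f_2$, which is the only technically delicate point in the whole corollary.

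The issue is that you need to control the \emph{full} gradient $\nabla_{x,t}\left(\SL_t\nabla\right)\cdot\vec f_2$, but the estimates \eqref{eq2.37}--\eqref{eq2.38} you cite control only $(\partial_t)^{m+1}\nabla_{y,s}\Gamma$; they give no information about the horizontal derivatives $\nabla_x\left(\SL_t\nabla\right)$. The pointwise bound $|\nabla_{x,t}\nabla_{y,s}\Gamma(x,t,y,0)|\lesssim (|x-y|+t)^{-n-1}$, which would indeed yield the $(2^k\ell(Q))^{-2}$ decay after integrating over an annulus, is \emph{not} available under the standing assumptions: De Giorgi/Nash gives H\"older continuity, not $C^1$ bounds, so the mixed second derivative of $\Gamma$ has no pointwise control. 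Consequently the claim that ``the bound carries a factor $t^2/(2^k\ell(Q))^2$'' is not justified by any cited lemma. The fix is what the paper does: observe that $u_k:=\left(\SL_t\nabla\right)f_k$ solves $Lu_k=0$ in a neighbourhood of $R_Q$, apply a \emph{single global Caccioppoli estimate on $R_Q$} to transfer $\int_{R_Q}|t\nabla u_k|^2\,dxdt/t$ to $\fint_{-\ell(Q)}^{2\ell(Q)}\int_{2Q}|u_k-c_Q|^2\,dxdt$, and then choose the constant $c_Q:=\left(\SL_t\nabla\right)f_k\big|_{t=0}(x_Q)$ so that Cauchy--Schwarz and the H\"older estimate \eqref{eq2.40} give $|u_k-c_Q|\lesssim 2^{-k\alpha_0}\|f\|_\infty$. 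The constant subtraction is not a cosmetic device: $\left(\SL_t\nabla\right)f$ is in general unbounded for $f\in L^\infty$ (each annular contribution $u_k$ is $O(\|f\|_\infty)$ uniformly in $k$, so the series $\sum_k u_k$ diverges), and only the \emph{oscillation} has the needed $2^{-k\alpha_0}$ decay.

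Step~3 is essentially correct, but the ``weak-$(1,1)$ plus Marcinkiewicz'' alternative you sketch is not a straightforward application of classical Calder\'on--Zygmund theory, since the operator $f\mapsto t\nabla\partial_t\SL_t f$ maps into a tent space and its ``kernel'' is not a standard CZ kernel on $\rn$. Your parenthetical alternative --- prove an $H^1\to T^1_2$ bound by testing on atoms and establishing molecular decay --- is what the paper actually does (via the molecular bound \eqref{eq3.14}), and is the cleaner route.
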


We give the proof of Lemma \ref{l3.1} now, and 
the proof of Corollary \ref{c3.3} at the end of this section.

\begin{proof}[Proof of Lemma \ref{l3.1}]
We shall prove that
\begin{equation}
\iint_{\mathbb{R}_+^{n+1}}\left|t\,\nabla\left(\SL_{t}\nabla\right)f(x)\right|^{2}\,\frac{dxdt}{t}
 \lesssim\,\Vert
f\Vert_{L^2(\rn)}^{2}\,,\label{eq3.7}
\end{equation}
provided that $L$ and its adjoint $L^*$ satisfy the standard assumptions and have $t$-independent coefficients.
The analogous bound in the lower half-space holds by the same argument.

As we pointed out in the introduction, the estimate \eqref{eq3.6} for $p=2$, that is,  \eqref{eq1.23}, was proved in \cite{R}, \cite{GH}. It will be our starting point. Let us note that  \eqref{eq1.23}
implies, in particular, the Carleson measure estimate
\begin{equation}\label{eq3.8}
\sup_Q\frac1{|Q|}\int_0^{\ell(Q)}\!\!\!\int_Q 
\left|t\,(\partial_t)^2\SL_{t}f(x)\right|^{2}\,\frac{dxdt}{t} \lesssim \|f\|_{L^\infty(\rn)}\,,
\end{equation}
by a classical argument of \cite{FS}.  

We begin with some preliminary reductions.  
Following the proof of
\cite[Lemma 5.2]{AAAHK}, we may integrate by parts in $t$, and then use 
Caccioppoli's inequality in Whitney boxes to reduce matters to proving
\begin{equation*}
\iint_{\mathbb{R}_+^{n+1}}\left|t\,\partial_t\left(\SL_{t}\nabla\right)f(x)\right|^{2}\,\frac{dxdt}{t}
 \lesssim\,\Vert
f\Vert_{L^2(\rn)}^{2}\,.
\end{equation*}
We refer the reader to \cite{AAAHK} for the details.  Futhermore, by translation invariance in $t$,
and \eqref{eq1.23}, we may replace $\nabla$ by
 $\nabla_\|:=\nabla_x$, the $n$-dimensional ``horizontal" gradient.  Then,
by the adapted Hodge decomposition for the $n$-dimensional operator
$L_\|:=-\sum_{i,j=1}^n D_i A_{ij} D_j$, we may reduce matters to proving that
\begin{equation}
{\bf K}:= \iint_{\mathbb{R}_+^{n+1}}
\left|t\,\partial_t\left(\SL_{t}\nabla_\|\cdot A_\|\nabla_\| F\right)(x)\right|^{2}\,\frac{dxdt}{t}
 \lesssim\,\Vert
\nabla_\| F\Vert_{L^2(\rn)}^{2}\,,\label{eq3.9}
\end{equation}
where
$A_\|:=(A_{ij})_{1\leq i,j\leq n}$ is the $n\times n$ ``upper left block" of $A$;  thus
$L_\|=-\nabla_\|\cdot A_\|\nabla_\|$.  
Observe that 
\begin{multline*}{\bf K}=\iint_{\mathbb{R}_+^{n+1}}
\left|\partial_t\left(\SL_{t}\nabla_\|\cdot A_\|\nabla_\| F\right)(x)\right|^{2}\,t\,dxdt\\[4pt]
=-\frac12\iint_{\mathbb{R}_+^{n+1}}\partial_t
\left(\partial_t\left(\SL_{t}\nabla_\|\cdot A_\|\nabla_\| F\right)(x)\,
\overline{\partial_t\left(\SL_{t}\nabla_\|\cdot A_\|\nabla_\| F\right)(x)}\right)\,t^2\,dxdt\\[4pt]
\leq C \iint_{\mathbb{R}_+^{n+1}}
\left|(\partial_t)^2\left(\SL_{t}\nabla_\|\cdot A_\|\nabla_\| F\right)(x)\right|^{2}\,t^3\,dxdt\,+\,\frac12{\bf K}\,.
\end{multline*}
Hiding the small term on the left hand side of the inequality\footnote{To do this rigorously, i.e., to ensure that ${\bf K}$ is finite, we would truncate the $t$-integral, resulting in controllable errors when we integrate by parts;  we omit the routine details.}, we see that it is now enough to establish the bound \eqref{eq3.9}, but with ${\bf K}$ replaced by
$$\widetilde{\bf K}:= \iint_{\mathbb{R}_+^{n+1}}
\left|t^2\,(\partial_t)^2\left(\SL_{t}\nabla_\|\cdot A_\|\nabla_\| F\right)(x)\right|^{2}\,\frac{dxdt}{t}\,.$$
We now write
\begin{multline}\label{eq3.10}
t^2\,(\partial_t)^2\SL_{t}\nabla_\|\cdot A_\|\nabla_\| F=\\[4pt]
\left\{t^2\,(\partial_t)^2\SL_{t}\nabla_\|\cdot A_\|
\,-\,t^2\,\left((\partial_t)^2\SL_{t}\nabla_\|\cdot A_\|\right)P_t\right\}\nabla_\| F\,+\,
t^2\,(\partial_t)^2\left(\SL_{t}\nabla_\|\cdot A_\|\right)P_t\nabla_\| F\\[4pt]=:R_t(\nabla_\|F)
\,+\,t^2\,\left((\partial_t)^2\SL_{t}\nabla_\|\cdot A_\|\right)P_t\nabla_\| F\,,
\end{multline}
where $P_t$ is a
``nice" approximate identity given by convolution with a smooth, compactly supported,
non-negative kernel with integral 1.  In order to avoid possible confusion, we note that
the $t$-derivatives are applied only to $\SL_t$, but not to $P_t$.
The last term is the main term;  we note that by Carleson's lemma, its contribution 
to $\widetilde{\bf K}$ will be bounded, once we establish the Carleson measure estimate
\begin{equation}\label{eq3.11}
\sup_Q\frac1{|Q|}\int_0^{\ell(Q)}\!\!\!\int_Q
|t^2\,(\partial_t)^2\Big(\SL_t\nabla_\|\cdot A_\|\Big)(x)|^2 \frac{dxdt}{t}\,\leq\,C\,.
\end{equation}
We defer momentarily the proof of this bound, and consider first the ``error" term 
$R_t$, which we rewrite as
$$R_t = \left\{t^2\,(\partial_t)^2\SL_{t}\nabla_\|\cdot A_\| P_t
\,-\,t^2\,\left((\partial_t)^2\SL_{t}\nabla_\|\cdot A_\|\right)P_t\right\}
\,+\,t^2\,(\partial_t)^2\SL_{t}\nabla_\|\cdot A_\| (I-P_t)=: R_t' +R_t''\,.$$
Note that $R_t'1=0$.  Thus,
by the case $m=1$ of Lemma \ref{l2.43}, the operator $R_t'$ satisfies (all of) the hypotheses of 
Lemma \ref{l2.48}, whence it contributes a bounded square function to $\widetilde{\bf K}$.

\smallskip

Next, we consider $R_t''$.
Since the kernel of $\SL_t$ is $\Gamma(x,t,y,0)$, where 
$\overline{\Gamma(x,t,y,s)}=\Gamma^*(y,s,x,t)$ 
is an adjoint solution 
in $(y,s)$ away from the pole at 
$(x,t)$, and is jointly translation invariant  in the arguments $(t,s)$, we have that 
\begin{equation}\label{eq3.12}
R_t''(\nabla_\| F) = t^2\,\Big(\left((\partial_t)^3\SL_t\nabla\right)\cdot\left(\vec{a}\,(I-P_t)F\right)\Big)
\,-\,t^2\,\partial_t^3\SL_t\left(\vec{b}\cdot \nabla_\|(I-P_t)F\right)=:I_1+I_2\,,
\end{equation}
where $\vec{a}:= (A_{1,n+1},...,A_{n+1,n+1})$, and $\vec{b}:= (A_{n+1,1},...,A_{n+1,n})$.
By \eqref{eq2.45}, $t^3(\partial_t)^3 (\SL_t\nabla)$ is bounded on $L^2(\rn)$, uniformly in $t$.  
Moreover, $$\iint_{\mathbb{R}^{n+1}_+} |t^{-1}(I-P_t)F(x)|^2 \frac{dxdt}{t}\lesssim 
\|\nabla_\| F\|_2^2\,,$$
by a standard argument using Plancherel's theorem.  Thus, the contribution of $I_1$ is bounded.
To handle $I_2$, we further decompose it as
\begin{equation*}I_2 = -\,t^2\,\partial_t^3\SL_t\left(\vec{b}\cdot \nabla_\|F\right)
\,+\,\,t^2\,\left\{\partial_t^3\SL_t\vec{b} P_t -
\left(\partial_t^3\SL_t\vec{b}\right) P_t\right\}\cdot\nabla_\|F\,+\,t^2\,
\left(\partial_t^3\SL_t\vec{b}\right)  P_t(\nabla_\|F)\,.
\end{equation*}
The first of these terms may be handled by \eqref{eq1.23}, after using Caccioppoli's inequality on Whitney boxes to reduce the order of differentiation by one;  moreover, by Carleson's lemma
and \eqref{eq3.8}, the same strategy may be applied to the last term.  The middle term 
is of the form $\theta_t (\nabla_\|F)$, where $\theta_t$
satisfies the hypotheses of Lemma \ref{l2.48}, and therefore its contribution is also bounded.

It remains to establish the Carleson measure estimate \eqref{eq3.11}.
To this end, we invoke a key fact in the proof of the Kato conjecture. By \cite{AHLMcT}, 
there exists, for each $Q$, a mapping
$F_{Q}=\mathbb{R}^{n}\rightarrow \mathbb{C}^{n}$ such that \begin{equation}
\begin{split}\label{eq3.13}\text{(i)} & \quad\int_{\mathbb{R}^{n}}|\nabla_{\|}F_{Q}|^{2}\leq C|Q|\\
\text{(ii)} & \quad\int_{\mathbb{R}^{n}}|L_{\|}F_{Q}|^{2}\leq C\frac{|Q|}{\ell(Q)^{2}}\\
\text{(iii)} & \quad\sup_{Q}\int_{0}^{\ell(Q)}\fint_{Q}|\vec{\zeta}(x,t)|^{2}\frac{dxdt}{t}\\
 & \quad\quad\leq
C\sup_{Q}\int_{0}^{\ell(Q)}\fint_{Q}|\vec{\zeta}(x,t)E_{t}\nabla_{\|}F_{Q}(x)|^{2}\frac{dxdt}{t},\end{split}
\end{equation}
 for every function $\vec{\zeta}:\mathbb{R}_{+}^{n+1}\rightarrow\mathbb{C}^{n}$, where $E_{t}$ denotes the dyadic
averaging operator, i.e. if $Q(x,t)$ is the minimal dyadic cube (with respect to the grid induced by $Q$) containing $x$,
with side length at least $t$, then \begin{equation*} E_{t}g(x):=\fint_{Q(x,t)}g.\end{equation*}
 Here $\nabla_{\|}F_{Q}$ is the Jacobian matrix $(D_{i}(F_{Q})_{j})_{1\leq i,j\leq n}$, and the product
$$\vec{\zeta}E_{t}\nabla_{\|}F_{Q}=\sum_{i=1}^{n}\zeta_{i}E_{t}D_{i}F_{Q}$$ is a vector. Given the existence of a family of
mappings $F_{Q}$ with these properties, we see,
as in \cite[Chapter 3]{AT}, that by (iii), applied with
$\vec{\zeta}(x,t)=T_{t}A_{\|}$, where $T_t:= t^2(\partial_t)^2(\SL_t\nabla_\|)$, 
it is enough to show that \begin{equation*}
\int_{0}^{\ell(Q)}\!\!\!\int_{Q}|\left(T_{t}A_{\|}\right)(x)\,\left(E_{t}\nabla_{\|}F_{Q}\right)(x)|^{2}\frac{dxdt}{t}
\leq C|Q|.
\end{equation*}
 But as in \cite{AT}, we may exploit the idea of \cite{CM} to write
\begin{equation*}
\begin{split}(T_{t}A_{\|})E_{t}\nabla_{\|}F_{Q} & =\left\{(T_{t}A_{\|})E_{t}-T_{t}A_{\|}\right\}\nabla_{\|}F_{Q}+T_{t}A_{\|}\nabla_{\|}F_{Q}\\
 & =(T_{t}A_{\|})(E_{t}-P_{t})\nabla_{\|}F_{Q}+\left\{(T_{t}A_{\|})P_{t}-T_{t}A_{\|}\right\}\nabla_{\|}F_{Q}+T_{t}A_{\|}\nabla_{\|}F_{Q}\\
 & =:R_{t}^{(1)}\nabla_{\|}F_{Q}+R_{t}^{(2)}\nabla_{\|}F_{Q}+T_{t}A_{\|}\nabla_{\|}F_{Q},\end{split}
\end{equation*}
 where as above, $P_{t}$ is a nice approximate identify.  By the case $m=1$ of Lemma
\ref{l2.36}, and Cauchy-Schwarz, we have
that $T_tA_\|\in L^\infty$, so the contribution of
the term $R_t^{(1)}$ may be handled by a standard orthogonality argument, given
\eqref{eq3.13}(i).   The operator $R_t^{(2)}$ is exactly the same as the operator
$R_t$ in \eqref{eq3.10} (up to a minus sign), 
and we have already shown that the latter obeys a square function bound.
Finally, by definition of $T_t$, we have that
$$T_{t}A_{\|}\nabla_{\|}F_{Q}= t^2(\partial_t)^2\SL_t L_\|F_Q\,.$$
By the case $m=0$ of \eqref{eq2.46}, and \eqref{eq3.13}(ii), we then obtain
$$\int_{0}^{\ell(Q)}\!\!\!\int_{Q}|\left(T_{t}A_{\|}\nabla_{\|}F_{Q}\right)(x)|^{2}\frac{dxdt}{t}
\leq\int_{0}^{\ell(Q)}\!\!\!\int_{\rn}|t\left((\partial_t)^2\SL_{t}L_\|F_{Q}\right)(x)|^{2}\,t\,dxdt
\lesssim \frac{|Q|}{\ell(Q)^2}\int_0^{\ell(Q)}tdt \approx |Q|\,,$$
as desired. 
\end{proof}

We conclude this section with the proof of corollary of \ref{c3.3}.

 \begin{proof}[Proof of Corollary \ref{c3.3}] Estimate \eqref{eq3.5} follows immediately from
Lemma \ref{l3.1} and the Carleson measure estimate \eqref{eq3.4}.
Indeed, the case $p=2$ of  \eqref{eq3.5} is equivalent to the upper half-space version
of \eqref{eq3.2}, by the elementary fact that vertical and conical square 
functions are equivalent in $L^2$.  The case $2<p<\infty$ then follows by tent space interpolation
\cite{CMS} between the case $p=2$, and the case $p=\infty$, which is \eqref{eq3.4}.

Thus, we have reduced matters to proving \eqref{eq3.4} and \eqref{eq3.6}.  
We treat \eqref{eq3.4} first.
 The proof follows a classical argument of \cite{FS}.
 We split $f=\sum_{k=0}^\infty f_k$, where $f_0:= f1_{4Q}$, and 
 $f_k:= f1_{2^{k+2}Q\setminus 2^{k+1} Q}$,
$k\geq 1$.  By \eqref{eq3.7}, we have
\begin{equation*}
\int_0^{\ell(Q)}\!\!\!\int_Q \left|t\,\nabla\left(\SL_{t}\nabla\right)f_0(x)\right|^{2}\,\frac{dxdt}{t}
 \lesssim\, \int_{4Q} |f|^2\,\lesssim \,
|Q|\, \|f\|_{L^\infty(\rn)}\,.
\end{equation*}
Now suppose that $k\geq 1$.  Since $u_k:=(\SL_t\nabla) f_k$ solves $Lu_k=0$ in
$\ree\setminus(4Q\times(-4\ell(Q),4\ell(Q))$, by Caccioppoli's inequality, we have that
$$\int_0^{\ell(Q)}\!\!\!\int_Q \left|t\,\nabla\left(\SL_{t}\nabla\right)f_k(x)\right|^{2}\,\frac{dxdt}{t}
 \lesssim\,\fint_{-\ell(Q)}^{2\ell(Q)}\!\!\int_{2Q} \left|\left(\SL_{t}\nabla\right)f_k(x)-c_Q\right|^{2}\,dxdt\,,$$
 where the constant $c_Q$ is at our disposal.  We now choose $c_Q:= 
 \left(\SL_{t}\nabla\right)f_k\big|_{t=0}(x_Q)$, where $x_Q$ denotes the center of $Q$.
We observe that, by Cauchy-Schwarz,
\begin{multline*}
\left|\left(\SL_{t}\nabla\right)f_k(x)-c_Q\right|^{2}\\[4pt]
\leq\, \int_{2^{k+2}Q\setminus2^{k+1}Q}\left|\nabla_{y,s}\big(\Gamma(x,t,y,s) - 
\Gamma(x_Q,0,y,s)\big)|_{s=0}\right|^2dy\,\int_{2^{k+2}Q\setminus2^{k+1}Q}|f|^2\\[4pt]
\lesssim \, 2^{-2k\alpha_0}\fint_{2^{k+2}Q\setminus2^{k+1}Q}|f|^2\,\lesssim\, 
2^{-2k\alpha_0} \|f\|^2_\infty\,,
\end{multline*}
where we have used \eqref{eq2.40} in the next to last inequality.  Summing in $k$, we obtain \eqref{eq3.4}.

Next, we prove \eqref{eq3.6}.  By translation invariance in $t$, the case $2\leq p<\infty$
is already included in the conclusion of Lemma \ref{l3.1}.  
The case $1<p<2$ then follows by interpolation with the Hardy space bound
\begin{equation*}
\|t\nabla\partial_t \SL_t f\|_{T^1_2} \lesssim \|f\|_{H^1(\rn)}\,.
\end{equation*}
To prove the latter, by a standard argument,
it suffices to assume that $f= a$, an $H^1$ atom adapted to a cube $Q$,
and to establish the ``molecular bounds":
\begin{equation}\label{eq3.14}
\int_{V_k}\iint_{|x-y|<t}|t\nabla\partial_t\SL_t a(y)|^2 \frac{dy dt}{t^{n+1}}dx \lesssim 2^{-\eps k}
\left(2^{-k}\ell(Q)\right)^{-n}\,,\qquad k=0,1,2,...,
\end{equation}
where,  given a cube $Q\subset \rn$, we set $V_0=V_0(Q):= 2Q$, and
$V_k=V_k(Q):= 2^{k+1}Q\setminus 2^kQ\,, k\geq 1$.
To prove \eqref{eq3.14}, we may suppose that $k\geq 4$, since the desired bound for small $k$ 
follows trivially from the global $L^2$ bound.  To this end, we neeed only observe that
\begin{multline*}
\int_{V_k}\iint_{|x-y|<t}|\nabla\partial_t\SL_t a(y)|^2 \frac{dy dt}{t^{n-1}}dx\\[4pt]
\leq\, \int_{V^*_k}\int_0^{2^{k-1}\ell(Q)}|\nabla\partial_t\SL_t a(y)|^2 t\, dtdy
\,+\,\sum_{j\geq k}\int_{V_k}\int_{2^{j-1}\ell(Q)}^{2^j\ell(Q)}
\int_{|x-y|<t}|\nabla\partial_t\SL_t a(y)|^2 \frac{dy dt}{t^{n-1}}dx\\[4pt]
\lesssim \,
\int_{V^{**}_k}\fint_{2^{-k}\ell(Q)}^{2^{k}\ell(Q)}|\partial_t\SL_t a(y)|^2 \, dtdy
\,+\,\sum_{j\geq k}\int_{V_k}\fint_{2^{j-2}\ell(Q)}^{2^{j+1}\ell(Q)}
\int_{|x-y|<8t}|\partial_t\SL_t a(y)|^2 \frac{dy dt}{t^{n}}dx \\[4pt]\lesssim \,
2^{-2\alpha_0 k}
\left(2^{-k}\ell(Q)\right)^{-n}\,,
\end{multline*}
where in the preceding estimates we have used the following ingredients:
Fubini's Theorem in the first inequality, to obtain the first integral, where $V_k^*$
is a ``fattened" version of $V_k$;   Caccioppoli in the second inequality, 
where $V_k^{**}$ is a further fattened version of $V_k$;  and, in the third inequality,
that $|\partial_t \SL_t a(y)| \lesssim \ell(Q)^{\alpha_0} (t+|y-x_Q|)^{-n-\alpha_0}$,
by  the standard properties of $H^1$ atoms, and the ``Calder\'on-Zygmund estimate"
\eqref{eq2.30} with $m=1$, where $x_Q$ denotes the center of $Q$.
 \end{proof}

\section{The second main estimate:  tent space bounds for $\nabla L^{-1}\nabla $}
\label{s4}

In this section we will prove our second set of
``main estimates", which are tent space bounds for the operator 
$\nabla L^{-1}\dv$, for $t$-independent $L$.  The first result treats the case $p=2$.
\begin{lemma}
\label{l4.1} Suppose that $L$ and 
$L^*$ satisfy the standard assumptions and  are $t$-independent.  Then $\nabla L^{-1}\dv: T^2_2\to \widetilde{T}^2_\infty$,
and \begin{equation}\label{eq4.2}
\|\nabla L^{-1}\dv\Phi\|_{\widetilde{T}_\infty^2}\,\leq \,C\, \|\Phi\|_{T_2^2}\,,
\end{equation}
where $C$ depends only upon dimension, ellipticity, the De Giorgi/Nash constants,
and the constant in \eqref{eq1.23}.
\end{lemma}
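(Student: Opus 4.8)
\emph{Strategy.} The plan is to use the $t$-independence of $L$ to realize $\nabla u$, with $u:=L^{-1}\dv\Phi$, as a vertical superposition of gradients of single layer potentials, and then to estimate $\N(\nabla u)$ by isolating the ``diagonal'' heights $s\approx t$ -- where cancellation is provided by the \emph{uniform} $L^2$-boundedness of $\SL_\tau\nabla$ in the $t$-independent setting, which ultimately rests on the square function bound \eqref{eq1.23} of Section~\ref{s3} -- from the remaining scales, where the Calder\'on--Zygmund kernel bounds of Section~\ref{s2} furnish geometric decay. Representing $u$ through the fundamental solution, integrating by parts, and using $\Gamma(x,t,y,s)=\Gamma(x,t-s,y,0)$ to convert the $\partial_s$ in $\nabla_{y,s}$ into a $\partial_t$ acting on $\SL$, one gets, for $(x,t)\in\reu$ and $\Phi$ in a suitable dense class,
\begin{equation*}
\nabla u(x,t)\;=\;-\int_{0}^{\infty}\Big(\nabla(\SL_{t-s}\nabla)\cdot\Phi(\cdot,s)\Big)(x)\,ds ,
\end{equation*}
where $\nabla(\SL_\tau\nabla)$ is the operator of Lemma~\ref{l3.1} and Corollary~\ref{c3.3} (for $\tau<0$ one uses the single layer potential of the fundamental solution in the lower half space). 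Two elementary facts will be used: $\|\Phi\|_{T^2_2}^2\approx\iint_{\reu}|\Phi|^2\,\tfrac{dy\,ds}{s}$ by Fubini, and $T^2_2\hookrightarrow\widetilde T^2_\infty$, the latter seen by choosing for a.e.\ $x$ a near-maximizing Whitney box for $\N\Phi(x)$ and applying Fubini; consequently any bound of $\fiint_{W}|\nabla u|^2$ by a dilated Whitney average of $|\Phi|^2$ will automatically give the desired $L^2(\rn)$ estimate. It therefore suffices to estimate $\fiint_{W(z,t_0)}|\nabla u|^2$ for each Whitney box $W(z,t_0)$, and we split the $s$-integral dyadically about $t_0$, and within each dyadic range split $\Phi$ spatially into $C\Delta(z,t_0)$ and its dyadic annuli.

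\emph{Diagonal part.} For the piece with $s\approx t_0$ and $\Phi$ supported near $z$, set $u^{\mathrm{diag}}:=L^{-1}\dv\Phi^{\mathrm{diag}}$ for the corresponding truncation of $\Phi$; then $Lu^{\mathrm{diag}}=\dv\Phi$ on $W(z,t_0)$, so the (inhomogeneous) Caccioppoli inequality of Proposition~\ref{p2.23} gives $\fiint_{W(z,t_0)}|\nabla u^{\mathrm{diag}}|^2\lesssim t_0^{-2}\fiint_{2W}|u^{\mathrm{diag}}|^2+\fiint_{2W}|\Phi|^2$. The second term is already a dilated Whitney average of $|\Phi|^2$. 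For the first, the representation formula yields $u^{\mathrm{diag}}(\cdot,t)=-\int_{|s-t|\lesssim t_0}(\SL_{t-s}\nabla)\Phi^{\mathrm{diag}}(\cdot,s)\,ds$; using the uniform $L^2(\rn)\to L^2(\rn)$ boundedness of $\SL_\tau\nabla$ for $t$-independent operators (cf.\ Lemma~\ref{l2.59}, \cite{HMiMo,AAAHK}, a consequence of \eqref{eq1.23}) together with Cauchy--Schwarz in $s$ over an interval of length $\approx t_0$ and Fubini, one obtains $t_0^{-2}\fiint_{2W}|u^{\mathrm{diag}}|^2\lesssim\fiint_{W'}|\Phi|^2$ on a further dilate $W'$. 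Both terms are thus controlled by $\|\Phi\|_{T^2_2}^2$ after summation via the embedding above.

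\emph{Off-diagonal parts.} When $s\not\approx t_0$ (so $|t-s|\gtrsim\max(t,s)$ on $W(z,t_0)$) or when $\Phi$ sits in a far dyadic spatial annulus, the kernel $\nabla_{x,t}\nabla_{y,s}\Gamma$ is non-singular on the region in play; the size and H\"older bounds \eqref{eq2.29}--\eqref{eq2.30}, in the integrated forms of Lemmas~\ref{l2.36} and~\ref{l2.43} (with the Carleson measure bound \eqref{eq3.4} used for the large-height contributions), produce a geometric gain $2^{-\eps(j+k)}$ in the logarithmic scale- and space-distances $j,k$. A Schur test / Minkowski's inequality against $\Phi\in L^2(\reu,\tfrac{dy\,ds}{s})$ then sums these pieces to $\lesssim\|\Phi\|_{T^2_2}$ in $\|\N(\nabla u)\|_{L^2}$. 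Finally one assembles the dyadic pieces, using a standard stopping-time/tent-space (Fefferman--Stein) argument to pass from the local $L^2$ Whitney estimates to the global $L^2(\rn)$ bound.

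\emph{Main obstacle.} The substantive point is the diagonal part: the integral $\int_{|s-t|\lesssim t_0}(\SL_{t-s}\nabla)\Phi^{\mathrm{diag}}(\cdot,s)\,ds$ ranges over $|t-s|$ arbitrarily small, so no kernel decay is available, and one is forced to invoke the \emph{uniform} (down to $\tau=0$) $L^2(\rn)$ bound for $\SL_\tau\nabla$ -- exactly the layer potential estimate for $t$-independent operators whose proof rests on the Kato-type square function bound \eqref{eq1.23} of Section~\ref{s3}; this is why that section precedes the present one. The only other point requiring care is the bookkeeping that turns the many local Whitney averages into a single $L^2(\rn)$ norm, which is routine tent-space technology.
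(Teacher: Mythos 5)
Your global strategy (isolate a Whitney box, split $\Phi$ into a ``diagonal'' part handled by the uniform $L^2$ bound for $\SL_\tau\nabla$ -- correctly recognized as the heir of \eqref{eq1.23} -- and treat the rest as an error) correctly identifies the main analytic input, and your arithmetic for the diagonal piece (Cauchy--Schwarz in $s$ after inhomogeneous Caccioppoli) checks out. The pointwise embedding $\N(\Phi)(x)\lesssim \A_2^\beta(\Phi)(x)$ you use for $T^2_2\hookrightarrow\widetilde T^2_\infty$ is also fine. But there is a genuine gap in the off-diagonal part, specifically for the ``sub-Whitney cone'' piece $\Phi^{\mathrm{cone}}:=\Phi\,{\bf 1}_{\{|y-z|\lesssim t_0,\ s\ll t_0\}}$, and the Schur/Minkowski/kernel-decay argument you propose cannot close it. If you estimate $\fiint_{W(z,t_0)}|\nabla L^{-1}\dv\Phi^{\mathrm{cone}}|^2$ using only the size bound $|\nabla_{x,t}\nabla_{y,s}\Gamma|\lesssim t_0^{-n-1}$ and Cauchy--Schwarz (together with the dyadic gain in the height $s\approx 2^{-k}t_0$, which you correctly note), you arrive at
\begin{equation*}
\fiint_{W(z,t_0)}\bigl|\nabla L^{-1}\dv\Phi^{\mathrm{cone}}\bigr|^2 \,\lesssim\, \frac{1}{t_0^n}\iint_{R_{C\Delta(z,t_0)}}\frac{|\Phi|^2}{s}\,dyds\,\lesssim\,\C(\Phi)(z)^2,
\end{equation*}
i.e.\ a bound by the Carleson functional, not $\A_2$. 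The equivalence $\|\A_q f\|_{L^p}\approx\|{\mathfrak C}_q f\|_{L^p}$ of \cite{CMS} requires $q<p$ and fails at $q=p=2$: $\|\C(\Phi)\|_{L^2}$ is \emph{not} controlled by $\|\Phi\|_{T^2_2}$ (a single Whitney bump already gives $\|\C(\Phi)\|_{L^2}=\infty$ with $\|\A_2(\Phi)\|_{L^2}<\infty$). So the Schur test lands on the wrong functional, and the ``stopping-time/Fefferman--Stein'' step you invoke cannot recover it.

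The paper's proof uses two devices precisely at this point which your proposal is missing. First, for the cone piece, it exploits that $w_0''$ is a null solution of $L$ in $B((x_0,t_0),3t_0/4)$ and applies a reverse H\"older inequality for the gradient together with Meyers' theorem ($\nabla L^{-1}\dv:L^q\to L^q$ for $q$ slightly below $2$) to replace the $L^2$ average by an $L^q$ one, $q<2$; this converts the bound into $\bigl(M(\A_2(\Phi)^q)(x_0)\bigr)^{1/q}$, whose $L^2$ norm \emph{is} controlled by $\|\Phi\|_{T^2_2}$ because $2/q>1$. Second, after a Caccioppoli--Moser--Poincar\'e argument for the far spatial pieces (treated as a single solution $\widetilde w=\sum_{k\ge1}w_k$, rather than annulus-by-annulus kernel estimates -- again the raw kernel size bound would lose the $j$-decay), the boundary traces $\nabla w(\cdot,0)$ and $(\partial_s w)(\cdot,0)$ appear, and their $L^2$ norms are controlled by a duality argument against the square function estimate \eqref{eq3.7} of Lemma~\ref{l3.1}. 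Your proposal invokes the uniform $L^2$ bound of $\SL_\tau\nabla$ but not the integrated square function bound, and it is the latter that is needed for the boundary trace. In short: the diagonal piece is essentially correct and parallel to the paper's $w_0'$ term, but the treatment of both the cone piece and the far pieces needs the Meyers/reverse-H\"older mechanism and the square-function duality, neither of which appears in the proposal; kernel decay plus a Schur test is not enough.
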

We defer momentarily the proof of the lemma.

Next, we state appropriate versions in the case $p\leq 1$.
As above, and in the sequel, given a cube $Q\subset \rn$, we set 
$$V_k=V_k(Q):= 2^{k+1}Q\setminus 2^kQ\,,\qquad k\geq 1.$$

\begin{proposition} \label{p4.3}
Suppose that $L$ and 
$L^*$ satisfy the standard assumptions and are $t$-independent.  Suppose also that $n/(n+\alpha_0)<p\leq 1$,
and let $a$ be a (vector-valued)
$T^p_2$ atom, taking values in $\CC^{n+1}$, adapted to a cube $Q$ (i.e., to the
Carleson box $R_Q:= Q\times (0,\ell(Q))$).  Then $\nabla L^{-1}\dv a$ satisfies
the ``molecular size" estimates
\begin{align}\label{eq4.4}
\| \N( \nabla L^{-1} \nabla \cdot a)\|_{L^p(64Q)}& \leq\, C\,\ell(Q)^{n\big(\frac12-\frac1p\big)}\\[4pt]
\| \N( \nabla L^{-1} \nabla \cdot a)\|_{L^p(V_k(Q))}& \leq\, C\,2^{-\epsilon k}
\left(2^k\ell(Q)\right)^{n\big(\frac12-\frac1p\big)}\,,\quad k=6,7,...,\label{eq4.5}
\end{align}
where $\epsilon>0$ and $C$ depend only upon dimension, ellipticity, $p$, 
the De Giorgi/Nash constants, 
and the constant in \eqref{eq1.23}.
\end{proposition}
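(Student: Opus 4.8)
The plan is to upgrade the $L^{2}$ bound of Lemma~\ref{l4.1} to the $p\le 1$ molecular estimates \eqref{eq4.4}--\eqref{eq4.5}, in the same spirit as the passage from the square-function estimate \eqref{eq3.7} to the molecular bounds \eqref{eq3.14} in the proof of Corollary~\ref{c3.3}; these molecular estimates are the real content, and the boundedness $\nabla L^{-1}\dv\colon T^{p}_{2}\to\widetilde{T}^{p}_{\infty}$ then follows from the atomic decomposition of $T^{p}_{2}$ (Lemma~\ref{l2.19}), the sublinearity of $\N$, and the $p$-subadditivity of $\|\cdot\|_{L^{p}}^{p}$ for $p\le1$. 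Write $u:=\nabla L^{-1}\dv a$. The local estimate \eqref{eq4.4} is almost immediate: since $a$ is adapted to $R_{Q}=Q\times(0,\ell(Q))$, Definition~\ref{d2.15} (with $q=2$, cf.\ Remark~\ref{r2.18}) gives $\|a\|_{T^{2}_{2}}\lesssim|Q|^{1/2-1/p}$, so Lemma~\ref{l4.1} and H\"older's inequality on $64Q$ (valid since $p\le1<2$) yield
$$\|\N(u)\|_{L^{p}(64Q)}\,\le\,|64Q|^{\,1/p-1/2}\,\|\N(u)\|_{L^{2}(64Q)}\,\le\,|64Q|^{\,1/p-1/2}\,\|u\|_{\widetilde{T}^{2}_{\infty}}\,\lesssim\,|64Q|^{\,1/p-1/2}\,\|a\|_{T^{2}_{2}}\,,$$
which is \eqref{eq4.4}.

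For the remote estimate \eqref{eq4.5}, fix $k\ge 6$, $x\in V_{k}(Q)$ and $(y,s)\in\Gamma(x)$. The key observation is geometric: because $\supp a\subset R_{Q}$ while $x$ lies at distance $\gtrsim 2^{k}\ell(Q)$ from $Q$, one checks directly that $\dist\big(W(y,s),\overline{R_{Q}}\big)\gtrsim\max\big(s,\,2^{k}\ell(Q)\big)=:D$; in particular $w:=L^{-1}(\dv a)$ is a genuine null solution of $L$ in a ball of radius $\gtrsim D$ about every point of $W(y,s)$, and decays at infinity. Integrating by parts in the global fundamental-solution representation, $u(Y)=-\int_{R_{Q}}\nabla_{Y}\nabla_{Z}\Gamma(Y,Z)\cdot a(Z)\,dZ$ for $Y\in W(y,s)$; combining Minkowski's integral inequality, Caccioppoli on Whitney boxes (which reduces matters to the $L^{2}$-averaged kernel bound $\lesssim D^{-(n+1)}$, in the spirit of \eqref{eq2.29}--\eqref{eq2.30} and Lemma~\ref{l2.36}), and Cauchy--Schwarz against the measure $\tau^{-1}\,dz\,d\tau$ with the $T^{2}_{2}$-normalization of $a$, one obtains $\big(\fiint_{W(y,s)}|u|^{2}\big)^{1/2}\lesssim D^{-(n+1)}$ times a fixed power of $\ell(Q)$. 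Taking the supremum over $(y,s)\in\Gamma(x)$ --- the dominant case being $s\lesssim 2^{k}\ell(Q)$, so $D\approx 2^{k}\ell(Q)$ --- bounds $\N(u)$ by a constant on $V_{k}(Q)$; raising this to the power $p$, integrating over $V_{k}(Q)$ (whose measure is $\approx(2^{k}\ell(Q))^{n}$), and using the restriction $p>n/(n+\alpha_{0})$ yields \eqref{eq4.5} with some $\epsilon>0$.

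I expect the remote estimate to be the main obstacle, and within it two points need care: (i) setting up $L^{-1}\dv a$ precisely enough --- through the construction of the global inverse and the standing De Giorgi/Nash hypotheses --- to be certain it is a decaying null solution off $\supp a$, so that interior Caccioppoli and the averaged kernel bounds genuinely apply, including for $(y,s)$ with $s$ small; and (ii) organizing the supremum defining $\N$ uniformly in the aperture variable $s$, so that the worst case $D\approx 2^{k}\ell(Q)$ is correctly isolated and a strictly positive decay exponent emerges. Once \eqref{eq4.4}--\eqref{eq4.5} are in hand, for an arbitrary $\Phi\in T^{p}_{2}$ one decomposes $\Phi=\sum_{j}\lambda_{j}a_{j}$ with $\sum_{j}|\lambda_{j}|^{p}\lesssim\|\Phi\|_{T^{p}_{2}}^{p}$, splits $\rn=64Q_{j}\cup\bigcup_{k\ge6}V_{k}(Q_{j})$ for each $j$, and sums the molecular estimates to obtain $\|\N(\nabla L^{-1}\dv\Phi)\|_{L^{p}(\rn)}\lesssim\|\Phi\|_{T^{p}_{2}}$.
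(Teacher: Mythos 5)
Your treatment of the local estimate \eqref{eq4.4} is correct and is exactly the paper's: apply Lemma~\ref{l4.1} to the atom, use the $T^2_2$-normalization \eqref{eq2.16}, and then H\"older on $64Q$. The far-away estimate \eqref{eq4.5}, however, is where all the real work lies, and your sketch has two genuine gaps there.

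First, the ``$L^2$-averaged kernel bound $\lesssim D^{-(n+1)}$'' is only the \emph{size} bound on $\nabla_Y\nabla_Z\Gamma$, and it is not enough: a $T^p_2$ atom has no vanishing moment, so the gain $2^{-\epsilon k}$ cannot come from cancellation against the atom. It must come from the H\"older/Calder\'on--Zygmund smoothness of the kernel. Concretely, the paper applies Caccioppoli in the $(z,t)$-integral, which allows one to replace $\Gamma(y,s,z,t)$ by $\Gamma(y,s,z,t)-\Gamma(y,s,z_Q,0)$ (subtraction of a constant costs nothing in Caccioppoli), and then invokes \eqref{eq2.30} (and \eqref{eq2.40} in a companion step) to extract the factor $2^{-2\alpha_0 k}$. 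It is precisely this $\alpha_0$-gain, combined with the hypothesis $p>n/(n+\alpha_0)$, that produces $\epsilon=\alpha_0+n-n/p>0$ in \eqref{eq4.20}. If one tracks your Cauchy--Schwarz/size-bound estimate through, the powers of $\ell(Q)$ do not even match the right-hand side of \eqref{eq4.5}, and no positive $\epsilon$ appears: the hypothesis $p>n/(n+\alpha_0)$ never enters, which is a sign the argument cannot close.

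Second, and independently, the claim that the ``worst case'' of the non-tangential supremum is $D\approx 2^k\ell(Q)$ misidentifies where the difficulty lies. For $(y,s)\in\Gamma(x)$ with $s$ small relative to $2^k\ell(Q)$, Caccioppoli on the Whitney box $W(y,s)$ produces a factor $s^{-2}$ (not $D^{-2}$), and one cannot apply Caccioppoli at scale $D$ while averaging over a Whitney box of side $\sim s$: the ratio of volumes introduces a factor $(D/s)^{n+1}$, which is even worse. This is exactly why the paper splits the supremum into $\tN_1$ (the regime $\delta\geq 2^{k-3}\ell(Q)$, where your computation is morally in the right direction once \eqref{eq2.30} is used) and $\tN_2$ (the regime $\delta\leq 2^{k-3}\ell(Q)$), and handles $\tN_2$ by the Kenig--Pipher device \eqref{eq4.21}: Caccioppoli plus the Moser bound \eqref{eq1.7} reduce $\fiint_{B_\delta}|\nabla w|^2$ to an $L^1$ average of $|w-C_{x,\delta}|$, which one then splits, via Poincar\'e and the triangle inequality, into a $\sup_s|\partial_s w(y,s)|$ term and a boundary-trace term $|\nabla_\| w(\cdot,0)|$; each of these is then estimated using the kernel-smoothness bounds \eqref{eq2.30}, \eqref{eq2.40}. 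You flag the small-$s$ regime as a point ``needing care,'' but do not say what to do, and your proposed mechanism (taking the supremum and isolating $D\approx 2^k\ell(Q)$) will not close this gap. The passage from the molecular estimates to Proposition~\ref{p4.6} via the $T^p_2$ atomic decomposition at the end of your note is fine and matches the paper.
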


As a corollary of Lemma \ref{l4.1} and Proposition \ref{p4.3}, we have the following.
\begin{proposition} \label{p4.6}
Suppose that $L$ and 
$L^*$ satisfy the standard assumptions and are $t$-independent.    Then there is an $\eps>0$ 
such that for $n/(n+\alpha_0)<p< 2+\eps$,
we have $\nabla L^{-1}\dv: T^p_2 \to \widetilde{T}^p_\infty$,
i.e., for every $\Phi\in T^p_2(\reu,\CC^{n+1})$, 
\begin{equation}\label{eq4.7}
\|\nabla L^{-1}\dv\Phi\|_{\widetilde{T}_\infty^p}\,\leq \,C\, \|\Phi\|_{T_2^p}\,,
\end{equation}
where $\eps$ depends only upon dimension, ellipticity, the De Giorgi/Nash constants, 
and the constant in \eqref{eq1.23}, and $C$ depends upon these parameters and $p$. 
\end{proposition}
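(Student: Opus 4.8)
The plan is to deduce Proposition~\ref{p4.6} from the two endpoint facts already in hand --- Lemma~\ref{l4.1} (the case $p=2$) and Proposition~\ref{p4.3} (the molecular bounds for $p\le 1$) --- by first running an atomic argument in the range $p\le 1$ and then interpolating, the only genuinely new ingredient being the passage above the exponent $2$. First I would treat $p_0<p\le 1$, $p_0:=n/(n+\alpha_0)$. Given $\Phi\in T^p_2(\reu,\CC^{n+1})$, apply the atomic decomposition of Lemma~\ref{l2.19}: $\Phi=\sum_j\lambda_j a_j$ in $T^p_2$ with $a_j$ a $\CC^{n+1}$-valued $T^p_2$-atom adapted to a cube $Q_j$ and $\big(\sum_j|\lambda_j|^p\big)^{1/p}\lesssim\|\Phi\|_{T^p_2}$. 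Since $\nabla L^{-1}\dv$ is linear and $\N$ is subadditive, $p$-subadditivity of the $L^p$ quasi-norm ($p\le1$) gives
\[
\|\nabla L^{-1}\dv\,\Phi\|_{\widetilde{T}^p_\infty}^p=\|\N(\nabla L^{-1}\dv\,\Phi)\|_{L^p(\rn)}^p\le\sum_j|\lambda_j|^p\,\|\N(\nabla L^{-1}\dv\,a_j)\|_{L^p(\rn)}^p .
\]
For each $j$ I would split the $L^p(\rn)$-integral into the piece over $64Q_j$ and the pieces over the annuli $V_k(Q_j)$, $k\ge 6$, and invoke \eqref{eq4.4}--\eqref{eq4.5} of Proposition~\ref{p4.3}; summing the geometric series in $k$ (which converges since $\epsilon>0$ and, because $p<2$, the scaling exponent $n(\tfrac12-\tfrac1p)\le0$) shows that $\nabla L^{-1}\dv\,a_j$ is a fixed multiple of a $\widetilde{T}^p_\infty$-molecule adapted to $Q_j$, so $\|\N(\nabla L^{-1}\dv\,a_j)\|_{L^p(\rn)}$ is controlled by the appropriate normalizing constant. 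This yields boundedness $T^p_2\to\widetilde{T}^p_\infty$ for every $p\in(p_0,1]$; it is here (through Proposition~\ref{p4.3} and Lemma~\ref{l2.31}) that the hypothesis $p>n/(n+\alpha_0)$ is used.

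Next, Lemma~\ref{l4.1} supplies the case $p=2$, and I would cover the intermediate range $1<p<2$ by interpolation: $\nabla L^{-1}\dv$ is bounded $T^{p_1}_2\to\widetilde{T}^{p_1}_\infty$ for a fixed $p_1\in(p_0,1]$ and bounded $T^2_2\to\widetilde{T}^2_\infty$, and since $f\in\widetilde{T}^q_\infty\iff\N f\in L^q$ with $\N$ a fixed positive sublinear map, the real-interpolation spaces between these scales are again $T^p_2$ and $\widetilde{T}^p_\infty$. A Marcinkiewicz-type interpolation for tent spaces (cf.\ \cite{CMS} and the duality of Lemma~\ref{l2.14}) then gives the bound for all $p\in(p_0,2]$.

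The remaining range $2<p<2+\eps$ is the delicate point. Here $\Phi$ no longer decomposes into atoms, but one can exploit that $\nabla L^{-1}\dv\,\Phi$ is, locally away from $\supp\Phi$, the gradient of a null solution of $L$, and for $t$-independent operators such gradients satisfy an interior reverse H\"older inequality up to some exponent $2+\eps$ depending only on the standard constants (cf.\ Lemma~\ref{l2.33}). Combining this self-improvement with the $T^2_2\to\widetilde{T}^2_\infty$ bound through a real-variable extrapolation (a good-$\lambda$/stopping-time argument adapted to the $\N$-functional) upgrades the estimate to $T^p_2\to\widetilde{T}^p_\infty$ for $2\le p<2+\eps$; interpolating once more with the case $p=p_1$ gives the full range $p_0<p<2+\eps$, with $\eps$ and the implicit constants depending only on dimension, ellipticity, the De Giorgi/Nash constants, the constant in \eqref{eq1.23} (and, for the constant, on $p$). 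The main obstacle is precisely this last step: tent-space interpolation only links the endpoints $p_0$ and $2$, so one must push the endpoint up to $2+\eps$ via the reverse-H\"older self-improvement, or equivalently re-run the argument of Lemma~\ref{l4.1} with $L^{2+\eps}$-averages in place of $L^2$-averages wherever Caccioppoli's inequality and the interior bounds for solutions were used.
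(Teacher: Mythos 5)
Your argument for $n/(n+\alpha_0)<p\le 2$ essentially reproduces the paper's: atomic decomposition plus the molecular bounds \eqref{eq4.4}--\eqref{eq4.5} for $p\le 1$, and tent-space interpolation with Lemma~\ref{l4.1} for $1<p<2$. One small point you gloss over: when you write $\|\N(\nabla L^{-1}\dv\Phi)\|^p_{L^p}\le\sum_j|\lambda_j|^p\|\N(\nabla L^{-1}\dv a_j)\|^p_{L^p}$ you are implicitly commuting $\nabla L^{-1}\dv$ with the $T^p_2$-convergent atomic sum, which is not automatic for $p\le1$. The paper handles this by first working on the dense class $T^p_2\cap T^2_2$, where the atomic decomposition also converges in $T^2_2$, and then using the already-established $T^2_2\to\widetilde T^2_\infty$ boundedness (Lemma~\ref{l4.1}) to justify the pointwise domination \eqref{eq4.9}. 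That step should be made explicit.

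The genuine gap is in the range $2<p<2+\eps$, which you yourself flag as the delicate part. Your sketch --- interior reverse H\"older for $\nabla w$ away from $\supp\Phi$, plus a ``good-$\lambda$/stopping-time argument adapted to the $\N$-functional,'' or alternatively ``re-running the argument of Lemma~\ref{l4.1} with $L^{2+\eps}$-averages'' --- does not assemble into a proof, and the second alternative in particular would fail: the inputs from Section~\ref{s3} (the Kato-type square function estimate and its Carleson measure consequences) are intrinsically $L^2$ statements; one cannot simply swap $L^2$ for $L^{2+\eps}$ averages throughout, nor is it clear how a good-$\lambda$ estimate on $\N(\nabla w)$ in $\reu$ would close. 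What the paper actually does is different and more concrete. Using the pointwise bound \eqref{eq4.10}, one first reduces matters to an $L^p(\rn)$ bound on the boundary trace $(\nabla w)(\cdot,0)$. One then proves a reverse H\"older inequality \emph{on $\rn$}, namely \eqref{eq4.32}, for $g=|\nabla w(\cdot,0)|$ against the controlling function $h=\C(\Phi)+(M(\A_2(\Phi)^q))^{1/q}+|(\partial_t w)(\cdot,0)|$, and applies Iwaniec's variant of Gehring's lemma (Lemma~\ref{l4.28}) to self-improve from exponent $2$ to some $p>2$. The crucial feature of $h$ is that $\|(\partial_t w)(\cdot,0)\|_{L^p}\lesssim\|\Phi\|_{T^p_2}$ for $1<p<\infty$ by the duality/square-function estimate \eqref{eq4.12} (itself coming from Corollary~\ref{c3.3}), while $\|\C(\Phi)\|_p$ and $\|(M(\A_2(\Phi)^q))^{1/q}\|_p$ are controlled by $\|\Phi\|_{T^p_2}$ for $p>2$ via \cite{CMS}. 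The proof of \eqref{eq4.32} itself uses a near/far decomposition $\Phi=\Phi_0+\sum_k\Phi_k$, Caccioppoli, the $t$-independent estimate \eqref{eq2.35}, Moser, and Poincar\'e, along lines of \cite{KP}; this is where interior reverse H\"older enters, but only as one ingredient, not as the driver. So while your instinct that some self-improvement is needed is correct, you are missing the essential idea: reduce to the boundary, identify the right majorant $h$, and invoke Gehring on $\rn$.
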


\begin{proof}[Proof of Proposition \ref{p4.6} 
(assuming Proposition \ref{p4.3}
and Lemma \ref{l4.1})]
The proof is completely standard except for the range $2<p<2+\eps$.  
Let us suppose first that $n/(n+\alpha_0)<p\leq2$.  The case $p=2$ is Lemma \ref{l4.1}.  
Given the lemma, it therefore suffices, by tent space interpolation, to prove the case
$n/(n+\alpha_0)<p\leq 1$.  To this end,
we claim that it suffices to replace $\Phi$ (in \eqref{eq4.7})
by a $T^p_2$ atom $a(x,t)$, i.e. to prove that there
exists a uniform constant $C$ such that for every $T^p_2$ atom,
\begin{equation} \| \N( \nabla L^{-1} \nabla \cdot a)\|_{L^p(\rn)} \leq C.
\label{eq4.8}
\end{equation}
Indeed, $T^p_2\cap T^2_2$ is dense in $T^p_2$; moreover, for every $\Phi \in T^p_2\cap T^2_2$,
there is an
atomic decomposition $\Phi = \sum \lambda_k a_k$, with
$\sum|\lambda_k|^p \approx \|\Phi\|_{T^p_2}^p$, which converges in both
$T^p_2$ and in $T^2_2$ (see \cite[Theorem 1 and Proposition 5]{CMS}, and also 
\cite[Proposition 3.25]{HMMc}.)  Since 
$\nabla L^{-1}\dv: T^2_2 \to \widetilde{T}^2_\infty$ (by Lemma \ref{l4.1}), 
we therefore obtain that, for $\Phi =\sum\lambda_ka_k \in T^p_2\cap T^2_2$,
\begin{equation} \N\left( \nabla L^{-1} \nabla \dv \Phi \right)\leq 
\sum|\lambda_k|\left( \N( \nabla L^{-1} \nabla \dv  a_k)\right)\,,
\label{eq4.9}
\end{equation}
whence the claim follows.

Hence, we fix a cube $Q\subset \rn$, and an atom $a$, supported
in the Carleson box $R_Q= Q\times (0, \ell(Q))$.
We decompose the $p$-th power of the norm 
in \eqref{eq4.8} as follows:
\begin{equation*}
\int\limits_{\rn}\, =\, \int\limits_{64Q} \,+\, \sum\limits_{k\geq 6} \int\limits_{\V_k},
\end{equation*}
where $\V_k= 2^{k+1} Q \setminus 2^k Q.$
The desired bound now follows immediately from H\"older's inequality 
and  \eqref{eq4.4}-\eqref{eq4.5}. 
This concludes the proof of Proposition \ref{p4.6}
in the range in the range $n/(n+\alpha_0)<p\leq2$.  The proof in the range
$2<p<2+\eps$ is more delicate, and we defer it until the end of this section. \end{proof}

Next, we give the 
\begin{proof}[Proof of Lemma \ref{l4.1}]

Let $\Phi\in T^2_2(\reu)$, taking values in $\CC^{n+1}$.  We may assume that 
$\Phi\in C_0^\infty(\reu)$, since this class of functions is dense in $T_2^2$.
Our goal is to show that $\N(\nabla L^{-1}\nabla\Phi)$ belongs to $L^2$.  For $(x,t)\in \reu$, set 
$B_{x,t}:= B((x,t),t/2) \subset \mathbb{R}^{n+1}$, and let
$\Delta_{x,t}:= \Delta(x,t/2)=\{y\in\rn:|x-y|<t/2\}$.  We set $w:= L^{-1}\dv\Phi$,
and define
$$\left(\widetilde{N}(\nabla w)(x)\right)^2 :=
\sup_{t>0} \fiint_{B_{x,t}}|\nabla w|^2 dy ds\,. $$
We remark that $\tN$ differs slightly from $\N$, but they are equivalent in $L^p$ norm,
by the well known fact that one may vary the aperture of the cones defining $\N$ (see \cite{FS}).
Therefore we work with $\tN$ throughout this section.  We claim that it suffices
to show that there is an exponent
$q<2$, depending only upon dimension and ellipticity, for which we have the pointwise bound
\begin{equation}\label{eq4.10}
\widetilde{N}(\nabla w)(x)\lesssim \left(M\big(\A_2(\Phi)^q\big)(x)\right)^{1/q} + 
M\Big((\nabla w)( \cdot,0)\Big)(x)\,,
\end{equation}
where $M$ denotes the Hardy-Littlewood maximal operator.
The desired $L^2$ bound for the first of these terms follows immediately. To handle the 
$L^2$ norm of the second, let $h\in L^2(\rn, \CC^{n+1})$, so that
\begin{multline}\label{eq4.11}
\left|\langle\nabla w(\cdot,0), h\rangle\right|=\left|\iint_{\reu}\Phi(x,t)\cdot 
\overline{t\nabla\left(\SL_t^{L^*}\nabla \right)h(x)}
\frac{dxdt}{t}\right|\\[4pt]\lesssim \|\Phi\|_{T^2_2} \,
\left(\iint_{\reu}|t\,\nabla\left(\SL_t^{L^*}\nabla \right)h|^2\frac{dxdt}{t}\right)^{1/2}
\lesssim \|\Phi\|_{T^2_2} \,\|h\|_2\,,
\end{multline}
where in the last step we have used \eqref{eq3.7} for $L^*$.  Taking a supremum over all such $h$ with norm 1, we obtain that $\|M\big(\nabla w( \cdot,0)\big)\|_2\lesssim \|\Phi\|_{T^2_2}$, thus establishing the claim.    Let us digress momentarily and note, for the sake of future reference, that the same duality argument, but now with $h\in L^{p'}(\rn)$, shows that, by \eqref{eq3.6} and translation invariance in $t$,
we have
\begin{equation}\label{eq4.12}
\|(\partial_t w)(\cdot,0)\|_{L^p(\rn)}\, \leq \,C_p\, \|\Phi\|_{T^p_2}\,,\qquad 1<p<\infty\,.
\end{equation}

We proceed now to the proof of \eqref{eq4.10}.
Fix $(x_0,t_0) \in \reu$, set $B_k:=B_k(x_0,t_0):=B((x_0,0),2^{k+2}t_0)\subset\ree$, $k=0,1,2...$, and let
$\Delta_k:=\Delta_k(x_0,t_0):= B_k(x_0,t_0)\cap (\rn\times\{0\})$.  We split
$$w=\sum_{k=0}^\infty w_k:= \sum_{k=0}^\infty L^{-1}\dv\Phi_k\,,$$
where
$$\Phi_0:= \Phi 1_{B_0}\,,\qquad \Phi_k:=\Phi1_{B_k\setminus B_{k-1}}\,,\,\, k\geq 1\,.$$
We further subdivide $\Phi_0=\Phi_0'+\Phi_0''$, where $\Phi_0':=\Phi_0 1_{\{t\geq t_0/4\}}$,
which induces a corresponding splitting $w_0=w_0'+w_0''$.
Since $\nabla L^{-1}\dv: L^2(\ree)\to L^2(\ree)$, we have that
$$\fiint_{B_{x_0,t_0}}|\nabla w'_0|^2 \lesssim \frac1{t_0^{n+1}}\iint_{B_0\cap\{t\geq t_0/4\}}
|\Phi|^2\lesssim
\iint_{B_0\cap\{t\geq t_0/4\}}|\Phi(x,t)|^2 \frac{dxdt}{t^{n+1}}\lesssim \A_2(\Phi)^2(x_0)\,,$$
uniformly in $t_0$, provided that we have defined $\A_2$ with respect to cones of sufficiently large aperture (as we may do:  see \cite[Prop. 4, p. 309]{CMS}).  Thus, the contribution of $w'_0$ gives the desired estimate.  To handle $w_0''$, we first note that $Lw_0''=0$ in 
$B^*_{x_0,t_0}:= B((x_0,t_0),3t_0/4)$.  Consequently, $\nabla w_0''$ satisfies the Reverse H\"older estimate
$$\fiint_{B_{x_0,t_0}}|\nabla w''_0|^2 \leq C_{q,n,\lambda,\Lambda} 
\left(\fiint_{\widetilde{B}_{x_0,t_0}}|\nabla w''_0|^q\right)^{2/q}\,,\qquad q<2\,,$$
where $\tB_{x_0,t_0}:=B((x_0,t_0),5t_0/8)$.   We recall that for some $\eps>0$, depending
only upon dimension and ellipticity, 
$\nabla L^{-1}\dv: L^q(\ree)\to L^q(\ree)$, for all $q\in (2-\eps,2)$.  We now fix such a $q$,
so that, by the Reverse H\"older estimate, we have
\begin{multline}\label{eq4.13}
\fiint_{B_{x_0,t_0}}|\nabla w''_0|^2 \,\lesssim\,
\left(\frac1{t_0^{n+1}}\iint|\Phi''_0|^q\right)^{2/q}\,\lesssim\,
\left(\fint_{\Delta_0}\!\fint_0^{4t_0}|\Phi(y,t)|^q\left(\fint_{|x-y|<t}dx\right)\,dydt\right)^{2/q}\\[4pt]
\,\leq\, \left(\fint_{|x_0-x|<8t_0}\left(\fint_0^{4t_0}\fint_{|x-y|<t}|\Phi(y,t)|^2\,dydt \right)^{q/2}dx\right)^{2/q}\,
\lesssim \left(M\left(\A_2(\Phi)^q\right)(x_0)\right)^{2/q}.
\end{multline} 
Since $q<2$, the contribution of $w_0''$ also satisfies the desired bound.

Next, we consider $\w:= \sum_{k=1}^\infty w_k$.  Since $L\w=0$ in $B_0$,
following \cite{KP}, by Caccioppoli's inequality and the Moser type bound
\eqref{eq1.7} (recall that we assume De Giorgi/Nash/Moser estimates,
by hypothesis), we have that, 
\begin{multline}\label{eq4.14}
\fiint_{B_{x_0,t_0}}|\nabla\w|^2 \lesssim \left(\frac{1}{t_0}\fiint_{\tB_{x_0,t_0}} 
|\w-C_{x_0,t_0}|\right)^2\\[4pt]
\lesssim \left(\frac{1}{t_0}\fiint_{\tB_{x_0,t_0}}  |\w(y,s)-\w(y,0)|dyds\right)^2\,
+\,\left(\frac{1}{t_0}\fiint_{\tB_{x_0,t_0}}  |\w(y,0)-C_{x_0,t_0}|dyds\right)^2\\[4pt]
\lesssim \left(\fint_{|x_0-y|<5t_0/8}\,\,\,\sup_{s<13t_0/8} |\partial_s\w(y,s)| \,dy\right)^2
\,+\,\left(\fint_{|x_0-y|<5t_0/8}|\nabla_\|\w(y,0)| dy\right)^2 =:I+II\,,
\end{multline}
where as above, $\tB_{x_0,t_0}:=B((x_0,t_0),5t_0/8)$; in the last line, we  
have used Poincar\'e's inequality to
obtain term $II$ (with $C_{x_0,t_0} := \fint_{|x_0-y|<5t_0/8} \w$). 

We treat term $II$ first.  Observe that
\begin{equation}\label{eq4.15}
II \lesssim \left(M\Big(\nabla w(\cdot,0)\Big)(x_0)\right)^2 
+ \left(\fint_{|x_0-y|<5t_0/8}|\nabla w_0(y,0)| dy\right)^2=:II' + II''\,,
\end{equation}
where we have crudely dominated
$|\nabla_\| w|, |\nabla_\|w_0|$ by the norms of
their respective full gradients.  
The term $II'$ is of the form that we seek in \eqref{eq4.10}.
We handle $II''$ by a similar duality argument,
but now with $h \in L^\infty(\{|x_0-y|<5t_0/8\})$.  We then have that, by \cite[Theorem 1, p. 313]{CMS},
\begin{multline*}
t_0^{-n}\left|\langle\nabla w_0, h\rangle\right|=t_0^{-n}\left|\iint_{\reu}\Phi_0\cdot 
\overline{t\nabla\left(\SL_t^{L^*}\nabla \right)h}\,
\frac{dxdt}{t}\right| \\[4pt]\lesssim \,t_0^{-n} \int_{\rn}\A_2(\Phi_0)(x)\,
\C\left(t\,\nabla\left(\SL_t^{L^*}\nabla \right)h\right)(x)\, dx
\\[4pt]\lesssim\,\fint_{|x_0-x|<8t_0} \A_2(\Phi)(x)\, dx
\,\, \|h\|_\infty\,\lesssim \, M\left( \A_2(\Phi)\right)(x_0)\, \|h\|_\infty\,,
\end{multline*}
where in the next-to-last step we have used Corollary \ref{c3.3} for $L^*$, 
and the fact that the compact support of
$\Phi_0$ contrains in turn the support of $\A_2(\Phi_0)$.
Taking a supremum over all such $h$ with $L^\infty$ norm 1, we have that $II''\lesssim
 M\left( \A_2(\Phi)\right)(x_0)^2$,
so that the desired bound holds for this term also.

It remains to treat term $I$ in \eqref{eq4.14}.  We write
\begin{equation}\label{eq4.16}
\partial_s \w(y,s)\,=\, \Big(\partial_s \w(y,s) -\left(\partial_s \w\right)(y,0) \Big)
\,+\, \left(\partial_s \w\right)(y,0)\,=: \,\mathcal{R}(y,s) \,+ \, \left(\partial_s \w\right)(y,0)\,.
\end{equation}
Notice that $\left(\partial_s \w\right)(y,0)$ contributes to $I$ an expression that satisfies the same bound
as $II$ in \eqref{eq4.15}, which we have already handled.
Recalling that $\w=\sum_{k=1}^\infty w_k$,
we have a corresponding splitting $\rr=\sum_{k=1}^\infty \rr_k$, and we treat each term in the latter sum
separately.  We have, for some $2<p<2+\eps$, with $\eps$ depending only on dimension
and ellipticity, and for $q:=p/(p-1)$, that
\begin{multline*}
|\rr_k(y,s)|=\left|\iint\nabla_{x,t}\Big(\partial_s\Gamma(y,s,x,t)-(\partial_s\Gamma)(y,0,x,t)\Big)
\cdot \Phi_k(x,t) \,dx dt\right|\\[4pt]
\leq \left(\iint_{B_k\setminus B_{k-1}}\left|\nabla_{x,t}\Big(\partial_s\Gamma(y,s,x,t)-
(\partial_s\Gamma)(y,0,x,t)\Big)\right|^p\,dxdt\right)^{1/p}\,
 \left(\iint\left|\Phi_k(x,t)\right|^{q}\,dxdt\right)^{1/q}\\[4pt]
 \lesssim\, 2^{-k} \left(\fiint_{B_k}\left|\Phi(x,t)\right|^q\,dxdt\right)^{1/q}
\end{multline*}
by the estimate of N. Meyers\footnote{We note that the estimate of Meyers continues to hold for complex coefficients:  it is simply a consequence of the self-improvement of the reverse H\"older inequalities for the gradient that one obtains from the inequalities of Caccioppoli and Sobolev.}, Caccioppoli's inequality, and \eqref{eq2.29}, since $|y-x_0|<t_0$ and $s< 2t_0$.  But now, following the computations in \eqref{eq4.13} above,
we find that for some $q<2$,
\begin{equation}\label{eq4.17}
|\rr(y,s)|\leq\sum_{k=1}^\infty|\rr_k(y,s)|\,\lesssim \, 
\sum_{k=1}^\infty2^{-k}\left(M\left(\A_2(\Phi)^q\right)(x_0)\right)^{1/q}\lesssim
\left(M\left(\A_2(\Phi)^q\right)(x_0)\right)^{1/q}\,,
\end{equation}
uniformly in $(y,s)$ as above, 
which yields the desired bound for $I$.
\end{proof}

With the lemma in hand, we present the

\begin{proof}[Proof of Proposition \ref{p4.3}]  We fix a cube $Q\subset \rn$, and an atom adapted
to $Q$ (more precisely, to its Carleson box $R_Q$).
We first observe that \eqref{eq4.4} is an immediate consequence of
Lemma \ref{l4.1}, and the definition of a $T^p_2$ atom \eqref{eq2.16}:
\begin{equation}
\int\limits_{64Q} \left|\N\left(\nabla L^{-1}\nabla \cdot a\right) \right|^2 
\,\lesssim\, \|a\|^2_{T_2^2}\,\lesssim |Q|^{1-2/p}\,.
\label{eq4.18}\end{equation}

We turn now to the ``far-away" estimate \eqref{eq4.5}.  
To lighten the exposition, as before, we shall work with a variant of the non-tangential
maximal function $\N$.
For $F\in L^2_{loc}(\reu)$, we set
$$\left(\widetilde{N}(F)(x)\right)^2 :=
\sup_{\delta>0} \fiint_{B_{\delta}}|F(y,s)|^2 dy ds\,. $$
Here, $B_\delta:=B_{x,\delta} := B((x,\delta),\delta/2).$  
We fix $k\geq 6$, let $x\in V_k:=2^{k+1}Q \setminus 2^kQ$, and for 
$B_\delta=B_{x,\delta}$ as above, we write
\begin{equation}\label{eq4.19}
\fiint\limits_{{B}_{\delta}}\left|\nabla L^{-1}\nabla\cdot a\right|^2 dyds\,=\,
\fiint\limits_{{B}_{\delta}} \Big| \nabla_{y,s} \iint\limits_{R_Q}\nabla_{z,t} \Gamma(y,s,z,t) \cdot
a(z,t) \,dzdt\Big|^2dyds\,.
\end{equation}
We note that, since $x\in V_k$, we have that $w:=L^{-1}\nabla\cdot a$ is a solution of
$Lw=0$ in $B^*_\delta:= B((x,\delta),3\delta/4)$.
For the sake of notational convenience, we set $r:=\ell(Q)$, and 
we split
$$\tN(F)\,\lesssim \,\sup_{\delta \geq 2^{k-3} r}\left(\fiint_{B_\delta}|F|^2 \right)^{1/2}
+\,\sup_{\delta \leq 2^{k-3} r}\left(\fiint_{B_\delta}|F|^2 \right)^{1/2}=:\,\tN_1(F) +\tN_2(F)\,.$$
Consider first $\tN_1$, i.e., the case that
$\delta \geq 2^{k-3} r$.  In this case, by Caccioppoli's inequality in the $y,s$ integral,
followed by Cauchy-Schwarz,  Caccioppoli, and the atomic estimate in the $z,t$ integral, 
we have that \eqref{eq4.19} is bounded by a constant times
\begin{multline*}
(2^k r)^{-2} \fiint\limits_{\tB_{\delta}} \Big| \iint\limits_{R_Q} \nabla_{z,t} \Gamma(y,s,z,t)\cdot 
a(z,t)\,dzdt\Big|^2dyds \qquad {\rm (where} \,\tB_\delta:= B((x,\delta),5\delta/8))\\[4pt]
\lesssim\,(2^k r)^{-2}r^{-2}
 \fiint\limits_{\tB_{\delta}}\left(\iint\limits_{\tR_Q} 
\left |\Gamma(y,s,z,t)-\Gamma(y,s,z_Q,0)\right|^2 \iint_{R_Q}|a|^2\right)dyds\\[4pt]
\lesssim\, 2^{-2\alpha_0k}(2^k r)^{-2n}r^{n}\iint_{R_Q}|a|^2\frac{dzdt}{t}
 \lesssim\,2^{-2\alpha_0k}(2^k r)^{-2n}r^nr^{n(1-2/p)}\,,
\end{multline*}
where in the last pair of estimates, we have used \eqref{eq2.30} and
\eqref{eq2.16}, and the fact that $|\tR_Q|\approx r^{n+1}$.   
Since this bound holds uniformly for $\delta\geq2^{k-3}r$, 
we find that
\begin{equation}\label{eq4.20}
\int_{V_k}\left(\tN_1(\nabla L^{-1}\nabla\cdot a)\right)^2\, \lesssim \,
2^{-k\big(2\alpha_0+n\big)}r^{n(1-2/p)}=2^{-2\epsilon k}\left(2^k r\right)^{n(1-2/p)}\,,
\end{equation}
with $\epsilon:= \alpha_0+n-n/p >0$, since $p>n/(n+\alpha_0)$.  

Next, we consider $\tN_2$, i.e., the case that $\delta\leq 2^{k-3} r$.
Recall that $w:= L^{-1} \nabla\cdot a$.  For $x\in V_k$, we have that $Lw=0$ in $B_\delta
=B_{x,\delta}$.
Following \cite{KP}, by Caccioppoli's inequality and the Moser type bound
\eqref{eq1.7}, we have that, 
\begin{multline}\label{eq4.21}
\fiint_{B_\delta}|\nabla w|^2 \lesssim \left(\frac{1}{\delta}\fiint_{\tB_{\delta}} 
|w-C_{x,\delta}|\right)^2\\[4pt]
\lesssim \left(\frac{1}{\delta}\fiint_{\tB_{\delta}}  |w(y,s)-w(y,0)|\,dyds\right)^2\,
+\,\left(\frac{1}{\delta}\fiint_{\tB_{\delta}}  |w(y,0)-C_{x,\delta}|\,dyds\right)^2\\[4pt]
\lesssim \left(\fint_{|x-y|<5\delta/8}\,\,\,\sup_{s<13\delta/8} |\partial_s w(y,s)| \,dy\right)^2
\,+\,\left(\fint_{|x-y|<5\delta/8}|\nabla_\|w(y,0)| \,dy\right)^2 =:I+II\,,
\end{multline}
where as above, $\tB_{\delta}:=B((x,\delta),5\delta/8)$; in the last line, we 
have used Poincar\'e's inequality to
obtain term $II$ (with $C_{x,\delta} := \fint_{|x-y|<5\delta/8} w$). 

We first consider the contribution of term $I$.  For $s< \delta\leq 2^{k-3} r$, 
and for $|x-y|<\delta\leq 2^{k-3}r$, with $x\in V_k$, by Caccioppoli's inequality
we have that
\begin{multline*}
|\partial_s w(y,s)|^2 \,\leq\, \Big| \iint\limits_{R_Q}\partial_s \nabla_{z,t} \Gamma(y,s,z,t)\cdot 
a(z,t)\,dzdt\Big|^2 \\[4pt]
\lesssim\,r^{-2}
\iint\limits_{\tR_Q} 
\left |\partial_s\Big(\Gamma(y,s,z,t)-\Gamma(y,s,z_Q,0)\Big)\right|^2dz dt\iint_{R_Q}|a|^2\,dzdt\,\\[4pt]
\lesssim\,2^{-2\alpha_0k}(2^kr)^{-2n}\, 
r^n\iint_{R_Q}|a|^2\frac{dzdt}{t}\,\lesssim \lesssim\,2^{-2\alpha_0k}(2^kr)^{-2n}r^{n}r^{n(1-2/p)}\,,
\end{multline*}
where we have used
\eqref{eq2.30} and
\eqref{eq2.16}.  Since this inequality holds uniformly for the range of $y,s$ under consideration,
we obtain the same bound for term $I$, uniformly in $\delta\leq 2^{k-3}r$.
Integrating over $V_k$, we obtain precisely the
same estimate as in \eqref{eq4.20}, as desired.

Finally, we consider term $II$, which for $|x-y|<\delta\leq 2^{k-3}r$, with $x\in V_k$, satisfies
$$II^{1/2}\leq M\left({\bf 1}_{2^{k+2}Q\setminus 2^{k-2}Q}|\nabla_\|w|\right)(x)\,.$$
Therefore, taking a supremum over $\delta\leq 2^{k-3} r$, and 
integrating over $V_k$, we find that the contribution of $II$ is controlled by
\begin{multline*}
\int_{V_k}M\left({\bf 1}_{2^{k+2}Q\setminus 2^{k-2}Q}|\nabla_\|w|\right)^2
\lesssim\, \int_{2^{k+2}Q\setminus 2^{k-2}Q}|\nabla_\|w(y,0)|^2dy\\[4pt]
=\,\int_{2^{k+2}Q\setminus 2^{k-2}Q}\Big| \iint\limits_{R_Q}\nabla_y \nabla_{z,t} \Gamma(y,0,z,t)\cdot 
a(z,t)\,dzdt\Big|^2dy\\[4pt]
\lesssim\,\left(\int_{2^{k+2}Q\setminus 2^{k-2}Q}r^{-1}
\iint\limits_{\tR_Q} 
\left |\nabla_y\Big(\Gamma(y,0,z,t)-\Gamma(y,0,z_Q,0)\Big)\right|^2dz dt\,dy\right)\,
\iint_{R_Q}|a|^2\,\frac{dzdt}{t}\,\\[4pt]
\lesssim \,
2^{-2\alpha_0k}(2^kr)^{-n}r^{n}r^{n(1-2/p)}=
2^{-2\epsilon k}\left(2^kr\right)^{n(1-2/p)}\,,
\end{multline*}
as in \eqref{eq4.20},
where in the last three lines we have used Cauchy-Schwarz and Caccioppoli, \eqref{eq2.40},
and \eqref{eq2.16}. \end{proof}

We now state a corollary of the estimates that we have proved so far.
\begin{corollary}\label{c4.22} 
Suppose, as in Proposition \ref{p4.3} and Proposition \ref{p4.6}, that $L$ and 
$L^*$ satisfy the standard assumptions, are $t$-independent and that $L^*$
satisfies the $L^2$ square function 
bound \eqref{eq1.23}.   Suppose first that $n/(n+\alpha_0)<p\leq1$.  
Let $a$ be a $\CC^{n+1}$-valued $T^p_2$ atom adapted to $Q$.   Then
\begin{align}\label{eq4.23}
\|  \left(\nabla L^{-1} \nabla \cdot a\right)(\cdot,0)\|_{L^p(64Q)}& \leq\, C\,\ell(Q)^{n\big(\frac12-\frac1p\big)}\\[4pt]
\|  \left(\nabla L^{-1} \nabla \cdot a\right)(\cdot,0)\|_{L^p(V_k(Q))}& \leq\, C\,2^{-\epsilon k}
\left(2^k\ell(Q)\right)^{n\big(\frac12-\frac1p\big)}\,,\quad k=6,7,....\label{eq4.24}
\end{align}
Moreover, for every $\Phi\in T^p_2(\reu,\CC^{n+1})$, and $p$ as indicated, we have
\begin{equation}\label{eq4.25}
\int_{\rn} |\left(\nabla L^{-1}\dv \Phi \right)(y,0)|^p\,dy\, \leq\, C\, \|\Phi\|_{T^p_2}^p\,,\quad 
\quad n/(n+\alpha_0)<p\leq 2\,,
\end{equation}
\begin{equation}\label{eq4.26}
\big\|\left(\nabla_\| L^{-1}\dv \Phi \right)(\cdot,0)\big\|_{H^p(\rn)} \leq\, C\, \|\Phi\|_{T^p_2}\,,
\quad n/(n+\alpha_0)<p\leq1\,,
\end{equation}
\begin{equation}\label{eq4.27}
\big\|\left(\vec{N}\cdot A\nabla L^{-1}\dv \Phi\right) (\cdot,0)\big\|_{H^p(\rn)} \leq\, C\, \|\Phi\|_{T^p_2}\,,
\quad n/(n+\alpha_0)<p\leq1\,,
\end{equation}
The constant $C$ in each of these estimates has the same dependence as the constant in \eqref{eq4.7}.
\end{corollary}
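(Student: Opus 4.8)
The plan is to harvest the estimates already proven in Lemma~\ref{l4.1} and Proposition~\ref{p4.3} and feed them into the atomic decomposition of $T^p_2$ (Lemma~\ref{l2.19}) and the molecular criterion for $H^p$ (Lemma~\ref{l2.6}(ii)). The only genuinely new issue is to make rigorous the boundary trace $(\nabla L^{-1}\nabla\cdot a)(\cdot,0)$. Here one uses that a $T^p_2$ atom $a$, because of the weight $dxdt/t$ in its normalization \eqref{eq2.16}, vanishes at $\{t=0\}$ in an $L^2$-averaged sense; truncating $a$ to $\{t>\eta\}$ produces a function $a_\eta$ (bounded by a fixed multiple of an atom), supported in a compact subset of $\reu$, with $a_\eta\to a$ in both $T^p_2$ and $T^2_2$ as $\eta\to0$. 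For such $a_\eta$ the solution $w:=L^{-1}\nabla\cdot a_\eta$ satisfies $Lw=0$ in a slab $\{|t|<\eta\}$, so by the De Giorgi--Nash--Moser bounds $\nabla w$ is continuous up to $\{t=0\}$, the traces $(\nabla w)(\cdot,0)$, $(\nabla_\|w)(\cdot,0)=\nabla_\|\big(w(\cdot,0)\big)$ and $(\vec N\cdot A\nabla w)(\cdot,0)$ all exist classically, and $|(\nabla w)(x,0)|\leq \N(\nabla w)(x)$; the general case is recovered by a limiting argument resting on the $L^2$-boundedness of Lemma~\ref{l4.1}.

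\emph{Estimates \eqref{eq4.23}--\eqref{eq4.25}.} With the trace in hand, revisiting the proof of Proposition~\ref{p4.3} shows that it satisfies the $L^2$-averaged (molecular) bounds $\|(\nabla w)(\cdot,0)\|_{L^2(16Q)}\lesssim\ell(Q)^{n(\frac12-\frac1p)}$ and $\|(\nabla w)(\cdot,0)\|_{L^2(V_k(Q))}\lesssim 2^{-\epsilon k}\big(2^k\ell(Q)\big)^{n(\frac12-\frac1p)}$ for $k\geq 6$, with $\epsilon=\alpha_0+n-n/p>0$: for $k\geq6$ these are read off from \eqref{eq4.20} and from the treatment of the terms $I$ and $II$ in \eqref{eq4.21} in the limit $\delta\to0$, while on $64Q$ one invokes Lemma~\ref{l4.1} directly. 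Estimates \eqref{eq4.23}--\eqref{eq4.24} then follow a fortiori by H\"older's inequality on sets of finite measure. For \eqref{eq4.25}, the range $1<p\leq2$ is immediate from Proposition~\ref{p4.6} and the pointwise bound $|(\nabla w)(\cdot,0)|\leq\N(\nabla w)$; for $n/(n+\alpha_0)<p\leq1$ one instead decomposes $\Phi=\sum_j\lambda_ja_j$ atomically (Lemma~\ref{l2.19}, with convergence in $T^p_2\cap T^2_2$, exactly as in the proof of Proposition~\ref{p4.6}), uses the molecular bounds above — which give $\|(\nabla L^{-1}\nabla\cdot a_j)(\cdot,0)\|_{L^p(\rn)}\lesssim1$ uniformly, the annular sum $\sum_k 2^{-\epsilon pk}$ being convergent — and concludes by subadditivity of $\|\cdot\|_{L^p}^p$ together with $\sum_j|\lambda_j|^p\lesssim\|\Phi\|_{T^p_2}^p$.

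\emph{Estimates \eqref{eq4.26}--\eqref{eq4.27}.} By Lemma~\ref{l2.6}(ii), for $n/(n+\alpha_0)<p\leq1$ it suffices to show that the linear operators $\Phi\mapsto(\nabla_\|L^{-1}\nabla\cdot\Phi)(\cdot,0)$ and $\Phi\mapsto(\vec N\cdot A\nabla L^{-1}\nabla\cdot\Phi)(\cdot,0)$ carry each $T^p_2$ atom adapted to $Q$ to a fixed multiple of an $H^p$-molecule adapted to $Q$ in the sense of Definition~\ref{d2.3}. The size and decay conditions of Definition~\ref{d2.3} are exactly the molecular bounds of the previous step (using $|\vec N\cdot A\nabla w|\leq\Lambda|\nabla w|$ for \eqref{eq4.27}), so only the cancellation $\int_{\rn}m=0$ remains. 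For \eqref{eq4.26}, $m=\nabla_\|\big(w(\cdot,0)\big)$ is the full horizontal gradient of a function which, by the molecular decay, lies in $\dot W^{1,q}(\rn)$ for all sufficiently large $q$ and hence decays at infinity, whence $\int_{\rn}\partial_j\big(w(\cdot,0)\big)\,dx=0$ for $1\leq j\leq n$. For \eqref{eq4.27}, taking $a$ truncated away from $\{t=0\}$, we have $Lw=\nabla\cdot a$ with $\nabla\cdot a$ compactly supported in $\reu$ and $a(\cdot,0)\equiv0$; integrating this identity over $\reu$ and integrating by parts — the decay of $\nabla w$ from the fundamental-solution estimates \eqref{eq2.29} killing the flux at spatial infinity — gives $\int_{\rn}(\vec N\cdot A\nabla w)(x,0)\,dx=\pm\int_{\rn}a_{n+1}(x,0)\,dx=0$. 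Thus each atom maps to an $H^p$-molecule with uniform constants, and Lemma~\ref{l2.6}(ii), combined with the truncation/limiting argument, yields \eqref{eq4.26}--\eqref{eq4.27}.

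The only step that requires real care is the one isolated in the first paragraph: the rigorous construction of the boundary traces and the justification of the integration by parts for \eqref{eq4.27}. Once the $T^p_2$ atom is arranged to vanish near $\{t=0\}$, so that $w$ solves the equation in a full slab about the boundary, both become routine consequences of interior De Giorgi--Nash--Moser regularity and the decay of the global fundamental solution, and everything else reduces to bookkeeping over Proposition~\ref{p4.3} and Lemma~\ref{l4.1}.
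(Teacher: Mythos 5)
Your proposal follows essentially the same route as the paper's (brief) proof: reduce to atoms $a$ compactly supported away from $\{t=0\}$, use the fact that $w=L^{-1}\nabla\cdot a$ solves $Lw=0$ in a slab about the boundary to make sense of the trace, harvest the molecular-type $L^2$ bounds from the proof of Proposition~\ref{p4.3} and Lemma~\ref{l4.1}, and finish \eqref{eq4.26}--\eqref{eq4.27} by checking the $H^p$-molecule cancellation via integration by parts. The structure, atomic decomposition, and the sources of each estimate all match the paper's argument.

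One technical claim is stated too strongly and should be corrected: De Giorgi--Nash--Moser regularity gives H\"older continuity of the solution $w$ itself, not of $\nabla w$, so the assertion that ``$\nabla w$ is continuous up to $\{t=0\}$'' and the pointwise inequality $|(\nabla w)(x,0)|\leq\N(\nabla w)(x)$ are not in general valid for bounded measurable coefficients. What one actually has, and what the paper invokes, is the ``flat'' Caccioppoli/slice estimate of Lemma~\ref{l2.33}: for $t$-independent $L$ and a null solution on a box, the $L^p$ norm of $\nabla w$ on a fixed time slice is controlled by the averaged $L^p$ norm over a Whitney neighborhood. This, together with the reverse H\"older property of the gradient, lets you define the boundary trace as an $L^p_{\loc}$ function and bound it by the non-tangential maximal function in the averaged sense; all your subsequent estimates then go through verbatim, since they only use $L^2$- or $L^p$-averaged quantities, never pointwise values of $\nabla w$ on the boundary. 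With this substitution the argument is complete and is the same as the paper's.
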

\begin{proof}[Sketch of proof]  It is enough to work with $a$ and 
$\Phi$ compactly supported in $\reu$, as the 
class of such functions is dense in $T^p_2$ (and the compact support property is preserved
in the \cite{CMS} atomic decomposition).  For such $a$ and $\Phi$, 
the ``boundary traces" of $\nabla L^{-1}\dv a,\,\nabla L^{-1}\dv \Phi$ make sense (e.g.,
via Lemma \ref{l2.33}), and inherit 
from $\N(\nabla L^{-1}\dv a),\,\N(\nabla L^{-1}\dv \Phi)$ the
claimed bounds  in \eqref{eq4.23}-\eqref{eq4.25}.  To be more precise, Lemma~\ref{l2.33} trivially implies its $L^p$ analogue, for $p\leq 2$, by H\"older's inequality on one side and reverse H\"older estimates on the gradient of the solution on the other. This, in turn, directly entails bounds on the boundary trace in terms of the corresponding non-tangential maximal function in $L^p$, $p\leq 2$, and, respectively, \eqref{eq4.23}-\eqref{eq4.25}. To prove \eqref{eq4.26}-\eqref{eq4.27},
it is enough to decompose $\Phi$ into $T^p_2$ atoms, and to observe that by 
\eqref{eq4.23}-\eqref{eq4.24}, for a $T^p_2$ atom $a$,
the functions $\vec{m}:=\left(\nabla_\| L^{-1}\dv a \right)(\cdot,0)$,
and $m_{\vec{N}}:= \left(\vec{N}\cdot A\nabla L^{-1}\dv a\right) (\cdot,0)$, are
$H^p(\rn)$ molecules (up to a uniform multiplicative constant), since each has mean value zero
(as may be seen by integrating by parts).
\end{proof}

It remains to complete the proof of Proposition  \ref{p4.6}.
We shall require the following variant of Gehring's lemma 
as proved in
Iwaniec \cite{I}.

\begin{lemma}\label{l4.28} Suppose that $0\leq g, h\in L^s(\rn)$,
with $1<s<\infty$, and that for all cubes $Q\subseteq \rn$,
\begin{equation}\label{eq4.29}
\left(\fint_Qg^s\right)^{1/s}
\leq C_0\fint_{3Q} g
+ C_1\left(\fint_{3Q}h^s\right)^{1/s},
\end{equation}
Then, there exists $p=p(n,p,C_0,C_1)>s$ such that
\begin{equation}\label{eq4.30}
\int_{\rn}g^p\leq C\int_{\rn}h^p.
\end{equation} 
\end{lemma}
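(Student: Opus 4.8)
The plan is to follow the classical Gehring/Iwaniec argument for self-improving reverse H\"older inequalities. First I would establish a ``good-$\lambda$'' (distributional) inequality for $g$ by performing a Whitney decomposition of the level sets of the uncentered Hardy--Littlewood maximal operator $M$, applying the hypothesis \eqref{eq4.29} on the Whitney cubes; then I would multiply that inequality by $\lambda^{p-s-1}$ and integrate in $\lambda$, which by Fubini produces $\int g^p$ on both sides, with a coefficient on the right that stays bounded as $p\downarrow s$ while the coefficient on the left blows up, so that for $p$ sufficiently close to $s$ the offending term can be absorbed.

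To carry out the first step, fix $\lambda>0$. Since $g\in L^s(\rn)$ with $s>1$, the maximal theorem gives $Mg\in L^s$, so $\Omega_\lambda:=\{Mg>\lambda\}$ is open of finite measure; let $\{Q_i\}$ be a Whitney-type decomposition of $\Omega_\lambda$ into pairwise disjoint dyadic cubes whose triples $\{3Q_i\}$ have bounded overlap (by a dimensional constant) and such that each $3Q_i$ is contained in a cube $\widetilde Q_i$ with $|\widetilde Q_i|\approx|Q_i|$ which contains a point $z_i$ with $Mg(z_i)\leq\lambda$; this last property forces $\fint_{3Q_i}g\leq (|\widetilde Q_i|/|3Q_i|)\fint_{\widetilde Q_i}g\leq C_n\lambda$. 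Since $\{g>\lambda\}\subseteq\Omega_\lambda$ up to a null set (Lebesgue differentiation), we have $\int_{\{g>\lambda\}}g^s\leq\sum_i\int_{Q_i}g^s$; bounding $\fint_{Q_i}g^s$ by raising \eqref{eq4.29} to the $s$-th power, using $(\fint_{3Q_i}g)^s\leq(C_n\lambda)^{s-1}\fint_{3Q_i}g$, the splittings $g=g\mathbf{1}_{\{g>\lambda/2\}}+g\mathbf{1}_{\{g\leq\lambda/2\}}$ and likewise for $h^s$, the bounded overlap of $\{3Q_i\}$, and the weak-type bound $|\Omega_\lambda|\leq C_n\lambda^{-1}\int_{\{g>\lambda/2\}}g$, one arrives at
\begin{equation*}
\int_{\{g>\lambda\}}g^s\,dx\;\leq\;C\lambda^{s-1}\int_{\{g>\lambda/2\}}g\,dx\;+\;C\int_{\{h>\lambda/2\}}h^s\,dx,\qquad\lambda>0,
\end{equation*}
with $C=C(n,s,C_0,C_1)$.

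For the second step, multiply this inequality by $\lambda^{p-s-1}$ and integrate over $\lambda\in(0,N)$. By Fubini the left side equals $\frac{1}{p-s}A_N$, where $A_N:=\int_{\rn}g^s\min(g,N)^{p-s}\leq N^{p-s}\|g\|_{L^s}^s<\infty$; the first term on the right is $\leq\frac{C2^{p-1}}{p-1}A_N$ (using the pointwise bound $g\,\min(2g,N)^{p-1}\leq 2^{p-1}g^s\min(g,N)^{p-s}$), and the second is $\leq\frac{C2^{p-s}}{p-s}\int_{\rn}h^p$. Because $\frac{C2^{p-1}}{p-1}$ stays bounded as $p\downarrow s$ whereas $\frac{1}{p-s}\to\infty$, there is $\delta=\delta(n,s,C_0,C_1)>0$ such that $\frac{1}{p-s}>\frac{C2^{p-1}}{p-1}$ for all $p\in(s,s+\delta)$; fixing such a $p$ and using $A_N<\infty$, we absorb the first term and get $A_N\leq C'\int_{\rn}h^p$ with $C'=C'(n,s,C_0,C_1,p)$, uniformly in $N$, whence letting $N\to\infty$ gives \eqref{eq4.30} (the case $\int h^p=\infty$ being vacuous).

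The main obstacle, and the step deserving care, is the derivation of the good-$\lambda$ inequality. The hypothesis \eqref{eq4.29} controls $\fint_Q g^s$ by the fixed dilate $3Q$, so a plain Calder\'on--Zygmund stopping-time decomposition, on which only $\fint_{Q_i}g$ would be comparable to $\lambda$, does not suffice; working instead with the maximal function, whose level set admits a Whitney decomposition for which even the \emph{tripled} cubes remain tied to points where $Mg\leq\lambda$, is precisely what renders the $3Q$ harmless. The other point requiring attention is the absorption: since a priori we know only $g\in L^s$, one must truncate the $\lambda$-integral at height $N$ and use the elementary pointwise inequalities above to reduce to the finite quantity $A_N$ before absorbing and passing to the limit.
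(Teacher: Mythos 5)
The paper does not supply a proof of this lemma; it simply cites Iwaniec \cite{I}, and the argument there is the standard Gehring-type level-set argument that you reproduce. Your proof is correct and amounts to essentially the same approach as the cited source: the Whitney decomposition of $\{Mg>\lambda\}$ (rather than a raw Calder\'on--Zygmund stopping time) to control $\fint_{3Q_i}g$ by $\lambda$, the resulting distributional inequality $\int_{\{g>\lambda\}}g^s\lesssim \lambda^{s-1}\int_{\{g>\lambda/2\}}g+\int_{\{h>\lambda/2\}}h^s$, and the truncated $\lambda$-integration with absorption of the $A_N$ term for $p$ close to $s$ are all carried out correctly, with the needed finiteness of $A_N$ coming from $g\in L^s$.
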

\begin{proof}[Proof of Proposition \ref{p4.6} in the case $2<p<2+\eps$]
As above, we set $w:=L^{-1}\dv\Phi$.   By \eqref{eq4.10}, we may reduce matters to
proving an $L^p(\rn)$ bound for
$(\nabla w)(\cdot,0)$.
We claim that the conditions of the
lemma are verified, with 
\begin{equation}\label{eq4.31}
s=2\,, \,\,g:=|\nabla w(\cdot,0)|\,, \,{\rm and}\,\, 
h:= \C(\Phi) + \left(M\left(\A_2(\Phi)^q\right)\right)^{1/q}+ |(\partial_t w)(x,0)|\,,
\end{equation}
for some $q<2$.  Given the claim, we observe that
$\|h\|_p\lesssim \|\Phi\|_{T^p_2}$, by \eqref{eq4.12} and  \cite[Theorem 3]{CMS},
since $2<p<\infty$.   The conclusion of Proposition
\ref{p4.6} then follows in the present case.  

We now proceed to establish the claim.  We fix a cube $Q\subset\rn$,
and we split $\Phi=
\Phi_0 + \sum_{k=1}^\infty\Phi_k$, where $\Phi_0 := \Phi 1_{R_{4Q}}$, and where
$\Phi_k:= \Phi 1_{R_{2^{k+2}Q\setminus  2^{k+1}Q}}$, $k\geq 1$.   Let $w_0+\sum w_k$
be the corresponding splitting of $w$, and set $\widetilde{w}:= \sum_{k\geq 1} w_k$.  Of course, this is essentially equivalent to our splitting of $w$ in the proof of Lemma \ref{l4.1}.
Our goal is to prove the bound
\begin{equation}\label{eq4.32}
\left(\fint_Q |\nabla w(\cdot, 0)|^2 \right)^{1/2} \lesssim \fint_{3Q}|\nabla w(\cdot,0)| 
 + \left(\fint_Q |\C(\Phi)|^2 \right)^{1/2} +\fint_{3Q}\, \sup_{|t|\leq\ell(Q)} |\partial_t \widetilde{w}(x,t)| dx\,.
\end{equation}
Let us take the latter estimate for granted momentarily, and show that
it implies  \eqref{eq4.29}, for the particular $g$ and $h$ defined in \eqref{eq4.31}.    
We observe that we need only 
treat the last term, as the others are of the desired form.
We note that this last
term is essentially the same as (the square root of) term $I$ in \eqref{eq4.14}, so that,
exactly as in \eqref{eq4.16}-\eqref{eq4.17}, we have that
\begin{multline*}
\sup_{|t|\leq\ell(Q)}|\partial_t \widetilde{w}(x,t)|\,\lesssim\,
\inf_{z\in Q}\left(M\left(\A_2(\Phi)^q\right)(z)\right)^{1/q} + |(\partial_t \widetilde{w})(x,0)|\\[4pt]
\lesssim\,\inf_{z\in Q}\left(M\left(\A_2(\Phi)^q\right)(z)\right)^{1/q}+
|(\partial_t w)(x,0)|+|(\partial_t w_0)(x,0)|
\end{multline*}
(here, we are using the non-centered maximal function).  The first and second of these terms contribute the desired bound as per the definition of $h$;   the contribution of the third is controlled by the stronger
estimate
\begin{multline}\label{eq4.33}
\left(\fint_{3Q} |(\nabla w_0)(x,0)|^2\right)^{1/2}\\[4pt]
=\,\left(\fint_{3Q} |(\nabla L^{-1}\dv \Phi_0)(x,0)|^2\right)^{1/2}\lesssim\,
\left(\frac1{|Q|}\iint_{R_{4Q}} |\Phi(x,t)|^2\frac{dxdt}{t}\right)^{1/2} \lesssim \inf_{z\in Q} \C(\Phi)\,,
\end{multline}
by \eqref{eq4.25} and the definition of $\Phi_0$ (and where we are using a 
non-centered version of $\C$).  Gathering our estimates, we see that
\eqref{eq4.32} implies \eqref{eq4.29} with $s,g,h$ as in \eqref{eq4.31}.  

Therefore, it remains only to
establish \eqref{eq4.32}.
To this end, we observe that, by \eqref{eq4.33},
$$\left(\fint_Q|\big(\nabla w\big)(x,0)|^2 dx\right)^{1/2} \lesssim
\left(\fint_Q|\big(\nabla \widetilde{w}\big)(x,0)|^2 dx\right)^{1/2} +\left(\fint_Q |\C(\Phi)|^2\right)^{1/2}\,.$$
Since the last term is an appropriate bound, we need only consider the $\widetilde{w}$ term.
By construction $L\widetilde{w}=0$ in $\Omega_{4Q}:=4Q\times (-4\ell(Q),4\ell(Q))$.  Thus, by
\eqref{eq2.35} (with $p=2$), and \eqref{eq1.7}, and then 
following an argument of \cite{KP}, we have
\begin{multline*}
\left(\fint_Q|\big(\nabla \widetilde{w}\big)(x,0)|^2 dx\right)^{1/2}
\lesssim \frac1{\ell(Q)}\fint_{3Q}\fint_{-\ell(Q)}^{\ell(Q)}| \widetilde{w}(x,t)-c_Q| dtdx\\[4pt]
\lesssim \frac1{\ell(Q)}\fint_{3Q}\fint_{-\ell(Q)}^{\ell(Q)}| \widetilde{w}(x,t)-\widetilde{w}(x,0)|\, dtdx\,
+\,\frac1{\ell(Q)}\fint_{3Q}| \widetilde{w}(x,0)-c_Q| \,dx\\[4pt]
\lesssim \fint_{3Q}\,\sup_{|t|\leq \ell(Q)}|\partial_t \widetilde{w}(x,t)|\,dx + \fint_{3Q} |\nabla_\| w(x,0)|\,dx
+\fint_{3Q} |\nabla_\| w_0(x,0)|\,dx\,,
\end{multline*}
where in the last step, having chosen the constant $c_Q$ appropriately, we have used Poincar\'e's
inequality, and then split $\widetilde{w}=w-w_0$.  Crudely dominating the tangential gradient by the full gradient, and once again applying \eqref{eq4.33}, we obtain  \eqref{eq4.32}.
\end{proof}

We conclude this section with two remarks.
\begin{remark}\label{r4.34}
Having completed the proof of Proposition \ref{p4.6},
we note that \eqref{eq4.25} may now be extended to the range
$n/(n+\alpha_0)<p<2+\eps$.
\end{remark}  
\begin{remark}\label{r4.35}
The upper bound $p<2+\eps$ in Proposition \ref{p4.6}
is optimal in the following sense.  Observe that by Remark \ref{r4.34} and duality
(and interchanging the roles of $L$ and $L^*$), we have the square function bound
\begin{equation}\label{eq4.36}
\int_{\rn}\left(\iint_{|x-y|<t}|\nabla\left( \SL_t\nabla\right) f(y)|^2\frac{dydt}{t^{n-1}}\right)^{p/2}\leq C_p
\|f\|_{L^p(\rn)}\,,\qquad 2-\eps_1<p<\infty\,, 
\end{equation}
where $2-\eps_1$ is dual to $2+\eps$.   Let us now specialize to the case that the 
$t$-independent coefficients of $L$ are real symmetric.  Then with 
$u(x,t): = (\SL_t \nabla)f(x)$, with $f\in L^p$, 
by \cite{DJK}, we have that 
$$\int |u(x,0)|^p \leq \int N_*(u)^p  \lesssim \int_{\rn} S(u)^p\lesssim \int_{\rn}|f|^p
\,,\qquad 2-\eps_1<p<\infty\,,$$  where 
$S(u):= \A(t\nabla u)$ 
denotes the conical square function, and the last step is \eqref{eq4.36}.  But this says that
$(\SL_t \nabla)\big|_{t=0}:L^p\to L^p$, by definition of $u$.  
Now further specialize to the ``block" case (i.e., the case that
$A_{n+1,j}=0=A_{j,n+1}, 1\leq j\leq n$), so that
 $S_t = \frac12 e^{-t\sqrt{L_\|}}L_\|^{-1/2}$, 
 where $L_\|:=-\sum_{i,j=1}^n \partial_{x_i}A_{i,j}\partial_{x_j}$ is the $n$-dimensional operator formed from the ``upper left" $n\times n$ block of the coefficient matrix $A$.
Thus, $(\SL_t \nabla)\big|_{t=0} = L_\|^{-1/2}\nabla$,
 which is the adjoint of the $L_\|$-adapted Riesz transform.  
 By the examples of Kenig (which may be found in
 \cite{AT}), given $\eps_1>0$, there is an $L_\|$ with real symmetric  coefficients,
 for which the adjoint Riesz transform is not bounded on $L^p$ with
 $p=2-\eps_1$.
\end{remark}

\section{The third main estimate:  tent space bounds for $\nabla L^{-1} \textstyle{\frac 1t} $}
\label{s5}

Let us now turn to the third main estimate, which will be the core of our approach to the Dirichlet problem.

\begin{proposition}\label{p5.1} Suppose that $L$ and $L^*$ satisfy the standard assumptions and are  $t$-independent. Then 
\begin{equation}\label{eq5.2}
 \nabla L^{-1} \textstyle{\frac 1t}: \widetilde T^p_1\to \widetilde{T}^p_\infty, \quad 1<p< 2+\eps,
\end{equation}
\noindent for some $\eps>0$, depending on the standard constants only. Here the operator  $ L^{-1} \textstyle{\frac 1t}$ is to be interpreted via
\begin{equation}\label{eq5.3} \left(L^{-1}\textstyle{\frac 1t} \Psi\right) (y,s):=\iint_{\reu} \Gamma(y,s; x,t) \,\Psi(x,t)\,\frac{dxdt}{t}, \quad (y,s)\in \reu. \end{equation}
\end{proposition}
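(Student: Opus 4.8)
The plan is to follow the template of Section~\ref{s4}: reduce to a pointwise bound for a modified non-tangential maximal function of $\nabla w$, where $w:=L^{-1}\tfrac1t\Psi$, and then estimate the resulting terms in $L^p$. By density we may take $\Psi\in C_0^\infty(\reu)$, and we work with the variant $\widetilde N$ built on solid balls $B_{x,\delta}:=B((x,\delta),\delta/2)$, which is $L^p$-equivalent to $\N$. The target is the pointwise estimate
\begin{equation*}
\widetilde N(\nabla w)(x)\ \lesssim\ \big(M((\widetilde\A_1(\Psi))^q)(x)\big)^{1/q}\ +\ M\big((\nabla w)(\cdot,0)\big)(x)
\end{equation*}
for a suitable exponent $q$ with $1<q<\min(p,2)$. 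Granting this, the first term is controlled in $L^p$ by the maximal theorem (since $\widetilde\A_1(\Psi)\in L^p$ and $q<p$), so the matter reduces to bounding $(\nabla w)(\cdot,0)$ in $L^p$.

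For the pointwise bound, fix $(x_0,t_0)\in\reu$, put $B_k:=B((x_0,0),2^{k+2}t_0)$, and split $\Psi=\Psi_0'+\Psi_0''+\sum_{k\ge1}\Psi_k$, where $\Psi_k:=\Psi\mathbf 1_{B_k\setminus B_{k-1}}$ and $\Psi_0:=\Psi\mathbf 1_{B_0}$ is split according to whether $t\ge t_0/4$; this induces $w=w_0'+w_0''+\sum_kw_k$. Since $\Psi_0''$ lives in $\{t<t_0/4\}$, the piece $w_0''$ is a genuine null solution of $L$ in $B((x_0,t_0),3t_0/4)$; there, Caccioppoli, Poincar\'e, and the H\"older continuity bounds \eqref{eq2.29}--\eqref{eq2.30} for $\Gamma$ reduce the contribution to an integral $\iint_{R_{\Delta(x_0,ct_0)}}|\Psi_W|\,\tfrac{dxdt}{t}$ over a Carleson box, which is $\lesssim t_0^{\,n}\,M(\widetilde\A_1(\Psi))(x_0)$ --- this is the step where the $\widetilde T^p_1$ hypothesis renders the $\tfrac1t$-singularity integrable near the boundary. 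The far piece $\widetilde w:=\sum_{k\ge1}w_k$ solves $L\widetilde w=0$ in $B_0$, so, as in \eqref{eq4.14}--\eqref{eq4.17}, its contribution splits into the boundary-trace term $M((\nabla w)(\cdot,0))(x_0)$ (absorbing the $w_0$-traces as in \eqref{eq4.15}) and a geometrically convergent sum in $k$ with the desired majorant, the decay coming from Meyers' inequality, Caccioppoli, and \eqref{eq2.29}--\eqref{eq2.30}. The remaining, and most delicate, piece is $w_0'$, which is sourced at the current scale $t\approx t_0$ so that $w$ is not a solution near $(x_0,t_0)$: rather than invoking off-the-shelf $L^q$-bounds for $\nabla L^{-1}$, one decomposes this source dyadically by its distance to $(x_0,t_0)$ and uses the pointwise bounds \eqref{eq2.29} for $\nabla_{y,s}\Gamma$ --- again together with the fact that the $\tfrac1t$-averages of $\Psi$ over the relevant regions are controlled by $(M((\widetilde\A_1(\Psi))^q))^{1/q}(x_0)$ --- to extract the bound with room to spare.

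It remains to bound $(\nabla w)(\cdot,0)$ in $L^p$. For $h\in L^{p'}(\rn,\CC^{n+1})$, using $\overline{\Gamma(y,s;x,t)}=\Gamma^*(x,t;y,s)$ and the definitions \eqref{eq1.10}--\eqref{eq1.12},
\begin{equation*}
\langle(\nabla w)(\cdot,0),h\rangle\ =\ \iint_{\reu}\Psi(x,t)\cdot\overline{\big((\SL_t^{L^*}\nabla)\cdot\vec h\big)(x)}\ \frac{dxdt}{t}\,.
\end{equation*}
Since $(\SL_t^{L^*}\nabla)\cdot\vec h$ is a null solution of $L^*$ off the boundary, tent-space duality, the conical square-function bounds of Corollary~\ref{c3.3} for $L^*$ (valid for $p'\ge2$), and the standard $L^{p'}$ bounds for single-layer potentials of $t$-independent operators recalled in Section~\ref{s2}, yield $|\langle(\nabla w)(\cdot,0),h\rangle|\lesssim\|\widetilde\A_1(\Psi)\|_{L^p}\|h\|_{L^{p'}}$, hence $\|(\nabla w)(\cdot,0)\|_{L^p}\lesssim\|\Psi\|_{\widetilde T^p_1}$ for $1<p\le2$ (the normal component $(\partial_t w)(\cdot,0)$ is handled identically, cf.\ \eqref{eq4.12}). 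Together with the pointwise bound this proves \eqref{eq5.2} for $1<p\le2$. Finally, for $2<p<2+\eps$ I would run the Gehring--Iwaniec self-improvement of Lemma~\ref{l4.28} exactly as at the end of Section~\ref{s4}: establish a reverse-H\"older-with-tail inequality for $g=|(\nabla w)(\cdot,0)|$, with tail $h=\C(\mathbf s(\Psi))+(M((\widetilde\A_1(\Psi))^q))^{1/q}+|(\partial_t w)(\cdot,0)|$ of $L^p$-norm $\lesssim\|\Psi\|_{\widetilde T^p_1}$, and conclude.

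The principal obstacle, and the genuinely new feature beyond Section~\ref{s4}, is the interaction of the $\tfrac1t$-factor with the $\widetilde T^p_1$-geometry: $\tfrac1t\Psi$ is neither a divergence nor a member of $T^2_2$, so even the $p=2$ case must be re-derived through the pointwise estimate, the crux being the Carleson-box bound $\iint_{R_Q}|\Psi_W|\,\tfrac{dxdt}{t}\lesssim|Q|\,M(\widetilde\A_1(\Psi))$ that tames the kernel in the far pieces; and the near-scale piece $w_0'$, where $w$ is not a solution and the fundamental-solution kernel is not locally integrable, demands the careful distance decomposition described above. Pushing the duality argument for $(\nabla w)(\cdot,0)$ cleanly through the full range $1<p<2+\eps$ is the other point requiring care.
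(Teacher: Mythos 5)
Your overall structure matches the paper: both split $\Psi$ around the point $(x_0,t_0)$ into a Whitney piece $\Psi_0'$ (where $t\approx t_0$), a Carleson-box piece $\Psi_0''$ below it, and far annular pieces; both reduce the far part to a near/trace dichotomy as in \eqref{eq4.14}; and both recover the boundary trace $\nabla w(\cdot,0)$ by pairing $\Psi$ against $(\SL_t^{L^*}\nabla)h$. Two remarks on the deviations.

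First, for $w_0'$ the paper simply invokes the Sobolev mapping $\nabla L^{-1}:L^{\frac{2(n+1)}{n+3}}(\ree)\to L^2(\ree)$ followed by H\"older on the Whitney region. Your dyadic-in-distance argument with the pointwise bound $|\nabla_{y,s}\Gamma|\lesssim|X-Y|^{-n}$ can be made to work, but not via Minkowski: $\|\nabla_{y,s}\Gamma(\cdot,\cdot,x,t)\|_{L^2(B_{x_0,t_0})}$ diverges, so you must insert a Cauchy--Schwarz with the kernel itself as weight (the total mass $\iint_{B}|X-Y|^{-n}\lesssim t_0$ is what saves you) before integrating over $B_{x_0,t_0}$. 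Your phrase ``with room to spare'' undersells this; the paper's Sobolev route is the cleaner way to say the same thing.

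Second, and more substantively, your appeal to Corollary~\ref{c3.3} and to Gehring--Iwaniec for the range $2<p<2+\eps$ is a detour that the paper avoids. The pairing $\iint_{\reu}\Psi\cdot\overline{(\SL_t^{L^*}\nabla)h}\,\tfrac{dxdt}{t}$ sees the $\widetilde T^{p'}_\infty$ norm of $(\SL_t^{L^*}\nabla)h$ itself (no extra $t\nabla$ is applied), so the square-function estimate of Corollary~\ref{c3.3} is not the right input; what is needed, and what the paper cites, is the nontangential-maximal-function bound $\|(\SL_t^{L^*}\nabla)h\|_{\widetilde T^{p'}_\infty}\lesssim\|h\|_{L^{p'}}$ from \cite{HMiMo}, which is \eqref{eq1.15} for $L^*$ together with its companion for $\SL_t\nabla_\|$. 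These hold for $p'\in(2-\eps',\infty)$, i.e. dually for $p\in(1,2+\eps)$, so the full claimed range comes out of the duality step in one stroke with no need for self-improvement. (Even if one did want to run Gehring here, your proposed tail $h=\C({\bf s}(\Psi))+\dots$ is not obviously controlled in $L^p$ by $\|\Psi\|_{\tT^p_1}$, and the term $|(\partial_t w)(\cdot,0)|$ has the same nature as the quantity being estimated, which risks circularity.) With these adjustments your proof converges to the paper's.
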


\bp The argument follows the general lines of the proof of Lemma~\ref{l4.1}. Let $\Phi\in \widetilde T^p_1(\reu)$.  By density, it is again sufficient to work with $\Phi\in C_0^\infty(\reu)$ and we let $w:=L^{-1} \textstyle{\frac 1t}\Phi$. 

Next, fix $(x_0,t_0)\in \reu$ and split $\Phi=\Phi_0'+\Phi_0''+\sum_{k=1}^\infty \Phi_k$ according to dyadic annuli of $B((x_0,0),4t_0)$ (this is the same notation as in the proof of Lemma~\ref{l4.1}, in particular, $\Phi_0'$ is supported in a Whitney cube and  $\Phi_0''$ is supported in a Carleson region below it), and, respectively, $w=w_0'+w_0''+\w=w_0'+w_0''+\sum_{k=1}^\infty w_k$. 

First of all, by Sobolev embedding $\nabla L^{-1}: L^{\frac{2(n+1)}{n+3}}(\ree)\to L^2(\ree)$ and hence, 
$$\fiint_{B_{x_0,t_0}}|\nabla w'_0|^2\lesssim \frac1{t_0^{n+1}}\left(\iint_{B_0\cap \{t\geq t_0/4\}}
|\Phi_0'/t|^{\frac{2(n+1)}{n+3}}\right)^{\frac{n+3}{(n+1)}}. $$
Now we use the fact that $\Phi_0'$ is supported in a Whitney region around $(x_0,t_0)$, so that in particular $t\approx t_0$ within the region of integration, and H\"older inequality to bound the expression above by 
$$
\frac1{t_0^{n}}\iint_{B_0\cap \{t\geq t_0/4\}} |\Phi_0'(x,t)|^2 \frac{dxdt}{t}\lesssim {\widetilde{\mathfrak C}}_1(\Phi)(x_0)^2,$$
uniformly in $t_0$. (See Section~\ref{s2.2.3} for notation). 

Turning to $w_0'',$ we observe that $L^{-1}(\Phi''_0/t)$ is a solution in $B((x_0,t_0), 3t_0/4)$, and hence,  by Caccioppoli inequality, 
\begin{multline}\fiint_{B_{x_0,t_0}}|\nabla w''_0|^2\lesssim \frac1{t_0^{n+3}}\iint_{\widetilde B_{x_0,t_0}}
|L^{-1}(\Phi_0''/t)|^{2}\\[4pt]
\lesssim \frac1{t_0^{n+3}}\iint_{\widetilde B_{x_0,t_0}}
\left|\iint_{{B_0\cap \{t\leq t_0/4\}}} \Gamma (x,t; y,s)\Phi_0''(y,s)\frac{dyds}{s}\right|^{2} \\[4pt]
\lesssim \frac1{t_0^{n+3}}\iint_{\widetilde B_{x_0,t_0}}
\left|\iint \left(\chi_{B_0\cap \{t\leq t_0/4\}}\Gamma (x,t; \cdot, \cdot)\right)_W(y,s)\Phi''_{0,W}(y,s)\frac{dyds}{s}\right|^{2}\\[4pt]
\lesssim 
\left|\frac{1}{t_0^n}\iint \Phi''_{0,W}(y,s)\frac{dyds}{s}\right|^{2}\lesssim {\widetilde{\mathfrak C}}_1(\Phi)(x_0)^2, \end{multline}
\noindent (again, see Lemma~\ref{l4.1} and Section~\ref{s2.2.3} for notation). Here, in the fourth inequality we exploited the separation between $\widetilde B_{x_0,t_0}$ and the support of $\left(\chi_{B_0\cap \{t\leq t_0/4\}}\Gamma (x,t; \cdot, \cdot)\right)_W$,  as well as pointwise estimates on $\Gamma$. It is tacitly assumed throughout the argument that averaging denoted by the subscript $W$ is performed on  Whitney cubes $W(x,t)$ of diameter $ct$, with $c$ small enough to not obscure such a separation.  

It remains to address $\w$. Similarly to \eqref{eq4.14}, we have
\begin{multline*}
\fiint_{B_{x_0,t_0}}|\nabla\w|^2 
\lesssim \left(\frac{1}{t_0}\fiint_{\tB_{x_0,t_0}}  |\w(y,s)-\w(y,0)|dyds\right)^2\,
+\,\left(\frac{1}{t_0}\fiint_{\tB_{x_0,t_0}}  |\w(y,0)-C_{x_0,t_0}|dyds\right)^2\\[4pt]
\lesssim \left(\fint_{|x_0-y|<5t_0/8}\,\,\,\fint_0^{13t_0/8} |\partial_\tau\w(y,\tau)|\, d\tau \,dy\right)^2
\,+\,\left(\fint_{|x_0-y|<5t_0/8}|\nabla_\|\w(y,0)| dy\right)^2\\[4pt]
\lesssim \left(\fint_{|x_0-y|<5t_0/8}\,\,\,\fint_0^{13t_0/8} |\partial_\tau\w(y,\tau)-\partial_\tau\w(y,0)|\, d\tau \,dy\right)^2\\[4pt]
\,+\,\left(\fint_{|x_0-y|<5t_0/8}|\nabla \w(y,0)| dy\right)^2 =:I^2+II^2\,,
\end{multline*}
\noindent with a suitably chosen $C_{x_0,t_0}$ to ensure Poincar\'e inequality.  Note a slightly different separation into $I$ and $II$ compared to \eqref{eq4.14}. 
Just as in  \eqref{eq4.15}, 
\begin{equation*}
II \lesssim M\Big(\nabla w(\cdot,0)\Big)(x_0)
+ \fint_{|x_0-y|<5t_0/8}|\nabla w_0(y,0)| \,dy=:II' + II''\,.
\end{equation*}

Let us start with $II'$. Let $h\in L^{p'}(\rn, \CC^{n+1})$, so that
\begin{multline*}
\left|\langle\nabla w(\cdot,0), h\rangle\right|=\left|\langle\nabla L^{-1} (\Phi/t)(\cdot,0), h\rangle\right|=\left|\iint_{\reu}\Phi(x,t)\cdot 
\overline{\left(\SL_t^{L^*}\nabla \right)h(x)}\,
\frac{dxdt}{t}\right|\\[4pt]\lesssim \|\Phi\|_{\widetilde T^p_1} \,
\|(\SL_t^{L^*}\nabla)h\|_{\widetilde T^{p'}_{\infty}}
\lesssim \|\Phi\|_{\widetilde T^p_1} \,
\|h\|_{L^{p'}}
\end{multline*}
where in the last step we have used \eqref{eq1.15} for $L^*$ and its companion for $\SL_t \nabla_\|$ in place of $\D_t$ (proved in the $t$-independent case in \cite{HMiMo}).  One can note that \eqref{eq1.15} provides an estimate in $T^{p'}_{\infty}$, not $\widetilde T^{p'}_{\infty}$, but since $(\SL_t^{L^*}\nabla)h$ is a solution in $\reu$, the corresponding norms are equivalent by Moser bounds. Taking a supremum over all $h\in L^{p'}(\rn, \CC^{n+1})$ with norm 1, we obtain that $\|M\big(\nabla w( \cdot,0)\big)\|_{L^{p}}\lesssim \|\Phi\|_{\widetilde T^p_1}$.

To handle $II''$, take now $h \in L^\infty(\{|x_0-y|<5t_0/8\})$.  Then
\begin{equation*}
t_0^{-n}\left|\langle\nabla w_0, h\rangle\right|=t_0^{-n}\left|\iint_{\reu}\Phi_0(x,t)\cdot 
\overline{\left(\SL_t^{L^*}\nabla \right)h(x)}\,
\frac{dxdt}{t}\right| \lesssim t_0^{-n}\|\Phi_0\|_{\tT_1^r} \|(\SL_t^{L^*}\nabla)h\|_{\tT^{r'}_{\infty}}\end{equation*}
\noindent where $r$ is taken so that $1<r<p$. Note that $\|\Phi_0\|_{\tT_1^r}=\|\A_1(\Phi_{0,W})\|_{L^r}\approx \|\A_1(\Phi_{0,W})\|_{L^r(B(x_0,10t_0)}$, due to the support of $\Phi_0$. Hence, employing bounds on $(\SL_t^{L^*}\nabla)h$ once again, we have
$$ II'' \lesssim \left(M((\A_1(\Phi_{W}))^r)(x_0)\right)^{1/r}, \quad 1<r<p,$$
which will end up in the same contribution as $II'$.

Finally, let us turn to $I$. We have for $|x_0-y|<5t_0/8$, $0<\tau<13t_0/8$, 
\begin{multline*}
|\partial_\tau \w(y,\tau)-\partial_\tau \w(y,0)| \lesssim \sum_{k=1}^\infty  |\partial_\tau w_k(y,\tau)-\partial_\tau w_k(y,0)|\\[4pt]
 \lesssim \sum_{k=1}^\infty
\left|\iint\Big(\partial_\tau\Gamma(y,\tau,x,t)-(\partial_\tau\Gamma)(y,0,x,t)\Big)
\cdot \Phi_k(x,t) \,\frac{dx dt}{t}\right|\\[4pt]
\lesssim \sum_{k=1}^\infty \sup_{(x,t)\in B_k\setminus B_{k-1}}\Big(\partial_\tau\Gamma(y,\tau,\cdot,\cdot)-
(\partial_\tau\Gamma)(y,0,\cdot,\cdot)\Big)_W(x,t)\,
\iint_{B_k\setminus B_{k-1}}\left|\Phi_{k,W}(x,t)\right|\,\frac{dxdt}{t}\\[4pt]
\lesssim \sum_{k=1}^\infty 2^{-k\alpha_0} \frac{1}{(2^kt_0)^n}
\iint_{B_k\setminus B_{k-1}}\left|\Phi_{k,W}(x,t)\right|\,\frac{dxdt}{t}\lesssim {\widetilde{\mathfrak C}}_1(\Phi)(x_0).\end{multline*}
This finishes the proof. 
\ep

\section{The proof of Theorem~\ref{t1.13}}\label{s6}

In the present section we prove Theorem~\ref{t1.13} and establish some accompanying results extending our Main Estimates from Sections~\ref{s3}--\ref{s5} to the case of coefficients depending on the transversal direction $t$. We shall often write $L_1$ in place of $L$, to underline the difference with $L_0$. 

Let us start stating the following auxiliary Lemma. 
\begin{lemma}\label{l6.0.1} Assume that operator $L=L_1=-\nabla\cdot A \nabla$ satisfies the conditions on Theorem~\ref{t1.13}. Then for every $\Phi\in C_0^\infty (\reu)$ the quantity $\|\nabla L^{-1} \Phi\|_{\tT^p_\infty}$, $p\in (p_0, 2+\eps)$, is finite. 
\end{lemma}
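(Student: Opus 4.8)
The plan is to show that $\|\nabla L^{-1}\Phi\|_{\tT^p_\infty}$ is finite for $\Phi\in C_0^\infty(\reu)$ by reducing to the $t$-independent estimates already established in Sections~\ref{s3}--\ref{s5}, together with the small Carleson perturbation. First I would write $L=L_1$, $L_0=-\nabla\cdot A(\cdot,0)\nabla$, and observe that $A(x,t)-A(x,0)$ has $L^\infty$ norm bounded by $C_n\eps_0$ by Proposition~\ref{p2.26} (and its pointwise Carleson nature via $\eps(x,t)$). The function $w:=L^{-1}\Phi$ is a solution of $L_1 w=\Phi$ in $\reu$; since $\Phi$ is smooth with compact support, $w$ is sufficiently regular that all the non-tangential averages below make sense. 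The idea is to use the resolvent-type identity $L_1^{-1}=L_0^{-1}-L_0^{-1}(L_1-L_0)L_1^{-1}$, i.e. writing $(L_1-L_0) = -\nabla\cdot(A-A^0)\nabla$,
\begin{equation*}
w = L_0^{-1}\Phi \;+\; L_0^{-1}\nabla\cdot\big((A-A^0)\nabla w\big),
\end{equation*}
so that $\nabla w = \nabla L_0^{-1}\Phi + \nabla L_0^{-1}\dv\big((A-A^0)\nabla w\big)$. Since $\Phi\in C_0^\infty$, the first term $\nabla L_0^{-1}\Phi$ has finite $\tT^p_\infty$ norm by the $t$-independent bounds of Proposition~\ref{p4.6} (writing $\Phi = \dv \Psi$ for a compactly supported $\Psi$, or directly via the kernel estimates \eqref{eq2.29}--\eqref{eq2.30} and Lemma~\ref{l2.31}); in any case for smooth compactly supported data this is elementary. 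The second term is handled by Proposition~\ref{p4.6} applied to $L_0$: we need $(A-A^0)\nabla w\in T^p_2$, and then $\|\nabla L_0^{-1}\dv((A-A^0)\nabla w)\|_{\tT^p_\infty}\lesssim \|(A-A^0)\nabla w\|_{T^p_2}$.

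The key step is the bound $\|(A-A^0)\nabla w\|_{T^p_2}\lesssim \eps_0 \|\nabla w\|_{\tT^p_\infty} + (\text{finite})$. Using $|(A-A^0)(x,t)|\le \eps(x,t)$ on Whitney regions (with the slight adjustment in the definition of $\eps$ as in Proposition~\ref{p2.26}), we estimate, for $(A^1-A^0)\nabla w$ averaged in $L^2$ over Whitney boxes, by $\eps_W(x,t)\,(\nabla w)_W(x,t)$ where $\eps_W$ is the $L^\infty$-sup of $\eps$ over a slightly fattened Whitney box. By the hypothesis \eqref{eq1.4}, the function $\eps$ (hence ${\bf s}(\eps)$) lies in the Carleson-measure class $T^\infty_2$ with norm $\lesssim\eps_0$; that is, ${\bf s}(\eps)\in {\bf T}^\infty_2$. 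Then the factorization Lemma~\ref{l2.20}, namely ${\bf T}^\infty_2\cdot \widetilde T^p_\infty\hookrightarrow T^p_2$, yields
\begin{equation*}
\|(A-A^0)\nabla w\|_{T^p_2}\;\lesssim\;\|{\bf s}(\eps)\|_{{\bf T}^\infty_2}\,\|\nabla w\|_{\widetilde T^p_\infty}\;\lesssim\;\eps_0\,\|\nabla w\|_{\widetilde T^p_\infty}.
\end{equation*}
Combining, $\|\nabla w\|_{\tT^p_\infty}\le C\|\nabla L_0^{-1}\Phi\|_{\tT^p_\infty} + C\eps_0\|\nabla w\|_{\tT^p_\infty}$.

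The remaining — and main — obstacle is the \emph{a priori finiteness} of $\|\nabla w\|_{\tT^p_\infty}$: one cannot absorb the $C\eps_0\|\nabla w\|_{\tT^p_\infty}$ term on the left until one knows this quantity is finite to begin with. To get around this I would argue qualitatively: since $\Phi\in C_0^\infty(\reu)$ is supported in a fixed compact set $K\subset\reu$, away from $K$ the function $w$ is a null solution of $L_1$ (which enjoys De Giorgi/Nash/Moser by Proposition~\ref{p2.26}), so $\nabla w$ decays like the gradient of the fundamental solution, giving local $L^2$ control and hence pointwise control of $(\nabla w)_W(x,t)$ by $C(1+|x|+t)^{-n}$ for $(x,t)$ outside a large ball; near $K$ and near the boundary one uses interior estimates plus the global $L^2$-boundedness of $\nabla L_1^{-1}$ on $\ree$ (valid since $A$ is merely bounded elliptic). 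These crude bounds show $\N(\nabla w)\in L^p(\rn)$, i.e. $\|\nabla w\|_{\tT^p_\infty}<\infty$, \emph{with a constant depending on $\Phi$} — that is all we need to justify the absorption. Once absorbed (taking $\eps_0$ small enough, depending only on the standard constants of $L_0$ via Proposition~\ref{p4.6}), we conclude $\|\nabla L^{-1}\Phi\|_{\tT^p_\infty}<\infty$, which is the assertion of the Lemma. (In fact this argument also gives the quantitative bound $\|\nabla L^{-1}\Phi\|_{\tT^p_\infty}\lesssim\|\nabla L_0^{-1}\Phi\|_{\tT^p_\infty}$, which will be reused later, but for the present Lemma the qualitative statement suffices.)
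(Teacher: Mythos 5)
The approach is genuinely different from the paper's. The paper introduces a truncated-coefficient approximation $L_\epsilon$ whose coefficients coincide with those of $L_1$ only on the bounded region $R_{1/\epsilon}=Q_{1/\epsilon}\times\bigl((-1/\epsilon,-\epsilon)\cup(\epsilon,1/\epsilon)\bigr)$ and with those of $L_0$ elsewhere. Because the perturbation $A^\epsilon-A^0$ is then supported at heights $t\in(\epsilon,1/\epsilon)$, the quantity $\|\textbf{1}_{R_{1/\epsilon}}\nabla L_\epsilon^{-1}\Phi\|_{\tT^p_\infty}$ is automatically finite (the relevant Whitney boxes have volume $\gtrsim\epsilon^{n+1}$, and $\nabla L_\epsilon^{-1}\Phi\in L^2(\ree)$ by Lax--Milgram and Sobolev), so the absorption argument closes for $L_\epsilon$, uniformly in $\epsilon$. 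The bulk of the paper's proof is then the delicate passage $\epsilon\to0$, carried out via fundamental-solution kernel estimates in a truncated nontangential maximal function $\N^{\eta,R}$ and a careful order of limits. You bypass all of this, trying instead to obtain a~priori finiteness of $\|\nabla L_1^{-1}\Phi\|_{\tT^p_\infty}$ directly from crude decay bounds, and then to absorb; this is a shorter route where it works.

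The gap is that your qualitative finiteness argument does not cover the full range $p\in(p_0,2+\eps)$ claimed. The decay you get, $\N(\nabla w)(x)\lesssim(1+|x|)^{-n}$, is the best available without cancellation (since $|\nabla_Y\Gamma_1(Y,Z)|\sim|Y-Z|^{-n}$), and it places $\N(\nabla w)$ in $L^p(\rn)$ only for $p>1$: $\int_{|x|>R}|x|^{-np}\,dx<\infty$ iff $np>n$. For $p\in(p_0,1]$ the integral diverges, yet this is exactly the range needed when the lemma is invoked in Lemma~\ref{l6.26} (where $n/(n+\alpha_0)<p\leq1$ is used). To rescue the direct route for $p\leq1$ one needs the extra cancellation present in the applications --- e.g.\ $\Phi=\dv\Psi$ with $\Psi\in C_0^\infty(\reu,\CC^{n+1})$, where the kernel becomes $\nabla_Y\nabla_Z\Gamma_1\sim|Y-Z|^{-n-1}$ so $\N(\nabla w)\lesssim(1+|x|)^{-n-1}\in L^p$ for $p>n/(n+1)$ --- but you do not invoke it, and your parenthetical ``writing $\Phi=\dv\Psi$ for a compactly supported $\Psi$'' is not available for generic $\Phi\in C_0^\infty$ since it forces $\int\Phi=0$. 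The same objection applies to dismissing $\|\nabla L_0^{-1}\Phi\|_{\tT^p_\infty}<\infty$ as ``elementary.'' (A minor imprecision: the global bound one has is $\nabla L_1^{-1}:L^{2(n+1)/(n+3)}(\ree)\to L^2(\ree)$, not ``$L^2$-boundedness of $\nabla L_1^{-1}$.'')
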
 

We postpone the proof of the Lemma to the end of this Section. For now, let us proceed with the perturbation results.

\begin{lemma}\label{l6.26}
Let $L=L_1$ be an elliptic operator satisfying assumptions of Theorem~\ref{t1.13}. Then
\begin{equation}
\nabla L_1^{-1} \dv : T^p_2 \to \widetilde{T}^p_\infty, \quad n/(n+\alpha_0)<p< 2+\eps.
\label{eq6.27}\end{equation}
Moreover, for every $\Phi\in T^p_2(\reu,\CC^{n+1})$, and $p$ as indicated, we have
\begin{equation}\label{eq4.25.1}
\int_{\rn} \Big|\left(\nabla L_1^{-1}\dv \Phi \right)(y,0)\Big|^p\,dy\, \leq\, C\, \|\Phi\|_{T^p_2}^p\,,\quad 
\quad n/(n+\alpha_0)<p< 2+\eps\,,
\end{equation}
\begin{equation}\label{eq4.26.1}
\big\|\left(\nabla_\| L_1^{-1}\dv \Phi \right)(\cdot,0)\big\|_{H^p(\rn)} \leq\, C\, \|\Phi\|_{T^p_2}\,,
\quad n/(n+\alpha_0)<p\leq1\,,
\end{equation}
\begin{equation}\label{eq4.27.1}
\big\|\left(\vec{N}\cdot A_1\nabla L_1^{-1}\dv \Phi\right) (\cdot,0)\big\|_{H^p(\rn)} \leq\, C\, \|\Phi\|_{T^p_2}\,,
\quad n/(n+\alpha_0)<p\leq1\,.
\end{equation}
\end{lemma}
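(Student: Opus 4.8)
\textbf{Proof strategy for Lemma~\ref{l6.26}.}
The plan is to run a perturbation argument comparing $L_1=-\nabla\cdot A_1\nabla$ against the $t$-independent operator $L_0=-\nabla\cdot A_0\nabla$, for which the corresponding tent space bounds are already available from Proposition~\ref{p4.6} and Corollary~\ref{c4.22} (recall that $L_0$, $L_0^*$ inherit the De Giorgi/Nash property, and that $L_0^*$ satisfies the square function bound \eqref{eq1.23} by the results quoted in Section~\ref{s3}). The starting point is the resolvent-type identity $L_1^{-1}\dv - L_0^{-1}\dv = L_1^{-1}(L_0-L_1)L_0^{-1}\dv = L_1^{-1}\,\nabla\cdot\big((A_1-A_0)\nabla L_0^{-1}\dv\big)$, valid a priori on $\Phi\in C_0^\infty(\reu)$; here Lemma~\ref{l6.0.1} guarantees that the quantities we manipulate are finite, so the formal computations are legitimate. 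Applying $\nabla$ to both sides gives
\begin{equation*}
\nabla L_1^{-1}\dv\,\Phi = \nabla L_0^{-1}\dv\,\Phi + \nabla L_1^{-1}\dv\big(\mathcal E\,\nabla L_0^{-1}\dv\,\Phi\big),
\end{equation*}
where $\mathcal E:=A_1-A_0$ is the discrepancy matrix. The first term is controlled directly by Proposition~\ref{p4.6} (and Corollary~\ref{c4.22} for the endpoint boundary estimates). For the second term I would invoke Lemma~\ref{l6.0.1} together with the key observation that the discrepancy $\mathcal E$ is pointwise dominated by $\eps(x,t)$ (up to the harmless enlargement of Whitney cubes, as in Proposition~\ref{p2.26}), and that by \eqref{eq1.4} the measure $\eps^2(x,t)\,dxdt/t$ is a Carleson measure with small norm $\eps_0$.

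The heart of the matter is therefore to show that multiplication by $\mathcal E$, followed by $\nabla L_1^{-1}\dv$, is bounded on the relevant tent spaces with operator norm $\lesssim \eps_0$, so that the second term can be absorbed. Concretely, I would first prove that $\Psi\mapsto \mathcal E\,\Psi$ maps $\widetilde T^p_\infty$ into $T^p_2$ with norm $\lesssim\eps_0$: this is exactly the content of the factorization Lemma~\ref{l2.20}, since $\mathcal E\in{\bf T}^\infty_2$ with $\|\mathcal E\|_{{\bf T}^\infty_2}\lesssim\eps_0$ by the Carleson condition \eqref{eq1.4} (apply the inclusion ${\bf T}^\infty_2\cdot\widetilde T^p_\infty\hookrightarrow T^p_2$). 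Composing with the bound $\nabla L_1^{-1}\dv:T^p_2\to\widetilde T^p_\infty$ — which is precisely what we are trying to establish — one gets a self-improving inequality
\begin{equation*}
\|\nabla L_1^{-1}\dv\,\Phi\|_{\widetilde T^p_\infty}\ \lesssim\ \|\Phi\|_{T^p_2}\ +\ C\eps_0\,\|\nabla L_1^{-1}\dv\,\Phi\|_{\widetilde T^p_\infty},
\end{equation*}
and choosing $\eps_0$ small enough (depending only on the standard constants of $L_0$ and the implicit constants in Proposition~\ref{p4.6}) lets us hide the last term on the left. The finiteness provided by Lemma~\ref{l6.0.1} is what makes this absorption rigorous; a density argument then extends \eqref{eq6.27} to all of $T^p_2$. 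The boundary estimates \eqref{eq4.25.1}, \eqref{eq4.26.1}, \eqref{eq4.27.1} follow in the same way: write $\big(\nabla L_1^{-1}\dv\Phi\big)(\cdot,0)=\big(\nabla L_0^{-1}\dv\Phi\big)(\cdot,0)+\big(\nabla L_1^{-1}\dv(\mathcal E\,\nabla L_0^{-1}\dv\Phi)\big)(\cdot,0)$, bound the first term by Corollary~\ref{c4.22}, and for the second term note that $\mathcal E\,\nabla L_0^{-1}\dv\Phi\in T^p_2$ with norm $\lesssim\eps_0\|\Phi\|_{T^p_2}$ and apply — once \eqref{eq6.27} and \eqref{eq4.25.1} are in hand for $L_1$ — the already-proved $L_1$ versions of the $L^p$, $H^p$, and conormal-trace bounds; the mean-value-zero property needed for the $H^p$-molecule estimates \eqref{eq4.26.1}--\eqref{eq4.27.1} is preserved under integration by parts exactly as in the proof of Corollary~\ref{c4.22}.

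I expect the main obstacle to be making the absorption step and the underlying identities fully rigorous rather than formal: one must be careful that $\nabla L_1^{-1}\dv$ acting on $\mathcal E\,\nabla L_0^{-1}\dv\Phi$ is well-defined (the inner function is only in $T^p_2$, not compactly supported), that the integration by parts defining the resolvent identity is justified, and that the a priori finiteness from Lemma~\ref{l6.0.1} genuinely covers the quantity being absorbed — this is why Lemma~\ref{l6.0.1} is isolated and proved separately. A secondary technical point is the range $2<p<2+\eps$ for \eqref{eq4.25.1}: here one cannot rely on tent space interpolation alone, and I would instead use Remark~\ref{r4.34} (which extends \eqref{eq4.25} to that range for $L_0$) as the base case and re-run the same perturbation/absorption argument, the Carleson smallness again furnishing the decisive gain.
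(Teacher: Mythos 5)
Your resolvent identity is transposed relative to the one the paper actually needs, and this breaks the absorption step. You write
\[
L_1^{-1}\dv - L_0^{-1}\dv \;=\; L_1^{-1}(L_0-L_1)L_0^{-1}\dv,
\]
so that the error term is $\nabla L_1^{-1}\dv\bigl(\mathcal E\,\nabla L_0^{-1}\dv\,\Phi\bigr)$: the \emph{unknown} operator $\nabla L_1^{-1}\dv$ is applied to the function $\Psi:=\mathcal E\,\nabla L_0^{-1}\dv\,\Phi$, which lies in $T^p_2$ with small norm but is a different function from $\Phi$ and is in general not $C_0^\infty(\reu)$. From your pieces the only bound available is
\[
\bigl\|\nabla L_1^{-1}\dv\bigl(\mathcal E\,\nabla L_0^{-1}\dv\,\Phi\bigr)\bigr\|_{\widetilde T^p_\infty}
\;\leq\; \|\nabla L_1^{-1}\dv\|_{T^p_2\to\widetilde T^p_\infty}\cdot\eps_0\,\|\Phi\|_{T^p_2},
\]
which puts the \emph{operator norm} of $\nabla L_1^{-1}\dv$ on the right, not $\|\nabla L_1^{-1}\dv\,\Phi\|_{\widetilde T^p_\infty}$ for the same fixed $\Phi$. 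The self-improving inequality you then display is therefore not what your identity yields; to absorb at the level of operator norms you would need to know \emph{a priori} that $\|\nabla L_1^{-1}\dv\|_{T^p_2\to\widetilde T^p_\infty}<\infty$, and Lemma~\ref{l6.0.1} does not give this: it only asserts finiteness of $\|\nabla L_1^{-1}\dv\,\Phi\|_{\widetilde T^p_\infty}$ for each individual $\Phi\in C_0^\infty(\reu)$.

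The paper writes the resolvent identity the other way around, namely
\[
L_1^{-1}-L_0^{-1}\;=\;L_0^{-1}(L_1-L_0)L_1^{-1}
\;=\;-\,L_0^{-1}\dv\,(A^0-A^1)\,\nabla L_1^{-1},
\]
so that the error term is $\nabla L_0^{-1}\dv\bigl(\mathcal E\,\nabla L_1^{-1}\dv\,\Phi\bigr)$. Now the outer operator is the \emph{known} bounded one $\nabla L_0^{-1}\dv:T^p_2\to\widetilde T^p_\infty$ (Proposition~\ref{p4.6}), the inner function satisfies $\|\mathcal E\,\nabla L_1^{-1}\dv\,\Phi\|_{T^p_2}\lesssim\eps_0\,\|\nabla L_1^{-1}\dv\,\Phi\|_{\widetilde T^p_\infty}$ by Lemma~\ref{l2.20} and the Carleson bound, and the quantity $\|\nabla L_1^{-1}\dv\,\Phi\|_{\widetilde T^p_\infty}$ to be absorbed is finite by Lemma~\ref{l6.0.1} (applied with $\dv\Phi$, which is $C_0^\infty$ when $\Phi$ is). This yields the absorbable inequality pointwise in $\Phi$, which is what makes the subtraction rigorous. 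The same orientation avoids the circularity in your proposed treatment of the boundary estimates \eqref{eq4.26.1}--\eqref{eq4.27.1}: with your identity you would need the $L_1$ version of the very boundary bound you are proving to handle the error term, whereas with the paper's identity the outer operator is $L_0$, for which \eqref{eq4.26}--\eqref{eq4.27} are already proved, and the inner function is controlled by the tent space bound \eqref{eq6.27} just established. So the overall strategy you propose is correct in spirit, but the order of factors in the resolvent identity is not a free choice here, and the one you picked is the one that does not close.
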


\bp By density, it is sufficient to estimate $\nabla L_1^{-1} \dv \Phi$ for  $\Phi\in C_0^\infty(\reu)$. Let us recall that for such $\Phi$ the quantity $\|\nabla L_1^{-1} \dv \Phi\|_{\widetilde{T}^p_\infty}$ is finite by Lemma~\ref{l6.0.1}. Furthermore, 
\begin{equation}\label{eq6.27.1}
L_0^{-1}-L_1^{-1}= L_0^{-1}L_1 L_1^{-1}-L_0^{-1}L_0 L_1^{-1}
=L_0^{-1} (L_1-L_0) L_1^{-1}=L_0^{-1} \dv(A^0-A^1)\nabla L_1^{-1}.
\end{equation}
Hence, by Proposition~\ref{p4.6} and Lemma~\ref{l2.20},
\begin{align} \|\nabla L_1^{-1} \nabla \cdot \Phi\|_{\widetilde{T}^p_\infty} &\leq \|\N(\nabla (L_1^{-1}-L_0^{-1}) \nabla \cdot \Phi)\|_{L^p} +\|\N(\nabla L_0^{-1} \nabla \cdot \Phi)\|_{L^p}\label{eq6.29}\\
& \leq \|\N(\nabla L_0^{-1} \nabla \cdot (A^1-A^0) \nabla L_1^{-1} \nabla \cdot \Phi\|_{L^p}+ \|\nabla L_0^{-1} \nabla\|_{T^p_2 \to \widetilde{T}^p_\infty}\|\Phi\|_{T^p_2}\notag\\
&\leq \|\nabla L_0^{-1} \nabla\|_{T^p_2 \to \widetilde{T}^p_\infty} \|(A^1-A^0) \nabla L_1^{-1} \nabla \cdot \Phi\|_{T^p_2} +\|\nabla L_0^{-1} \nabla\|_{T^p_2 \to \widetilde{T}^p_\infty}\|\Phi\|_{T^p_2}\notag\\
&\leq \varepsilon_0 \|\nabla L_0^{-1} \nabla\|_{T^p_2 \to \widetilde{T}^p_\infty} \| \nabla L_1^{-1} \nabla \cdot \Phi\|_{\widetilde{T}^p_\infty}+\|\nabla L_0^{-1} \nabla\|_{T^p_2 \to \widetilde{T}^p_\infty} \|\Phi\|_{T^p_2}.\notag
\end{align}
Now, assuming that $\varepsilon_0$ is sufficiently small compared to $1/\|\nabla L_0^{-1} \nabla\|_{T^p_2 \to \widetilde{T}^p_\infty}$, the estimate \eqref{eq6.29} implies
$$ \|\nabla L_1^{-1} \nabla \cdot \Phi\|_{\widetilde{T}^p_\infty} \lesssim \|\Phi\|_{T^p_2},$$
as desired.

A very similar argument, now using \eqref{eq6.27}, in combination with the results of Corollary~\ref{c4.22} and Remark~\ref{r4.34} allows us to make sense of boundary traces of $\nabla L_1^{-1} \nabla \cdot \Phi$ for  $\Phi \in T^p_2$ and to establish an analogue of  \eqref{eq4.25} (the latter in the extended range $n/(n+\alpha_0)<p< 2+\eps$) for the operator $L_1$: 
\begin{align*} \|\nabla L_1^{-1} \nabla \cdot \Phi(\cdot, 0)\|_{L^p(\rn)} &\leq \|\nabla (L_1^{-1}-L_0^{-1}) \nabla \cdot \Phi)(\cdot, 0)\|_{L^p(\rn)}  +\|\nabla L_0^{-1} \nabla \cdot \Phi(\cdot, 0)\|_{L^p(\rn)} \\
& \leq \|\nabla L_0^{-1} \nabla \cdot (A^1-A^0) \nabla L_1^{-1} \nabla \cdot \Phi(\cdot, 0)\|_{L^p(\rn)} + C\|\Phi\|_{T^p_2}\notag\\
&\lesssim \varepsilon_0 \| \nabla L_1^{-1} \nabla \cdot \Phi\|_{\widetilde{T}^p_\infty}+\|\Phi\|_{T^p_2}\lesssim \|\Phi\|_{T^p_2}.\end{align*}
And \eqref{eq4.26.1}--\eqref{eq4.27.1} are proved similarly, invoking \eqref{eq4.26}, \eqref{eq4.27}.
\ep

Essentially the same argument applies to show 
\begin{lemma}\label{l6.29.1}
Let $L=L_1$ be an elliptic operator satisfying assumptions of Theorem~\ref{t1.13}. Then
\begin{equation}
 \nabla L^{-1} \textstyle{\frac 1t}: \widetilde T^p_1\to \widetilde{T}^p_\infty, \quad 1<p< 2+\eps.
\label{eq6.29.2}\end{equation}
\end{lemma}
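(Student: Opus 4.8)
The plan is to run the same perturbation scheme as in Lemma~\ref{l6.26}, now using Proposition~\ref{p5.1} as the $t$-independent input in place of Proposition~\ref{p4.6}. By density we may take $\Phi\in C_0^\infty(\reu)$, and by Lemma~\ref{l6.0.1} the quantity $\|\nabla L_1^{-1}\textstyle{\frac1t}\Phi\|_{\widetilde T^p_\infty}$ is a priori finite, so that the forthcoming estimate can be used to absorb a small term on the left-hand side. The starting point is the resolvent identity
\begin{equation}\label{eq6.30.1}
L_0^{-1}-L_1^{-1}=L_0^{-1}(L_1-L_0)L_1^{-1}=L_0^{-1}\dv\,(A^0-A^1)\nabla L_1^{-1},
\end{equation}
exactly as in \eqref{eq6.27.1}, which we apply to $\textstyle{\frac1t}\Phi$.

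\smallskip

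The key steps, in order, are the following. First, write
\begin{equation*}
\nabla L_1^{-1}\textstyle{\frac1t}\Phi=\nabla L_0^{-1}\textstyle{\frac1t}\Phi+\nabla\big(L_1^{-1}-L_0^{-1}\big)\textstyle{\frac1t}\Phi,
\end{equation*}
and apply $\widetilde T^p_\infty$ norms to both sides. Second, by Proposition~\ref{p5.1} the first term satisfies $\|\nabla L_0^{-1}\textstyle{\frac1t}\Phi\|_{\widetilde T^p_\infty}\lesssim\|\Phi\|_{\widetilde T^p_1}$. Third, for the second term use \eqref{eq6.30.1} to write it as $\nabla L_0^{-1}\dv\big((A^0-A^1)\nabla L_1^{-1}\textstyle{\frac1t}\Phi\big)$; by Proposition~\ref{p4.6} (the $t$-independent tent space bound for $\nabla L_0^{-1}\dv$, which holds in the range $n/(n+\alpha_0)<p<2+\eps\supset(1,2+\eps)$) this is controlled by $\|\nabla L_0^{-1}\dv\|_{T^p_2\to\widetilde T^p_\infty}\,\|(A^0-A^1)\nabla L_1^{-1}\textstyle{\frac1t}\Phi\|_{T^p_2}$. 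Fourth, invoke the factorization Lemma~\ref{l2.20}: since $A^0-A^1\in{\bf T}^\infty_2$ with norm $\lesssim\eps_0$ by the (SCMC) hypothesis \eqref{eq1.4}, and $\nabla L_1^{-1}\textstyle{\frac1t}\Phi\in\widetilde T^p_\infty$, we get
\begin{equation*}
\|(A^0-A^1)\nabla L_1^{-1}\textstyle{\frac1t}\Phi\|_{T^p_2}\lesssim\eps_0\,\|\nabla L_1^{-1}\textstyle{\frac1t}\Phi\|_{\widetilde T^p_\infty}.
\end{equation*}
Fifth, combine these to obtain
\begin{equation*}
\|\nabla L_1^{-1}\textstyle{\frac1t}\Phi\|_{\widetilde T^p_\infty}\lesssim\|\Phi\|_{\widetilde T^p_1}+C_0\,\eps_0\,\|\nabla L_1^{-1}\textstyle{\frac1t}\Phi\|_{\widetilde T^p_\infty},
\end{equation*}
where $C_0=\|\nabla L_0^{-1}\dv\|_{T^p_2\to\widetilde T^p_\infty}$; choosing $\eps_0$ small enough that $C_0\eps_0<1/2$, and using the a priori finiteness from Lemma~\ref{l6.0.1}, we absorb the last term and conclude $\|\nabla L_1^{-1}\textstyle{\frac1t}\Phi\|_{\widetilde T^p_\infty}\lesssim\|\Phi\|_{\widetilde T^p_1}$, which is \eqref{eq6.29.2} by density.

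\smallskip

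The only real subtlety is matching the function spaces so that Lemma~\ref{l2.20} applies cleanly: we need $(A^0-A^1)$ to act as a multiplier from $\widetilde T^p_\infty$ into $T^p_2$, which is precisely the inclusion ${\bf T}^\infty_2\cdot\widetilde T^p_\infty\hookrightarrow T^p_2$ from Lemma~\ref{l2.20}, together with the observation (Proposition~\ref{p2.26}, \eqref{eq2.27}, and the pointwise bound $\epsilon(x,t)=\,$ the local sup of $|A^1-A^0|$) that the discrepancy, measured through ${\bf s}(\cdot)$, has ${\bf T}^\infty_2$ norm comparable to the Carleson norm in \eqref{eq1.4}, hence $\lesssim\eps_0$. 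Everything else is a verbatim repetition of the absorption argument in Lemma~\ref{l6.26}, so no genuinely new obstacle arises; the main point is simply to have assembled the correct $t$-independent inputs (Propositions~\ref{p4.6} and~\ref{p5.1}) and the a priori bound (Lemma~\ref{l6.0.1}) beforehand.
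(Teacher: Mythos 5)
Your proposal is correct and follows essentially the same route as the paper: the paper's proof of Lemma~\ref{l6.29.1} is a one-line invocation of "just as in the proof of Lemma~\ref{l6.26}," and you have unrolled that reference precisely — density plus the a priori finiteness from Lemma~\ref{l6.0.1}, the resolvent identity \eqref{eq6.27.1}, Proposition~\ref{p5.1} for the $L_0$ term, Proposition~\ref{p4.6} and the factorization Lemma~\ref{l2.20} (noting $\|A^1-A^0\|_{{\bf T}^\infty_2}\lesssim\eps_0$ since ${\bf s}(A^1-A^0)=\epsilon$) for the discrepancy term, and absorption. No deviation from the paper's argument.
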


\bp Once again, by density, it is sufficient to estimate $\nabla L_1^{-1} \textstyle{\frac 1t} \Phi$ for  $\Phi\in C_0^\infty(\reu)$, and due to Lemma~\ref{l6.0.1} the quantity $\|\nabla L_1^{-1} \textstyle{\frac 1t}  \Phi\|_{\widetilde{T}^p_\infty}$ is finite for such $\Phi$. Just as in the proof of Lemma~\ref{l6.26}, 
\begin{align} \|\nabla L_1^{-1} \textstyle{\frac 1t} \Phi\|_{\widetilde{T}^p_\infty} &\leq \|\N(\nabla (L_1^{-1}-L_0^{-1}) \textstyle{\frac 1t}\Phi)\|_{L^p} +\|\N(\nabla L_0^{-1} \textstyle{\frac 1t} \Phi)\|_{L^p}\label{eq6.29.3}\\
&\leq \varepsilon_0 \|\nabla L_0^{-1} \nabla\|_{T^p_2 \to \widetilde{T}^p_\infty} \| \nabla L_1^{-1} \textstyle{\frac 1t}\Phi\|_{\widetilde{T}^p_\infty}+\|\nabla L_0^{-1} \textstyle{\frac 1t}\|_{\tT^p_1 \to \widetilde{T}^p_\infty} \|\Phi\|_{\tT^p_1}.\notag
\end{align}
Assuming that $\varepsilon_0$ is sufficiently small compared to $1/\|\nabla L_0^{-1} \nabla\|_{T^p_2 \to \widetilde{T}^p_\infty}$, this furnishes the desired result.
\ep

We now turn to the proof of Theorem~\ref{t1.13}.

\noindent {\it Proof of Theorem~\ref{t1.13}}. 

\vskip 0.08 in \noindent {\bf Step I. Proof of \eqref{eq1.14}. } For any elliptic operator $L$ (not necessarily $t$-independent) we can define the single layer potential acting on $C_0^\infty$ functions as follows. For every 
$f\in C_0^\infty(\rn)$, $\Psi\in C_0^\infty(\reu)$ let
\begin{equation}\label{eq6.29.4}\langle \nabla \SL^{L}_s f, \Psi \rangle:=
\langle f, \tr\circ (L^*)^{-1}\dv \Psi\rangle,
\end{equation} 
\noindent where $\tr$ denotes the trace operator. For any elliptic operator $L$,
\begin{equation}\label{eq6.29.5}
\nabla (L^*)^{-1}\dv :L^2(\ree)\mapsto L^2(\ree),
\end{equation}
i.e., $(L^*)^{-1}\dv :L^2(\ree)\mapsto \dot{W}^{1,2}(\ree)$, so that by the trace theorem,
$$\tr \circ (L^*)^{-1} \dv: L^2(\ree)\mapsto \dot{H}^{1/2}(\rn)\,.$$ This justifies \eqref{eq6.29.4}. 

It follows from our definition \eqref{eq6.29.4} and \eqref{eq6.27.1} that 
for every $f\in C_0^\infty(\rn)$, $\Psi\in C_0^\infty(\reu)$ the identity 
\begin{multline}\label{eq6.29.6} \langle \nabla \SL^{L_1}_s f - \nabla \SL^{L_0}_s f, \Psi \rangle=\langle f, \tr\circ ((L_1^*)^{-1}-(L_0^*)^{-1})\dv \Psi\rangle\\[4pt]
=\langle f, \tr\circ ((L_0^*)^{-1}\dv (A_1^*-A_0^*)\nabla (L_1^*)^{-1})\dv \Psi\rangle=\langle \nabla \SL^{L_0}_s f,  (A_1^*-A_0^*)\nabla (L_1^*)^{-1}\dv \Psi\rangle\\[4pt]
=\langle\nabla L_1^{-1}\dv (A_1-A_0) \nabla \SL^{L_0} f, \Psi\rangle,
\end{multline} 
\noindent is valid. It follows that for every $f\in C_0^\infty(\rn)$
\begin{equation}\label{eq6.29.7}
\nabla \SL^{L_1}_s f - \nabla \SL^{L_0}_s f=\nabla L_1^{-1}\dv (A_1-A_0) \nabla \SL^{L_0} f \quad {\mbox{a.e. in }} \reu.
\end{equation}

Having \eqref{eq6.29.7} at hand, we simply invoke \eqref{eq1.14} for the $t$-independent operator $L_0$ (established in \cite{HMiMo}), Lemma~\ref{l2.20}, and Lemma~\ref{l6.26}, to obtain 
\begin{equation}\label{eq6.29.8}\|\nabla \SL^{L_1}_s f - \nabla \SL^{L_0}_s f\|_{\tT^p_\infty} =\|\nabla L_1^{-1}\dv (A_1-A_0) \nabla \SL^{L_0} f\|_{\tT^p_\infty}\lesssim \eps_0\|f\|_{H^p}, \quad p_0<p<2+\eps, 
\end{equation}
for every $f\in C_0^\infty(\rn)$ and then for all $f\in H^p$ by density. This entails \eqref{eq1.14} for $L=L_1$.

\vskip 0.08 in \noindent {\bf Step II. Proof of \eqref{eq1.16}--\eqref{eq1.19}. } The estimates \eqref{eq1.16}--\eqref{eq1.19} are essentially the dual versions of the bounds for $\nabla_A L^{-1}\nabla$ (recall the notation from Section~\ref{s2.1}), restricted to the boundary. There is no need to distinguish $L_0$ and $L_1$. Indeed, for $1<p<2+\eps$,
\begin{multline*}\|t\nabla\SL_t^{L^*}\nabla f\|_{T^{p'}_2}=\sup_{\Phi\in T^p_2, \,\|\Phi\|=1}\iint_{\reu}\int_{\rn}\nabla_{x,t}\nabla_{y,s}\Gamma_{L^*}(x,t,y,0)f(y)\,dy\,\overline{\Phi(x,t)}\,dxdt\\[4pt]
=\sup_{\Phi\in T^p_2, \,\|\Phi\|=1}\int_{\rn} (\nabla L^{-1}\nabla\cdot\Phi) (x,0)f(x)\,dx \lesssim 
\|\Phi\|_{T^p_2} \|f\|_{L^{p'}},
\end{multline*}

\noindent using \eqref{eq4.25}, Remark~\ref{r4.34}, \eqref{eq4.25.1}. 
The same duality argument gives \eqref{eq1.19} and \eqref{eq1.16} as a consequence of \eqref{eq4.26}, \eqref{eq4.26.1} and \eqref{eq4.27}, \eqref{eq4.27.1}, respectively.

\vskip 0.08 in \noindent {\bf Step III. Proof of \eqref{eq1.15} and \eqref{eq1.15.1}}. Let us start with \eqref{eq1.15} and, respectively, $1<p<2+\eps$. In the $t$-independent case $L=L_0$ the bound \eqref{eq1.15} was proved in \cite{HMiMo}. We note that for any operator, $t$-independent or not, the double layer potential is the dual of $\partial_{\nu_{A_i}} L_i^{-1}\textstyle{\frac 1t}$, properly interpreted on the boundary. Indeed, for $\Phi\in C_0^\infty(\reu)$
\begin{equation}\label{eq6.21.2}
\iint_{\reu} \mathcal{D}^{L^*_i}_tf(x)\, \overline{\Phi(x,t)} \,\frac{dxdt}{t}=\int_{\rn} f(y)\, \overline{\vec{N} \cdot A^{i}(y,0)\left( \nabla L_i^{-1} \textstyle{\frac 1t} \Phi\right)(y,0)} \,dy.
\end{equation}

This duality (or considerations similar to Corollary~\ref{c4.22}) can be used to define $\partial_{\nu_{A_0}} L_0^{-1}\textstyle{\frac 1t}\Phi$ on the boundary and to justify that
$$\|\partial_{\nu_{A_0}} L_0^{-1}\textstyle{\frac 1t}\Phi\|_{L^p}\lesssim \|\Phi\|_{\tT^p_1}. $$
In fact, more generally, by the duality with the \cite{HMiMo} estimates for $\SL^{L_0}\nabla$, we get 
$$\|\nabla L_0^{-1}\textstyle{\frac 1t}\Phi(\cdot, 0)\|_{L^p}\lesssim \|\Phi\|_{\tT^p_1}. $$

Now we can write 
\begin{equation}\label{eq6.21.4}
\nabla (L_1^{-1}-L_0^{-1})\textstyle{\frac 1t}\Phi=\nabla L_0^{-1}\dv (A_1-A_0)\nabla L_1^{-1}\textstyle{\frac 1t}\Phi,
\end{equation}

\noindent where formula itself makes sense for all $\Phi\in C_0^\infty(\reu)$ and then we can take restriction to the boundary on the right-hand side and obtain 
$$\|\nabla (L_1^{-1}-L_0^{-1})\textstyle{\frac 1t}\Phi(\cdot, 0)\|_{\tT^p_\infty}\lesssim \|\Phi\|_{\tT^p_1}, $$
 using Lemma~\ref{l6.29.1}, Lemma~\ref{l2.20}, Corollary~\ref{c4.22}, and Remark~\ref{r4.34}. This entails the desired estimate on $\nabla L_1^{-1}\textstyle{\frac 1t}\Phi(\cdot, 0)$ and hence, by duality, on $\mathcal{D}^{L^*_1}_tf$ in \eqref{eq1.15}.

To be precise, the duality considerations above yield equivalence of the estimates for $\partial_{\nu_{A_i}} L_i^{-1} \textstyle{\frac 1t}$ on the boundary and the estimate
 $\mathcal{D}^{L^*_i}_t: L^{p'}\to \tT^{p'}_\infty$. We, instead, depart from  $\mathcal{D}^{L^*_0}_t: L^{p'}\to T^{p'}_\infty$ and aim at $\mathcal{D}^{L^*_1}_t: L^{p'}\to T^{p'}_\infty$. 
However, recalling that the double layer is a solution and solutions satisfy Moser estimates, we can replace $\N$ by $N_*$ to remove or implement the extra averaging encoding the difference between $\tT^{p'}_\infty$ and $T^{p'}_\infty$ in the present context for free.  

Let us turn to \eqref{eq1.15.1} . Applying Meyers' characterization \eqref{eq2.12} to $\ree$ and restricting to $\reu$, we see that it is sufficient to prove that for every $(n+1)$-dimensional cube $I=Q\times [t_0, t_0+l(Q)]$, $t_0\geq 0$, there exists a constant $c_I$ such that 
\begin{equation}\label{eq6.21.3}\frac{1}{l(I)^\beta}\left( \fiint_I |\mathcal{D}_s f(y)-c_I|^2 dyds\right)^{1/2} \leq C \|f\|_{\Lambda_\beta(\rn)}\,.\end{equation}

Recall the following result.
\begin{lemma}\label{l6.34}
Let $D$ be a bounded domain in $\mathbb{R}^d$, whose boundary is locally the graph of a Lipschitz function. Then every $u \in L^1_{loc}(D)$ such that $\nabla u \dist(\cdot, \partial D) \in W^{1,p}(D)$
satisfies the following type of Poincar\'{e} inequality. There exists a constant $u_D$ such that 
\begin{equation}
\|u-u_D\|_{L^p(D)} \leq C\| \nabla u \dist(\cdot, \partial D)\|_{L^p(D)},
\label{eq6.35}\end{equation}
where $\dist(x, \partial D)$ is the distance from $x \in D$ to the boundary of $D$ and $C$ is a constant depending on the character of $D$ and not depending on its size. 
\end{lemma}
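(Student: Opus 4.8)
The plan is to derive \eqref{eq6.35} from the classical Whitney‑decomposition‑plus‑chaining (Boman) argument, exploiting that a bounded domain whose boundary is locally a Lipschitz graph is a John domain, with John constant depending only on the Lipschitz character of $\partial D$ and hence invariant under dilations. First I would fix a Whitney decomposition $D=\bigcup_j Q_j$ into dyadic cubes with $\operatorname{diam}(Q_j)\approx\dist(Q_j,\partial D)=:d_j$ and bounded overlap of the dilates $\tfrac{9}{8}Q_j$; since $\dist(\cdot,\partial D)\approx d_j$ on $Q_j$, the assumption that the right‑hand side of \eqref{eq6.35} is finite forces $\nabla u\in L^p(Q_j)$ for each $j$ (so in particular $u\in W^{1,p}_{\loc}(D)$, which is all that is actually used), and the ordinary Poincar\'e inequality on a cube gives
\[
\|u-u_{Q_j}\|_{L^p(Q_j)}\;\lesssim\;\operatorname{diam}(Q_j)\,\|\nabla u\|_{L^p(Q_j)}\;\lesssim\;\|\dist(\cdot,\partial D)\,\nabla u\|_{L^p(Q_j)}.
\]

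Next I would use the John property to organize the cubes into chains. Fixing a central Whitney cube $Q_0$ near the John center, every $Q_j$ is joined to $Q_0$ by a finite chain $Q_0=S_0,S_1,\dots,S_m=Q_j$ of Whitney cubes with $S_i\cap S_{i+1}\neq\emptyset$, $\operatorname{diam}(S_i)\approx\operatorname{diam}(S_{i+1})$, $Q_j\subset CS_i$ for every $i$, and --- the essential geometric input --- with bounded overlap of the chains: $\sum_{j:\,S\in\mathrm{chain}(Q_j)}|Q_j|\lesssim|S|$ for each fixed Whitney cube $S$. Telescoping $u_{Q_j}-u_{Q_0}$ along the chain and estimating each increment on an overlapping pair $S_i,S_{i+1}$ by $\fint_{CS_i}|u-u_{CS_i}|\lesssim\operatorname{diam}(S_i)\big(\fint_{CS_i}|\nabla u|^p\big)^{1/p}$ produces the pointwise‑in‑$j$ bound $|u_{Q_j}-u_{Q_0}|\lesssim\sum_{S\in\mathrm{chain}(Q_j)}\operatorname{diam}(S)\big(\fint_{CS}|\nabla u|^p\big)^{1/p}$.

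Then I would set $u_D:=u_{Q_0}$ and assemble: by bounded overlap of the Whitney cubes,
\[
\|u-u_D\|_{L^p(D)}^p\;\lesssim\;\sum_j\|u-u_{Q_j}\|_{L^p(Q_j)}^p\,+\,\sum_j|Q_j|\,|u_{Q_j}-u_{Q_0}|^p,
\]
where the first sum is $\lesssim\|\dist(\cdot,\partial D)\nabla u\|_{L^p(D)}^p$ by the previous display. Into the second sum I would insert the chain estimate and apply the bounded‑overlap‑of‑chains property together with the $\ell^p$‑boundedness of the discrete averaging operator attached to the chain structure (a discrete Hardy inequality), reducing it to $\sum_S|S|\operatorname{diam}(S)^p\,\fint_{CS}|\nabla u|^p\approx\sum_S\int_{CS}\dist(\cdot,\partial D)^p|\nabla u|^p\lesssim\|\dist(\cdot,\partial D)\nabla u\|_{L^p(D)}^p$, again by bounded overlap. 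Since every constant so far depends only on $p$, the dimension, and the scale‑invariant Whitney and John constants, $C$ is independent of $\operatorname{diam}(D)$, as claimed.

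The main obstacle is the chaining step itself: constructing the chains with the bounded‑overlap property and establishing the $\ell^p$‑boundedness of the associated discrete averaging (Hardy‑type) operator. This is precisely the classical Boman chain argument for John domains, and I would invoke it rather than reprove it; alternatively one may localize by a partition of unity to a boundary coordinate patch and flatten by a bi‑Lipschitz change of variables under which $\dist(\cdot,\partial D)$ is comparable to the last coordinate, reducing matters to a weighted Poincar\'e inequality on a model half‑cube --- but that model estimate is itself proved by the same Whitney/Hardy mechanism, so this route offers no genuine shortcut.
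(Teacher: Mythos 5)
The paper does not give an independent proof here; it simply cites Boas--Straube \cite{BS} and Hurri-Syrj\"anen \cite{HuS}, whose arguments are precisely the Whitney-decomposition/Boman-chain mechanism you describe (local Poincar\'e on Whitney cubes where $\dist(\cdot,\partial D)\approx\operatorname{diam}Q_j$, telescoping along chains to a fixed central cube, and the bounded-overlap/discrete-Hardy step to resum). Your sketch is correct and matches the approach of the paper's cited sources, including the observation that all constants are scale-invariant because they come from the Whitney and John constants of a Lipschitz graph domain.
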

The proof of the lemma can be found in \cite{BS}, \cite{HuS}. The authors prove the result in considerably more general domains and for more general powers of distance to the boundary. The dependence of the constant $C$ on particular features of the domain is not formally stated but is evident from the proof in \cite{HuS}. For a collection of cubes  $C$ would be a universal constant. 

Then, returning to \eqref{eq6.21.3}, we see that 
when $\ell(I) \gtrsim \dist(I, \rn\times\{0\})$, we may replace $I$ by a Carleson box
$R_Q:= Q\times (0,\ell(Q))$ of comparable side length, and then apply Lemma~\ref{l6.34} to obtain a bound
in terms of \eqref{eq1.16}.
If, on the other hand, if $\ell(I) \ll \dist(I, \rn\times\{0\})$, we may then use
 \eqref{eq1.6} to reduce to the previous case. It is useful to note that $|I|=|Q|^\frac{n+1}{n}$ when calculating the normalization.

\vskip 0.08 in \noindent {\bf Step IV. Proof of \eqref{eq1.20}--\eqref{eq1.22.1}. } 
The proof is postponed until Section~\ref{s7}. See Proposition~\ref{p7.15}.

This finishes the proof of Theorem~\ref{t1.13}, modulo Lemma~\ref{l7.1}, Proposition~\ref{p7.15}, and the proof of Lemma~\ref{l6.0.1}.
\ep

\noindent {\it Proof of Lemma~\ref{l6.0.1}.}
We proceed in several steps. 
First, let us fix a small $\epsilon>0$ and define $Q_{1/\epsilon}$ to be a cube in $\rn$ centered at $0$ with the side-length $1/\epsilon$. Let $A_{\epsilon}$ be the elliptic matrix such that
\begin{equation*}A_{\epsilon}(x,t):=
\begin{cases}A_1(x,t),\,\,\,\text{if}\,\,\,\ (x,t)\in R_{1/\epsilon}=Q_{1/\epsilon} \times \bigl((-1/\epsilon, -\epsilon)\cup(\epsilon,1/\epsilon)\bigr),\\
A_0(x,t),\,\,\,\text{otherwise}.
\end{cases}
\end{equation*}

For convenience, we shall split the complement of $R_{1/\epsilon}$ as 
 $$\rec=\recs\cup \recb, \quad \recs=Q_{1/\epsilon}\times  [-\epsilon, \epsilon], \quad \recb=\ree\setminus\Bigl(Q_{1/\epsilon}\times(-1/\epsilon, 1/\epsilon)\Bigr).$$

The first order of business is to establish the desired estimate for $L_\epsilon$ in place of $L_1$. Note that $\|\N(\textbf{1}_{R_{1/\epsilon}}\nabla L_\epsilon^{-1} \Phi )\|_{L^p}$ is automatically finite by definition of $R_{1/\epsilon}$ and the general fact that $$\nabla L_\epsilon^{-1}:L^{\frac{2(n+1)}{n+3}}(\ree)\to L^2(\ree)$$ for all elliptic operators. The underlying estimates, of course, depend on $\epsilon$. However, knowing the fact of finiteness we can now run a familiar perturbation procedure to obtain  uniform in $\epsilon$ bounds. Specifically, by \eqref{eq6.27.1} applied to $L_\epsilon$ in place of $L_1$, the fact that $A_\epsilon-A_0$ is supported in $R_{1/\epsilon}$, and Proposition~\ref{p4.6}
$$\|\nabla L_\epsilon^{-1}\Phi -\nabla L_0^{-1}\Phi\|_{\tT^p_\infty}\leq \eps_0\|\nabla L_0^{-1}\nabla\|_{T^p_2\to\tT^p_\infty}\|\textbf{1}_{R_{1/\epsilon}}\nabla L_\epsilon^{-1}\Phi\|_{\tT^p_\infty}\lesssim \eps_0\|\textbf{1}_{R_{1/\epsilon}}\nabla L_\epsilon^{-1}\Phi\|_{\tT^p_\infty}.$$
Hence, $\|\nabla L_\epsilon^{-1}\Phi\|_{\tT^p_\infty}$ is finite and
$$\|\nabla L_\epsilon^{-1}\Phi -\nabla L_0^{-1}\Phi\|_{\tT^p_\infty}\lesssim \eps_0\|\nabla L_\epsilon^{-1}\Phi\|_{\tT^p_\infty}.$$
Hiding the small term, we have 
\begin{equation}\label{eq6.9}
\|\nabla L_\epsilon^{-1}\Phi\|_{\tT^p_\infty}\lesssim \|\nabla L_0^{-1}\Phi\|_{\tT^p_\infty}\leq C_\Phi,\quad n/(n+\alpha_0)<p<2+\eps,
\end{equation}
with constant $C_\Phi$ independent of $\epsilon$. 

Now let us estimate the difference between $L_\epsilon$ and $L_1$. For $\widetilde{y}, \widetilde{s}$, $\eta$, $R$ such that  $\epsilon \ll \eta<\widetilde{s}<R/2\ll 1/\epsilon$ and $|\widetilde{y}|<R\ll 1/\epsilon$, we have 
\begin{multline}
\fiint \limits_{W(\tilde{y},\tilde{s})} |\nabla L_1^{-1}\Phi(y,s)-\nabla\Le^{-1}\Phi(y,s)|^2 dyds
\\[4pt]
=\fiint_{W(\tilde{y},\tilde{s})} \left|\nabla_{y,s} \iint_{\ree} \nabla_{w,r} \Gamma_1 (y,s,w,r) (A^1-A^\epsilon)(w,r) \nabla_{w,r}\Le^{-1}\Phi(w,r) dwdr\right|^2 dyds\\[4pt]
=\fiint_{ W(\tilde{y},\tilde{s})} \left|\nabla_{y,s} \iint_{\ree}  \nabla_{w,r} \Gamma_1 (y,s,w,r) \textbf{1}_{\recs}(w,r)(A^1-A^\epsilon)(w,r) \nabla_{w,r}\Le^{-1}\Phi(w,r) dwdr\right|^2 dyds\\[4pt]
+ \fiint_{W(\tilde{y},\tilde{s})} \Bigl|\nabla_{y,s} \iint_{\ree}  \nabla_{w,r}( \Gamma_1 (y,s,w,r) -\Gamma_1 (\tilde y,\tilde s,w,r) )\times \\[4pt]
\times \textbf{1}_{\recb}(w,r)(A^1-A^\epsilon)(w,r) \nabla_{w,r} \Le^{-1}\Phi(w,r) dwdr\Bigr|^2 dyds\\[4pt]
\lesssim \frac{1}{\eta^{\,2}} \fiint_{\widetilde W(\tilde{y},\tilde{s})} \left| \iint_{\ree}  \nabla_{w,r} \Gamma_1 (y,s,w,r) \textbf{1}_{\recs}(w,r)(A^1-A^\epsilon)(w,r) \nabla_{w,r}\Le^{-1}\Phi(w,r) dwdr\right|^2 dyds\\[4pt]
+  \frac{1}{\eta^{\,2}} \fiint_{\widetilde W(\tilde{y},\tilde{s})} \Bigl| \iint_{\ree}  \nabla_{w,r}( \Gamma_1 (y,s,w,r) -\Gamma_1 (\tilde y,\tilde s,w,r) )\times \\[4pt]
\times \textbf{1}_{\recb}(w,r)(A^1-A^\epsilon)(w,r) \nabla_{w,r} \Le^{-1}\Phi(w,r) dwdr\Bigr|^2 dyds =: I+II,
\label{eq6.10}\end{multline}

\noindent where, as usually, $\widetilde W(\tilde{y},\tilde{s})$ is a slightly enlarged version of $W(\tilde{y},\tilde{s})$ (in the present context, one can take, for instance, a $c \eta$ - neighborhood of $W(\tilde{y},\tilde{s})$ for some small $c$). 

Let us start by estimating $II$. 

To this end, we show that $r  \nabla(\Gamma_1(y,s,w,r)-\Gamma_1(\tilde y,\tilde s,w,r))\textbf{1}_{\recb}(w,r) $ is in $ T^{p'}_{2}$ as a function of $w,r$  for any fixed $(y,s) \in \widetilde W(\tilde{y},\tilde{s})$ and $p'<2$ sufficiently close to 2. Slightly abusing the notation, we keep writing $r$ inside the norm. Then\begin{multline*}
\left\|r  \nabla(\Gamma_1(y,s,\cdot,\cdot)-\Gamma_1(\tilde y,\tilde s,\cdot,\cdot))\textbf{1}_{\recb}\right\|_{T^{p'}_2}\\[4pt]
\lesssim \sum_{j=1}^\infty \left\|r  \nabla(\Gamma_1(y,s,\cdot,\cdot)-\Gamma_1(\tilde y,\tilde s,\cdot,\cdot))\textbf{1}_{S_j(Q_{1/\epsilon}\times(-1/\epsilon,1/\epsilon))}\right\|_{T^{p'}_2},
\end{multline*}

\noindent where, as usually, $S_j(Q_{1/\epsilon}\times(-1/\epsilon,1/\epsilon))$, $j\geq 1$, are dyadic annuli around the set $Q_{1/\epsilon}\times(-1/\epsilon,1/\epsilon)$. Using the definition of the tent spaces via the square function we observe that the last expression above is equal to 
\begin{multline*}
 \sum_{j=1}^\infty \left\|\A_2(r  \nabla(\Gamma_1(y,s,\cdot,\cdot)-\Gamma_1(\tilde y,\tilde s,\cdot,\cdot))\textbf{1}_{S_j(Q_{1/\epsilon}\times(-1/\epsilon,1/\epsilon))})\right\|_{L^{p'}(C2^jQ_{1/\epsilon})}\\[4pt]
\lesssim  \sum_{j=1}^\infty (2^j/\epsilon)^{\frac{n}{p'}-\frac n2} \left\|\A_2(r  \nabla(\Gamma_1(y,s,\cdot,\cdot)-\Gamma_1(\tilde y,\tilde s,\cdot,\cdot))\textbf{1}_{S_j(Q_{1/\epsilon}\times(0,1/\epsilon))})\right\|_{L^{2}(C2^jQ_{1/\epsilon})}\\[4pt]
\lesssim  \sum_{j=1}^\infty (2^j/\epsilon)^{\frac{n}{p'}-\frac n2} \left(\iint_{ S_j(Q_{1/\epsilon}\times(0,1/\epsilon))}  |r\nabla(\Gamma_1(y,s,w,r)-\Gamma_1(\tilde y,\tilde s,w,r))|^2 \,\frac{dwdr}{r}\right)^{1/2}
\\[4pt]
\lesssim  \sum_{j=1}^\infty (2^j/\epsilon)^{\frac{n}{p'}-\frac n2-\frac{1}{2}} \left(\iint_{\widetilde S_j(Q_{1/\epsilon}\times(0,1/\epsilon))}  |\Gamma_1(y,s,w,r)-\Gamma_1(\tilde y,\tilde s,w,r)|^2 \,dwdr\right)^{1/2}.
\end{multline*}

\noindent Here, as usually, $\widetilde S_j$ is a slightly enlarged version of $S_j$.  Note that by choosing suitable constants we can make sure that  $\widetilde S_j$ has roughly the same separation from $\widetilde W(\tilde{y},\tilde{s})$ as $S_j$. It follows from the calculation above and \eqref{eq6.9} that 
\begin{multline*}
II^{1/2}\lesssim  \frac{C_\Phi}{\eta} \sum_{j=1}^\infty (2^j/\epsilon)^{\frac{n}{p'}-\frac n2-\frac 12} \left(\iint_{\widetilde S_j(Q_{1/\epsilon}\times(0,1/\epsilon))} \fiint \limits_{\widetilde W(\tilde{y},\tilde{s})}  |\Gamma_1(y,s,w,r)-\Gamma_1(\tilde y,\tilde s,w,r)|^2 \,dyds\,dwdr\right)^{1/2} 
\\[4pt]
\lesssim  \frac{C_\Phi}{\eta} \sum_{j=1}^\infty (2^j/\epsilon)^{\frac{n}{p'}-\frac n2-\frac 12} \left(\frac{R}{2^j/\epsilon}\right)^{\alpha_0}\left(\iint_{\widetilde S_j(Q_{1/\epsilon}\times(0,1/\epsilon))} \fiint \limits_{c2^j\widetilde W(\tilde{y},\tilde{s})}  |\Gamma_1(y,s,w,r)|^2 \,dyds\,dwdr\right)^{1/2},
\end{multline*}

\noindent where we used H\"older continuity of $\Gamma_1(y,s,w,r)$ as a solution in $(y,s)$ and $c2^j\widetilde W(\tilde{y},\tilde{s})$ is a concentric dialate of $\widetilde W(\tilde{y},\tilde{s})$ with $c$ chosen so that  $c2^j\widetilde W(\tilde{y},\tilde{s})$ and $\widetilde S_j(Q_{1/\epsilon}\times(-1/\epsilon,1/\epsilon))$ are separated by a strip of width $2^j/\epsilon$. Then the expression above is bounded by 
\begin{equation*}
\,\frac{C_\Phi}{\eta} \sum_{j=1}^\infty (2^j/\epsilon)^{\frac{n}{p'}-\frac n2-\frac 12} \left(\frac{R}{2^j/\epsilon}\right)^{\alpha_0}(2^j/\epsilon)^{-(n-1)+\frac{n+1}{2}} \lesssim C_\Phi\,\frac{R^{\alpha_0}}{\eta}\sum_{j=1}^\infty (2^j/\epsilon)^{\frac{n}{p'}-n+1-\alpha_0} \lesssim C_\Phi\,\frac{R^{\alpha_0}}{\eta}\epsilon^{-\frac{n}{p'}+n-1+\alpha_0}.
\end{equation*}

Turning to  $I$, we note that for $p'<2$
\begin{multline*}
\left\|r  \nabla(\Gamma_1(y,s,\cdot,\cdot))\textbf{1}_{\recs}\right\|_{T^{p'}_2}
\lesssim |Q_{1/\epsilon}|^{\frac 1{p'}-\frac 12}\left\|\A_2(r  \nabla(\Gamma_1(y,s,\cdot,\cdot))\textbf{1}_{\recs})\right\|_{L^2(2Q_{1/\epsilon})}\\[4pt]
\lesssim \epsilon^{\frac n2-\frac{n}{p'}}\left(\int_{Q_{1/\epsilon}}\int_0^{\epsilon}|r \nabla\Gamma_1(y,s,w,r)|^2\,\frac{drdw}{r}\right)^{1/2}\lesssim \epsilon^{\frac 12 +\frac n2-\frac{n}{p'}}\left(\int_{Q_{1/\epsilon}}\int_0^{\epsilon}| \nabla\Gamma_1(y,s,w,r)|^2\,drdw\right)^{1/2}\\[4pt]
\lesssim \epsilon^{\frac 12 +\frac n2-\frac{n}{p'}}\left(\iint_{\ree\setminus 2\widetilde W(\tilde y,\tilde s)}\frac{|\Gamma_1(y,s,w,r)|^2}{|(y,s)-(w,r)|^2}\,{drdw}\right)^{1/2}  \lesssim  \epsilon^{\frac 12 +\frac n2-\frac{n}{p'}} (\eta)^{-\frac n2+\frac 12}.
\end{multline*}

Combining the estimates above with \eqref{eq6.9}, we conclude that for every $p>2$ close to $2$ and $x\in Q_{R/2}$
\begin{equation}\label{eq6.11}
\N^{\eta,R/2} (\nabla L_1^{-1}\Phi-\nabla\Le^{-1}\Phi)(x) \lesssim C_\Phi \left(\frac{R^{\alpha_0}}{\eta}\epsilon^{-\frac{n}{p'}+n-1+\alpha_0}+\epsilon^{\frac 12 +\frac n2-\frac{n}{p'}} (\eta)^{-\frac n2-\frac 12}\right),\end{equation}
which in turn shows that
\begin{equation}\label{eq6.12}
\N^{\eta,R/2} (\nabla L_1^{-1}\Phi)(x) \lesssim C_\Phi \left(\frac{R^{\alpha_0}}{\eta}\epsilon^{-\frac{n}{p'}+n-1+\alpha_0}+\epsilon^{\frac 12 +\frac n2-\frac{n}{p'}} (\eta)^{-\frac n2-\frac 12}\right)+ \N^{\eta,R/2} (\nabla\Le^{-1}\Phi)(x),
\end{equation}
where 
$$\N^{\eta,R/2}(u)(x):=\left(\sup_{\substack{(\tilde{y}, \tilde{s}) \in \Gamma(x) \cap B(x,R/2)\\ \eta< \tilde{s}}}\fiint\limits_{W(\tilde{y},\tilde{s})}|u(y,s)|^2 dyds \right)^{1/2}, $$
\noindent with a suitably chosen aperture of the underlying cones and $\epsilon \ll \eta<R \ll 1/\epsilon$.

Taking the norm of both sides of \eqref{eq6.12} in $L^p(Q_{R/2})$, with $p>2$ close to 2, one has
\begin{multline}\label{eq6.13}
\left(\int_{Q_{R/2}}\left|\N^{\eta,R/2} (\nabla L_1^{-1}\Phi)(x)\right|^p\,dx\right)^{1/p} \\[4pt] \lesssim C_\Phi \left(\frac{R^{\alpha_0+n/p}}{\eta}\epsilon^{-\frac{n}{p'}+n-1+\alpha_0}+ R^{n/p}\epsilon^{\frac 12 +\frac n2-\frac{n}{p'}} (\eta)^{-\frac n2-\frac 12}\right)  + \left(\int_{Q_{R/2}}\left|\N^{\eta,R/2} (\nabla {L_\epsilon}^{-1}\Phi)(x)\right|^p\,dx\right)^{1/p}\\[4pt] \lesssim C_\Phi \left(\frac{R^{\alpha_0+n/p}}{\eta}\epsilon^{-\frac{n}{p'}+n-1+\alpha_0}+ R^{n/p}\epsilon^{\frac 12 +\frac n2-\frac{n}{p'}} (\eta)^{-\frac n2-\frac 12}+1\right),
\end{multline}

\noindent by \eqref{eq6.9}. Taking the limit first as $\epsilon\to 0$ and then as $\eta\to 0$, $R\to\infty$, we deduce that 
\begin{equation}\label{eq6.14}
\|\N(\nabla L_1^{-1}\Phi )\|_{L^p} \lesssim C_\Phi,
\end{equation}

\noindent for $p>2$ close to 2, as desired.

Let us now treat the case of $p\leq 1$. We shall prove that $r \nabla\Gamma_1(y,s,\cdot,\cdot)\textbf{1}_{\reu \setminus {R_{1/\epsilon}}} \in T^\infty_{2,\alpha}$ whenever $(y,s) \in W(\tilde{y},\tilde{s})$. 
First of all, 
\begin{multline*}
\sup\limits_{Q:\,l(Q)\lesssim 1/\epsilon} \frac{1}{|Q|^{1+\frac{2\alpha}{n}}}\iint\limits_{R_Q\cap\recb} |\nabla \Gamma_1(y,s,w,r)|^2 rdwdr\\[4pt] 
\leq 
\sup\limits_{Q:\,l(Q)\lesssim 1/\epsilon} \frac{l(Q)}{|Q|^{1+\frac{2\alpha}{n}}}\iint\limits_{R_Q\cap\recb}|\nabla (\Gamma_1(y,s,w,r)-\Gamma_1(y,s,c_Q,0))|^2 dwdr,
\end{multline*}
\noindent where $c_Q$ denotes the center of the cube $Q$. Note that we consider at the moment $Q:\,l(Q)\lesssim 1/\epsilon$ (in fact, $Q:\,l(Q)\leq c/\epsilon$ for suitably small $c$). Then for every $(y,s)\in W(\tilde{y},\tilde{s})$ subject to the conditions $\epsilon \ll \eta<\widetilde{s}<R/2\ll 1/\epsilon$ and $|\widetilde{y}|<R\ll 1/\epsilon$, the expression $\Gamma_1(y,s,w,r)-\Gamma_1(y,s,c_Q,0)$ as a function of $w,r$ is a solution in a $c/\epsilon$-neighborhood of $2R_Q$, denoted by $U_{c/\epsilon}(2R_Q)$. Here, by $2R_Q$ we denote a subregion of $\ree$ concentric with $R(Q)$ and twice as large.  
Thus, first applying Caccioppoli inequality in $2R_Q$ and then exploring H\"older continuity of solutions in $U_{c/\epsilon}(2R_Q)$, we deduce that the last expression above is bounded by 
\begin{multline*}
\sup\limits_{Q:\,l(Q)\lesssim 1/\epsilon} \frac{l(Q)^{-1}}{|Q|^{1+\frac{2\alpha}{n}}}\iint\limits_{2R_Q}|\Gamma_1(y,s,w,r)-\Gamma_1(y,s,c_Q,0)|^2 dwdr\\[4pt]
\lesssim \sup\limits_{Q:\,l(Q)\lesssim 1/\epsilon} \frac{1}{l(Q)^{2\alpha}}\,\left(\frac{l(Q)}{\dist \{(y,s), 2R_Q\}}\right)^{2\alpha_0}\dist \{(y,s), 2R_Q\}^{-n-1}\iint\limits_{U_{c\dist \{(y,s), 2R_Q\}}(2R_Q)}| \Gamma_1(y,s,w,r)|^2 dwdr.
\end{multline*}
\noindent Thus, using pointwise estimates on solutions, the expression above can further be estimated by 
\begin{multline*}
 \sup\limits_{Q:\,l(Q)\lesssim 1/\epsilon} \frac{1}{l(Q)^{2\alpha}}\,\left(\frac{l(Q)}{\dist \{(y,s), 2R_Q\}}\right)^{2\alpha_0}\dist \{(y,s), 2R_Q\}^{-2(n-1)}\\[4pt] \lesssim (1/\epsilon)^{2\alpha_0-2\alpha} \dist \{(y,s), 2R_Q\}^{-2(n-1)-2\alpha_0}
 \lesssim \epsilon^{2\alpha+2(n-1)}.
\end{multline*}

Now consider $Q$ such that $l(Q)\geq c/\epsilon$. Similarly to above, $\Gamma_1(y,s,w,r)$, as a function of $(w,r)$, is a solution in $U_{c/\epsilon}':=U_{c/\epsilon}(R_Q\cap\recb)$. Thus, we can use Caccioppoli inequality in a decomposition of $R_Q\cap\recb$ into cubes of size $c/\epsilon$. Then 
\begin{multline*}
\sup\limits_{Q:\,l(Q)\gtrsim 1/\epsilon} \frac{1}{|Q|^{1+\frac{2\alpha}{n}}}\iint\limits_{R_Q\cap\recb} |\nabla \Gamma_1(y,s,w,r)|^2 rdwdr\\[4pt] 
\lesssim
\sup\limits_{Q:\,l(Q)\gtrsim 1/\epsilon} \frac{l(Q)}{|Q|^{1+\frac{2\alpha}{n}}}\,\epsilon^2\iint\limits_{U_{c/\epsilon}'} | \Gamma_1(y,s,w,r)|^2 dwdr \\[4pt]
\lesssim
\sup\limits_{Q:\,l(Q)\gtrsim 1/\epsilon} \frac{l(Q)}{|Q|^{1+\frac{2\alpha}{n}}}\,\epsilon^2\iint\limits_{1/\epsilon\lesssim |(y,s)-(w,r)|\lesssim l(Q)} |(y,s)-(w,r)|^{-2(n-1)} dwdr \\[4pt]
\lesssim 
\sup\limits_{Q:\,l(Q)\gtrsim 1/\epsilon} \frac{l(Q)}{|Q|^{1+\frac{2\alpha}{n}}}\,\epsilon^2 (1/\epsilon)^{-2(n-1)-\beta+n} l(Q)^{\beta+1},
\end{multline*}

\noindent for any $\beta>0$, since $-2(n-1)-\beta+n<0$. Taking now $\beta>0$ sufficiently small, we observe that the latter expression is bounded by 
\begin{equation*}
\sup\limits_{Q:\,l(Q)\gtrsim 1/\epsilon} \frac{l(Q)^{2+\beta-n-2\alpha}}{(1/\epsilon)^{n+\beta}} \lesssim \epsilon^{2n-2+2\alpha}. 
\end{equation*}

It remains to estimate 
\begin{equation*} \sup\limits_{Q} \frac{1}{|Q|^{1+\frac{2\alpha}{n}}}\iint\limits_{R_Q\cap\recs} |\nabla \Gamma_1(y,s,w,r)|^2 rdwdr
\lesssim \sup\limits_{Q} \frac{\min\{\epsilon, l(Q)\}}{|Q|^{1+\frac{2\alpha}{n}}}\iint\limits_{R_Q\cap\recs} |\nabla \Gamma_1(y,s,w,r)|^2 dwdr.
\end{equation*}

\noindent Consider first the case when $l(Q)<c\eta$ for some small $c>0$. Then the corresponding part of the supremum above is bounded by
\begin{multline*} \sup\limits_{Q:\,l(Q)<c\eta} \frac{\min\{\epsilon, l(Q)\}}{|Q|^{1+\frac{2\alpha}{n}}}\iint\limits_{R_Q} |\nabla (\Gamma_1(y,s,w,r)-\Gamma_1(y,s,c_Q,0))|^2 dwdr
\\[4pt] \lesssim
 \sup\limits_{Q:\,l(Q)<c\eta} \frac{\min\{\epsilon, l(Q)\}}{|Q|^{1+\frac{2\alpha}{n}}}\frac {1}{l(Q)^2} \iint\limits_{\widetilde R_Q} |\Gamma_1(y,s,w,r)-\Gamma_1(y,s,c_Q,0)|^2 dwdr
 \\[4pt] 
\lesssim \sup\limits_{Q:\,l(Q)<c\eta} \frac{\min\{\epsilon, l(Q)\}}{|Q|^{1+\frac{2\alpha}{n}}}\frac {|Q|^{1+\frac 1n}}{l(Q)^2}\,\left(\frac{l(Q)}{\eta}\right)^{2\alpha_0}\fiint\limits_{U_{c\eta}(R_Q)} |\Gamma_1(y,s,w,r)|^2 dwdr \\[4pt]
\lesssim \sup\limits_{Q:\,l(Q)<c\eta} \frac{\min\{\epsilon, l(Q)\}}{l(Q)^{2\alpha+1}}\,\left(\frac{l(Q)}{\eta}\right)^{2\alpha_0} (\eta)^{-2(n-1)}\lesssim \epsilon^{2\alpha_0-2\alpha} \,(\eta)^{-2(n-1)-2\alpha_0}.
\end{multline*}

It remains to analyze the case when $l(Q)>c\eta$. We have 
\begin{multline*}  \sup\limits_{Q:\,l(Q)>c\eta} \frac{\min\{\epsilon, l(Q)\}}{|Q|^{1+\frac{2\alpha}{n}}}\iint\limits_{R_Q\cap\recs} |\nabla \Gamma_1(y,s,w,r)|^2 dwdr\\[4pt]
\lesssim  \sup\limits_{Q:\,l(Q)>c\eta} \frac{\min\{\epsilon, l(Q)\}}{|Q|^{1+\frac{2\alpha}{n}}}\,\frac{1}{\eta^{\,2}}\,\iint\limits_{U_{c\eta}(R_Q\cap\recs)} | \Gamma_1(y,s,w,r)|^2 dwdr
\\[4pt]
\lesssim  \sup\limits_{Q:\,l(Q)>c\eta} \frac{\min\{\epsilon, l(Q)\}}{|Q|^{1+\frac{2\alpha}{n}}}\,\frac{1}{\eta^{\,2}}\, \eta^{\,-2(n-1)}\, \eta\, l(Q)^n \lesssim  \sup\limits_{Q:\,l(Q)>c\eta} \frac{\min\{\epsilon, l(Q)\}}{l(Q)^{2\alpha}}\,\frac{1}{\eta^{\,2n-1}}\lesssim \epsilon\, \eta^{\,-2\alpha-2n+1}.
\end{multline*}

Therefore, by \eqref{eq6.10}, for every $n/(n+\alpha_0)<p\leq 1$ and $x\in Q_{R/2}$
 
\begin{equation}\label{eq6.15}
\N^{\eta.R} (\nabla L_1^{-1}\Phi-\nabla \Le^{-1}\Phi)(x) \lesssim C_\Phi \left(\epsilon^{n-1+\alpha}+\epsilon^{\alpha_0-\alpha} \,(\eta)^{-n+1-\alpha_0}+ \epsilon^{1/2} \eta^{\,-\alpha-n+1/2}\right),\end{equation}
where $\eta$ and $R$ such that  $\epsilon \ll \eta<R/2\ll 1/\epsilon$.

Having this at hand, the argument analogous to \eqref{eq6.11}-- \eqref{eq6.14} shows that 
\begin{equation}\label{eq6.16}
\| \N(\nabla L_1^{-1}\Phi )\|_{L^p} \lesssim C_\Phi,
\end{equation}

\noindent for $n/(n+\alpha_0)<p\leq 1$. By interpolation with \eqref{eq6.14}, we deduce \eqref{eq6.16} for all $n/(n+\alpha_0)<p\leq 2+\eps$. This finishes the proof of Lemma~\ref{l6.0.1}.\ep

\section{Existence and Invertibility of Layer Potentials on the boundary}\label{s7}

\subsection{Normal and tangential traces on the boundary}\label{s2.4}

Given a divergence form operator $L=-\dv (A \nabla)$ defined in
$\mathbb{R}^{n+1}_+$, we shall say that a solution $u$ of the equation $Lu=0$ has a variational co-normal
in the sense of tempered distributions if there is an $f\in {\bf S}'(\rn)$
such that for every $\Phi \in {\bf S}(\mathbb{R}^{n+1})$, we have
\begin{equation}\label{eq7.0}\iint_{\mathbb{R}^{n+1}_+}A\nabla u \cdot \overline{\nabla \Phi} =\langle f,\varphi \rangle\,,
\end{equation}
where $\varphi:=\Phi(\cdot,0)$ (observe that $\varphi\in {\bf S}(\rn)$.)  We then set $\partial_{\nu_A}u:= f$. 

To formulate the following convergence results, recall that we use the usual conventions $H^p= L^p$ and $H^{1,p}=\dot L_1^p$ when $p>1$.

\begin{lemma}\label{l7.1} Let $L=-\dv(A\nabla)$ be an elliptic operator (not necessarily $t$-independent), and 
suppose that $u\in W^{1,2}_{loc}(\mathbb{R}^{n+1}_+)$ is a weak solution of $Lu=0$, which satisfies
$\N(\nabla u) \in L^p(\rn)$ with $n/(n+1)<p <2+\eps$.  Then
\begin{itemize}
\item[(i)]  there exists $g \in H^{1,p}(\rn)$ such that $u \to g$ n.t. a.e., with 
\begin{equation}\label{eq7.1.1}
|u(y,t)-g(x)| \lesssim t \N(\nabla u)(x), \quad {\mbox{for every $(y,t) \in \Gamma(x)$, $x\in \rn$,}}
\end{equation} 
and 
\begin{equation}\label{eq7.1.2}
\|g\|_{H^{1,p}}\lesssim \|\N(\nabla u)\|_{L^p}, \quad n/(n+1)<p <2+\eps;
\end{equation}
\item[(ii)] for the limiting function $g$ from {\rm{(i)}}, one has 
\begin{equation}\label{eq7.1.3}\sup_{t>0} \left\|\fint_{t/2}^{2t}\nabla_{\|}u(\cdot,\tau) \,d\tau\right\|_{L^p(\rn)}\lesssim \|\N(\nabla u)\|_{L^p(\rn)},
\end{equation}
and 
\begin{equation}\label{eq7.1.4}\fint_{t/2}^{2t}\nabla_{\|}u(\cdot,\tau) \,d\tau \to \nabla_{\|}g\quad \mbox{as $t \to 0$,}
\end{equation}
weakly in $L^p$ when $p>1$. 
\end{itemize}
Furthermore, 
\begin{itemize}
\item[(iii)]  there exists $f \in H^p(\rn)$ such that $f=\partial_{\nu_A}u$ in the variational sense, i.e., $$\iint\limits_{\reu} A(X) \nabla u(X) \overline{\nabla \Phi(X)}\,dX= \int\limits_{\rn} f(x) \overline{\varphi(x)}\,dx,$$ for $\Phi \in {\bf S}(\mathbb{R}^{n+1})$ and $\varphi:= \Phi\!\mid_{t=0}$, and 
\begin{equation}\label{eq7.2}
\|\partial_{\nu_A} u\|_{H^p(\rn)}\leq C\, \|\N(\nabla u)\|_{L^p(\rn)}, \quad n/(n+1)<p <2+\eps;
\end{equation}
\item[(iv)] for the limiting function $f$ from {\rm{(iii)}}, one has $\partial_{\nu_A} u(\cdot,t) \to f$,  as $t \to 0$, in the sense of tempered distributions. \end{itemize}
\end{lemma}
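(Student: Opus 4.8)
\textbf{Plan of proof for Lemma~\ref{l7.1}.}

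The first task is to establish (i), from which (ii) follows by a Poincar\'e/mollification argument and (iii)--(iv) by duality. The starting point is a standard interior estimate: since $\N(\nabla u)\in L^p(\rn)$ with $p>n/(n+1)$, Lemma~\ref{l2.31} applied to $w=\nabla u$ gives $\nabla u\in L^{p(n+1)/n}_{loc}$ up to the boundary (indeed in $L^{p(n+1)/n}(\reu)$ after the Moser improvement noted there), so the integrals $\int_{t_1}^{t_2}\partial_\tau u(x,\tau)\,d\tau$ make sense along a.e. vertical line. The plan is to show that for a.e.\ $x$ the limit $g(x):=\lim_{t\to 0}\fiint_{W(x,t)}u$ exists; the control \eqref{eq7.1.1} comes from writing, for $(y,t)\in\Gamma(x)$,
\[
\Big|u(y,t)-\fiint_{W(x,t)}u\Big|\lesssim t\,\fiint_{\widetilde W(x,t)}|\nabla u|\lesssim t\,\N(\nabla u)(x),
\]
via Poincar\'e on a chain of Whitney boxes connecting $(y,t)$ to $W(x,t)$, and then telescoping over dyadic scales $t=2^{-k}$ shows the averages form a Cauchy sequence with the same bound, yielding n.t.\ convergence to a function $g$ satisfying \eqref{eq7.1.1}. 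The bound \eqref{eq7.1.2} on $\|g\|_{H^{1,p}}$ is obtained through the characterization of $H^{1,p}$ in Lemma~\ref{l2.8}: for $x,x'$ outside a null set one estimates $|g(x)-g(x')|$ by comparing $u$ at a common interior point at height $t\approx|x-x'|$ and using \eqref{eq7.1.1} at both $x$ and $x'$, producing an inequality of the form $|g(x)-g(x')|\le|x-x'|(\widetilde g(x)+\widetilde g(x'))$ with $\widetilde g\lesssim \N(\nabla u)\in L^p$.

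For (ii), the uniform bound \eqref{eq7.1.3} is immediate from Jensen/Fubini since $\fint_{t/2}^{2t}|\nabla_\| u(\cdot,\tau)|\,d\tau\le C\,\N(\nabla u)$ pointwise; and when $p>1$ the weak $L^p$ convergence \eqref{eq7.1.4} follows by testing against $C_0^\infty$ functions: one integrates by parts in $x$, so that $\int \varphi\cdot\fint_{t/2}^{2t}\nabla_\| u\,d\tau = -\int (\dv_\|\varphi)\fint_{t/2}^{2t} u\,d\tau\to -\int(\dv_\|\varphi)\,g=\int\varphi\cdot\nabla_\| g$, using the already-established $L^1_{loc}$ (indeed $L^{p}_{loc}$) convergence $\fint_{t/2}^{2t}u(\cdot,\tau)\,d\tau\to g$ from part (i).

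For (iii) and (iv), I would proceed by duality, following the template of \cite{AAAHK}. Given $\Phi\in\mathbf S(\ree)$ with $\varphi=\Phi(\cdot,0)$, consider $\iint_{\reu}A\nabla u\cdot\overline{\nabla\Phi_t}$, where $\Phi_t$ is a vertical translate or truncation; the weak formulation of $Lu=0$ allows one to move derivatives onto $u$, and a Cauchy--Schwarz estimate splitting the region into Whitney boxes gives $|\iint A\nabla u\cdot\overline{\nabla\Phi}|\lesssim \int \N(\nabla u)\,(\text{something controlled by }\|\varphi\|_{H^{p'}\text{-type dual norm}})$; more precisely one shows the functional $\varphi\mapsto\iint_{\reu}A\nabla u\cdot\overline{\nabla\Phi}$ extends to a bounded functional on $H^{p'}$ (or, for $p\le1$, tests against the smooth atoms/molecules and uses Lemma~\ref{l2.6}(ii)), hence is represented by some $f\in H^p$ with $\|f\|_{H^p}\lesssim\|\N(\nabla u)\|_{L^p}$. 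The distributional convergence $\partial_{\nu_A}u(\cdot,t)\to f$ in (iv) then comes from testing the truncated identities against a fixed Schwartz function and passing to the limit using the $L^{p(n+1)/n}(\reu)$ integrability of $\nabla u$ to justify dominated convergence.

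\textbf{Main obstacle.} The delicate point is the $H^p$ (rather than merely $L^p$ or distributional) control in \eqref{eq7.1.2} and \eqref{eq7.2} when $p\le1$: one must verify genuine Hardy-space membership, which requires producing atomic/molecular decompositions or verifying the right cancellation, and the cancellation $\int f=0$ (or the molecular decay of $\nabla_\| g$) has to be extracted from the equation $Lu=0$ together with the decay of $\N(\nabla u)$ at a specific rate — this is where the restriction $p>n/(n+1)$ and the interplay with Lemma~\ref{l2.8} and Lemma~\ref{l2.31} is essential, and where most of the real work lies.
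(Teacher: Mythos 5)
The treatment of (i), (ii), and (iv) in your plan is essentially what the paper does: for (i) the paper invokes the Kenig--Pipher argument \cite[pp.\,461--462]{KP} for the existence of $g$ and \eqref{eq7.1.1}, and for $p\le1$ passes through Lemma~\ref{l2.8} exactly as you describe; for (ii) the weak convergence is obtained via the same integration by parts; for (iv) the argument is dominated convergence using the $L^{p(n+1)/n}(\reu)$ bound on $\nabla u$ furnished by Lemma~\ref{l2.31}.

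For (iii), however, your plan has a genuine gap. You propose to show that the functional $\varphi\mapsto\iint_{\reu}A\nabla u\cdot\overline{\nabla\Phi}$ is bounded on ``$H^{p'}$'' (or, for $p\le1$, to test it against atoms via Lemma~\ref{l2.6}(ii)). But for $p\le1$ the dual of $H^p$ is $\Lambda_\alpha$, not $H^{p'}$, and boundedness of the functional on $\Lambda_\alpha$ (the dual of $H^p$) does not place $f$ in $H^p$ --- only in the bidual --- while Lemma~\ref{l2.6}(ii) is a tool for proving boundedness of an operator \emph{out of} $H^p$, not for establishing that a given distribution \emph{belongs} to $H^p$. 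You correctly flag exactly this issue as the ``main obstacle,'' but you do not supply the mechanism that resolves it. The paper resolves it by a different and cleaner route, which avoids duality and atomic decompositions entirely: it uses the grand maximal function characterization of $H^p$ and establishes the pointwise bound
\begin{equation*}
M_\varphi(\partial_{\nu_A}u)(x)\;\lesssim\;\Bigl(M\bigl(\N(\nabla u)^{n/(n+1)}\bigr)(x)\Bigr)^{(n+1)/n},
\end{equation*}
where $M$ is the Hardy--Littlewood maximal operator. This is obtained by testing the variational conormal against $\varphi_t(x-\cdot)\eta_B$ with $\eta_B$ a cutoff on the Whitney ball $B(x,t)$, bounding the resulting $L^1(2B)$ norm of $\nabla u$ by Lemma~\ref{l2.31} applied with exponent $n/(n+1)$. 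Raising to the $p$th power and integrating, the boundedness of $M$ on $L^{p(n+1)/n}$ --- valid precisely because $p>n/(n+1)$ --- gives \eqref{eq7.2} uniformly over the whole range of $p$, with no separate treatment of cancellation needed (it is built into the maximal-function characterization). This pointwise maximal-function estimate, driven by the interplay between Lemma~\ref{l2.31} and the lower bound $p>n/(n+1)$, is the key idea you would need to add to close the argument for (iii).
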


We remark that the proof of  (iii), (iv) below is presented primarily to treat the case $p\leq 1$.  For the case $p> 1$ one can directly adopt a construction in \cite{AAAHK} (or make obvious modifications in the argument below). 

A statement analogous to Lemma~\ref{l7.1} holds in the lower half space. 

\begin{proof}

\noindent {\bf Step I}. The proof of (i). 

The existence of a limiting function $g$ satisfying $|u(y,t)-g(x)| \lesssim t \N(\nabla u)(x)$, for every $(y,t) \in \Gamma(x)$, $x\in \rn$, follows verbatim an analogous argument in  \cite[pp. 461--462]{KP}. Moreover, in \cite{KP} the authors show that for any $x\in\rn$,
$$|g(x)-g(y)|\leq C t \N(\nabla u)(x), \quad\mbox{for every}\quad y\in \Delta(x,t).$$
Just as in \cite{KP}, this yields $g\in \dot L^p_1(\rn)$ whenever $\N(\nabla u)\in L^p$, $p>1$, with the desired estimate on $\|g\|_{\dot L^p_1}$.
Moreover, the inequality above immediately implies that for any $x,y\in\rn$,
$$|g(x)-g(y)|\leq C |x-y|\,(\N(\nabla u)(x)+\N(\nabla u)(y)).$$
Hence, by Lemma~\ref{l2.8}, $g \in H^{1,p}(\rn)$, when $\N(\nabla u)\in L^p$, $\left(\frac{n}{n+1},1\right]$, with the desired estimate on $\|g\|_{H^{1,p}}$.

\vskip 0.08 in

\noindent \noindent {\bf Step II}. The proof of (ii).  Let $p>1$.

The fact that \eqref{eq7.1.3} is valid follows directly from the definition of the non-tangential maximal function. Turning to \eqref{eq7.1.4}, let us denote 
$$u_{{\rm{Ave}}}(x,t):=\fint_{t/2}^{2t} u(\cdot,\tau) \,d\tau,$$
and take $\vec{\psi} \in \SL(\rn, \mathbb{C}^n)$. Then (with the integrals interpreted as ${\bf S}', {\bf S}$ pairings)
\begin{multline}\label{eq7.3}\left| \int_{\rn} \overline{(\nabla_{\|}u_{{\rm{Ave}}}(x,t)-\nabla_{\|}g(x))} \vec{\psi}(x)\,dx\right|=\left| \int_{\rn}  \overline{(u_{{\rm{Ave}}}(x,t)-g(x))} \dv_{\|}\vec{\psi}(x)\,dx\right|\\[4pt]
\lesssim t \left( \int_{\rn} |\N(\nabla u)(x)|^{p} dx\right)^{1/p}\left( \int_{\rn} |\dv_{\|}\vec{\psi}(x)|^{p'}dx\right)^{1/p'}.\end{multline}
This directly implies 
\begin{equation}\label{eq7.4}\int_{\rn}  \overline{(\nabla_{\|}u_{{\rm{Ave}}}(x,t)-\nabla_{\|}g(x))}\vec{\psi} (x)\,dx \xrightarrow{t \to 0} 0,\quad \mbox {for any}\quad \vec{\psi} \in \SL(\rn, \mathbb{C}^n),
\end{equation}
which justifies convergence weakly in $L^p.$

\vskip 0.08 in

\noindent \noindent {\bf Step III}. The proof of (iii).   

By hypothesis and Lemma~\ref{l2.31}, $\nabla u \in L^r(\mathbb{R}^{n+1}_+)$, with $r:= p(n+1)/n >1.$
We may then define a linear functional $\Lambda=\Lambda_u \in {\bf S}'(\mathbb{R}^{n+1})$ by
\begin{equation}\label{eq7.5}\langle\Lambda,\Phi\rangle:= \iint_{\!\mathbb{R}^{n+1}_+} A\nabla u\overline{\nabla\Phi}\,,\qquad \Phi \in {\bf S}(\mathbb{R}^{n+1})\,.\end{equation}
Given $\varphi\in{\bf S}(\rn)$, we say that $\Phi\in {\bf S}(\mathbb{R}^{n+1})$ is an extension of $\varphi$ if $\Phi(\cdot,0)=\varphi$.
We now define a linear functional $f \in {\bf S}'(\rn)$ by
$$\langle f,\varphi\rangle:= \langle\Lambda,\Phi\rangle\,,$$
where $\Phi$ is any extension of $\varphi$.  Since this extension need not be unique, we must verify that
$f$ is well defined.  To this end, fix $\varphi\in{\bf S}(\rn)$,
and let $\Phi_1,\Phi_2\in{\bf S}(\mathbb{R}^{n+1})$ be any two extensions
of $\varphi$.  Then $\Psi:= \Phi_1-\Phi_2\in {\bf S}(\mathbb{R}^{n+1}),$ with $\Psi(\cdot,0)\equiv 0$. Then
$ \langle\Lambda,\Psi\rangle = 0$, by definition of a weak solution.  Thus, the linear functional
$f$ is well defined, and therefore $u$ has a variational co-normal in ${\bf S}'$.

Setting $f=:\partial_{\nu_A} u$, we now proceed to prove \eqref{eq7.2}.  We fix $\varphi \in \C_0^\infty(\rn)$, with $0\leq\varphi$, $\int\varphi = 1$, and ${\rm supp}(\varphi)\subset \Delta(0,1):=\{x\in\rn:|x|<1\}$.
Denote
\begin{equation}M_\varphi f:=\sup\limits_{t>0} |(f\ast \varphi_t)|,\label{eq7.6}\end{equation}
where $\varphi_t(x):=t^{-n}\varphi(x/t)$. Then
$$\|f\|_{H^p(\rn)}\leq C\, \|M_\varphi f\|_{L^p(\rn)},$$
as usually, with $H^p\equiv L^p$ when $p>1$.
Hence, it suffices to show that 
\begin{equation}\label{eq7.7}
\|M_\varphi(\partial_{\nu_A} u)\|_{L^p(\rn)}  \leq C \|\N(\nabla u)\|_{L^p(\rn)} \,,
\end{equation}
\noindent for $\partial_{\nu_A} u$ defined as above. 
We claim that
\begin{equation}\label{eq7.8}
M_\varphi(\partial_{\nu_A} u)(x)\leq C \left(M\left(\N(\nabla u)\right)^{n/(n+1)}\right)^{(n+1)/n}(x)\,,
\end{equation}
for every $x\in\rn$, where $M$ denotes the usual Hardy-Littlewood maximal operator.
Taking the claim for granted momentarily, we see that
\begin{equation}\label{eq7.8.1}
\int_{\rn}M_\varphi(\partial_{\nu_A} u)^p\,dx \lesssim\int_{\rn}\left(M\left(\N(\nabla u)\right)^{n/(n+1)}\right)^{p(n+1)/n}\,dx
\lesssim \int_{\rn}\N(\nabla u)^{p}\,dx\,,
\end{equation}
as desired, since $p(n+1)/n>1$.

It therefore remains only to establish \eqref{eq7.8}.  To this end, we fix $x\in \rn$ and $t>0$, 
set $B:= B(x,t):=\{Y\in \mathbb{R}^{n+1}: |Y-x|<t\}$, and define a smooth cut-off
$\eta_{B}\in  C_0^\infty(2B)$, with
$\eta_{B}\equiv 1$ on $B$, $0\leq\eta_{B}\leq 1,$
and $|\nabla \eta_{B}|\lesssim 1/t$.   Then
$$\Phi_{x,t}(y,s):= \eta_B(y,s)\, \varphi_t(x-y)$$
is an extension of $\varphi_t(x-\cdot)$, with $\Phi_{x,t}\in C_0^\infty(2B),$ which satisfies
$$0\leq \Phi_{x,t} \lesssim t^{-n}\,,\qquad|\nabla_Y \Phi_{x,t}(Y)|\lesssim t^{-n-1}\,.$$
We then have
\begin{multline}\label{eq7.9}
|\left(\varphi_t*\partial_{\nu_A} u\right)(x)|=|\langle \partial_{\nu_A} u,\varphi_t(x-\cdot)\rangle| =\left|
 \iint_{\!\mathbb{R}^{n+1}_+} A\nabla u\overline{\nabla\Phi_{x,t}}\,dY\right|\\[4pt]
 \lesssim \,t^{-n-1}\iint_{\!\mathbb{R}^{n+1}_+\cap 2B} |\nabla u|\,dY\,
 \lesssim \,t^{-n-1}\left(\int_{\Delta(x,Ct)}\left(\N(|\nabla u| 1_{2B})(y)\right)^{n/(n+1)}dy\right)^{(n+1)/n}\,,
\end{multline}
where in the last step we have used \eqref{eq2.32} with $p = n/(n+1)$ and $C$ is chosen sufficiently large, so that we have
\begin{equation}\label{eq7.9.1}\N(|\nabla u| 1_{2B}) \leq 1_{\Delta(x,Ct)}  \N(|\nabla u| 1_{2B})\,,
\end{equation}
with $\Delta(x,Ct):=\{y\in\rn:|x-y|<Ct\}.$  Hence, 
\begin{equation}\label{eq7.9.2}|\left(\varphi_t*\partial_{\nu_A} u\right)(x)|\lesssim 
\left(t^{-n}\int_{|x-y|<Ct}\left(\N(\nabla u)(y)\right)^{n/(n+1)}dy\right)^{(n+1)/n}\,,
\end{equation}
and taking the supremum over $t>0$, we obtain \eqref{eq7.8}.

\vskip 0.08 in

\noindent \noindent {\bf Step IV}. The proof of (iv).   

By the same method as above (see formula \eqref{eq7.5} and related discussion), the variational normal derivative of $u$, $\partial_{\nu_A} u(\cdot, t)$, is well-defined for any $t\geq 0$ in the sense of 
\begin{equation}\label{eq7.5.1}\langle\partial_{\nu_A} u(\cdot, t),\phi\rangle:= \int_{\rn}\int_t^\infty  A\nabla u\overline{\nabla\Phi}\,,\qquad \Phi \in {\bf S}(\mathbb{R}^{n+1})\,,\end{equation}
where $\Phi\in {\bf S}(\mathbb{R}^{n+1})$ is any extension of $\varphi=\Phi(\cdot,t)\in {\bf S}(\rn)$.
The result does not depend on a particular choice of extension.
Thus, it is enough to prove that for any $\varphi \in {\bf S}(\rn)$ and $\Phi \in {\bf S}(\mathbb{R}^{n+1})$ such that $\varphi=\Phi(\cdot,0)$
$$\iint_{\reu} A(x,t+s) \nabla u(x,t+s) \overline{\nabla \Phi(x,s)} dx  \xrightarrow{t \to 0} \iint_{\reu} A(x,s) \nabla u(x,s) \overline{\nabla \Phi(x,s)} dxds. $$
However,
\begin{multline}
\iint_{\reu} A(x,t+s) \nabla u(x,t+s) \overline{\nabla \Phi(x,s)}\, dxds\\[4pt]
=\int_{\rn}\int_t^\infty A(x,s) \nabla u(x,s) \overline{\nabla \Phi(x,s)}\, dxds\\[4pt]
+
\int_{\rn}\int_t^\infty A(x,s) \nabla u(x,s) \overline{\nabla (\Phi(x,s-t)-\Phi(x,s))} \,dxds=: I_t+II_t.
\end{multline}
By Lemma~\ref{l2.31} we have $\nabla u \in L^{p\frac{(n+1)}{n}}(\reu)$ and hence, by dominated convergence, $II_t$ converges to 0 and $I_t$ converges to 
$$\iint_{\reu} A(x,s) \nabla u(x,s) \overline{\nabla \Phi(x,s)} dxds,$$
as desired.
\end{proof}

\begin{remark}\label{r7.15.1} A careful look at the proof reveals that the property $Lu=0$ has not been used in Steps I, II of the argument. Hence, the statements (i)--(ii) apply to any $u\in W^{1,2}_{loc}(\reu)$ with $\N (\nabla u)\in L^p$, $n/(n+1)<p<2+\eps.$
\end{remark}

\subsection{Layer potentials at the boundary}\label{s7.2}

\begin{proposition}\label{p7.15} Retain the assumptions of Theorem~\ref{t1.13}. Let $p_0<p<2+\eps$ and $0\leq \alpha<\alpha_0$. There exist bounded operators $\widetilde{\K}^{L_j}:H^p \to H^p$, $\K^{L_j}:\la \to \la$ (resp., $\K^{L_j}:L^{p'} \to L^{p'}$ when $p>1$),  and $\SL^{L_j}_t\!\mid_{t=0}:H^p \to H^{1,p}$,  $j=0,1$, such that for every $f \in H^p$
\begin{itemize}
\item[(i)] $\partial^\pm_{\nu_{A_j}} \SL^{L_j} f = (\pm\frac{1}{2} I+\widetilde{\K}^{L_j})f$ for every $f\in H^p$ and 
\begin{equation}\label{eq5.15.0.0}\partial^\pm_{\nu_{A_j,t}}\SL^{L_j}f \xrightarrow{t \to 0^\pm}(\pm \frac{1}{2} I+\widetilde{\K}^{L_j})f,\end{equation}
where $\partial^\pm_{\nu^j}$ and $\partial^\pm_{\nu_{A_j,t}}$  denote the co-normal derivatives on $\RR^n\times \{s=0\}$ and $\RR^n \times \{s=t\}$, respectively, both interpreted in the variational sense.
The convergence is in the sense of tempered distributions.

\item[(ii)] \begin{equation}\label{eq5.15.0.1}\SL^{L_j}_t f \xrightarrow{t \to 0^\pm} \SL^{L_j}_t\!\mid_{t=0} f \quad\mbox{n.t. a.e. }\end{equation} In addition,  
\begin{equation}\label{eq5.15.0.2}\frac 1t\int_{t/2}^{2t}\nabla_{\|} \SL^{L_j}_\tau f\,d\tau \xrightarrow{t \to 0^{\pm}} \nabla_{\|}\SL^{L_j}_t\!\mid_{t=0} f\end{equation}
in the sense of tempered distributions. 
When $p>1$, we have 
\begin{equation}\label{eq5.15.0.4}\sup_{t\neq 0}\left\|\frac 1t\int_{t/2}^{2t}\nabla_{\|} \SL^{L_j}_\tau f\,d\tau\right\|_{L^p(\rn)}\lesssim \|f\|_{L^p(\rn)},\end{equation}
and the convergence in \eqref{eq5.15.0.2} holds weakly in $L^p$. 
\item[(iii)] \begin{equation}\label{eq5.15.0.3}\D^{L^*_j}_t g \xrightarrow{t \to 0^\pm} (\mp\frac{1}{2}I+ \K^{L^*_j})g\end{equation} for every $g\in C_0^\infty(\rn)$, in the sense of tempered distributions. When $p>1$ we have
\begin{equation}\label{eq5.15.0.5}\sup_{t\neq 0}\|\D^{L^*_j}_t g\|_{L^{p'}(\rn)}\lesssim \|f\|_{L^{p'}(\rn)},\end{equation}
and the convergence in \eqref{eq5.15.0.3} is in $L^{p'}$ on compacta in $\rn$. When $0\leq \alpha<\alpha_0$, 
\begin{equation}\label{eq5.15.0.6}\sup_{t\neq 0}\|\D^{L^*_j}_t g\|_{\la(\rn)}\lesssim \|f\|_{\la(\rn)},\end{equation}
and the convergence in \eqref{eq5.15.0.3} holds in the weak* topology of  $\la$.
\end{itemize}
Moreover, if $\eps_0$ is sufficiently small, then invertibility of  $\mp\frac 12 I + \K^{L_j^*}$, $\pm\frac 12 I+\widetilde{\K}^{L_j}$, or $\SL^{L_j}_t\!\mid_{t=0}$ in a given function space for $j=0$ implies their invertibility, in the same function space, for $j=1$. The precise statement is as follows. 

\begin{itemize}
\item[(iv)]  If there exists $p_*\in (p_0, 2+\eps)$ such that $\SL^{L_0}_t\!\mid_{t=0}: H^{p_*}(\rn)\to H^{1,p_*}(\rn)$ is invertible and $\eps_0$ is sufficiently small, depending on the standard constants and on the norm of the inverse of $\SL^{L_0}_t\!\mid_{t=0}$, then $\SL^{L_1}_t\!\mid_{t=0}: H^{p_*}(\rn)\to H^{1,p_*}(\rn)$ is invertible as well. 

\item[(v)] If there exists $p_*\in (1, 2+\eps)$ such that $\pm\frac 12 I+\widetilde{\K}^{L_0}: L^{p_*}\to L^{p_*}$ is invertible (equivalently, $\mp\frac 12 I + \K^{L_0^*}:L^{p'_*}\to L^{p'_*}$, is invertible) and $\eps_0$ is sufficiently small, depending on the standard constants and on the norms of the inverses, then $\pm\frac 12 I+\widetilde{\K}^{L_1}:L^{p_*}\to L^{p_*}$ and $\mp\frac 12 I + \K^{L_1^*}: L^{p'_*}\to L^{p'_*}$ are invertible as well. 

\item[(vi)] If there exists $p_*\in (p_0, 1]$ such that $\pm\frac 12 I+\widetilde{\K}^{L_0}:H^{p_*}\to H^{p_*}$ is invertible (and hence, $\mp\frac 12 I + \K^{L_0^*}:\Lambda_{\alpha_*}\to\Lambda_{\alpha_*}$, $\alpha^*=n(1/p_*-1)$, is invertible) 
and $\eps_0$ is sufficiently small, depending on the standard constants and on the norms of the inverses, then $\pm\frac 12 I+\widetilde{\K}^{L_1}:H^{p_*}\to H^{p_*}$ and $\mp\frac 12 I + \K^{L_1^*}:\Lambda_{\alpha_*}\to\Lambda_{\alpha_*}$ are invertible as well. 
\end{itemize}
\end{proposition}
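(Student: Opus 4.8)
The plan is to prove Proposition~\ref{p7.15} in two stages: first establish existence of the boundary operators $\widetilde\K^{L_j}$, $\K^{L_j}$, $\SL^{L_j}_t\!\mid_{t=0}$ together with the convergence statements (i)--(iii), and only then carry out the perturbative invertibility arguments (iv)--(vi). For the $t$-independent operator $L_0$ everything in (i)--(iii) is already available from Lemma~\ref{l2.59} and \cite{HMiMo}, so the real content is transferring it to $L_1$. First I would establish the key representation formula $\nabla\SL^{L_1}_sf-\nabla\SL^{L_0}_sf=\nabla L_1^{-1}\dv(A_1-A_0)\nabla\SL^{L_0}f$, valid a.e.\ in $\reu$ for $f\in C_0^\infty(\rn)$, which was already derived in Step~I of the proof of Theorem~\ref{t1.13} (equation \eqref{eq6.29.7}); combined with Lemma~\ref{l6.26} and the known boundedness of $\nabla\SL^{L_0}$ on $\widetilde T^p_\infty$, and Remark~\ref{r7.15.1} applied to the difference $\SL^{L_1}f-\SL^{L_0}f$ (which lies in $W^{1,2}_{loc}$ with $\N(\nabla\cdot)\in L^p$ but need not be a null solution of either operator), Lemma~\ref{l7.1}(i),(ii) produces the non-tangential limit $\SL^{L_1}_t\!\mid_{t=0}f$ and the tangential trace $\nabla_\|\SL^{L_1}_t\!\mid_{t=0}f$, while Lemma~\ref{l7.1}(iii),(iv) produces the variational conormal $\widetilde\K^{L_1}$ and its distributional convergence. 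The double layer operator $\K^{L_1^*}$ and \eqref{eq5.15.0.3}--\eqref{eq5.15.0.6} are then obtained by the duality identity \eqref{eq6.21.2}, $\iint_{\reu}\D^{L^*_1}_tf\,\overline{\Phi}\,\frac{dxdt}{t}=\int_{\rn}f\,\overline{\vec N\cdot A^1(\cdot,0)(\nabla L_1^{-1}\tfrac1t\Phi)(\cdot,0)}$, together with Lemma~\ref{l6.29.1}, Corollary~\ref{c4.22}, and the boundedness estimates \eqref{eq1.15}, \eqref{eq1.15.1} already proved in Steps III of Theorem~\ref{t1.13}; the jump relation $\mp\frac12 I$ is inherited from $L_0$ because the difference $\D^{L^*_1}-\D^{L^*_0}$ is continuous up to the boundary (its gradient has the requisite $\widetilde T^p_\infty$ bound, so it has a genuine non-tangential limit with no jump).

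For the jump formulas themselves — i.e.\ that the limits are $\pm\frac12 I+\widetilde\K^{L_1}$ and $\mp\frac12 I+\K^{L_1^*}$ rather than merely some bounded operator — the cleanest route is to write $\widetilde\K^{L_1}=\widetilde\K^{L_0}+(\widetilde\K^{L_1}-\widetilde\K^{L_0})$ and observe, from the representation formula above, that $\partial_{\nu_{A_1}}^\pm(\SL^{L_1}f-\SL^{L_0}f)=\partial_{\nu_{A_1}}^\pm(\nabla L_1^{-1}\dv(A_1-A_0)\nabla\SL^{L_0}f)$ is the \emph{same} limit from both sides (no jump), since $\nabla L_1^{-1}\dv$ applied to an $L^2_{loc}$ tent-space datum supported in $\reu$ extends by zero across the boundary in the appropriate sense; this requires a small computation with the variational definition \eqref{eq7.0} and the fact that $(A_1-A_0)\nabla\SL^{L_0}f\in T^p_2$, so that the off-diagonal decay from Lemma~\ref{l6.26} / Proposition~\ref{p4.3} controls everything. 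The jump of $\widetilde\K^{L_1}$ thus equals the jump of $\widetilde\K^{L_0}$, which is $\pm\frac12 I$ by Lemma~\ref{l2.59}. The same reasoning, via \eqref{eq6.21.2}, handles $\K^{L_1^*}$.

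For the invertibility claims (iv)--(vi), the strategy is a standard Neumann-series / small-perturbation argument once the difference of boundary operators is shown to be small in operator norm. Concretely, for (iv) one writes $\SL^{L_1}_t\!\mid_{t=0}=\SL^{L_0}_t\!\mid_{t=0}+(\SL^{L_1}_t\!\mid_{t=0}-\SL^{L_0}_t\!\mid_{t=0})$ and must show $\|\SL^{L_1}_t\!\mid_{t=0}-\SL^{L_0}_t\!\mid_{t=0}\|_{H^{p_*}\to H^{1,p_*}}\lesssim\eps_0$. Taking tangential gradients, this is $\|\nabla_\|(\nabla L_1^{-1}\dv(A_1-A_0)\nabla\SL^{L_0}f)(\cdot,0)\|_{H^{p_*}}$; by \eqref{eq4.26.1} of Lemma~\ref{l6.26} (extended to the relevant range) this is $\lesssim\|(A_1-A_0)\nabla\SL^{L_0}f\|_{T^{p_*}_2}\leq\eps_0\|\nabla\SL^{L_0}f\|_{\widetilde T^{p_*}_\infty}\lesssim\eps_0\|f\|_{H^{p_*}}$ using Lemma~\ref{l2.20} for the factorization $\mathbf T^\infty_2\cdot\widetilde T^{p_*}_\infty\hookrightarrow T^{p_*}_2$ and \eqref{eq1.20} for $L_0$. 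Choosing $\eps_0$ small relative to $\|(\SL^{L_0}_t\!\mid_{t=0})^{-1}\|$ makes $\SL^{L_1}_t\!\mid_{t=0}$ invertible by the Neumann series. For (v) and (vi) one does the same with $\widetilde\K^{L_1}-\widetilde\K^{L_0}=\partial_{\nu_{A_1}}(\nabla L_1^{-1}\dv(A_1-A_0)\nabla\SL^{L_0})(\cdot,0)$, controlled in $L^{p_*}$ (resp.\ $H^{p_*}$) norm by $\eps_0$ via \eqref{eq4.27} / \eqref{eq4.27.1} and the factorization lemma; invertibility of $\mp\frac12 I+\K^{L_1^*}$ on $L^{p'_*}$ (resp.\ $\Lambda_{\alpha_*}$) then follows by duality, since $\mp\frac12 I+\K^{L^*}$ is the Banach-space adjoint of $\pm\frac12 I+\widetilde\K^{L}$ under the $H^p$--$\Lambda_\alpha$ (or $L^p$--$L^{p'}$) pairing.

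The main obstacle I anticipate is not the perturbation estimates — those are routine given Sections~\ref{s3}--\ref{s6} — but the careful justification of the jump relations for $L_1$: one must argue that the \emph{difference} operators $\SL^{L_1}-\SL^{L_0}$, $\D^{L_1^*}-\D^{L_0^*}$ extend continuously (with no jump) across $\rn$, which hinges on showing that $\nabla L_1^{-1}\dv\Psi$ and $\nabla L_1^{-1}\tfrac1t\Psi$, for $\Psi$ compactly supported in $\reu$, have genuine (jump-free) non-tangential boundary traces, and that these traces agree with the variational conormal and tangential-trace constructions of Lemma~\ref{l7.1}. This requires matching up several a priori different notions of boundary value (n.t.\ limit, weak-$L^p$ average limit, distributional limit, variational conormal) and verifying they coincide — essentially an exercise in the density arguments and the representation formulas \eqref{eq6.29.4}--\eqref{eq6.29.6} and \eqref{eq6.21.2}, but one that must be done with some care to avoid circularity, since the operators $\widetilde\K^{L_1}$, $\K^{L_1^*}$ are being \emph{defined} through these limits in the first place.
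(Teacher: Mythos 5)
Your overall architecture matches the paper's: use the representation formula \eqref{eq6.29.7}, $\nabla\SL^{L_1}f-\nabla\SL^{L_0}f=\nabla L_1^{-1}\dv(A_1-A_0)\nabla\SL^{L_0}f$, to transfer existence and mapping properties of the boundary operators from $L_0$ to $L_1$ via Lemma~\ref{l7.1} and Lemma~\ref{l6.26}, and then run a method-of-continuity argument for (iv)--(vi). Three points deserve comment.

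First, your route to the jump relations is \emph{genuinely different} from the paper's Step V. The paper proves $\partial^+_{\nu_{A_1}}\SL^{L_1}f-\partial^-_{\nu_{A_1}}\SL^{L_1}f=f$ directly, by regularizing $f$ to $f_\eta(y,s)=f(y)\varphi_\eta(s)$, using that $\iint_{\ree}A_1\nabla L_1^{-1}(f_\eta)\cdot\nabla\Psi=\iint f_\eta\Psi$ (the global inverse has no boundary pole), and then carefully passing to the limit. You instead want to show the \emph{difference} $\SL^{L_1}f-\SL^{L_0}f$ has matching conormal derivatives from above and below, so the jump for $L_1$ is inherited from the known jump for $L_0$. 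This is an appealing and, I believe, valid idea: the clean way to realize it is to note that, with $v:=L_1^{-1}\dv\Psi$ and $\Psi:=(A_1-A_0)\nabla\SL^{L_0}f$, pairing the weak equation for the global inverse over $\ree$ against a test function $\Phi$ gives exactly $\iint_{\ree}A_1\nabla v\cdot\overline{\nabla\Phi}=-\iint_{\ree}\Psi\cdot\overline{\nabla\Phi}$, and then the quantity $\iint_{\reu}(A_1\nabla\SL^{L_1}f-A_0\nabla\SL^{L_0}f)\cdot\overline{\nabla\Phi}+\iint_{\rel}(\cdots)$ collapses to zero after one more algebraic step. But the sentence ``$\nabla L_1^{-1}\dv$ applied to a tent-space datum supported in $\reu$ extends by zero across the boundary in the appropriate sense'' is not a proof: $\Psi$ is \emph{not} compactly supported in $\reu$ (it lives up against the boundary), so the ``extension by zero'' heuristic is exactly the step that needs the weak-form identity above, plus an integrability check (via Lemma~\ref{l2.31}) to legitimize it. Moreover the difference $\SL^{L_1}f-\SL^{L_0}f$ is a null solution of neither $L_0$ nor $L_1$, so the variational conormal from definition \eqref{eq7.0} and Lemma~\ref{l7.1}(iii) does not apply to it directly; you must work with the two well-defined conormals $\partial_{\nu_{A_1}}\SL^{L_1}f$ and $\partial_{\nu_{A_0}}\SL^{L_0}f$ separately and subtract.

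Second, your representation $\widetilde\K^{L_1}-\widetilde\K^{L_0}=\partial_{\nu_{A_1}}\big(\nabla L_1^{-1}\dv(A_1-A_0)\nabla\SL^{L_0}\cdot\big)(\cdot,0)$ is missing a term. Since the conormals $\partial_{\nu_{A_1}}$ and $\partial_{\nu_{A_0}}$ are defined by integrating $A_j\nabla u\cdot\overline{\nabla\Phi}$ over the whole half-space, subtracting produces (as in the paper's \eqref{eq7.16.1}) the two pieces $\iint A_1(\nabla\SL^{L_1}f-\nabla\SL^{L_0}f)\cdot\overline{\nabla\Phi}$ \emph{and} $\iint(A_1-A_0)\nabla\SL^{L_0}f\cdot\overline{\nabla\Phi}$; the second is not of the form $\partial_\nu(\nabla L_1^{-1}\dv\,\cdot)$ and is controlled instead by the pointwise bound $\|A_1-A_0\|_\infty\lesssim\eps_0$ from \eqref{eq2.27} together with a maximal-function estimate. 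Not fatal to your plan, but the estimate you cite, \eqref{eq4.27}/\eqref{eq4.27.1}, only handles the first piece.

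Third, and most significantly: you invoke the identity $\K^{L_1^*}=\operatorname{adj}(\widetilde\K^{L_1})$ (under the $L^p$--$L^{p'}$ and $H^p$--$\la$ pairings) to deduce both the double-layer jump relation and the invertibility claims (v)--(vi), but you do not prove it. For $t$-independent operators this adjoint relation is immediate from translation invariance, but for $L_1$ it requires the paper's Step~IV, which is nontrivial: one introduces the auxiliary ``displaced-pole'' single layers $\SL^{L_1,\tau}$, establishes uniform bounds on $\partial_{\nu_{A_1}}\SL^{L_1,\tau}\varphi$ in $L^p$ (via the $\dot H^{1/2}$ trace theory for $\tr_\tau\circ L_1^{-1}$), and takes a double limit $\tau\to 0$, $r\to 0$ exchanging the pole parameter and the evaluation slice. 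Without something of this kind, your deduction of $\K^{L_1^*}$ from $\widetilde\K^{L_1}$ is circular, precisely the concern you flag at the end but do not resolve. Similarly, the one-sentence claim that $\D^{L_1^*}-\D^{L_0^*}$ ``is continuous up to the boundary'' because its gradient lies in $\widetilde T^p_\infty$ needs a representation of that difference as a globally defined $W^{1,2}_{loc}(\ree)$ function, which the duality \eqref{eq6.21.2} alone (being purely an upper-half-space statement) does not supply.
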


A couple of comments are in order here. First, the boundary trace of the double layer potential is an adjoint operator to the boundary trace of the normal derivative of the single layer. To be precise, it follows directly from the definitions that for a $t$-independent operator ${\mathcal D}_t^{L_0^*}=adj \,(\partial_{\nu_{A_0}} S^{L_0})(\cdot, -t)$, $adj$ denoting the Hermitian adjoint on $\rn$, and $\widetilde{\K}^{L_0}=adj\,(\K^{L_0^*})$. This explains the nature of the statements in (v), (vi). For non-$t$ independent operators the situation is more complicated. We will discuss it in the course of the proof.

Secondly, we mention that, by extrapolation, given our boundedness results, invertibility of $\pm\frac 12 I+\widetilde{\K}^{L_i}:H^{p_*}\to H^{p_*}$ for a given $p_*\in (p_0, 2+\eps)$, implies invertibility in $H^p$ for $p$ in a small neighborhood of $p_*$, and analogous statements hold for  $\mp\frac 12 I + \K^{L_j^*}$ and $\SL^{L_j}_t\!\mid_{t=0}$.

\begin{proof} 

\noindent {\bf Step I:} the normal derivative of the single layer potential at the boundary. 

The existence of the limit in (i) in the appropriate sense and mapping properties of the emerging boundary operators follow directly from Lemma~\ref{l7.1}. Respectively, we can define $\widetilde{\K}^{L_1}:H^p\to H^p$ such that $\partial^+_{\nu_{A_1}} \SL^{L_1} f = (\frac{1}{2} I+\widetilde{\K}^{L_1})f$ and the desired limiting properties will hold. The same can be done in the lower-half space. It only remains to justify that that the difference between the operators emerging in the upper and the lower-half space is indeed the identity. To this end, it is enough to show that 
\begin{equation}\label{eq7.15.0}
\partial^+_{\nu_{A_1}} \SL^{L_1} f-\partial^-_{\nu_{A_1}} \SL^{L_1} f=f, \quad \mbox{for every $f\in C_0^\infty(\rn)$}. 
\end{equation}
It is convenient to postpone the argument until the end of the proof of the Proposition (see Step V).

Now we turn to the question of preservation of the invertibility by the corresponding operators, that is, to the corresponding parts of (v) and (vi).
Following the arguments as in Lemma \ref{l7.1}, ((iii) and (iv), {\it loc. cit.}), we can prove that 
\begin{equation}\|\partial_{\nu_{A_1}}\SL^{L_1} f-  \partial_{\nu_{A_0}}\SL^{L_0} f\|_{H^p} \lesssim  \varepsilon_0 \|f\|_{H^p}.\label{eq7.16}\end{equation}
Let us track the details. Recall the argument and notation of Step III in the proof of Lemma~\ref{l7.1}, in particular, the definitions of $\varphi_t$ and $\Phi_{x,t}$. With the same definitions, we have
\begin{multline}\label{eq7.16.1}\left|\left(\varphi_t*(\partial_{\nu_{A_1}}\SL^{L_1} f-  \partial_{\nu_{A_0}}\SL^{L_0} f)\right)(x)\right|=|\langle \partial_{\nu_{A_1}}\SL^{L_1} f-  \partial_{\nu_{A_0}}\SL^{L_0} f,\varphi_t(x-\cdot)\rangle|\\
 =\left|\iint_{\!\mathbb{R}^{n+1}_+}(A_1(y,s) \nabla \SL^{L_1}_s f(y) - A_0(y) \nabla \SL^{L_0}_s f(y))\overline{\nabla\Phi_{x,t}(y,s)}\,dyds\right|\\
 \leq \left|\iint_{\!\mathbb{R}^{n+1}_+}A_1(y,s) (\nabla \SL^{L_1}_s f (y)- \nabla \SL^{L_0}_s f(y))\overline{\nabla\Phi_{x,t}(y,s)}\,dyds\right|\\[4pt]
 \qquad +\left|\iint_{\!\mathbb{R}^{n+1}_+}(A_1(y,s) - A_0(y) )\nabla \SL^{L_0}_s f(y)\overline{\nabla\Phi_{x,t}(y,s)}\,dyds\right|.
 \end{multline}
 
By \eqref{eq6.29.7} (with the roles of $L_0$ and $L_1$ interchanged) the last expression in \eqref{eq7.16.1} is equal to 
 \begin{multline*}  
\left|\iint_{\!\mathbb{R}^{n+1}_+}A_1(y,s) (\nabla L_0^{-1}\dv (A_1-A_0) \nabla \SL^{L_1} f)(y,s)\overline{\nabla\Phi_{x,t}(y,s)}\,dyds\right|\\[4pt]
 \qquad +\left|\iint_{\!\mathbb{R}^{n+1}_+}(A_1(y,s) - A_0(y) )\nabla \SL^{L_0}_s f(y)\overline{\nabla\Phi_{x,t}(y,s)}\,dyds\right|\\
 \lesssim \,t^{-n-1}\iint_{\!\mathbb{R}^{n+1}_+\cap 2B} |\nabla L_0^{-1}\dv (A_1-A_0) \nabla \SL^{L_1} f(y,s)|\,dyds\\[4pt]
 +  t^{-n-1}\iint_{\!\mathbb{R}^{n+1}_+\cap 2B} |(A_1(y,s)-A_0(y))\nabla \SL^{L_0}_s f(y)|\,dyds\\
 \lesssim \,t^{-n-1}\left(\int_{\rn}\left(\N(|\nabla L_0^{-1}\dv (A_1-A_0) \nabla \SL^{L_1} f| 1_{2B})(y)\right)^{n/(n+1)}dy\right)^{(n+1)/n}\\
 \qquad + t^{-n-1}\left(\int_{\rn}\left(\N(|(A_1-A_0)\nabla \SL^{L_0}_t f| 1_{2B})(y)\right)^{n/(n+1)}dy\right)^{(n+1)/n}\\
 \lesssim \,t^{-n-1}\left(\int_{|x-y|<Ct}\left(\N(|\nabla L_0^{-1}\dv (A_1-A_0) \nabla \SL^{L_1} f| )(y)\right)^{n/(n+1)}dy\right)^{(n+1)/n}\\
 \qquad + \eps_0 t^{-n-1}\left(\int_{|x-y|<Ct}\left(\N(|\nabla \SL^{L_0}_t f| )(y)\right)^{n/(n+1)}dy\right)^{(n+1)/n}.
 \end{multline*}
Combining this with the considerations in \eqref{eq7.6}--\eqref{eq7.8.1} and invoking \eqref{eq1.14} and \eqref{eq4.7}, we conclude  \eqref{eq7.16}.

Having \eqref{eq7.16} at hand, the method of continuity shows that if there exists $p_*\in (n/(n+\alpha_0), 2+\eps)$ such that $\pm\frac 12 I+\widetilde{\K}^{L_0}:H^{p_*}\to H^{p_*}$ is invertible, then $\pm\frac 12 I+\widetilde{\K}^{L_1}:H^{p_*}\to H^{p_*}$ is invertible as well, provided that $\eps_0$ is sufficiently small depending on the standard constants {\it and on the norm of the inverse of $\pm\frac 12 I+\widetilde{\K}^{L_0}$}. 

\vskip 0.08 in

\noindent {\bf Step II:} the tangential derivative of the single layer potential at the boundary. 

As above, (ii) follows directly from Lemma~\ref{l7.1}, and hence we only have to establish the preservation of invertibility, that is, (iv). We have to show that 
\begin{equation}
\|\nabla_\|\SL_t^{L_1}\!\mid_{t=0} f -\nabla_\|\SL_t^{L_0}\!\mid_{t=0} f \|_{H^p} \lesssim \varepsilon_0 \| f\|_{H^p},
\label{eq7.18}\end{equation}
for any $f\in H^p$, $p_0<p<2+\eps$ and then apply the method of continuity. However, by \eqref{eq7.1.2}
$$
\|\nabla_\|\SL_t^{L_1}\!\mid_{t=0} f -\nabla_\|\SL_t^{L_0}\!\mid_{t=0} f \|_{H^p} \lesssim \|\nabla_\|\SL_t^{L_1} f -\nabla_\|\SL_t^{L_0}f \|_{\tT^p_\infty},$$
which gives \eqref{eq7.18} by virtue of \eqref{eq6.29.8}.

\vskip 0.08 in

\noindent {\bf Step III:} the double layer potential at the boundary. 

\noindent {\bf Step III(a):} uniform estimates on slices. Let $L$ be as in Theorem~\ref{t1.13}.


First of all, by \eqref{eq1.15}, 
\begin{equation}\label{eq7.18.1.1} \sup_{t>0}\|\D_t^{L^*}f\|_{L^{p'}(\rn)}\lesssim \|f\|_{L^{p'}(\rn)},\quad 1<p<2+\eps,
\end{equation}
for every $f\in L^{p'}$. An analogous estimate holds in $\la(\rn)$:
\begin{equation}\label{eq7.18.u1}
\sup_{t>0} \|\D_t^{L^*} f\|_{\la(\rn)} \lesssim  \|f\|_{\la(\rn)}, \quad 0\leq \alpha<\alpha_0.
\end{equation}
For $\alpha>0$ this is simply a consequence of  \eqref{eq1.15.1}, using the definition of $\la$ spaces. It remains to treat the case of $BMO$ ($\alpha=0$). For future reference though we address $f\in \la$, $0\leq\alpha<\alpha_0$, in the argument below. 

To start, we observe that for any $f\in BMO(\rn)$ the double layer potential $\D_t^{L^*} f$ is well-defined in $\reu$ and satisfies \eqref{eq1.16}. In particular, $\D_t^{L^*} f\in W^{1,2}_{loc}(\reu)$ and hence, by De Giorgi-Nash-Moser estimates  \eqref{eq1.6}--\eqref{eq1.7}, for every fixed $t>0$ the function $\D_t^{L^*} f \in L^p_{loc}(\rn)$, $0<p\leq \infty$. We also note that $\D_t^{L^*} 1$ is well-defined and equal to zero for any $t>0$, in the sense of $BMO(\rn)$. Indeed, for any $\tau>0$, and $f, \varphi\in C_0^\infty(\rn)$
\begin{equation}\label{eq7.18.1.3}\langle \D_\tau^{L^*}f, \varphi\rangle=\langle f, \partial_{\nu_{A}}^+ \SL^{L, \tau} \varphi\rangle=-\langle f, \partial_{\nu_{A}}^- \SL^{L, \tau} \varphi\rangle,
\end{equation}
where
\begin{equation}\label{eq7.18.1.4}\SL^{L, \tau}_sf(y)  
\equiv\int\limits_{\mathbb{R}^{n}}\Gamma_{L}
(y,s,x,\tau)\,\varphi(x)\,dx.
\end{equation}
The conormal derivative of $\SL^{L_i, \tau}f$ is, as usual, defined in the weak sense, via \eqref{eq7.0},  taken in the upper half space when we write $\partial_{\nu_{A}}^+$ and in the lower half-space when we write $\partial_{\nu_{A}}^-$. The fact that  $\partial_{\nu_{A}}^+ \SL^{L, \tau} \varphi=-\partial_{\nu_{A}}^- \SL^{L, \tau} \varphi$ is justified by the observation that $\SL^{L, \tau}_s\varphi$ is a solution for $s\neq \tau$, $\tau\neq 0$ and thus, for any $F\in C_0^\infty(\rn\times (-|\tau|/2,|\tau|/2)$ such that $F\Big|_{\rn}=f$ we have
\begin{multline*} \langle f,\partial_{\nu_{A}}^+ \SL^{L, \tau} \varphi\rangle+\langle f,\partial_{\nu_{A}}^- \SL^{L, \tau} \varphi\rangle\\[4pt]=\iint_{\reu} \overline{\nabla_{y,s}F(y,s)} \,A(y,s) \SL^{L, \tau}_s \varphi(y)\,dyds+\iint_{\rel} \overline{\nabla_{y,s}F(y,s)} \,A(y,s) \SL^{L, \tau}_s \varphi(y)\,dyds
\\[4pt]=\iint_{\ree} \overline{\nabla_{y,s}F(y,s)} \,A(y,s) \SL^{L, \tau}_s \varphi(y)\,dyds=0.
\end{multline*}

 For future reference we record that, due to \eqref{eq7.18.1.1}, \eqref{eq7.18.1.3}, we have 
\begin{equation}\label{eq7.18.1.5} \sup_{\tau>0}\|\partial_{\nu_{A}} \SL^{L, \tau} \varphi\|_{L^{p}(\rn)}\lesssim \|\varphi\|_{L^{p}(\rn)}, \quad 1<p<2+\eps,
\end{equation}
and the duality \eqref{eq7.18.1.3}  extends to  $f\in L^{p'}$ and $\varphi\in L^p$.  Using the weak definition of conormal derivative, one can easily see that $\int_{\rn} \partial_{\nu_{A}} \SL^{L, \tau} a\, dx=0$, for any $\tau>0$ and for any $H^p$-atom $a$. This implies that $\D_t^{L^*} 1=0$ for any $t>0$, in the sense of $\la(\rn)$, as desired.

Now let us fix $f\in \la(\rn)$ and some $H^p$-atom $a$, $\supp a\subset Q$, $\alpha=n(1/p-1)$. Denote $f_{4Q}=\fint_{4Q}f$, $ f-f_{4Q}=f_0+\sum_{k=2}^\infty f_k$, where $f_0=(f-f_{4Q}) {\bf 1}_{4Q}$ and $f_k=(f-f_{4Q}) {\bf 1}_{2^{k+1}Q\setminus 2^k Q}$. Then 
$$\langle \D_t^{L^*}f_0, a\rangle \lesssim \|f_0\|_{L^2(4Q)}\|a\|_{L^2(4Q)}\lesssim l(Q)^{-\alpha}\left(\fint_{4Q}|f-f_{4Q}|^2\,dx\right)^{1/2} \lesssim \|f\|_{\la}.$$
On the other hand, denoting by $x_Q$ the center of $Q$ and using the vanishing moment condition of $a$, we have
\begin{multline}\label{eqD1}
\langle \D_t^{L^*}f_k, a\rangle=\int_{Q} \left(\D_t^{L^*}f_k(x)-\D_t^{L^*}f_k(x_Q)\right)\,a(x)\,dx
\\[4pt] \lesssim 
2^{-k\alpha_0} l(Q)^{-\alpha} \left(\fiint_{2^{k-1}Q\times (t-2^{k-1}l(Q), t+2^{k-1}l(Q))} |\D_t^{L^*}f_k(x)|^2\,dx\right)^{1/2},
\end{multline}
where we used the fact that $f_k$ is supported away from $2^{k-1}Q$, and hence, $\D_t^{L_1^*}f_k$ is a solution in $2^{k-1}Q\times (t-2^{k-1}l(Q), t+2^{k-1}l(Q))$. Now we can use uniform in $t$ estimates for the double layer potential in $L^2$ to show that the expression above is controlled by
\begin{equation}\label{eqD2}C\, l(Q)^{-\alpha}2^{-k\alpha_0} (2^kl(Q))^{-\frac{n+1}{2}}\|f_k\|_{L^2(\rn)} \lesssim 2^{-k(\alpha_0-\alpha)} (2^kl(Q))^{-\alpha}\left(\fiint_{2^{k+1}Q\setminus 2^kQ} |f-f_{4Q}|^2\,dx\right)^{1/2}.\end{equation} 
Now, using the usual telescoping argument, we conclude that
\begin{equation}\label{eqD3}\left|\langle \D_t^{L^*}f, a\rangle\right|=\left|\langle \D_t^{L^*}(f-f_{4Q}), a\rangle\right| \lesssim \|f\|_{\la},\end{equation} 
for any $H^p$ atom $a$, which yields \eqref{eq7.18.u1} for $0\leq \alpha<\alpha_0$.

\vskip 0.08 in
\noindent {\bf Step III(b):} convergence.

The next order of business is to show that for any $f\in C_0^\infty(\rn)$ the sequence $\D_t^{L^*}f $ converges as $t\to 0$ in the sense of distributions. Given \eqref{eq7.18.1.1}, this will entail that there exists a boundary operator, to be  denoted $-\frac{1}{2}I+ \K^{L^*}$, which is bounded in $L^{p'}$ and such that for every $f\in L^{p'}$ we have 
\begin{equation}\label{eq7.18.1.2}\D_t^{L^*}f \to(-\textstyle{\frac{1}{2}}I+ \K^{L^*})f \quad\mbox{weakly in $L^{p'}$.}
\end{equation}

Since $L=L_0$ (the $t$-independent case) has been treated in \cite{HMiMo}, here we write $L_1$ in place of $L$ although the same argument applies to $L_0$.

To this end, take any $f, \varphi \in C_0^\infty(\rn)$ and let us show that $\langle \D_\tau^{L_1^*}f, \varphi\rangle$ is Cauchy.  In fact, we shall prove a slightly stronger statement, the bound on $\langle \D_\tau^{L_1^*}f, \varphi\rangle-\langle \D_{\tau'}^{L_1^*}f, \varphi\rangle$ in terms of $\|\varphi\|_{L^1(\rn)}$, thus establishing convergence of $ \D_\tau^{L_1^*}f$ on compacta of $\rn$.

Take  any $0<\tau'< \tau$, and write for $f, \varphi \in C_0^\infty(\rn)$ and $F\in C_0^\infty(\ree)$ such that $F\Big|_{\rn}=f$,
\begin{multline}\label{eq7.18.c3}
\langle \D_\tau^{L_1^*}f-\D_{\tau'}^{L_1^*}f, \varphi\rangle=\iint_{\rel} \overline{A_1(y,s)(\nabla \SL^{L_1, \tau}_s-\nabla \SL^{L_1, \tau'}_s)\, \varphi (y)}\,\nabla_{y,s} F(y,s)\,dyds\\[4pt]
=\iint_{\rel} \overline{A_1(y,s)\int_{\rn}\nabla_{y,s}(\Gamma_{L_1}(y,s,x,\tau)-\Gamma_{L_1}(y,s,x,\tau')) \varphi(x)\,dx}\,\nabla_{y,s} F(y,s)\,dyds\\[4pt]
\leq C \|\varphi\|_{L^1(\rn)} \sup_{x\in \supp \varphi} \iint_{\rel} \left|\nabla_{y,s}(\Gamma_{L_1}(y,s,x,\tau)-\Gamma_{L_1}(y,s,x,\tau')) \right|\,\left|\nabla_{y,s} F(y,s)\right|\,dyds
\\[4pt]
\leq C_{F} \|\varphi\|_{L^1(\rn)} \sup_{x\in \supp \varphi} \iint_{\rel\cap \supp F} \left|\nabla_{y,s}(\Gamma_{L_1}(y,s,x,\tau)-\Gamma_{L_1}(y,s,x,\tau')) \right|\,dyds.
\end{multline}

\noindent Now let $B_{x,\tau}:= B((x,\tau), c\tau)$, 
and let $S_k(B_{x,\tau})$ be the dyadic annuli around $B_{x,\tau}$. The constant $c$ is such that $(x,\tau')\in \frac 12 B_{x,\tau}$. For a given set $E$ we denote by $U_{r}(E)$ an $c'r$-neighborhood of $E$, where the constant $c'$ is chosen (and fixed throughout the argument) to preserve suitable separation. For instance, for all $k\geq 2$ we have $\dist\{U_{2^k\tau} (S_k(B_{x,\tau})), 2^{k-2}B_{x,\tau}\}\approx 2^k\tau$. Then for every fixed $x\in \supp \varphi$
\begin{multline*}
 \iint_{\rel\cap \supp F} \left|\nabla_{y,s}(\Gamma_{L_1}(y,s,x,\tau)-\Gamma_{L_1}(y,s,x,\tau')) \right|\,dyds \\[4pt]
 = \iint_{2B_{x,\tau}\cap \supp F} \left|\nabla_{y,s}(\Gamma_{L_1}(y,s,x,\tau)-\Gamma_{L_1}(y,s,x,\tau')) \right|\,dyds
 \\[4pt]
 +\sum_{k\geq 2: \,S_k(B_{x,\tau})\cap \supp F\neq \emptyset} \iint_{S_k(B_{x,\tau})} \left|\nabla_{y,s}(\Gamma_{L_1}(y,s,x,\tau)-\Gamma_{L_1}(y,s,x,\tau')) \right|\,dyds=:I+II.
\end{multline*}
Now,  
\begin{multline*}
 II\lesssim \sum_{k\geq 2: \,S_k(B_{x,\tau})\cap \supp F\neq \emptyset} (2^k\tau)^{\frac{n+1}{2}} \left(\iint_{S_k(B_{x,\tau})} \left|\nabla_{y,s}(\Gamma_{L_1}(y,s,x,\tau)-\Gamma_{L_1}(y,s,x,\tau')) \right|^2\,dyds\right)^{1/2}
 \\[4pt]
 \lesssim \sum_{k\geq 2: \, S_k(B_{x,\tau})\cap \supp F\neq \emptyset} (2^k\tau)^{\frac{n+1}{2}-1} \left(\iint_{U_{2^k\tau}(S_k(B_{x,\tau}))} \left|\Gamma_{L_1}(y,s,x,\tau)-\Gamma_{L_1}(y,s,x,\tau') \right|^2\,dyds\right)^{1/2}
 \\[4pt]
 \lesssim \sum_{k\geq 2: \, S_k(B_{x,\tau})\cap \supp F\neq \emptyset} (2^k\tau)^{\frac{n+1}{2}-1-\alpha_0} |\tau-\tau'|^{\alpha_0} \left(\iint_{U_{2^k\tau}(S_k(B_{x,\tau}))}\fiint_{2^{k-2}  B_{x,\tau}} \left|\Gamma_{L_1}(y,s,z,r) \right|^2\, dzdr \,dyds\right)^{1/2}
  \\[4pt]
 \lesssim \sum_{k\geq 2: \,  S_k(B_{x,\tau})\cap \supp F\neq \emptyset} (2^k\tau)^{\frac{n+1}{2}-1-\alpha_0} |\tau-\tau'|^{\alpha_0}  (2^k\tau)^{\frac{n+1}{2}-(n-1)}   \\[4pt]\lesssim \sum_{k\geq 2: \, S_k(B_{x,\tau})\cap \supp F\neq \emptyset}  |\tau-\tau'|^{\alpha_0}  (2^k\tau)^{1-\alpha_0}\lesssim C_F |\tau-\tau'|^{\alpha_0}, 
\end{multline*}
where we used Caccioppoli inequality in $y,s$ in the second inequality, H\"older continuity of solutions in $x,\tau$ in the third one, and pointwise bounds on the fundamental solution for the fourth inequality. 
As for $I$, for any $\tau'\leq \tau$ we have
\begin{multline*}
\iint_{2B_{x,\tau}\cap \supp F} \left|\nabla_{y,s}\Gamma_{L_1}(y,s,x,\tau')\right|\,dyds\\[4pt]
\lesssim \iint_{2B_{x,\tau'}\cap 2B_{x,\tau}\cap \supp F} \left|\nabla_{y,s}\Gamma_{L_1}(y,s,x,\tau')\right|\,dyds\\[4pt]
+\sum_{k\geq 2: \,S_k(B_{x, \tau'})\cap 2B_{x,\tau}\cap \supp F\neq\emptyset}\iint_{S_k(B_{x, \tau'})\cap 2B_{x,\tau}\cap \supp F} \left|\nabla_{y,s}\Gamma_{L_1}(y,s,x,\tau)\right|\,dyds.
\end{multline*}
Here the implicit constant $c_0$ in the definition of $B_{x,\tau'}=B((x,\tau'), c_0\tau')$ is chosen so that $3B_{x,\tau'}$ is separated from $F$. In particular, it is normally smaller than the constant $c$ in the definition of $B_{x,\tau}$ above. Hence, the first integral is necessarily zero, even if $\tau'=\tau$. Then the expression above is further controlled by
\begin{multline*}
\sum_{k\geq 2: \,S_k(B_{x, \tau'})\cap 2B_{x,\tau}\cap \supp F\neq\emptyset}(2^k\tau')^{\frac{n+1}{2}} \left(\iint_{S_k(B_{x, \tau'})\cap 2B_{x,\tau}\cap \supp F} \left|\nabla_{y,s}\Gamma_{L_1}(y,s,x,\tau)\right|^2\,dyds\right)^{1/2} \\[4pt] \lesssim \sum_{k\geq 2: \,S_k(B_{x, \tau'})\cap 2B_{x,\tau}\cap \supp F\neq\emptyset}(2^k\tau')^{\frac{n+1}{2}-1} \left(\iint_{U_{2^k\tau'}(S_k(B_{x, \tau'})\cap 2B_{x,\tau}\cap \supp F)} \left|\Gamma_{L_1}(y,s,x,\tau)\right|^2\,dyds\right)^{1/2}\\[4pt]
\lesssim \sum_{k\geq 2: \,S_k(B_{x, \tau'})\cap 2B_{x,\tau}\cap \supp F\neq\emptyset}2^k\tau' \lesssim C_f\,\tau.\end{multline*}
\noindent Note that this computation automatically covers both integrals arising from $I$, taking $\tau'=\tau$ to handle the first one.

All in all, this line of reasoning shows that 
\begin{equation}\left|\langle \D_\tau^{L_1^*}f-\D_{\tau'}^{L_1^*}f, \varphi\rangle\right|\leq C_{f} \|\varphi\|_{L^1(\rn)}\left( |\tau-\tau'|^{\alpha_0}+\tau\right),\label{eq7.18.c4}\end{equation}
and thus, the convergence in the sense of distributions holds and \eqref{eq7.18.1.2} is valid, for some operator $\K^{L_1^*}$, which is bounded in $L^{p'}$. Moreover, \eqref{eq7.18.c4} shows that $\D_\tau^{L_1^*} f$ converges to its boundary data sternly in $L^{p'}$ on compacta of $\rn$.

Let us now establish weak* convergence in $\la$, $0\leq\alpha<\alpha_0$. To this end, take $f\in \la$, $0\leq\alpha<\alpha_0$ and an $H^p$-atom $a$, $p=n/(\alpha+n)$. Fix $\eps>0$ and let $k_0\in \NN$ be a large number to be chosen depending on $\eps$. Let $f-f_{2^{k_0}Q}=f_{k_0}+\sum_{k=k_0}^\infty f_k$, where $f_{k_0}=(f-f_{2^{k_0}Q}) {\bf 1}_{2^{k_0}Q}$ and $f_k=(f-f_{2^{k_0}Q}) {\bf 1}_{2^{k}Q\setminus 2^{k-1} Q}$, $k\geq k_0+1$. By weak-$L^2$ convergence proved above there exists $\delta=\delta_{\eps, f,a,k_0}$, such that 
\begin{equation}\label{eq7.18.c1}\left|\langle \D_\tau^{L_1^*}f_{k_0}-\D_{\tau'}^{L_1^*}f_{k_0}, a\rangle\right|<\eps \quad\mbox{for all}\quad \tau,\tau'<\delta. 
\end{equation}
On the other hand, for $0<\tau, \tau'<l(Q)$
\begin{multline}\label{eq7.18.c2}\sum_{k=k_0+1}^\infty \left|\langle \D_\tau^{L_1^*}f_k-\D_{\tau'}^{L_1^*}f_k, a\rangle\right|\lesssim l(Q)^{-\alpha} \sum_{k=k_0+1}^\infty  \left(\fint_Q |\D_\tau^{L_1^*}f_k-\D_{\tau'}^{L_1^*}f_k|^2\, dx\right)^{1/2}\\[4pt]
\lesssim  l(Q)^{-\alpha}\sum_{k=k_0+1}^\infty 2^{-k\alpha_0} \left(\fiint_{2^{k-2}Q\times (-2^{k-1}l(Q), 2^{k-1}l(Q))} |\D_\tau^{L_1^*}f_k|^2\, dx\right)^{1/2}\\[4pt] \lesssim  l(Q)^{-\alpha}\sum_{k=k_0+1}^\infty 2^{-k\alpha_0} (2^kl(Q))^{-n/2}\|f_k\|_{L^2}\lesssim  \|f\|_{\la} \sum_{k=k_0+1}^\infty k\,2^{k(\alpha-\alpha_0)}. \end{multline}
Now we choose $k_0=k_0(\eps, f)$ so that the expression on the right-hand side above is controlled by $\eps$ and this finishes the proof of the weak-* convergence in $\la$. Moreover, by the same reasoning as above one can conclude that the boundary operator 
$\K^{L_1^*}$, defined a priori in $L^{p'}$, is well-defined and bounded in $\la$ for $0\leq \alpha<\alpha_0$, with $ \D_\tau^{L_1^*}f$ converging to $\K^{L_1^*}f$ for every $f\in\la$ in the weak* sense.

In fact, one can say more than that. 

\vskip 0.08 in \noindent {\bf Step IV}: Let us demonstrate that $\K^{L_1^*}=adj (\widetilde \K^{L_1})$ or rather (before the justification of the jump formulas) that $ \D_s^{L_1^*}\Big|_{s=0^{\pm}}=adj \,(\partial_{\nu_{A_1}}^{\mp}\SL^{L_1})$. 

Following \eqref{eq7.18.c3}--\eqref{eq7.18.c4} one can demonstrate that for every $f, \varphi\in C_0^\infty(\rn)$ (and $F\in C_0^\infty(\ree)$ - an extension of $f$) and $r<0$, $\tau>0$ such that $\tau\ll |r| \ll {\rm diam} \,(\supp F)$ there holds
\begin{equation}\label{eq7.18.1.10}
\left|\langle\partial_{\nu_{A_1}}^-\SL^{L_1, \tau}\varphi(\cdot, r)-\partial_{\nu_{A_1}}^-\SL^{L_1}\varphi(\cdot, r), f\rangle \right|\lesssim C_{\varphi, f} \,\tau^{\alpha_0}. 
\end{equation}
\noindent  Here $\partial_{\nu_{A_1}}^-\SL^{L_1, \tau}\varphi(\cdot, r)$ and $\partial_{\nu_{A_1}}^-\SL^{L_1}\varphi(\cdot, r)$, $r<0$, are the conormal derivatives at $\RR^n\times\{t=r\}$, in the sense of \eqref{eq7.0}. In particular,  
\begin{equation}\label{eq7.18.1.12}
\lim_{r\to 0} \left|\langle\partial_{\nu_{A_1}}^-\SL^{L_1}\varphi(\cdot, r)-\partial_{\nu_{A_1}}^-\SL^{L_1}\varphi, f\rangle \right|=0,  
\end{equation}
as per (iv) of Lemma~\ref{l7.1}, and 
\begin{equation}\label{eq7.18.1.11}
\lim_{r\to 0} \sup_{\tau>0} \left|\langle\partial_{\nu_{A_1}}^-\SL^{L_1, \tau}\varphi(\cdot, r)-\partial_{\nu_{A_1}}^-\SL^{L_1, \tau}\varphi, f\rangle \right|=0.   
\end{equation}

The latter can be shown following the lines of the proof of \eqref{eq7.18.1.12}, as soon as we confirm that $\nabla\SL^{L_1, \tau}\varphi \in L^2(\rel)$, uniformly in $\tau>0$. However, by ellipticity, $L_1^{-1}: (\dot W^{1,2}(\ree))^*\to \dot W^{1,2}(\ree) $ and hence, by trace theorems, $\tr_\tau \circ L_1^{-1}: (\dot W^{1,2}(\ree))^*\to \dot H^{1/2}(\rn)$, where $\tr_\tau$ is the trace on the hyperplane $\rn\times\{t=\tau\}$, and the implicit estimates are uniform in $\tau$. By duality, analogously to \eqref{eq6.29.4}, for every $\varphi\in C_0^\infty(\rn)$ and $\Psi\in C_0^\infty(\rel)$ we have
$$ \langle \nabla \SL^{L_1,\tau}_s \varphi, \Psi \rangle=\iint_{\rel}  \nabla_{y,s} \SL^{L_1,\tau}_s \varphi(y) \,\Psi (y,s)\,dyds=
\langle \varphi, \tr_\tau \circ (L^*)^{-1}\dv \Psi\rangle.$$
Due to the considerations above this extends to all $\Psi\in L^2(\rel)$ with the uniform in $\tau$ estimates. Hence, $\nabla\SL^{L_1, \tau}\varphi \in L^2(\rel)$ for every $\varphi \in C_0^\infty(\rn)$ with the estimates uniform in $\tau$. At this point we can indeed invoke the argument akin to that of \eqref{eq7.18.1.12}. Using the definition of weak derivative, we have to estimate, for any $f,\varphi\in C_0^\infty$ and any $F, \widetilde F\in C_0^\infty(\ree)$ with $F\Big|_{\rn}=\widetilde F\Big|_{\rn}=f$ the difference
$$\iint_{\rel} A(x,r+s) \nabla \SL^{L_1,\tau}\varphi(x,r+s) \overline{\nabla \widetilde F(x,s)} dx  ds- \iint_{\rel} A(x,s) \nabla \SL^{L_1,\tau}\varphi(x,s) \overline{\nabla  F(x,s)} dx  ds.
$$ 
Since all extensions give equal integrals, we take $\widetilde F$ such that $\widetilde F(x,s)=F(x,r+s)$. Then the absolute value of the difference above is bounded by 
$$ \left|\iint_{(x,s)\in\rel:\, r<s<0} A(x,s) \nabla \SL^{L_1,\tau}\varphi(x,s) \overline{\nabla  F(x,s)} dx  ds\right|\lesssim C_F |r|^{1/2} \|\nabla \SL^{L_1,\tau}\varphi\|_{L^2(\rel)},  $$
which, in view of the uniform bounds on $\|\nabla \SL^{L_1,\tau}\varphi\|_{L^2(\rel)}$,
justifies \eqref{eq7.18.1.11}.


Now, combining \eqref{eq7.18.1.10}--\eqref{eq7.18.1.11}, and taking the limit in $\tau$ and then in $r$, we conclude that in fact, for every $\varphi\in C_0^\infty(\rn)$ the quantity $\partial_{\nu_{A_1}}^-\SL^{L_1, \tau}\varphi$ converges to $\partial_{\nu_{A_1}}^-\SL^{L_1}\varphi$ in the sense of distributions (and weakly in $L^p$, $1<p<2+\eps$). Comparing this to \eqref{eq7.18.1.3} and invoking the results of Step I, we see that the boundary operator identified in \eqref{eq7.18.1.2} is a Hermitian adjoint on $\rn$ of the normal derivative of the single layer on the boundary. That is, for all $f, \varphi\in C_0^\infty (\rn)$ we have 
\begin{equation}\label{eqD4} \left\langle \D_s^{L_1^*}\Big|_{s=0^{\pm}}f, \varphi\right\rangle 
=\left\langle f, \partial_{\nu_A}^{\mp} \SL^{L_1}\,\varphi\right\rangle,
\end{equation}
 and hence,  $ \D_s^{L_1^*}\Big|_{s=0^{\pm}}=adj \,(\partial_{\nu_{A_1}}^{\mp}\SL^{L_1})$ as an operator in $L^{p'}$. 

Now, one has to show that for any $f\in \la$ and $a$-an $H^p$ atom we have 
$ \left\langle \D_s^{L_1^*}\Big|_{s=0^{\pm}}f, a\right\rangle 
=\left\langle f, \partial_{\nu_A}^{\mp} \SL^{L^1}\,a\right\rangle$. This follows essentially the arguments in \eqref{eq7.18.c1}--\eqref{eq7.18.c2} and \eqref{eqD1}--\eqref{eqD3}. With the same notation for $f_{k_0}$ and $f_k$, $k>k_0$, we have 
$$ \left\langle \D_s^{L_1^*}\Big|_{s=0^{\pm}}f_{k_0}, a\right\rangle 
=\left\langle f_{k_0}, \partial_{\nu_A}^{\mp} \SL^{L_1}\,a\right\rangle,$$
simply because both both $f_{k_0}$ and $a$ belong to $L^2$. Furthermore, we claim that for any $\eps>0$ there exists $k_0=k_0(a,f,\eps)$ such that 
$$ \sum_{k=k_0+1}^\infty \left\langle \D_s^{L_1^*}\Big|_{s=0^{\pm}}f_{k}, a\right\rangle<\eps.$$
Note that the supports of $a$ and $f_k$, $k>k_0$, are disjoint, and hence, $\D_s^{L_1^*}\Big|_{s=0^{\pm}}f_{k}$ is a solution across the corresponding part of the boundary. Thus, arguing as in \eqref{eqD1}--\eqref{eqD3}, we can establish the same estimates as in \eqref{eq7.18.c2} for 
$$ \langle \D_s^{L_1^*}\Big|_{s=0^{\pm}}f_k, a\rangle=\int_{Q} \left(\D_s^{L_1^*}\Big|_{s=0^{\pm}}f_k(x)-\D_s^{L_1^*}\Big|_{s=0^{\pm}}f_k(x_Q)\right)\,a(x)\,dx,$$
in terms of $\eps$ provided that $k_0$ is sufficiently large and finish the proof of \eqref{eqD4}. 

This justifies the desired duality relations for the trace of the double layer and normal derivative of the single layer potential on the boundary and thus verifies jump formula for the double layer potential in $L^{p'}$ and in $\la$ (assuming  \eqref{eq7.15.0} which we establish below in Step V).  

We have concluded the proof of the entire statement (iii). 
Moreover one establishes the validity of the invertibility claimed in (v), (vi) for the operator $\K^{L_1^*}$, also by duality. 

It only remains to establish the jump formulas. 

\vskip 0.08 in \noindent {\bf Step V}: jump relations. 

We only have to verify \eqref{eq7.15.0}. We adopt the general line of reasoning from \cite{AAAHK}, although the fact that $A_1$ depends on $t$ brings up some changes. 

We aim to show that for all $\Psi\in C_0^\infty(\ree)$ we have
\begin{equation}\label{eq7.15.1}
\iint_{\reu} A\nabla u^+\,\nabla \Psi\,dxdt+ \iint_{\rel} A\nabla u^-\,\nabla \Psi\,dxdt = \int_{\rn} f\,\Psi\,dx,
\end{equation}
where $u^{\pm}=\SL^{L_1}_tf$, $t\in \RR_{\pm}$. Let $u_\eta^\pm(\cdot, t)=(L_1^{-1} (f_\eta))(\cdot, t)$, $t\in \RR_{\pm}$, $f_\eta(y,s):=f(y)\varphi_\eta(s)$, $(y,s)\in \ree$, where $\varphi_\eta$ is, as usual, the kernel of a smooth approximate identity, in particular, $\int \varphi_\eta =1$ and $\varphi_\eta\in C_0^\infty(-\eta, \eta)$. Finally, let $U_\eta=u_\eta^+ {\bf 1}_{\reu}+u_\eta^- {\bf 1}_{\rel}$. Then 
\begin{multline*} \iint_{\reu} A\nabla u^+_\eta\,\nabla \Psi\,dxdt+ \iint_{\rel} A\nabla u^-_\eta\,\nabla \Psi\,dxdt \\[4pt]
= \iint_{\ree} A\nabla U_\eta\,\nabla \Psi\,dxdt=\iint_{\ree} f_\eta \Psi \,dxdt\to \int_{\rn} f\Psi \,dx,
\end{multline*}
as $\eta \to 0$. So far this is the same set up as in \cite{AAAHK}. Now we have to prove that the left-hand side of the expression above converges to the left-hand side of \eqref{eq7.15.1} as $\eta\to 0$. To this end, fix some $\delta\gg \eta$ and split 
\begin{equation}\label{eq7.15.2} \iint_{\reu} A\nabla (u^+_\eta-u^+)\,\nabla \Psi\,dxdt=\int_\delta^\infty\!\!\!\!\int_{\rn}+\int_0^\delta \!\!\!\int_{\rn}=: I_{\delta, \eta}+II_{\delta, \eta}.
\end{equation}
Let us start with $I_{\delta, \eta}$. Given that $\delta\gg\eta$, one can estimate the $L^2$ norm of
$$(\nabla u^+ -\nabla u_\eta^+)(x,t)=\iint_{\ree} \nabla_{x,t}(\Gamma(x,t; y,0)-\Gamma(x,t; y,s)) f(y)\varphi_\eta (s)\,dyds$$
using Caccioppoli inequality, H\"older continuity of solutions, and pointwise estimates on the fundamental solution, as well as the fact that $f$ is compactly supported. All in all, we will have 
$I_{\delta, \eta} \leq C_{f,\Psi, \delta}\, \eta^{\alpha_0}$. Hence, for a fixed $\delta>0$, the integral $I_{\delta, \eta}$ converges to zero as $\eta\to 0.$

Turning to $II_{\delta, \eta}$, we observe that by \eqref{eq1.14} 
\begin{equation}\label{eqjf1}\sup_{0<t<\infty} \|(\nabla u^+)_W(\cdot, t)\|_{L^2(\rn)}\lesssim \sup_{0<t<\infty} \|(\nabla \SL^{L_1} f)_W(\cdot, t)\|_{L^2(\rn)}\lesssim \|f\|_{L^2(\rn)},
\end{equation} where $(\cdot)_W$, as before, denotes $L^2$-averaging over Whitney balls \eqref{eq2.12.1}. Since $\Psi\in C_0^\infty(\reu)$, \eqref{eqjf1} and Dominated Convergence Theorem entail  $$\int_{\rn}\!\int_0^\delta A\nabla u^+\,\nabla \Psi\,dxdt\to 0, \quad \mbox{as } \delta\to 0.$$
On the other hand, 
\begin{equation}\label{eqjf2}\|\nabla u^+_\eta\|_{L^2(\ree)}=\|\nabla L_1^{-1} (f\varphi_\eta)\|_{L^2(\ree)} \lesssim \sup_{\tau} \|\nabla S^{L_1, \tau} f\|_{L^2(\ree)}\leq C_f,
\end{equation}
with the constants independent of $\eta$ (see the discussion following \eqref{eq7.18.1.11} for the last estimate). Then, using once again the Dominated Convergence Theorem, 
 $$\sup_{\eta>0}\int_{\rn}\!\int_0^\delta A\nabla u_\eta^+\,\nabla \Psi\,dxdt\to 0, \quad \mbox{as } \delta\to 0.$$ Thus, $\sup_{\eta>0} II_{\delta, \eta}\to 0$ as $\delta\to 0$.
This finishes the argument of \eqref{eq7.15.0}. By density, we get the required jump relations for the normal derivative of the single layer in $H^p$, $p_0<p<2+\eps$, and, by duality, for the double layer in $L^{p'}$ when $1<p<2+\eps$ and in $\la$, $0\leq \alpha<\alpha_0$. 
\end{proof}
 
At this point Proposition~\ref{p7.15} established \eqref{eq1.20}--\eqref{eq1.22.1} and thus we finished the proof of Theorem~\ref{t1.13}. Moreover, combining  Proposition~\ref{p7.15} with  Theorem~\ref{t1.13} and employing the representation formulas \eqref{eq1.28.1}--\eqref{eq1.28.3}, we have proved Theorem~\ref{t1.32}, with the exception of the uniqueness statement. We turn to the latter.

\section{Uniqueness}\label{s8}

Le us start with some additional properties of fundamental solutions which will be useful in the proof of uniqueness. 

\begin{lemma}\label{l8.01} Let $L$ be an elliptic operator falling under the scope of Theorem~\ref{t1.13}. Then for any $1<p<2+\eps$ the fundamental solution of $L^*$ satisfies
\begin{equation}\label{eq8.02}
\left(\int_{\rn} |\nabla_{z}\Gamma^* (x,t,z,0)|^p\,dz\right)^{1/p} \leq Ct^{-n(1-1/p)}, \quad \mbox{for all } (x,t)\in \reu.
\end{equation} 
\end{lemma}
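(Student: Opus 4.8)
The plan is to deduce \eqref{eq8.02} from the layer potential and square function bounds already established, via a duality argument, rather than from pointwise estimates (which would require local boundedness of $\nabla\Gamma^*$, not available here). Fix $(x,t)\in\reu$ and $1<p<2+\eps$, with dual exponent $p'$. Let $g\in C_0^\infty(\rn,\CC^n)$ with $\|g\|_{L^{p'}(\rn)}\leq1$. The key observation is that, for fixed $(x,t)$,
\[
\int_{\rn}\nabla_z\Gamma^*(x,t,z,0)\cdot g(z)\,dz
\]
can be recognized, up to notational bookkeeping, as $\bigl(\SL^L_t\nabla_\|\bigr)\cdot g\,(x)$ evaluated at the interior point $(x,t)$, once one uses the symmetry $\overline{\Gamma(x,t,y,s)}=\Gamma^*(y,s,x,t)$ from the definition of the adjoint fundamental solution. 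Thus the left side of \eqref{eq8.02} is bounded by
\[
\sup_{\|g\|_{L^{p'}}\leq1}\bigl|\bigl(\SL^L_t\nabla_\|\bigr)\cdot g\,(x)\bigr|.
\]

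Next I would convert this interior pointwise value into a non-tangential maximal function bound. Since $u:=\bigl(\SL^L_t\nabla_\|\bigr)\cdot g$ is a null solution of $L$ in $\reu$ (being a single layer potential of $L^2\cap L^{p'}$ data), Moser's estimate \eqref{eq1.7} and Caccioppoli \eqref{eq2.24} give
\[
|\nabla u(x,t)|\,\lesssim\,\Bigl(\fiint_{W(x,t)}|\nabla u|^2\Bigr)^{1/2}\,\lesssim\,\frac1t\,\N(u)(y)
\]
for every $y$ in a surface ball $\Delta(x,ct)$ of radius comparable to $t$ (using that $(x,t)$ lies in the cone $\Gamma(y)$). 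Averaging the $p'$-th power over that ball and invoking the boundedness $\|\N(\SL^L_t\nabla_\| g)\|_{L^{p'}}\lesssim\|g\|_{L^{p'}}$ --- which is the companion of \eqref{eq1.15} for $\SL^L_t\nabla_\|$, established in \cite{HMiMo} in the $t$-independent case and carried over to the present setting in Theorem~\ref{t1.13} (see also Step III of its proof and Proposition~\ref{p7.15}) --- one obtains
\[
|\nabla u(x,t)|^{p'}\,\lesssim\,\frac1{t^{p'}}\,\fint_{\Delta(x,ct)}\N(u)(y)^{p'}\,dy
\,\lesssim\,\frac1{t^{p'}}\,t^{-n}\,\|g\|_{L^{p'}}^{p'}.
\]
Taking $p'$-th roots and a supremum over $g$ yields $\bigl(\int_{\rn}|\nabla_z\Gamma^*(x,t,z,0)|^p dz\bigr)^{1/p}\lesssim t^{-1}t^{-n/p'}$; recalling $1/p'=1-1/p$ gives exactly the bound $t^{-n(1-1/p)}$ after absorbing the $t^{-1}$ --- here I should double-check the scaling, since the claimed exponent is $-n(1-1/p)$ with no extra power of $t$, so in fact one wants to replace $\nabla u$ at the point $(x,t)$ directly by the $L^2$ Whitney average and then use that $\frac1t\N(u)$ integrates against $L^{p'}$ with the single scaling factor already built into the $\SL_t\nabla_\|$ bound; I would recheck that the correct companion estimate to quote is $\|\nabla_\|\SL^L_t g\|_{T^{p'}_\infty}\lesssim\|g\|_{L^{p'}}$ (cf. \eqref{eq5.15.0.4}) rather than a version with an extra $t$-derivative.

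The main obstacle I anticipate is precisely this scaling/homogeneity bookkeeping: one must match the number of $t$-derivatives and the dimensional power in \eqref{eq8.02} against the right layer potential estimate among \eqref{eq1.14}, \eqref{eq1.15}, \eqref{eq1.20} and their $\SL_t\nabla_\|$ companions, and verify that the quantity $\int_{\rn}\nabla_z\Gamma^*(x,t,z,0)\cdot g(z)\,dz$ is genuinely $\nabla_\|$ applied to the \emph{horizontal} slice of $\SL^L_t$, so that no transversal derivative is lost. A secondary technical point is justifying the duality pairing rigorously: one works with $g\in C_0^\infty$, uses the already-established $L^2$ theory to make $\SL^L_t\nabla_\| g$ a bona fide weak solution with finite $\N$-norm, and only afterward passes to the $L^{p'}$ estimate. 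Once the homogeneity is pinned down, the remaining steps (Moser, Caccioppoli, averaging, duality) are routine, so I would present the argument in that order, leading with the symmetry identity for $\Gamma^*$ and ending with the supremum over normalized $g$.
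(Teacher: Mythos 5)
Your approach is genuinely different from the paper's, and considerably more economical; with the bookkeeping pinned down as you anticipate, it works. The paper proves the bound by introducing an auxiliary cone--adapted operator $L^*_{01}$ (equal to $L^*_1$ in the cone $\Gamma_{x,t}$ with vertex $(x,t/2)$ and to $L^*_0$ outside), verifying \eqref{eq8.02} for $\Gamma^*_{01}$ via the ``flat'' Caccioppoli estimate, writing a perturbation formula for $\SL^{L^*_1}\nabla_\|$ in terms of $\SL^{L^*_{01}}\nabla_\|$, and then controlling the error term with the square function bound \eqref{eq1.17} together with a fresh tent-space estimate \eqref{eq8.05}. Your route bypasses all of that: identify $\int_{\rn}\nabla_z\Gamma^*(x,t,z,0)\cdot g(z)\,dz=(\SL^{L^*}\nabla_\|)\cdot g$ evaluated at $(x,t)$ (the paper's first line does this too), observe this is a null solution of $L^*$, use Moser to dominate the pointwise value by the $L^2$ Whitney average, note that this average is $\leq \N\bigl((\SL^{L^*}\nabla_\|)g\bigr)(y)$ for all $y$ with $|y-x|<\beta t$, average the $p'$-th power over that surface ball, and invoke $\|\N((\SL^{L^*}\nabla_\|)g)\|_{L^{p'}}\lesssim\|g\|_{L^{p'}}$. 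The tradeoff is that the paper's longer argument also produces \eqref{eq8.05}, which feeds directly into Lemma~\ref{l8.09}, so the cone-adapted construction is not entirely wasted work in the paper's global economy.

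Two places where your write-up needs repair. First, the scaling: the quantity to bound is $|u(x,t)|$ with $u:=(\SL^{L^*}\nabla_\|)\cdot g$, \emph{not} $|\nabla u(x,t)|$; the latter introduces a spurious $t^{-1}$ from Caccioppoli, exactly the discrepancy you flag. The correct step is
\[
|u(x,t)|\;\lesssim\;\Bigl(\fiint_{W(x,t)}|u|^2\Bigr)^{1/2}\;\leq\;\N(u)(y),\qquad y\in\Delta(x,\beta t),
\]
by Moser alone, whence $|u(x,t)|^{p'}\lesssim \fint_{\Delta(x,\beta t)}\N(u)^{p'}\lesssim t^{-n}\|\N(u)\|_{L^{p'}}^{p'}$, which gives precisely $t^{-n/p'}=t^{-n(1-1/p)}$ with no stray power. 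Second, the companion estimate: \eqref{eq5.15.0.4} is the wrong citation --- it bounds the \emph{interior} tangential gradient $\nabla_\|\SL_t f$ on slices, not the single layer with differentiated kernel $(\SL_t\nabla_\|)f$, and these are genuinely different operators here since the coefficients are $x$-dependent. The estimate you actually need is $\|\N((\SL^{L^*_1}\nabla)g)\|_{L^{p'}}\lesssim\|g\|_{L^{p'}}$; it is not a labelled display in Theorem~\ref{t1.13}, but it is established in Step~III of the proof of Theorem~\ref{t1.13} in Section~\ref{s6}, where the bound $\|\nabla L_1^{-1}\textstyle{\frac1t}\Phi(\cdot,0)\|_{L^p}\lesssim\|\Phi\|_{\widetilde T^p_1}$ is proved; the desired non-tangential bound on $\SL^{L^*_1}\nabla$ is its dual. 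Once you cite that and drop the Caccioppoli step, the argument is complete.
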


\begin{proof} Fix $(x,t)\in \reu$ and a cone $\Gamma_{x,t}$ with a vertex at $(x,t/2)$ of aperture 1. Let us further denote by $A^*_{01}$ the matrix equal to $A^*_1$ on $\Gamma_{x,t}$ and $A^*_0$ in $\reu\setminus \Gamma_{x,t}$, and by $L^*_{01}$ the corresponding operator. 

Since $A^*_{01}$ is $t$-independent near the boundary, \eqref{eq8.02} follows for the fundamental solution of $L^*_{01}$ from the ``flat" version of the Caccioppoli inequality (see Proposition~2.1 in \cite{AAAHK}):
\begin{multline}\label{eq8.04}
\left(\int_{\rn} |\nabla_{z}\Gamma^*_{01} (x,t,z,0)|^p\,dz\right)^{1/p}  \lesssim \sum_{k=0}^\infty (2^kt)^{n/p}\left(\fint_{S_k(\Delta(x,t))} |\nabla_{z}\Gamma^*_{01} (x,t,z,0)|^p\,dz\right)^{1/p} \\[4pt]
\lesssim \sum_{k=0}^\infty (2^kt)^{-1+n/p} \left(\fiint_{S_k(\Delta(x,t))\times (-2^kt, 2^kt)} |\nabla_{z}\Gamma^*_{01} (x,t,z,s)|^2\,dzds\right)^{1/2} \\[4pt]
\lesssim \sum_{k=1}^\infty (2^kt)^{-n(1-1/p)} \lesssim t^{-n(1-1/p)},
\end{multline}
where $S_k(\Delta(x,t))$ as usual, denote dyadic annuli around the ball $\Delta(x,t)\subset \rn$.

Now take some $f\in L^{p'}$ and observe that $\int_{\rn} \nabla_{z}\Gamma^* (x,t,z,0) f(z)\,dz = (\SL^{L^*_1}\nabla_\|)f(x)$ and furthermore, following our usual perturbation considerations (cf., e.g., \eqref{eq6.29.6}), we have 
$$ (\SL^{L^*_1}_t\nabla_\|)f=(\SL^{L^*_{01}}_t\nabla_\|)f+ (L^*_{01})^{-1} 
\dv (A^*_1-A^*_{01}) \nabla (\SL^{L^*_1}\nabla_\|)f (\cdot, t),$$
and in view of the computation in \eqref{eq8.04}, it remains to analyze the second term above. However, 
\begin{multline*}(L^*_{01})^{-1} \dv (A^*_1-A^*_{01}) \nabla (\SL^{L^*_1}_t\nabla_\|)f(x,t)
\\[4pt]=\iint_{\ree}\nabla_{y,s}  \Gamma^*_{01} (x,t,y,s) (A^*_1-A^*_{01})(y,s) \nabla_{y,s} (\SL^{L^*_1}_s\nabla_\|)f(y)\,dyds\end{multline*}
for $(x,t)\in\reu$. Hence, given the support of $A^*_1-A^*_{01}$, the fact that $|A^*_1-A^*_{01}|\leq |A^*_1-A^*_0|$, and \eqref{eq1.17}, we only have to prove that 
\begin{equation}\label{eq8.05}
\chi_{\ree\setminus \Gamma_{x,t}}\nabla  \Gamma^*_{01} (x,t,\cdot,\cdot) \in \widetilde T^p_\infty,\quad 1<p<2+\eps,
\end{equation}
uniformly in $(x,t)\in\reu$ with the norm $Ct^{-n(1-1/p)}$. 

Throughout this argument it will be convenient to use cylinders rather than balls in the definition of the non-tangential maximal function, and as usual, we assume that the aperture is small enough (but fixed) to ensure proper separation. In particular, we can always assume that for any $(y,s)\in \ree\setminus \Gamma_{x,t}$ the corresponding cylinder $C_{y,s}:=\Delta(y,cs)\times(s-cs,s+cs)$ and its slightly fattened version $\widetilde C_{y,s}:=\Delta(y,c_1s)\times(s-c_1s,s+c_1s)$, $c_1>c$,  have a distance from $(x,t)$ equivalent to $|(y,s)-(x,t)|$. Let us now separate several cases. 

If $(y,s)\in \reu\setminus \Gamma_{x,t}$ is such that $\widetilde C_{y,s}\cap \Gamma_{x,t}\neq\emptyset$ (so that, in particular, for all $(z,\tau)\in \widetilde C_{y,s}$ we have $|(x,t)-(z,\tau)|\approx |x-z|\approx \tau\approx s \gtrsim t$) we have by Caccioppoli inequality
\begin{multline}\label{eq8.06}
\left(\fiint_{C_{y,s}}| \nabla  \Gamma^*_{01} (x,t,z,\tau)|^2\,dzd\tau\right)^{1/2}\\[4pt]
 \lesssim \frac 1s \left(\fiint_{\widetilde C_{y,s}}|   \Gamma^*_{01} (x,t,z,\tau)|^2\,dzd\tau\right)^{1/2} \approx |(x,t)-(y,s)|^{-n}\approx (|x-y|+t)^{-n}.
\end{multline}

If, on the other hand, $(y,s)\in \reu\setminus \Gamma_{x,t}$ is such that $\widetilde C_{y,s}\cap \Gamma_{x,t}=\emptyset$ we adopt the argument from \cite{KP}, p. 494, to write 
\begin{multline}\label{eq8.07}
\left(\fiint_{C_{y,s}}| \nabla  \Gamma^*_{01} (x,t,z,\tau)|^2\,dzd\tau\right)^{1/2}\\[4pt]
 \lesssim \frac 1s \fiint_{\widetilde C_{y,s}}\Big|   \Gamma^*_{01} (x,t,z,\tau)-\fint_{\Delta(y,c_1s)}  \Gamma^*_{01} (x,t,w,0)\,dw \Big|\,dzd\tau
 \\[4pt]
 \lesssim \frac 1s \fiint_{\widetilde C_{y,s}}\Big|   \Gamma^*_{01} (x,t,z,\tau)- \Gamma^*_{01} (x,t,z,0) \Big|\,dzd\tau\\[4pt]
+ \frac 1s \fiint_{\widetilde C_{y,s}}\Big|   \Gamma^*_{01} (x,t,z,0)-\fint_{\Delta(y,c_1s)}  \Gamma^*_{01} (x,t,w,0)\,dw \Big|\,dzd\tau
 \\[4pt]
 \lesssim \sup_{z\in \Delta(y,c_1s), \,\tau<(1+c_1)s}\Big|  \partial_\tau \Gamma^*_{01} (x,t,z,\tau)\Big|
+  \fint_{\Delta(y,c_1s)}\Big| \nabla_z  \Gamma^*_{01} (x,t,z,0) \Big|\,dz=: I+II,
 \end{multline}
using Caccioppoli inequality and local Moser estimates for solutions for the first bound above and Poincar\'e inequality for the third one. The second term, $II$, is bounded by the Hardy-Littleweeod maximal function and hence we can invoke \eqref{eq8.04} to deduce the desired bound. 

Turning to $I$, we observe that for all $(z,\tau) \in \reu\setminus \Gamma_{x,t}$ 
the integrand satisfies $L_0\,{\overline{\Gamma^*_{01} (x,t,z,\tau)}}=0$, with $L_0$ acting in $z,\tau$ variables. In particular, this is valid for $z\in \Delta(y,c_1s), \,\tau<(1+c_1)s$ (with $(y,s)$ chosen so that $\widetilde C_{y,s}\cap \Gamma_{x,t}=\emptyset$) and their neighborhood of radius $c|(x,t)-(z,\tau)|\gtrsim |x-z|+t$ (modulo the points near the boundary of $\Gamma_{x,t}$ which would fall under the scope of the same argument as in \eqref{eq8.06}).

Since $L_0$ is $t$-independent, derivatives in $\tau$ are also solutions, and we can apply Moser local bounds. Then for any 
$z\in \Delta(y,c_1s), \,\tau<(1+c_1)s$, we have 
 \begin{multline}\label{eq8.08}
\Big|  \partial_\tau \Gamma^*_{01} (x,t,z,\tau)\Big| \lesssim \left(\fiint_{B((z,\tau), c|(x,t)-(z,\tau)| )} | \partial_\tau \Gamma^*_{01} (x,t,w,r)|^2\,dwdr \right)^{1/2}\\[4pt]
\lesssim |(x,t)-(z,\tau)|^{-1}\left(\fiint_{B((z,\tau), c'|(x,t)-(z,\tau)| )} | \Gamma^*_{01} (x,t,w,r)|^2\,dwdr \right)^{1/2}\\[4pt] \lesssim  |(x,t)-(z,\tau)|^{-n}\lesssim (|x-z|+t)^{-n}.
 \end{multline}
 
The $L^p(\rn)$ norm in $z$ of the  right-hand sides of \eqref{eq8.06} and \eqref{eq8.08} gives a bound by $t^{-n(1-1/p)}$, as desired. Moreover, if $(y,s)\in\rel$, the exact same argument as in \eqref{eq8.07}--\eqref{eq8.08} also gives the same bound. This finishes the proof of \eqref{eq8.05} and thus, of \eqref{eq8.02}.
\end{proof}

\begin{lemma}\label{l8.09} Let $L$ be an elliptic operator falling under the scope of Theorem~\ref{t1.13}. Then for any $1<p<2+\eps$ the fundamental solution of $L^*$ satisfies
\begin{equation}\label{eq8.010}
\chi_{\ree\setminus \Gamma_{x,t}}\nabla  \Gamma^* (x,t,\cdot,\cdot) \in \widetilde T^p_\infty(\reu),\quad 1<p<2+\eps,
\end{equation}
uniformly in $x,t\in\reu$ with the norm $Ct^{-n(1-1/p)}$. Here $\Gamma_{x,t}$ is a cone with vertex at $(x,t/2)$ of aperture 1.
\end{lemma}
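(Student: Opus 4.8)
The plan is to deduce Lemma~\ref{l8.09} for the true fundamental solution $\Gamma^*$ of $L^*=L_1^*$ from the already-established Lemma~\ref{l8.01}, which handles the auxiliary operator $L_{01}^*$ whose coefficients agree with $A_1^*$ inside the cone $\Gamma_{x,t}$ and with $A_0^*$ outside it. The point is that $A_1^*$ and $A_{01}^*$ differ only on $\reu\setminus\Gamma_{x,t}$, where the discrepancy is controlled by $|A_1^*-A_0^*|$ and hence by the Carleson function $\epsilon$. First I would record the perturbation identity
\begin{equation*}
\SL^{L_1^*}_t\nabla_\| f = \SL^{L_{01}^*}_t\nabla_\| f + (L_{01}^*)^{-1}\dv\,(A_1^*-A_{01}^*)\nabla\big(\SL^{L_1^*}\nabla_\|\big)f\,(\cdot,t)\,,
\end{equation*}
exactly as in the proof of Lemma~\ref{l8.01}, valid for $f\in L^{p'}$, $1<p<2+\eps$. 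Testing against such $f$ and using $\int_{\rn}\nabla_z\Gamma^*(x,t,z,0)f(z)\,dz=(\SL^{L_1^*}\nabla_\|)f(x)$, the statement \eqref{eq8.010} is the ``integral kernel'' form of the tent-space bound $\|\chi_{\ree\setminus\Gamma_{x,t}}\nabla\Gamma^*(x,t,\cdot,\cdot)\|_{\widetilde T^p_\infty}\lesssim t^{-n(1-1/p)}$.

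Second, I would split the right-hand side of the identity into its two terms. For the first term, $\SL^{L_{01}^*}_t\nabla_\|$, the bound \eqref{eq8.05} of Lemma~\ref{l8.01} already gives $\chi_{\ree\setminus\Gamma_{x,t}}\nabla\Gamma_{01}^*(x,t,\cdot,\cdot)\in\widetilde T^p_\infty$ with norm $\lesssim t^{-n(1-1/p)}$, uniformly in $(x,t)$, so there is nothing further to do. For the second term, the error term
\begin{equation*}
(L_{01}^*)^{-1}\dv\,(A_1^*-A_{01}^*)\nabla\big(\SL^{L_1^*}_s\nabla_\|\big)f(x,t)
=\iint_{\ree}\nabla_{y,s}\Gamma_{01}^*(x,t,y,s)\,(A_1^*-A_{01}^*)(y,s)\,\nabla_{y,s}\big(\SL^{L_1^*}_s\nabla_\|\big)f(y)\,dyds\,,
\end{equation*}
I would pair it with a test function $\Psi\in T_2^p$ (to dualize $\widetilde T^p_\infty$ against $T^{p'}_2$, cf. Lemma~\ref{l2.14} and the duality used in Step~II of the proof of Theorem~\ref{t1.13}), and use Lemma~\ref{l2.20}: the product $(A_1^*-A_{01}^*)\nabla(\SL^{L_1^*}\nabla_\|)f$ lies in $T^{p'}_2$ with norm $\lesssim\eps_0\|(\SL^{L_1^*}_t\nabla)\nabla_\| f\|_{T^{p'}_2}$, controlled by $\|f\|_{L^{p'}}$ thanks to the square-function bound \eqref{eq1.17}. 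Then, since for $(y,s)\in\reu\setminus\Gamma_{x,t}$ the kernel $\Gamma_{01}^*(x,t,y,s)$ is an adjoint solution in $(y,s)$ away from the pole, the same estimates that gave \eqref{eq8.05} show $\chi_{\ree\setminus\Gamma_{x,t}}\nabla\Gamma_{01}^*(x,t,\cdot,\cdot)\in{\bf T}^\infty_{2,0}\cap(\text{bounded part})$, and the factorization ${\bf T}^\infty_2\cdot T^{p'}_2\hookrightarrow\widetilde T^1_1$ of Lemma~\ref{l2.20} (together with Hölder in the tent-space scale) closes the estimate with the claimed normalization $t^{-n(1-1/p)}$. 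Absorbing the small factor $\eps_0$ is harmless since the whole error term is already a bounded perturbation.

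The main obstacle I anticipate is bookkeeping the geometry: one must be careful that the cone $\Gamma_{x,t}$ with vertex at $(x,t/2)$ and the Whitney averaging implicit in $\widetilde T^p_\infty$ interact correctly, i.e. that for $(y,s)$ in the complement of $\Gamma_{x,t}$ the Whitney cylinder $C_{y,s}$ (and its fattening) stays at distance $\approx|(x,t)-(y,s)|$ from the pole and, crucially, stays inside a region where $\Gamma_{01}^*$ is a solution — this is exactly the case analysis carried out in \eqref{eq8.06}--\eqref{eq8.08}, and the same trichotomy (cylinder meeting $\Gamma_{x,t}$, cylinder disjoint from $\Gamma_{x,t}$ in $\reu$, cylinder in $\rel$) has to be re-run here for the convolution against $\Psi$. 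A secondary technical point is justifying the perturbation identity at the level of kernels rather than of $L^{p'}$ operators, but this follows by the now-familiar density argument with $f\in C_0^\infty(\rn)$ and the mapping properties $(L_1^*)^{-1}\dv:L^2(\ree)\to\dot W^{1,2}(\ree)$ used throughout Section~\ref{s6}. Once \eqref{eq8.010} is in hand, \eqref{eq8.02} is immediate by taking the $L^p(\rn)$ norm in $z$ of the pointwise Whitney-average bounds, exactly as at the end of the proof of Lemma~\ref{l8.01}.
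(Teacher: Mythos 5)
There is a genuine gap, and it is a category error rather than a bookkeeping one. The perturbation identity you write,
\[
\SL^{L_1^*}_t\nabla_\| f = \SL^{L_{01}^*}_t\nabla_\| f + (L_{01}^*)^{-1}\dv\,(A_1^*-A_{01}^*)\nabla\big(\SL^{L_1^*}\nabla_\|\big)f\,(\cdot,t)\,,
\]
lives entirely at the level of the single layer potential, whose kernel is the \emph{boundary trace} $\Gamma^*(x,t,z,0)$. Testing against $f\in L^{p'}$ (or dualizing in the $(x,t)$-variable against $\Psi$, which is what you actually set up) gives information only about $\nabla_z\Gamma^*(x,t,z,0)$. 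But \eqref{eq8.010} is a statement about the full kernel $\chi_{\ree\setminus\Gamma_{x,t}}\nabla_{y,s}\Gamma^*(x,t,y,s)$ over all $(y,s)\in\reu$, measured in $\widetilde T^p_\infty$. The boundary slice $s=0$ carries none of that bulk information, so your reduction to the single-layer identity cannot produce \eqref{eq8.010}. What your argument does reproduce — essentially verbatim, modulo \eqref{eq8.05} which you are free to cite — is the proof of \eqref{eq8.02} in Lemma~\ref{l8.01}, which is already in the paper. (Your closing remark that ``once \eqref{eq8.010} is in hand, \eqref{eq8.02} is immediate'' inverts the logical dependency of what you actually wrote.)

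A secondary but symptomatic problem is the duality: the predual of $\widetilde T^p_\infty$ under $\iint(\cdot)\frac{dyds}{s}$ is $\widetilde T^{p'}_1$, not $T^{p'}_2$ (Lemma~\ref{l2.14} concerns $T^p_2$, not $\widetilde T^p_\infty$). Likewise the embedding you cite from Lemma~\ref{l2.20} should read ${\bf T}^\infty_2\cdot T^{p'}_2\hookrightarrow\widetilde T^{p'}_1$, and it gets used in the opposite direction from what you describe. The correct route is the one the paper takes: fix $(x,t)$, pair $\chi_{\ree\setminus\Gamma_{x,t}}\nabla(\Gamma^*-\Gamma^{*x,t}_{01})(x,t,\cdot,\cdot)$ against $\Phi\in\widetilde T^{p'}_1$, and recognize the resulting scalar as $\big((L_{01}^{*x,t})^{-1}-(L_1^*)^{-1}\big)\dv\big(\chi_{\ree\setminus\Gamma_{x,t}}\Phi/s\big)(x,t)$. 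That is a \emph{resolvent} perturbation, not a single-layer one; one then expands via $(L^*_1)^{-1}\dv-(L^{*x,t}_{01})^{-1}\dv=(L^{*x,t}_{01})^{-1}\dv\,(A^*_1-A^{*x,t}_{01})\,\nabla(L^*_1)^{-1}\dv$, hits the outer kernel with \eqref{eq8.05}, absorbs the Carleson factor by Lemma~\ref{l2.20}, and closes with the tent-space bound $\nabla L_1^{-1}\dv: T^p_2\to\widetilde T^p_\infty$ from \eqref{eq6.27} in its dual form. No re-running of the trichotomy \eqref{eq8.06}--\eqref{eq8.08} is needed once you dualize at the right level.
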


\bp Since by \eqref{eq8.05} the desired estimate \eqref{eq8.010} is valid for $\Gamma_{01}$ defined in the preceding Lemma, it is sufficient to estimate the difference. To underline dependence of $A_{01}$ on $x,t$ we shall write explicitly $x,t$ as super-indices for the corresponding matrices and operators. 

Pairing  $\chi_{\ree\setminus \Gamma_{x,t}}\nabla  (\Gamma^*_{1}-\Gamma^{*x,t}_{01}) (x,t,\cdot,\cdot)$ with $\Phi\in C_0^\infty(\reu)$ such that $\|\Phi\|_{\widetilde T^{p'}_1(\reu)}=1$ we observe that it is sufficient to obtain the estimate on the absolute value of 
\begin{multline}\label{eq8.012}
\left(\iint_{\reu} \chi_{\ree\setminus \Gamma_{x,t}}(z,s)\nabla_{z,s} (\Gamma^*_{1}-\Gamma^{*x,t}_{01}) (y,\tau,z,s) \,\Phi(z,s)\,\frac{dzds}{s}\right)\,\Big|_{(y,\tau)=(x,t)}\\[4pt]
=\left(\left( (L_{01}^{*x,t})^{-1}- (L_{1}^{*})^{-1} \right) \dv\, \Bigl(\chi_{\ree\setminus \Gamma_{x,t}} \frac{\Phi}{s}\Bigr)(y,\tau)\right) \,\Big|_{(y,\tau)=(x,t)}
\end{multline}
by $C t^{-n(1-1/p)}\|\Phi\|_{\widetilde T^{p'}_1(\reu)}$ (here the division by $s$ signifies the division of $\Phi=\Phi(y,s)$ by its vertical variable $s$). To this end, we write 
\begin{equation}\label{eq8.011}
(L^*_1)^{-1} \dv-(L^{*x,t}_{01})^{-1}\dv  =(L^{*x,t}_{01})^{-1} \dv (A^*_1-A^{*x,t}_{01}) \nabla (L^*_1)^{-1}\dv.
\end{equation}
This makes sense acting on any $L^2$ function, in particular, in our case.  Hence, the right-hand side of \eqref{eq8.012} is equal to 
\begin{equation}\label{eq8.013}
-\left(\left( (L^{*x,t}_{01})^{-1} \dv (A^*_1-A^{*x,t}_{01}) \nabla (L^*_1)^{-1}\dv\right) \, \Bigl(\chi_{\ree\setminus \Gamma_{x,t}} \frac{\Phi}{s}\Bigr)(y,\tau)\right) \,\Big|_{(y,\tau)=(x,t)}.
\end{equation}
Now observe that by construction $(A^*_1-A^{*x,t}_{01})= \chi_{\ree\setminus \Gamma_{x,t}}(A^*_1-A^{*x,t}_{01})$ and hence, by \eqref{eq8.05}, the absolute value of the expression in \eqref{eq8.013} is bounded by 
\begin{equation}\label{eq8.014}
C t^{-n(1-1/p)}\left\|\tau\,(A^*_1-A^{*x,t}_{01}) \nabla (L^*_1)^{-1}\dv  \, \Bigl(\chi_{\ree\setminus \Gamma_{x,t}} \frac{\Phi}{s}\Bigr)\right\|_{\widetilde T^{p'}_1(\reu)}.
\end{equation}
We are slightly abusing the notation here writing $\tau$ under the norm: it is understood that the norm corresponds to the integration in $y,\tau$. Recall now that by Lemma~\ref{l2.20} we have ${\bf T}^\infty_2 \cdot {T}^{p'}_2 \hookrightarrow \widetilde T^{p'}_1.$ Hence, the expression in \eqref{eq8.014} is, in turn, bounded by 
\begin{equation}\label{eq8.015}
C t^{-n(1-1/p)}\left\|\tau\,\nabla (L^*_1)^{-1}\dv  \, \Bigl(\chi_{\ree\setminus \Gamma_{x,t}} \frac{\Phi}{s}\Bigr)\right\|_{T^{p'}_2(\reu)}.
\end{equation}
Finally, by duality with \eqref{eq6.27}, the latter does not exceed $C t^{-n(1-1/p)} \|\Phi\|_{\widetilde T^{p'}_1(\reu)}$, as desired. \ep 

To guarantee uniqueness in Hardy spaces for the regularity problem, we shall 
impose some additional regularity at the boundary  for the solutions of an elliptic operator at hand. To this end, recall that an elliptic operator satisfies the H\"{o}lder continuity condition at the boundary (or the De Giorgi-Nash estimates at the boundary) if 
there exists a constant $\alpha_{0}>0$ such that for every solution to $L u=0$ in $R_{2Q}=2Q \times (0, 2 \ell(Q))$, with $u(x,0)=0$ in $2Q$ in the weak sense and $(x,t) \in R_Q$,  we have
\begin{equation}
|u(x,t)| \leq C (t/R)^{\alpha_{0}} \left( \frac{1}{|R_{2Q}|}\iint\limits_{R_{2Q}} |u|^2\right)^{1/2}.
\label{eq1.8}
\end{equation}
 This condition will allow us to use suitable pointwise estimates for the Green function associated with $L_0$ in order to obtain uniqueness. 
 
For operators with
complex coefficients, it is not clear whether interior H\"older regularity \eqref{eq1.6}
implies boundary H\"older regularity \eqref{eq1.8}.  However, it is known that
operators with complex coefficients that are
small $L^\infty$ perturbations of real coefficients, do enjoy both
\eqref{eq1.6} and \eqref{eq1.8} (see \cite{A}).

Let us state the following auxiliary lemma, whose proof can be found in \cite{KK}.
\begin{lemma}[\cite{KK}]\label{l8.1} Let $L$ be an elliptic operator such that the solutions to $L$ and $L^*$ satisfy both the interior De Giorgi-Nash-Moser bounds \eqref{eq1.6}--\eqref{eq1.7} and
the condition of H\"older continuity at the boundary \eqref{eq1.8}. Let $G(X,Y)$ be the Green function of $L$ in $\reu$ (see \cite{HK} for the detailed definition). Then for all $X,Y \in \reu$, with $X\neq Y$, we have
\begin{align}
|G(X,Y)| &\lesssim|X-Y|^{1-n},\label{eq8.2}\\
|G(X,Y)| &\lesssim \min\{\delta(X),|X-Y|\}^{\alpha_0} \min\{\delta(Y),|X-Y|\}^{\alpha_0} |X-Y|^{1-n-2\alpha_0},
\label{eq8.3}\end{align}
where $\delta(X)$ is the distance of $X$ to the boundary and $\alpha_0$ is the minimum of H\"older exponents in \eqref{eq1.6} and \eqref{eq1.8}.
\end{lemma}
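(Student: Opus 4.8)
\textbf{Plan of proof for Lemma~\ref{l8.1} (Green function estimates).}

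The statement is quoted from \cite{KK}, so the plan is to recall the standard argument rather than reprove it from scratch; the key inputs are the construction of the Green function in \cite{HK}, together with interior estimates \eqref{eq1.6}--\eqref{eq1.7} and the boundary H\"older estimate \eqref{eq1.8} for both $L$ and $L^*$. First I would recall that $G(X,Y)$ is obtained from the global fundamental solution $\Gamma$ by correcting it to vanish on the boundary: $G(X,Y) = \Gamma(X,Y) - v_Y(X)$, where for fixed $Y\in\reu$ the correction $v_Y$ solves $Lv_Y = 0$ in $\reu$ with $v_Y(\cdot)\big|_{\rn} = \Gamma(\cdot,Y)\big|_{\rn}$ in the appropriate weak sense. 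The bound \eqref{eq8.2}, $|G(X,Y)|\lesssim |X-Y|^{1-n}$, then follows from the corresponding pointwise bound on $\Gamma$ (which holds by the De Giorgi/Nash/Moser hypotheses, cf.\ the $t$-independent version \eqref{eq2.29}, but valid in general under \eqref{eq1.6}--\eqref{eq1.7}) combined with the maximum-principle-type control $\sup_{\reu}|v_Y|\lesssim \sup_{\rn}|\Gamma(\cdot,Y)| \lesssim \dist(Y,\rn)^{1-n}$; for $|X-Y|\gtrsim\dist(Y,\rn)$ one uses instead the decay of $\Gamma$ away from the pole, so that in either regime one recovers $|X-Y|^{1-n}$.

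The improved estimate \eqref{eq8.3} is the substantive part and requires exploiting boundary regularity. The strategy is symmetric in $X$ and $Y$ (using $G_L(X,Y) = \overline{G_{L^*}(Y,X)}$), so it suffices to gain the factor $\min\{\delta(X),|X-Y|\}^{\alpha_0}|X-Y|^{-\alpha_0}$ in the $X$ variable, and then to repeat the argument in the $Y$ variable for $G_{L^*}$. Fix $Y$ and let $r := |X-Y|$. If $\delta(X)\geq r/2$ there is nothing to gain and \eqref{eq8.2} already gives \eqref{eq8.3}; so assume $\delta(X) < r/2$, i.e.\ $X$ is close to the boundary relative to its distance from the pole. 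Let $x^*$ be the boundary projection of $X$. On the surface ball $\Delta(x^*, cr)$ and the box $R:= \Delta(x^*,cr)\times(0,cr)$ (with $c$ small so that $Y\notin 2R$), the function $Z\mapsto G(Z,Y)$ is a solution of $L_Z G(\cdot,Y)=0$ vanishing on $\Delta(x^*,cr)\subset\rn$; hence \eqref{eq1.8} applies and yields
\[
|G(X,Y)| \lesssim \Big(\frac{\delta(X)}{r}\Big)^{\alpha_0}\Big(\fiint_{R}|G(Z,Y)|^2\,dZ\Big)^{1/2}
\lesssim \Big(\frac{\delta(X)}{r}\Big)^{\alpha_0}\, r^{1-n},
\]
where the last step uses \eqref{eq8.2} on the box $R$ (whose points are all at distance $\approx r$ from $Y$) together with $|R|\approx r^{n+1}$. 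This is exactly the claimed gain $\min\{\delta(X),r\}^{\alpha_0}\,r^{-\alpha_0}\cdot r^{1-n}$ in the regime $\delta(X)<r/2$.

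Finally, to insert the analogous factor in $Y$, one applies the same reasoning to $W\mapsto \overline{G(X,W)} = G_{L^*}(W,X)$, which is an $L^*$-solution vanishing on a boundary surface ball centered at the projection of $Y$ whenever $\delta(Y)<r/2$; combining the two one-sided gains (and noting that when either $\delta(X)\geq r/2$ or $\delta(Y)\geq r/2$ the corresponding minimum is just $r$, so no gain is needed there) produces the full two-sided bound \eqref{eq8.3} with the correct powers $1-n-2\alpha_0$. The main obstacle is purely a matter of bookkeeping the several geometric cases ($\delta(X)$ vs.\ $\delta(Y)$ vs.\ $r$, and ensuring the boxes on which \eqref{eq1.8} is applied stay away from the pole); there is no analytic difficulty beyond the already-assumed interior and boundary De Giorgi/Nash estimates. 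Since the full details appear in \cite{KK}, I would present this as a brief sketch and refer the reader there.
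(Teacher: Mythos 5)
The paper does not prove this lemma at all: it states ``whose proof can be found in \cite{KK}'' and moves on. So there is nothing to compare against directly; what matters is whether your sketch is correct and honest about what it imports. Your treatment of the improved bound \eqref{eq8.3} from \eqref{eq8.2} is right and is the substantive content: applying the boundary H\"older estimate \eqref{eq1.8} on a box $R=\Delta(x^*,cr)\times(0,cr)$ disjoint from the pole to gain the factor $(\delta(X)/r)^{\alpha_0}$, then symmetrizing in $Y$ via $G_L(X,Y)=\overline{G_{L^*}(Y,X)}$, is exactly the standard two-step argument.

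However, your derivation of the basic bound \eqref{eq8.2} has a genuine gap: you invoke a ``maximum-principle-type control $\sup_{\reu}|v_Y|\lesssim\sup_{\rn}|\Gamma(\cdot,Y)|$'', but the coefficients here are complex and merely bounded measurable, so no maximum principle is available. This is not a cosmetic point --- the whole paper is about complex coefficients, and the De Giorgi/Nash/Moser bounds give local interior pointwise control, not a global sup of a solution in terms of its boundary trace. Moreover, even granting such a bound, you would get $|v_Y|\lesssim\dist(Y,\rn)^{1-n}$ globally, which does not control $|G(X,Y)|$ by $|X-Y|^{1-n}$ in the regime $|X-Y|\gg\dist(Y,\rn)$ without an additional decay estimate for $v_Y$ away from $Y$. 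The construction in \cite{HK} and \cite{KK} does not pass through the ``fundamental solution minus correction'' decomposition at this step: \eqref{eq8.2} is obtained directly from the approximating construction of $G$ via energy estimates, $L^{p}$ and weak-$L^{p}$ bounds near the pole, a scaling argument, and local boundedness \eqref{eq1.7}. Since you intend to present this as a brief sketch and refer to \cite{KK} for details, it would be cleaner to simply cite \eqref{eq8.2} from there as well and present only the derivation of \eqref{eq8.3} from \eqref{eq8.2}, which is the part your sketch handles correctly.
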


\begin{proposition}\label{p8.3.1} Let $L$ be an elliptic operator falling under the scope of Theorems~\ref{t1.13}, \ref{t1.32} and
retain the significance of $p_0$, $\eps$, $\alpha_0$ from Theorem~\ref{t1.13}.
Then the following is true: 
\begin{itemize}
\item[(i)] Assume that $(R)_p$ is solvable for some $1<p<2+\eps$. If $u\in W^{1,2}_{loc}(\reu)$ is a solution of the Dirichlet problem $(D)_{p'}$, with zero boundary data in the sense that $L^*u=0$, $N_* (u)\in L^{p'}(\rn)$, and $u(\cdot, t)\to 0$ as $t\to 0$ strongly in $L^{p'}$ on compact subsets of $\rn$, then $u\equiv 0$. 

\item[(ii)] Assume that $(D)_{p'}$ is solvable for some $1<p<2+\eps$.
If $u\in W^{1,2}_{loc}(\reu)$ is a solution of the Regularity problem $(R)_{p}$, $1<p<2+\eps$ and $p<n$, with zero boundary data, that is, $Lu=0$, $\N(\nabla u)\in L^p(\rn)$, and $u(\cdot, t)\to 0$ as $t\to 0$ n.t., then $u\equiv 0$ (modulo constants).    

\item[(ii-a)] Assume that \eqref{eq1.8} is valid for $L^*$. Then the following is true. If $u\in W^{1,2}_{loc}(\reu)$ is a solution of the Regularity problem $(R)_{p}$, $p_0<p<2+\eps$, with zero boundary data, that is, $Lu=0$, $\N(\nabla u)\in L^p(\rn)$, and $u(\cdot, t)\to 0$ as $t\to 0$ n.t., then $u\equiv 0$ (modulo constants).  Here $p_0$ and $\eps$ depend on the corresponding parameter in the statement of Theorem~\ref{t1.13} and also on the boundary regularity exponent from \eqref{eq1.8}.



\item[(iii)] Assume  that $S_t^{L}|_{t=0}:H^p\to H^{1,p}$ and $\frac 12 I+\widetilde{\K}^{L}:H^{p}\to H^{p}$ are both invertible for some $p_0<p<2+\eps$ and that the solutions to the regularity problem $(R)_p$ are unique (e.g., the conditions of {\rm{(ii)}} or {\rm{(ii-a)}} are satisfied). Then the solution to the Neumann problem is unique. Specifically, if $u\in W^{1,2}_{loc}(\reu)$ is a solution of the Neumann problem $(N)_{p}$, $p_0<p<2+\eps$, with zero boundary data, that is, $Lu=0$, $\N(\nabla u)\in L^p(\rn)$, and $\partial_{\nu} u=0$ (with the normal derivative interpreted via Lemma~\ref{l7.1}), then $u\equiv 0$ (modulo constants).  
\end{itemize}
\end{proposition}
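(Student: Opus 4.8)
The four parts of Proposition~\ref{p8.3.1} are essentially duality arguments between the Dirichlet and Regularity problems, plus a reduction of Neumann-uniqueness to Regularity-uniqueness via the layer potential representation. I will treat them in the stated order, since (iii) genuinely uses (ii)/(ii-a).

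\textbf{Part (i): Dirichlet uniqueness from Regularity solvability.} Suppose $u$ solves $L^*u=0$ in $\reu$ with $N_*(u)\in L^{p'}(\rn)$ and $u(\cdot,t)\to 0$ strongly in $L^{p'}$ on compacta. The plan is to test $u$ against solutions of the Regularity problem for $L$. Given $g\in H^{1,p}(\rn)$, solvability of $(R)_p$ produces $v$ with $Lv=0$, $\N(\nabla v)\in L^p$, $v(\cdot,t)\to g$ n.t.\ and in the sense of Lemma~\ref{l7.1}. The idea is to integrate by parts on a strip $\rn\times(\varepsilon,1/\varepsilon)$ in the pairing $\iint A^*\nabla u\cdot\overline{\nabla v}$, which vanishes identically (each factor is a null solution of the adjoint pair on the strip), obtaining a boundary term at $t=\varepsilon$ minus one at $t=1/\varepsilon$. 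Using Moser bounds, $N_*(u)\in L^{p'}$, $\N(\nabla v)\in L^p$, the decay of $u$ at the boundary (which kills the $t=\varepsilon$ term after one lets $\varepsilon\to0$, pairing $u(\cdot,\varepsilon)\to0$ in $L^{p'}$ against $\partial_{\nu}v(\cdot,\varepsilon)$ bounded in $L^p$), and the fact that the top boundary term tends to $0$ (by the global integrability $\nabla u,\nabla v\in L^{\text{something}>1}$ via Lemma~\ref{l2.31}, or by a Cauchy--Schwarz/annular decay argument), one deduces $\langle u(\cdot,0),\partial_{\nu_{A}}v\rangle$-type identities that force all boundary pairings of $u$ to vanish. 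Since the normal derivatives $\partial_{\nu_A}v$, as $v$ ranges over solutions of $(R)_p$, are dense enough in a predual of the space where the trace of $u$ lives — this is where invertibility hidden in $(R)_p$ solvability enters — we conclude $u$ has zero Cauchy data, hence $u\equiv0$ by the representation formula (or by a Green's function argument).

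\textbf{Parts (ii) and (ii-a): Regularity uniqueness from Dirichlet solvability.} This is the dual of Part (i). Let $u$ solve $Lu=0$, $\N(\nabla u)\in L^p$, $u\to0$ n.t. By Lemma~\ref{l7.1}, $u$ has boundary trace $0$ in $H^{1,p}$, and also $\partial_{\nu_A}u\in H^p$ is well defined. The plan is: (a) produce the Green-function representation $u(X)=\iint_{\reu}\nabla_Y G(X,Y)\cdot A\nabla u(Y)\,dY$-type identity after appropriate integration by parts, or more efficiently, (b) pair $u$ against solutions $w$ of the Dirichlet problem $(D)_{p'}$ for $L^*$ with arbitrary data $f\in L^{p'}$ (resp.\ $\Lambda_\alpha$ data), integrate by parts on strips, and use the vanishing of the boundary trace of $u$ together with the control $N_*(w)\in L^{p'}$ and $\N(\nabla u)\in L^p$ to show $\langle\partial_{\nu_A}u,f\rangle=0$ for all such $f$, whence $\partial_{\nu_A}u=0$; then one needs to upgrade ``zero trace and zero conormal'' to $u\equiv$ const. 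For $p<n$ (part (ii)) the Sobolev embedding $H^{1,p}\hookrightarrow L^{p^*}$ of Lemma~\ref{l2.11} lets one run the argument with honest functions and avoid the boundary-regularity hypothesis; for the endpoint range $p_0<p\le 1$ (part (ii-a)), one instead invokes the boundary De Giorgi--Nash condition \eqref{eq1.8} for $L^*$, which via Lemma~\ref{l8.1} gives the refined Green-function bound \eqref{eq8.3}; this decay is what makes the representation $u(X)=\int_{\rn}\partial_{\nu^*}G(X,\cdot)\,(\text{trace of }u)$ legitimate and forces $u\equiv0$ when the trace vanishes. The auxiliary Lemmas~\ref{l8.01} and~\ref{l8.09} (the tent-space bounds on $\nabla\Gamma^*$) are exactly the ingredients needed to justify convergence of these boundary integrals in the rough range.

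\textbf{Part (iii): Neumann uniqueness from Regularity uniqueness.} Suppose $u$ solves $Lu=0$, $\N(\nabla u)\in L^p$, $\partial_{\nu_A}u=0$. By Theorem~\ref{t1.13} and Proposition~\ref{p7.15}, $u$ has a well-defined trace $f:=u|_{t=0}\in H^{1,p}$. The plan is to show $f$ lies in the kernel of an invertible operator. Consider $v:=\SL^L_t\big((S^L_t|_{t=0})^{-1}f\big)$, the layer-potential solution of the Regularity problem with data $f$; by invertibility of $S^L_t|_{t=0}:H^p\to H^{1,p}$ this is well defined and, by Theorem~\ref{t1.13}, satisfies $\N(\nabla v)\in L^p$ with $v|_{t=0}=f$. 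Then $u-v$ solves the Regularity problem with \emph{zero} boundary data, so by the uniqueness established in (ii)/(ii-a), $u=v$ modulo constants. Now compute the conormal derivative: $0=\partial_{\nu_A}u=\partial_{\nu_A}v=\big(\tfrac12 I+\widetilde{\K}^L\big)(S^L_t|_{t=0})^{-1}f$. Since $\tfrac12 I+\widetilde{\K}^L:H^p\to H^p$ is invertible by hypothesis, $(S^L_t|_{t=0})^{-1}f=0$, hence $f=0$, hence $v\equiv$ const, hence $u\equiv$ const.

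\textbf{Main obstacle.} The delicate point throughout is justifying the integration-by-parts / boundary-pairing identities rigorously in the rough function-space setting $p\le1$: one must control the boundary terms at $t=\varepsilon$ and at $t=1/\varepsilon$ simultaneously, with only $\N(\nabla u)\in L^p$ or $N_*(u)\in L^{p'}$ in hand and no pointwise decay. The top term is handled by the global integrability from Lemma~\ref{l2.31} together with the annular/Caccioppoli decay of the fundamental solution; the bottom term is where the \emph{mode of convergence} of the boundary data (strongly in $L^{p'}$ on compacta for Dirichlet, n.t.\ a.e.\ plus the Lemma~\ref{l7.1} averaged convergence for Regularity) must be matched precisely against the regularity of the test solution — and in the $H^p$ range this forces the use of the boundary-regularity hypothesis \eqref{eq1.8} and the sharp Green-function estimate \eqref{eq8.3}, which is the technical heart of (ii-a) and hence of the full Proposition.
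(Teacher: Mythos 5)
Your Part (iii) is essentially the paper's argument and is correct: represent $u$ as a single layer potential via uniqueness of $(R)_p$, then use invertibility of $\frac12 I+\widetilde{\K}$ and of $S_t|_{t=0}$ to kill the data. For Part (ii-a) you correctly identify that the ingredient is Lemma~\ref{l8.1} (the refined Green-function decay from boundary H\"older regularity), though you don't spell out a different proof than the paper's.

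Parts (i) and (ii) are where your route diverges from the paper, and the route has a genuine gap. In (i) you propose to pair $u$ on strips $\rn\times(\varepsilon,1/\varepsilon)$ against arbitrary solutions $v$ of $(R)_p$ and let $\varepsilon\to 0$. But the boundary term at $t=\varepsilon$ is of the form $\langle u(\cdot,\varepsilon),\partial_{\nu_A}v(\cdot,\varepsilon)\rangle$ (or its conjugate partner), and it tends to $0$ because $u(\cdot,\varepsilon)\to 0$ in $L^{p'}$ on compacta; the top term at $t=1/\varepsilon$ also tends to $0$. So the identity you extract is $0=0$, and the ``density of $\partial_{\nu_A}v$ in a predual'' step produces nothing: the pairings you would test against are already zero for free, since the Dirichlet trace of $u$ is zero by hypothesis. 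There is no route from there to ``$u(X_0)=0$ for each fixed interior $X_0$.'' What is missing is a device that recovers an \emph{interior} value of $u$; this is exactly what the paper supplies by constructing a Green function $G(X,\cdot)=\overline{\Gamma^*}(X,\cdot)-w_X$, where $w_X$ is the $(R)_p$ solution with boundary data $\overline{\Gamma^*(X,\cdot,0)}$, so that $G(X,\cdot,0)=0$, and then writing $u(X)\phi(X)=\iint\overline{A\nabla G(X,Y)}\,\nabla(u\phi)(Y)\,dY$ with a cutoff $\phi$ and showing the two resulting annular terms vanish as $\varepsilon\to0,\ \rho\to\infty$. (Lemmas~\ref{l8.01}, \ref{l8.09}, which you rightly flag as relevant, are precisely the tent-space bounds that control the error piece $w_X$.) Similarly in (ii): your plan to show $\partial_{\nu_A}u=0$ and then ``upgrade zero trace and zero conormal to $u\equiv$ const'' is not filled in — that upgrade is itself a Cauchy-uniqueness statement of the same difficulty as the original problem. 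The paper again sidesteps it by constructing a Green function from $(D)_{p'}$ solvability (correcting $\overline{\Gamma(X,\cdot)}$ by a $(D)_{p'}$ solution of $L^*$ with boundary data $\Gamma(X,\cdot,0)$) and using the same cutoff representation. In short: the pairing-against-test-solutions strategy without an adapted Green function only reproduces the hypothesis $u(\cdot,0)=0$, while the theorem requires a representation of interior values, and that is the missing idea.
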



Let us point out that for $t$-indepependent operators uniqueness of solutions to $(D)_{\Lambda^0}$ was demonstrated in \cite{HMiMo}. However, at the moment we do not see how to extend the underlying argument to the current setting of operators satisfying the small Carleson measure condition. Uniqueness of $(D)_{\Lambda^\alpha}$, $\alpha>0$, again for $t$-independent operators and under the assumption of invertibility of layer potentials, was shown in \cite{BM}. 

\vskip 0.08 in

\noindent {\it Proof of Proposition~\ref{p8.3.1}}.  {\bf Step I}: Proof of (i), the set-up.

Let us start with the definition of the Green function. First note that by Lemma~\ref{l8.01} for every fixed $(x,t)\in \reu$ we have  $\nabla_\| \Gamma^*(x,t, \cdot, 0)\in L^{p}(\rn),$ with the norm $Ct^{-n/p'}$. By our assumptions the regularity problem is solvable, and hence, there exists a solution to the following $(R)_p$ problem:
\begin{equation}
\begin{cases} 
Lw_{x,t}=0,\;\text{ in }\mathbb{R}_{+}^{n+1},\\ 
w_{x,t}(\cdot, s) \to {\overline{\Gamma^*(x,t, \cdot, 0)}} \text{ n.t. as $s\to 0$},\\
\|\N(\nabla w_{x,t})\|_{L^p}\lesssim t^{-n/p'}.
\end{cases}
\label{eq8.3.2}\end{equation}
Now define 
$$G(x,t, y,s):=\overline{\Gamma^*(x,t,y,s)}-w_{x,t}(y,s), \quad (x,t),(y,s)\in \reu.$$
Then, in particular, for $(y,s)\in \reu$ such that $s\leq t/4+|x-y|/2$ (that is, all $(y,s)$ which stay away from $\Gamma_{x,t}$ together with their Whitney cubes) we have
\begin{equation}\label{eq8.3.3-bis2}
|G(x,t, y,s)| \lesssim C s\, \left(\N(\chi_{\reu\setminus \Gamma_{x,t}} \nabla \Gamma^*(x,t,\cdot,\cdot))(y) +\N(\chi_{\reu\setminus \Gamma_{x,t}}\nabla w_{x,t})(y)\right), \quad y\in\rn.\end{equation}
This inequality follows (much as Lemma~\ref{l7.1}, \eqref{eq7.1.1}) from carefully tracking the argument in \cite{KP}, pp. 461--462. Furthermore, then
\begin{multline}\label{eq8.3.3}
\|G(x,t, \cdot,s)\chi_{y\in\rn:\,|x-y| \geq 2s-t/2}\|_{L^p(\rn)} \\[4pt]
\leq C s\,\| \N(\chi_{\reu\setminus \Gamma_{x,t}} \nabla \Gamma^*(x,t,\cdot,\cdot)) +\N(\chi_{\reu\setminus \Gamma_{x,t}} \nabla w_{x,t})\|_{L^p(\rn)}
\leq C s t^{-n/p'}.
\end{multline}
due to Lemma~\ref{l8.09} and \eqref{eq8.3.2}. 


Assume that $u\in W^{1,2}_{loc}(\reu)$ is a solution of $L^*u=0$ in $\reu$. Fix a point $X=(x,t) \in \reu$. Then for every $\phi\in C_0^\infty(\reu)$ such that $\phi\equiv 1$ in a neighborhood of $X$ we have 
\begin{align}
&u(x,t) =u(x,t)\phi(x,t)=\iint_{\reu} \overline{A \nabla G(X,Y)}\, \nabla \left(u\phi\right)(Y)dY=\notag\\
&=\iint_{\reu} \overline{A \nabla G(X,Y)} \,\nabla u(Y) \phi(Y)dY +\iint_{\reu} \overline{A \nabla G(X,Y)}\,u(Y) \nabla \phi (Y)dY\notag\\
&=-\iint_{\reu} \overline{G(X,Y)} \,A^* \nabla u(Y) \nabla\phi (Y)dY+\iint_{\reu} \overline{A \nabla G(X,Y)}\,u(Y) \nabla \phi (Y)dY\notag\\
&=:I+II,\label{eq8.4}
\end{align}
where  $G$ is the Green function defined as above.
Now choose $\eta\in C_0^\infty(-2,2)$ such that $\eta=1$ in $(-1,1)$ with $0\leq\eta\leq 1$, and set 
$$\phi(y,s):=(1-\eta (s/\epsilon))\,\eta (s/(100 \rho)) \,\eta(|x-y|/\rho), $$
where $\epsilon<t/8$ and $\rho>8t$. Then the domain of integration of $I$ and $II$ in \eqref{eq8.4} is contained in the union of 
\begin{itemize}
\item[(1)] $\Omega_1\subset \Delta_{2\rho}(x)\times \{\epsilon<s<2\epsilon\}$, with $\|\nabla \phi\|_{L^\infty(\Omega_1)}\lesssim \epsilon^{-1}$.\\
\item[(2)] $\Omega_{2,1}\subset \Delta_{2\rho}(x)\times \{100\rho<s<200\rho\}$, with $\|\nabla \phi\|_{L^\infty(\Omega_{2,1})}\lesssim \rho^{-1}$.\\
\item[(3)] $\Omega_{2,2}\subset (\Delta_{2\rho}(x)\setminus \Delta_{\rho}(x))\times \{0<s<200\rho\}$, with $\|\nabla \phi\|_{L^\infty(\Omega_{2,1})}\lesssim \rho^{-1}$.
\end{itemize}
We shall denote $\Omega_2:=\Omega_{2,1}\cap \Omega_{2,2}$, and split $I=I_{\Omega_1}+I_{\Omega_2}$ and $II=II_{\Omega_1}+II_{\Omega_2}$ according to the domains of integration.

\vskip 0.08 in \noindent {\bf Step II}: the proof of (i). 

Applying Caccioppoli inequality, we have 
$$ I_{\Omega_1}+II_{\Omega_1} \lesssim  \frac{1}{\epsilon}\iint_{ \Omega_1} (\overline{G(x,t, \cdot, \cdot)})_W(y,s)\, (u)_W(y,s)\, \frac{dyds}{s}.$$
As before,  $(\cdot)_W$ stands for the averaging over Whitney cubes \eqref{eq2.12.1}. Note that both $G$ and $u$ are solutions in the $c\epsilon$-neighborhood of the domain of integration and thus, $L^2$ averages denoted by the subscript $W$ can be substituted by any $L^p$ average as convenient using Moser estimates. 
Using this observation and \eqref{eq8.3.3}, one can see that the latter expression is bounded by 
\begin{multline*}
C\,\sup_{\epsilon/8<s<8\epsilon} \int_{\Delta_{C\rho}}|u(y,s)| \, \Bigl(\N(\chi_{\reu\setminus \Gamma_{x,t}} \nabla \Gamma^*(x,t,\cdot,\cdot))(y) +\N(\chi_{\reu\setminus \Gamma_{x,t}} \nabla w_{x,t})(y)\Bigr) \,dy\\[4pt]
\lesssim \sup_{\epsilon/8<s<8\epsilon}\left( \int_{\Delta_{C\rho}}|u(y,s)|^{p'}\,dy\right)^{1/p'} \, \left( \int_{\Delta_{C\rho}} \Bigl(\N(\chi_{\reu\setminus \Gamma_{x,t}} \nabla \Gamma^*(x,t,\cdot,\cdot))(y) +\N(\chi_{\reu\setminus \Gamma_{x,t}} \nabla w_{x,t})(y)\Bigr)^p \,dy\right)^{1/p}\\[4pt]
\leq C_t \sup_{\epsilon/8<s<8\epsilon}\left( \int_{\Delta_{C\rho}}|u(y,s)|^{p'}\,dy\right)^{1/p'}.\end{multline*}
The latter vanishes as $\epsilon \to 0$ since by assumptions $u$ converges to zero strongly in $L^{p'}$ on compact subsets of $\rn$. Going further,

\begin{multline*}
I_{\Omega_2}+II_{\Omega_2} \lesssim \frac{1}{\rho}\iint_{\Omega_2} (\overline{G(x,t, \cdot, \cdot)})_W(y,s)\, (u)_W(y,s)\, \frac{dyds}{s}\lesssim \frac{1}{\rho}\iint_{\widetilde\Omega_2} |G(x,t, y,s)|\, |u(y,s)|\, \frac{dyds}{s}\\[4pt]
\lesssim \frac{1}{\rho}\iint_{(y,s)\in\widetilde\Omega_2:\, s\leq t/4+|x-y|/2} |G(x,t, y,s)|\, |u(y,s)|\, \frac{dyds}{s}\\[4pt]
+\frac{1}{\rho}\iint_{(y,s)\in\widetilde\Omega_2:\, s> t/4+|x-y|/2} |G(x,t, y,s)|\, |u(y,s)|\, \frac{dyds}{s}=:J_1+J_2.
\end{multline*}
Here $\widetilde\Omega_2$ denotes a slight enlargement of $\Omega_2$ obtained by including in the set the Whitney cubes of all points of $\Omega_2$ and then  fattening those Whitney cubes by a small fixed factor of their size.  Properly adjusting the constants, we make sure that  the distance from Whitney cubes centered at points of $\widetilde\Omega_2$ to $(x,t)$ is still comparable to $\rho$. 

The region of integration in $J_1$ is chosen to allow one to employ \eqref{eq8.3.3}. By Lemma~\ref{l2.31}, $u\in L^{\frac{p'(n+1)}{n}}(\reu)$ and hence, $\|u\|_{L^{\frac{p'(n+1)}{n}}(\widetilde\Omega_2)}\to 0$ as $\rho \to \infty.$ Hence, $J_1\to 0$ as $\rho\to \infty$ if we show that 
$$\frac{1}{\rho}\left(\iint_{(y,s)\in\widetilde\Omega_2:\, s\leq t/4+|x-y|/2} |G(x,t, y,s)|^q\, \frac{dyds}{s^q}\right)^{1/q}\leq C_t,$$
with $q$ being a dual exponent of $\frac{p'(n+1)}{n}$, that is, $q=\frac{p(n+1)}{n+p}.$ Note that $1<q<p$. Using H\"older inequality to pass from the $L^q$ to $L^p$ norm  in $y$ and then \eqref{eq8.3.3}, we indeed have 
$$ \frac{1}{\rho}\left(\iint_{(y,s)\in\widetilde\Omega_2:\, s\leq t/4+|x-y|/2} |G(x,t, y,s)|^q\, \frac{dyds}{s^q}\right)^{1/q} \lesssim \frac{1}{\rho}\left(\int_0^{C\rho} \left(st^{-n/p'}\right)^{q} \rho^{n(1-q/p)}\,\frac{ds}{s^q}\right)^{1/q} \leq C t^{-n/p'},
$$
as desired.

The argument for $J_2$ is essentially the same once we observe that the region of integration in $J_2$ is included in $\Delta_{C\rho}\times({C_1\rho, C_2\rho})$ staying at a distance proportional to $\rho$ from $(x,t)$. Moreover, every point $(y.s) \in \Delta_{C\rho}\times({C_1\rho, C_2\rho})$ belongs to every cone in $\reu$ with a vertex at $(z,0)$ such that $z\in \Delta_{C_3\rho}\setminus \Delta_{C_4\rho}$ truncated at a height $C_5\rho$ (with a proper choice of constants and the aperture), and we can make sure that every such truncated cone stays at a distance proportional to $\rho$ from $(x,t)$. Then, once again invoking the argument in \cite{KP}, pp. 461--462, we have for all $(y.s) \in \Delta_{C\rho}\times({C_1\rho, C_2\rho})$ 
$$|G(x,t,y,s)|\lesssim s \left(\N(\nabla \Gamma^*(x,t,\cdot,\cdot))(z)+\N(\nabla w_{x,t})(z)\right)\lesssim \rho^{1-n}+s\N(\nabla w_{x,t})(z), $$
for any $z\in \Delta_{C_3\rho}\setminus \Delta_{C_4\rho}$. Here, the second estimate uses Caccioppoli inequality and pointwise estimates on the fundamental solution. Hence, we can show that by \eqref{eq8.3.2}
$$\sup_{C_1\rho<s<C_2\rho}\|G(x,t,\cdot,s)\|_{L^p(\Delta_{C\rho})}\lesssim  \rho^{1-n/p'}+\rho t^{-n/p'},$$ and with this at hand the same argument as for $J_2$ applies. 

All in all, taking first the limit as $\epsilon\to 0$ and then $\rho \to\infty$, we conclude that $u\equiv 0$ in $\reu$.

\vskip 0.08 in \noindent {\bf Step III}: the proof of (ii).

We shall use the set-up of Step I, with the Green function defined using the solvability of the Dirichlet problem. Indeed, $\|\Gamma(x,t,\cdot, 0)\|_{L^{p'}} \lesssim t^{1-n/p}$ and thus, by assumptions, there  there exists a solution to the following $(D)_{p'}$ problem:
\begin{equation}
\begin{cases} 
L^*w_{x,t}=0,\;\text{ in }\mathbb{R}_{+}^{n+1},\\ 
w_{x,t}(\cdot, s) \to \Gamma (x,t, \cdot, 0)\text{ as  $s\to 0$ strongly in $L^{p'}$ on compacta in $\rn$,}\\
\|N_*(w_{x,t})\|_{L^{p'}}\lesssim t^{1-n/p}.
\end{cases}
\label{eq8.3.2-reg}\end{equation}
Now define 
\begin{equation}\label{regun0}G(x,t, y,s):={\overline{\Gamma(x,t,y,s)}}-w_{x,t}(y,s), \quad (x,t),(y,s)\in \reu.\end{equation}

Then, exactly as above, 
\begin{multline}\label{regun1}I_{\Omega_1}+II_{\Omega_1} \lesssim   \frac{1}{\epsilon}\iint_{\widetilde\Omega_1} |G(x,t, y,s)|\, |u(y,s)|\, \frac{dyds}{s}\\[4pt]
\lesssim   \frac{1}{\epsilon}\iint_{\widetilde\Omega_1} |G(x,t, y,s)|\, s\N(\nabla u)(y)\, \frac{dyds}{s}
\lesssim   \sup_{\epsilon/8<s<8\epsilon}\left(\int_{\Delta_{C\rho}} |G(x,t, y,s)|^{p'}\, dy\right)^{1/p'}, \end{multline}
using the fact that $u$ is a solution to $(R)_p$ with zero boundary data and Lemma~\ref{l7.1}. Since by assumptions $w_{x,t}$ converges strongly in $L^{p'}(\Delta_{C\rho})$ to its boundary data and the same can be directly checked for $\Gamma$ (using, e.g., interior H\"older regularity of solutions), the Green function converges to zero  strongly in $L^{p'}(\Delta_{C\rho})$, and hence, the right-hand side of the expression above vanishes as $\epsilon\to 0$.

As for $\Omega_2$, we recall that by Lemma~\ref{l2.31} $\|w_{x,t}\|_{L^q(\reu)}\leq Ct^{-n/p+1}$, $q=p'(n+1)/n$ and therefore,  for a fixed $(x,t)$ we have $\|w_{x,t}\|_{L^q(\widetilde\Omega_2)}\to 0$, as $\rho\to\infty$. Moreover, by pointwise estimates on the fundamental solution $\|\Gamma(x,t,\cdot,\cdot)\|_{L^{p'(n+1)/n}(\widetilde \Omega_2)}\leq C\rho^{-n/p+1}$. Hence, for a fixed $(x,t)$
$$\|G(x,t,\cdot,\cdot)\|_{L^q(\widetilde\Omega_2)}\to 0, \quad\mbox{as }\rho\to\infty, \quad q=p'(n+1)/n,$$
provided that $p<n$. 
Note that the dual exponent $q'=\frac{p'(n+1)}{p'(n+1)-n}$ so that $1<q'<p$. Combing these considerations, we have 
\begin{multline}\label{regun2}I_{\Omega_2}+II_{\Omega_2} \lesssim   \frac{1}{\rho}\iint_{\widetilde\Omega_2} |G(x,t, y,s)|\, |u(y,s)|\, \frac{dyds}{s}\\[4pt]
\lesssim   \frac{1}{\rho}\,\|G(x,t,\cdot,\cdot)\|_{L^q(\widetilde\Omega_2)} \left(
\iint_{\widetilde\Omega_2} |u(y,s)|^{q'}\, \frac{dyds}{s^{q'}}\right)^{1/q'} \\[4pt]
\lesssim \rho^{-1+1/q'}\|G(x,t,\cdot,\cdot)\|_{L^q(\widetilde\Omega_2)} \left(
\int_{\Delta_{C\rho}} |\N(\nabla u)|^{q'}\, dy\right)^{1/q'}\\[4pt]
\lesssim \rho^{-1+(n+1)/q'-n/p}\|G(x,t,\cdot,\cdot)\|_{L^q(\widetilde\Omega_2)}\left(
\int_{\Delta_{C\rho}} |\N(\nabla u)|^{p}\, dy\right)^{1/p}\\[4pt]\lesssim \|G(x,t,\cdot,\cdot)\|_{L^q(\widetilde\Omega_2)}\left(
\int_{\Delta_{C\rho}} |\N(\nabla u)|^{p}\, dy\right)^{1/p},
\end{multline}
which by the aforementioned considerations vanishes as $\rho\to\infty$ if $p<n$, as desired.

\vskip 0.08 in \noindent {\bf Step IV}: the proof of (ii-a).

If the solutions satisfy the condition of H\"older continuity at the boundary \eqref{eq1.8}, the argument is considerably simpler and does not require the condition $1<p<n$. To be precise, it is valid for all $p_0< p<2+\eps$ where however $p_0<1$ and $\eps>0$ depend, in addition, on the exponent of boundary H\"older regularity (denoted by $\alpha_0$ in \eqref{eq1.8}). Throughout this argument we denote by $\alpha_0$ minimum of the exponents of the interior and boundary H\"older regularity. 

Under the assumption of De Giorgi-Nash-Moser estimates at the boundary \eqref{eq1.8} we can directly use the Green function constructed in \cite{HK}, \cite{KK} (see Lemma~\ref{l8.1}). It is straightforward to check that for such a Green function \eqref{eq8.4} is justified. Let us start with $p\geq 1$. Then we have once again
$$ I_{\Omega_2}+II_{\Omega_2} \lesssim \frac{1}{\rho}\iint_{\Omega_2} (\overline{G(x,t, \cdot, \cdot)})_W(y,s)\, (u)_W(y,s)\, \frac{dyds}{s}.$$
However, given that now $u$ is a solution to $(R)_p$ with zero boundary data, we can use Lemma~\ref{l8.1} to majorize the integral above by 
$$
C\, \frac{t^{\alpha_0}}{\rho}\iint_{\widetilde\Omega_2} s^{\alpha_0}\, \rho^{1-n-2\alpha_0}\,\N(\nabla u)\, dyds 
\lesssim t^{\alpha_0} \rho^{1-\alpha_0-n/p} \|\N(\nabla u)\|_{L^p} \to 0, \quad\mbox{as }\rho\to\infty.
$$
At the same time, for the same Green function constructed in \cite{HK}, \cite{KK}, 
\begin{multline*}
I_{\Omega_1}+II_{\Omega_1} \lesssim  \frac{1}{\epsilon}\iint_{\Omega_1} (\overline{G(x,t, \cdot, \cdot)})_W(y,s)\, (u)_W(y,s)\, \frac{dyds}{s}\\[4pt]
\lesssim \frac{t^{\alpha_0}}{\epsilon}\iint_{\widetilde\Omega_1} s^{\alpha_0}\, t^{1-n-2\alpha_0}\,\N(\nabla u)\, dyds \lesssim C_{t, \rho}\, \epsilon^{\alpha_0}  \|\N(\nabla u)\|_{L^p} \to 0, \quad\mbox{as }\epsilon\to 0.
\end{multline*}

Now, as above, taking first the limit as $\epsilon\to 0$ and then $\rho \to\infty$, we conclude that $u\equiv 0$ in $\reu$.

As for the case $p<1$, we recall that by Lemma~\ref{l2.31} we have $\|\nabla u\|_{L^{q}}\leq C$ with $q=\frac{p(n+1)}{n}>1$. Moreover, since $u$ has non-tangential trace zero at the boundary, $u(y,s)=\int_0^s \partial_\tau u(y,\tau)\,d\tau$ a.e. in $\reu$. This is, in fact, a part of the proof of \eqref{eq7.1.1}.  
Then  
\begin{multline*} I_{\Omega_2}+II_{\Omega_2} \lesssim \frac{1}{\rho}\iint_{\widetilde \Omega_2} |\overline{G(x,t, y,s)}|\, |u(y,s)|\, \frac{dyds}{s}\\[4pt]
\lesssim t^{\alpha_0}\rho^{-n-2\alpha_0}\iint_{\widetilde \Omega_2} s^{\alpha_0}\, |u(y,s)|\, \frac{dyds}{s} \lesssim t^{\alpha_0}\rho^{-n-\alpha_0}\iint_{\Delta_{C\rho}\times(0,C\rho)} |\partial_\tau u(y,\tau)|\, dyd\tau 
\\[4pt]\lesssim t^{\alpha_0}\rho^{1-n/p-\alpha_0}\left(\iint_{\Delta_{C\rho}\times(0,C\rho)} |\partial_\tau u(y,\tau)|^q\, dyd\tau\right)^{1/q}, \end{multline*}
vanishes as $\rho\to\infty$ provided that $1-n/p-\alpha_0>0$.
On the other hand, 
\begin{multline*}
I_{\Omega_1}+II_{\Omega_1} \lesssim  \frac{1}{\epsilon}\iint_{\widetilde\Omega_1} |\overline{G(x,t, y,s)}|\, |u(y,s)|\, \frac{dyds}{s}\\[4pt]
\lesssim  \epsilon^{\alpha_0-1}t^{\alpha_0} t^{1-n-2\alpha_0} \iint_{\Delta_{C\rho}\times(0,C\epsilon)} |\partial_\tau u(y,\tau)|\, dyd\tau.
\end{multline*}

Now recall that we have 
$$ \left(\iint_{\Delta_{C\rho}\times(0,C\epsilon)} |\partial_\tau u(y,\tau)|^{\frac{p(n+1)}{n}}\, dyd\tau\right)^{\frac{n}{p(n+1)}}\leq C,$$
and simultaneously, 
$$ \left(\iint_{\Delta_{C\rho}\times(0,C\epsilon)} |\partial_\tau u(y,\tau)|^p\, dyd\tau\right)^{\frac{1}{p}}\lesssim \epsilon^{1/p} 
\left(\int_{\Delta_{C\rho}} |\N(\partial_\tau u(y,\tau))|^p\, dyd\tau\right)^{\frac{1}{p}}
\lesssim \epsilon^{1/p}.$$
Then the usual interpolation inequalities (which in this case consist of an application of the H\"older's inequality to $|\partial_\tau u|^p$) yield 
$$ \left(\iint_{\Delta_{C\rho}\times(0,C\epsilon)} |\partial_\tau u(y,\tau)|^{p_\theta}\, dyd\tau\right)^{p_{\theta}}\leq C \epsilon^{\frac{1-\theta}{p}},$$
for any $0<\theta<1$ and $\frac{1}{p_\theta}=\frac{1-\theta}p +\frac{\theta n}{p(n+1)}$. Choosing $p_\theta=1$ (note that $p<1<\frac{p(n+1)}{n}$ so that $p_\theta=1$ is a valid choice) we compute that $\theta=(n+1)(1-p)$ and, respectively, $\frac{1-\theta}{p}=\frac{1-(n+1)(1-p)}{p}$. All in all, we have 
$$ \iint_{\Delta_{C\rho}\times(0,C\epsilon)} |\partial_\tau u(y,\tau)|\, dyd\tau\leq C \epsilon^{\frac{1-(n+1)(1-p)}{p}}.$$

Returning to the estimates on $I_{\Omega_1}+II_{\Omega_1}$, we then have
$$I_{\Omega_1}+II_{\Omega_1} \lesssim    \epsilon^{\alpha_0-1+\frac{1-(n+1)(1-p)}{p}} t^{1-n-\alpha_0}=\epsilon^{\alpha_0-n/p+n} t^{1-n-\alpha_0},$$
which vanishes as $\epsilon\to 0$ provided that $n(1/p-1)<\alpha_0$.

\vskip 0.08 in \noindent {\bf Step V}: the proof of (iii).

Suppose that $\N(\nabla u) \in L^p$, $p_0<p<2+\eps$, and that $\partial_\nu u=0$, interpreted in the weak sense of Lemma~\ref{l7.1}. Note that Lemma~\ref{l7.1} also yields existence of $u_0 \in H^{1,p}$ such that $u(x,t) \xrightarrow{t \to 0} u_0$, n.t.. By the uniqueness of solutions to $(R)_{H^p}$ established in Step IV,
$$u(\cdot ,t)= \SL(\SL^{-1}_t\!\mid_{t=0}u_0).$$
Thus, by Proposition~\ref{p7.15}
$$0=\partial_{\nu} u=\left(\frac{1}{2}I+ \widetilde{\K} \right)(\SL^{-1}_t\!\mid_{t=0} u_0).$$
By hypothesis, $\frac{1}{2}I+ \widetilde{\K} :H^p \to H^p$ and $\SL_t\!\mid_{t=0}:H^p \to H^{1,p}$ are bijective, so that $u_0 \equiv 0$ in the sense of $H^{1,p}$. Employing again the uniqueness of solutions to $(R)_{H^p}$ established in Step IV, we conclude that $u \equiv 0$, modulo constants, and thus $(N)_{H^p}$ is uniquely solvable.

\ep

\end{document}